\DeclareRobustCommand{\SkipTocEntry}[9]{}
\numberwithin{equation}{section}
\newcommand{\be}{\beta}
\newcommand{\CC}{{\mathbb{C}}}
\newcommand{\PP}{{\mathbb{P}}}
\newcommand{\RR}{{\mathbb{R}}}
\newcommand{\ZZ}{{\mathbb{Z}}}
\newcommand{\calC}{{\mathcal C}}
\newcommand{\calM}{{\mathcal M}}
\newcommand{\calQ}{{\mathcal Q}}
\newcommand{\op}{\operatorname}
\newcommand{\hyp}{\op{hyp}}
\newcommand{\even}{\op{even}}
\newcommand{\odd}{\op{odd}}
\newcommand{\abeven}{\operatorname{ab-even}}
\newcommand{\abodd}{\operatorname{ab-odd}}
\newcommand{\ab}{\operatorname{ab}}
\newcommand{\nonab}{\operatorname{nonab}}
\newcommand{\nonhyp}{\operatorname{nonhyp}}
\newcommand{\nonhypab}{\operatorname{nonhyp-nonab}}
\newcommand\ind{\operatorname{Ind}}
\newcommand\prim{\operatorname{prim}}
\newcommand\Res{\operatorname{Res}}
\newcommand{\bfz}{{\mathbf{z}}}
\newcommand*{\cleq}{\preccurlyeq}
\newcommand{\uq}{\underline{q}}
\newcommand{\moduli}[1][g]{{\mathcal M}_{#1}}
\newcommand{\omoduli}[1][g]{{\Omega\mathcal M}_{#1}}
 \newcommand{\divisor}[1]{{\rm div }\left( #1 \right)}
\newcommand{\rom}[1]{\textup{\uppercase\expandafter{\romannumeral#1}}}
\def\wX{\widehat X}
\def\wsigma{\widehat \sigma}
\newcommand*{\msd}{multi-scale differential\xspace}
\newcommand*{\mskd}{multi-scale $k$-differential\xspace}
\newcommand*{\mskds}{multi-scale $k$-differentials\xspace}
\newcommand*{\mstwod}{multi-scale $2$-differential\xspace}
\newcommand*{\mstwods}{multi-scale $2$-differentials\xspace}
\newcommand*{\prma}{prong-matching\xspace}
\newcommand*{\kprma}{$k$-prong-matching\xspace}
\newcommand*{\kprmas}{$k$-prong-matchings\xspace}
\newcommand*{\ptwd}{prong-matched twisted differential\xspace}
\newcommand*{\ptwkd}{prong-matched twisted $k$-differential\xspace}
\theoremstyle{plain}
\newtheorem{thm}{Theorem}[section]
\newtheorem{lm}[thm]{Lemma}
\newtheorem{prop}[thm]{Proposition}
\newtheorem{cor}[thm]{Corollary}
\newtheorem{conj}[thm]{Conjecture}
\theoremstyle{definition}
\newtheorem{df}[thm]{Definition}
\newtheorem{rem}[thm]{Remark}
\newtheorem{exa}[thm]{Example}
\def\be{\begin{equation}}   \def\ee{\end{equation}}     \def\bes{\begin{equation*}}    \def\ees{\end{equation*}}
\def\ba{\be\begin{aligned}} \def\ea{\end{aligned}\ee}   \def\bas{\bes\begin{aligned}}  \def\eas{\end{aligned}\ees}
\def\={\;=\;}  \def\+{\,+\,} 
\newcommand{\whX}{\widehat{X}}
\newcommand{\wh}[1]{{\widehat{#1}}}
\newcommand{\komoduli}[1][g]{{\Omega^k\mathcal M}_{#1}}
\newcommand{\Resk}[1][k]{\Res^{#1}}
\newcommand{\whomega}{\wh\omega}
\newcommand{\rot}{{\rm rot}}
\newcommand{\Id}{{\rm Id}}
\newcommand{\hk}{k}
\newcommand*{\anzero}{analytic zero\xspace}
\newcommand*{\anpole}{analytic pole\xspace}
\newcommand*{\anpoles}{analytic poles\xspace}
\newcommand*{\metzero}{metric zero\xspace}
\newcommand*{\metzeros}{metric zeros\xspace}
\newcommand*{\metpole}{metric pole\xspace}
\newcommand*{\metpoles}{metric poles\xspace}
\begin{document}
\title[Components of the strata of $k$-differentials]{Towards a classification of connected components of the strata of $\protect\hk$-differentials}
\author[Chen]{Dawei Chen}
\address{Department of Mathematics, Boston College, Chestnut Hill, MA 02467, USA}
\email{dawei.chen@bc.edu}
\thanks{Research of D.C. was supported in part by NSF Standard Grant DMS-2001040, NSF CAREER Grant DMS-1350396, and Simons Foundation Collaboration Grant 635235.  }

\author[Gendron]{Quentin Gendron}
\address{Centro de Investigaci\'on en Matem\'aticas, Guanjuato, Gto.,  AP 402, CP 36000, M\'exico}
\email{quentin.gendron@cimat.mx}
\thanks{Research of Q.G. was supported in part by a Postdoctoral Fellowship from DGAPA, UNAM}

\begin{abstract}
A $k$-differential on a Riemann surface is a section of the $k$-th power of the canonical bundle. Loci of $k$-differentials with prescribed number and multiplicities of zeros and poles form a natural stratification for the moduli space of $k$-differentials. The classification of connected components of the strata of $k$-differentials was known for holomorphic differentials, meromorphic differentials and quadratic differentials with at worst simple poles by Kontsevich--Zorich, Boissy and Lanneau, respectively. Built on their work we develop new techniques to study connected components of the strata of $k$-differentials for general $k$. As an application, we give a complete classification of connected components of the strata of quadratic differentials with arbitrary poles. Moreover, we distinguish certain components of the strata of $k$-differentials by generalizing the hyperelliptic structure and spin parity for higher $k$.  We also describe an approach to determine explicitly parities of $k$-differentials in genus zero and one, which inspires an amusing conjecture in number theory.  A key viewpoint we use is the notion of multi-scale $k$-differentials introduced by Bainbridge--Chen--Gendron--Grushevsky--M\"oller for $k = 1$ and extended by Costantini--M\"oller--Zachhuber for all $k$. 
\end{abstract} 

\date{\today}

\maketitle
\tableofcontents

%%%%%%%%%%%%%%%%%%%%%%%%%%%%%%%%%%%%%%%%%%%%%%%%%%%%%%%%%%%%
%%%%%%%%%%%%%%%%%%%%
\section{Introduction}
\label{sec:intro}
%%%%%%%%%%%%%%%%%%%%

Let $g\geq0$ and $k\geq1$ be integers and $\mu = (k_1, \ldots, k_n)$ be an integral partition of $k(2g-2)$. The stratum of $k$-differentials $\Omega^k\calM_g(\mu)$ parameterizes sections of the $k$-th power of the canonical bundle on genus $g$ Riemann surfaces with $n$ distinct zeros or poles of order specified by the signature $\mu$. It is known that $\Omega^k\calM_g(\mu)$ is a complex orbifold (see e.g.~\cite{BCGGM3}), but it can be disconnected 
for special $\mu$. Thus to understand the topology of $\Omega^k\calM_g(\mu)$, an important question is the classification of its connected components. 
\par
For abelian differentials (i.e. $k=1$), connected components of the strata are classified by \cite{kozo1} for holomorphic differentials and by \cite{boissymero} for meromorphic differentials. In this case, there can exist extra components due to hyperelliptic and spin structures for certain signatures $\mu$. For quadratic differentials with at worst simple poles (i.e. $k=2$ and $k_i \geq -1$ for all entries of $\mu$), connected components of the strata are classified by \cite{lanneauquad}. In this case, extra components are caused by the hyperelliptic structure only, except in genus three and four where several sporadical components exist due to a strange mod $3$ parity \cite{chenmoellerquad}. Built on the strategies of these works,  we develop in this paper a framework and new techniques towards solving the remaining cases. 
\par
Note that connected components of the strata of $k$-differentials are known in genus zero and one for all $k$. 
For $g=0$ and $\mu = (k_1, \ldots, k_n)$, the (projectivized) stratum is isomorphic to the moduli space of $n$-pointed $\PP^1$, hence it is irreducible. For $g=1$, since the canonical bundle is trivial on a torus, the stratum is isomorphic to the corresponding stratum of abelian differentials with the same signature, hence the result of \cite[Section 4]{boissymero} applies
for all $k$ (see Theorem~\ref{thm:compGenreUn}). Therefore, we can concentrate on the case of $g\geq 2$. 
\par 
A $k$-differential is called {\em primitive} if it is not a power of a lower-order differential. By taking powers, connected components of the strata of lower-order differentials give connected components of the corresponding loci in the strata of higher-order differentials, hence we can further restrict our study to the loci of primitive $k$-differentials. We use $\Omega^k\calM_g(\mu)^{\prim}$ to denote the primitive locus in the stratum $\Omega^k\calM_g(\mu)$. 
%\smallskip
\par
The results of Kontsevich--Zorich, Boissy and Lanneau for abelian and quadratic differentials were proven by induction on the genus and on the number of singularities. Two important operations they used in this process are {\em breaking up a zero} into lower-order zeros and {\em bubbling a handle} at a zero so as to increase the genus by one. In order to extend these operations to $k$-differentials for general $k$, 
we use as a key tool the theory of \mskds, which was introduced for abelian differentials in \cite{BCGGM2} and extended for all $k$ in \cite{CMZarea}. 
The moduli space of \mskds provides a smooth and functorial compactification for the strata of $k$-differentials. In particular, smoothing a \mskd from the boundary of a stratum into the interior singles out a unique connected component of the stratum, for otherwise different components of the stratum would intersect in the boundary which violates the smoothness of the moduli space of \mskds.  Using this principle together with other techniques, we obtain a number of results towards the classification of connected components of the strata as follows.  
\par
We first generalize the hyperelliptic structure from the case of $k \leq 2$ to all $k$.  The precise definition of a hyperelliptic component of $k$-differentials is given in Section~\ref{sec:hyp-k}. Roughly speaking, it arises from the locus of $k$-differentials on hyperelliptic Riemann surfaces with zeros and poles at some Weierstrass points and hyperelliptic conjugate pairs such that the dimension of this locus is equal to the dimension of the ambient stratum.  The following result gives a complete classification of such hyperelliptic components for all $k$. 
\begin{thm}
\label{thm:hyp-intro}
 Let $\mu=(2m_{1},\ldots,2m_{r}, l_{1},l_{1},\ldots,l_{s},l_{s})$ be a partition of $k(2g-2)$ (with possibly negative entries). Then the stratum $\komoduli[g](\mu)$ has a hyperelliptic component if and only if $\mu$ is one of the following types: 
 \begin{itemize}
  \item $\mu=(2m_{1},2m_{2})$ with one of the $m_i$ being negative, or $m_1, m_2 > 0$ and $k\nmid \gcd (m_1, m_2)$,
 \item $\mu = (2m, l, l)$ with $m$ or $l$ negative, or $m, l > 0$ and $k\nmid \gcd (m,l)$,
 \item $\mu = (l_1, l_1, l_2, l_2)$ with some $l_i < 0$, or $l_1, l_2 > 0$ and $k\nmid \gcd (l_1, l_2)$,
 \item $\mu = (k(2g-2))$,
 \item $\mu = (k(g-1), k(g-1))$.
 \end{itemize}
\end{thm}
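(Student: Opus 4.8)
The plan is to realize the hyperelliptic locus explicitly as a family of double covers of $\PP^1$, to compute its dimension, and to compare this with the dimension of the connected component of $\komoduli[g](\mu)$ containing it. Write $\pi\colon X\to\PP^1$ for the hyperelliptic double cover branched over $2g+2$ points, with involution $\sigma$, and let $H$ denote the hyperelliptic class. The shape of $\mu$ is dictated by the eigenform structure: a $k$-differential whose divisor is $\sigma$-invariant is automatically a $\sigma$-eigenform---for $k$ even the pullback of a $k$-differential on $\PP^1$, for $k$ odd the product of $y$ with such a pullback---and in either case its order at a fixed Weierstrass point is even, which is why the singularities split into even orders $2m_i$ at Weierstrass points $W_i$ and equal pairs $l_j,l_j$ at conjugate pairs $(P_j,P_j')$.

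First I would fix a hyperelliptic curve together with $r$ Weierstrass points and $s$ conjugate pairs and produce the differential. Existence is automatic: the relations $2W_i\sim P_j+P_j'\sim H$ give $\mathcal O_X\big(\sum_i 2m_i W_i+\sum_j l_j(P_j+P_j')\big)\cong K_X^{\otimes k}$, so a section with exactly this divisor exists and is unique up to scale. Counting the $2g+2$ branch points and the $s$ base points of the conjugate pairs on $\PP^1$ modulo $\mathrm{PGL}_2$, together with the scaling, shows that the hyperelliptic locus is irreducible, closed, and of dimension $2g+s$. Since it is a closed irreducible subset of the smooth space $\komoduli[g](\mu)$, it is a connected component exactly when $2g+s$ equals the dimension of the ambient component.

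Next I would run the dimension comparison, separating the primitive and imprimitive cases. When the hyperelliptic $q$ is primitive its canonical $k$-cover is connected, and the eigenform description of the stratum gives dimension $2g-2+n$ with $n=r+2s$; thus the hyperelliptic locus is a full component precisely when $2g+s=2g-2+(r+2s)$, i.e. $r+s=2$. The three solutions $(r,s)\in\{(2,0),(1,1),(0,2)\}$ are exactly the shapes $(2m_1,2m_2)$, $(2m,l,l)$ and $(l_1,l_1,l_2,l_2)$ of the first three items. When instead $q$ is a $d$-th power I would descend to the hyperelliptic components of $(k/d)$-differentials, and in the extremal case $d=k$ to abelian differentials, where $q=\omega^{\otimes k}$ for a hyperelliptic abelian $\omega$; since the power map sends components to components and, by Kontsevich--Zorich, the only holomorphic abelian hyperelliptic components are $\mathcal H^{\mathrm{hyp}}(2g-2)$ and $\mathcal H^{\mathrm{hyp}}(g-1,g-1)$, their $k$-th powers give exactly the remaining items $(k(2g-2))$ and $(k(g-1),k(g-1))$. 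These are the $r+s=1$ cases, for which the primitive stratum (of dimension $2g-1+s$) is too small to contain the hyperelliptic locus of dimension $2g+s$, so a component can only arise from the power construction.

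Finally I would justify the arithmetic conditions and prove completeness. The $\gcd$ and sign conditions control primitivity: if all half-orders are positive and $k\mid\gcd$, then $q$ is forced to be a $k$-th power of a holomorphic abelian differential with $r+s=2$, whose hyperelliptic locus is only codimension one in its stratum and hence not a component, so such $\mu$ are excluded; if some entry is negative or $k\nmid\gcd$, then $q$ is primitive or descends under the power map to a genuine lower hyperelliptic component, and the count above applies. For completeness, any admissible $\mu$ with $r+s\ge 3$ satisfies $2g+s<2g-2+(r+2s)$, so its hyperelliptic locus has positive codimension and is not a component. I expect the crux to be exactly this bookkeeping of primitivity and its precise translation into the $\gcd$/sign conditions, carried out in tandem with the dimension formula for the primitive stratum and a uniform treatment of the $k$ even and $k$ odd eigenstructures.
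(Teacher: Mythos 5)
Your proposal is correct and follows essentially the same route as the paper: parametrize the hyperelliptic locus by branch points and conjugate pairs to get dimension $2g+s$ (projectivized $2g-1+s$), compare with the stratum dimension to force $r+s\le 2$, rule out the holomorphic $k\mid\gcd$ cases because the hyperelliptic locus then sits in codimension one inside the $k$-th power component, and identify the $r+s=1$ cases with $k$-th powers of the Kontsevich--Zorich hyperelliptic components. The only cosmetic difference is that you handle the meromorphic-with-$k\mid\gcd$ subcase by descending along the power map to a lower-order hyperelliptic component, whereas the paper runs the same dimension comparison directly; both arguments are equivalent.
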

\par
Next we generalize the spin parity from the case of $k \leq 2$ to all $k$.  Recall that the parity of an abelian differential $\omega$ with singularities of even order only can be defined by using the mod $2$ dimension of the space of sections of the half-canonical divisor~${\rm div}(\omega)/2$.  
To define the parity for a $k$-differential $(X, \xi)$ of signature $\mu$ for $k\geq 2$, we consider the canonical cover $(\wh X, \wh \omega)$ of $(X, \xi)$, that is the minimal cover $\pi\colon\wh X\to X$ such that the pullback of $\xi$ by $\pi$ to~$\wh X$ equals the $k$-th power $\wh\omega^k$ of an abelian differential $\wh\omega$. If the abelian differential $\wh\omega$ has singularities of even order only, then we say that $\mu$ is of parity type and define the parity of $\xi$ by using the parity of $\wh\omega$. It is known that this parity can distinguish connected components of the strata for $k = 1$ (see~\cite{kozo1}) but not for $ k = 2$ (see~\cite{lanneauspin}). Below we show that this dependence on the parity of $k$ holds in general. 
\begin{thm}
\label{thm:parity-intro}
Let $\mu$ be a signature of parity type for $k$-differentials with $g\geq 1$. If $k$ is even, then the parity is an invariant of the entire primitive 
stratum $\komoduli(\mu)^{\prim}$.  If $k$ is odd, then there exist components of $\komoduli(\mu)^{\prim}$ with distinct parities, except for the special stratum $\Omega^{3}\moduli[2](6)^{\prim}$ which is connected.
\end{thm}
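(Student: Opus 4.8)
The plan is to determine the set of parity values realized on $\komoduli(\mu)^{\prim}$ by propagating the parity through the inductive surgery framework underlying the classification. Since the parity is a discrete deformation invariant, it is automatically constant on each connected component, so the real content of the theorem is whether it takes one value or two across all of them. Recall that the parity of $(X,\xi)$ is the $\Arf$ invariant of the theta characteristic $L=\tfrac12\divisor{\wh\omega}$ on the canonical cover $(\wh X,\wh\omega)$, where $\sigma$ denotes the order-$k$ deck transformation with $\sigma^*\wh\omega=\zeta_k\wh\omega$. The first step is to record how $L$, and hence $\Arf(L)$, transforms under the two basic surgeries on $\xi$ — breaking up a zero and bubbling a handle.

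The key observation is that each surgery on $(X,\xi)$ lifts $\sigma$-equivariantly to $(\wh X,\wh\omega)$: breaking a zero of $\xi$ breaks the corresponding $\sigma$-orbit of zeros of $\wh\omega$ symmetrically, and bubbling a single handle at a zero of $\xi$ bubbles an entire $\sigma$-orbit of $k$ handles on $\wh X$ at once (in the generic case where the zero is not a branch point; ramified orbits must be handled separately). I would then invoke the abelian-differential formula for the change of spin parity under a single handle bubbling: there are two topological types of the bubbling, and they change the $\Arf$ invariant by $0$ and $1$ respectively. Because the $k$ lifted handles are forced to be of the same type by the $\sigma$-symmetry, the total change of $\Arf(L)$ equals $k\delta \bmod 2$, where $\delta\in\{0,1\}$ is the change contributed by a single handle.

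This immediately produces the dichotomy. If $k$ is even then $k\delta\equiv 0\pmod 2$, so neither type of bubbling — nor, by the analogous computation, breaking up a zero — can change the parity; feeding this into the inductive classification (every component of $\komoduli(\mu)^{\prim}$ degenerates, through the boundary of the space of \msds, to a base case, and the parity is carried along unchanged) shows that all components share the single parity fixed by the base cases, proving invariance. If $k$ is odd then $k\delta\equiv\delta$, so performing the two bubbling types on a common primitive differential in a smaller stratum produces two primitive differentials in $\komoduli(\mu)^{\prim}$ whose $\Arf$ invariants differ by $1$; these necessarily lie in distinct connected components, giving the asserted components of distinct parity.

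The main obstacle, and the source of the exceptional case, is that the two-type bubbling argument for odd $k$ requires enough room in the signature and genus to realize both types while landing back in $\mu$, so the smallest strata must be treated as base cases by hand, where the equivariant $\Arf$ computation is replaced by direct analysis on the canonical cover. For $\Omega^{3}\moduli[2](6)^{\prim}$ the single zero of order $6$ together with the low genus leaves no room for the construction, and the reduction bottoms out precisely here; I would show directly — for instance by analyzing the genus-four canonical cover carrying three double zeros and its degree-$3$ automorphism, or by checking that all multi-scale boundary degenerations smooth into one component — that this stratum is connected, so its parity is trivially constant despite $k$ being odd. Pinning down the equivariant surgery formulas (including the ramified orbits) and verifying the finitely many base cases, especially the connectivity of $\Omega^{3}\moduli[2](6)^{\prim}$, is where the real work lies.
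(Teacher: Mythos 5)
Your high-level dichotomy (even $k$: parity preserved; odd $k$: exhibit both parities; treat $\Omega^{3}\moduli[2](6)^{\prim}$ separately) matches the shape of the theorem, but the central technical claim of your argument is not correct as stated. You assert that bubbling a handle at a zero $z_{0}$ of $\xi$ of order $n_{0}$ lifts to bubbling $k$ disjoint, $\tau$-equivalent handles on the canonical cover $\whX$, so that the Arf invariant changes by $k\delta$. This picture only holds when the attached genus-one piece $(X_{2},\xi_{2})$ is the $k$-th power of an abelian differential. In general the canonical cover of $(X_{2},\xi_{2})\in\komoduli[1](n_{0}+2k,-n_{0}-2k)$ is a \emph{connected} surface of genus $k-\gcd(n_{0},k)+1$ attached to $\whX_{1}$ at $\gcd(n_{0},k)$ points (a Riemann--Hurwitz count shows the genus of $\whX$ jumps by $k$, but not as $k$ disjoint handles); moreover the preimage of the dual cycle $\beta$ of index $s$ consists of $\gcd(s,k)$ circles, not $k$. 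So the change of $\Arf$ is not $k$ times a single-handle contribution, and the formula $\Delta\Arf\equiv k\delta$ is unjustified. A second gap: your even-$k$ conclusion needs every component of $\komoduli(\mu)^{\prim}$ to be reachable from a base case by these surgeries, which is only known for $k\le 2$ (the paper explicitly leaves the basic-domain/reachability machinery for general $k$ open, cf.\ Remark~\ref{rem:2-to-k}). The paper avoids both problems for even $k=2d$ by passing to the intermediate canonical $d$-cover, obtaining a primitive quadratic differential of parity type with the same parity, and quoting Lanneau's invariance theorem — no surgery or reachability is needed.

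For odd $k$ the paper likewise does not compute a per-handle $\delta$; instead it proves an additivity formula for the parity under gluing two $k$-differentials at a node of a \msd and smoothing (Lemma~\ref{lm:sumparbis}), and then constructs explicit genus-one flat surfaces in $\komoduli[1](2kn,-2kn)^{\prim}$ with vanishing $k$-residue at the pole realizing both parities (Lemma~\ref{lm:zeroresgun}, for $k\ge 5$; $k=3$ requires extra care). These building blocks are glued into arbitrary strata of parity type, with a separate case analysis for $g=2$, $k=3$, where $\Omega^{3}\moduli[2](4,2)$ and $\Omega^{3}\moduli[2](2,2,2)$ do have both parities and only $\Omega^{3}\moduli[2](6)^{\prim}$ is exceptional; its connectivity is proved by identifying it with a family of plane cuspidal quartics with a hyperflex, which is genuine work you have deferred. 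To repair your argument you would need to replace the "$k$ disjoint handles" picture by an honest computation of the Arf invariant of the connected lifted piece (essentially what Lemma~\ref{lm:zeroresgun} does), establish realizability and primitivity of both choices in every stratum, and supply the reachability statement or an alternative for the even case.
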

\par
Note that in general the locus of $k$-differentials with the same parity in $\komoduli(\mu)^{\prim}$ can possibly be disconnected, as there might be some other structures to further distinguish its components such as the hyperelliptic structure.  It would be interesting to know whether the generalized hyperelliptic and parity structures are sufficient to classify all connected components of the strata. We provide an evidence by using these structures to classify connected components of the strata of quadratic differentials with at least one pole of higher order (i.e. quadratic differentials of infinite area).  
\begin{thm}\label{thm:classquadintro}
For genus $g\geq 2$ and at least one pole of order $\geq 2$, connected components of the strata $\Omega^2\moduli(\mu)^{\prim}$ of primitive quadratic differentials can be described as follows: 
\begin{itemize}
\item[(i)] If $\mu$ is one of the following types:   
\begin{itemize}
 \item[$\bullet$] $(2n,-2l)$, 
 \item[$\bullet$] $(2n, -l, -l)$, 
 \item[$\bullet$] $(n, n, -2l)$, 
 \item[$\bullet$] $(n, n, -l, -l)$, 
\end{itemize}
in all of which $n$ and $l$ are not both even, then $\Omega^2\moduli(\mu)^{\prim}$ has two connected components, one being  hyperelliptic and the other non-hyperelliptic. 
\item[(ii)] For all other $\mu$ the primitive stratum $\Omega^{2}\moduli(\mu)^{\prim}$ is connected.
\end{itemize}
\end{thm}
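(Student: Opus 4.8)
The plan is to isolate the hyperelliptic locus using Theorem~\ref{thm:hyp-intro} together with a primitivity computation, and then to prove that its complement, the non-hyperelliptic primitive locus, is connected by an inductive degeneration argument in the moduli space of \msds. The guiding simplification is that for $k=2$ the spin parity is constant on each primitive stratum by Theorem~\ref{thm:parity-intro}, so hyperellipticity is the \emph{only} mechanism that can disconnect $\Omega^2\moduli(\mu)^{\prim}$. In particular the sporadic mod $3$ components of \cite{chenmoellerquad} cannot intrude here, since they are confined to the finite-area (at worst simple pole) case, whereas we always assume a pole of order $\geq 2$.

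First I would analyze the hyperelliptic locus. Specializing Theorem~\ref{thm:hyp-intro} to $k=2$, the signatures that carry a full-dimensional hyperelliptic locus and possess a pole of order $\geq 2$ are exactly the four families $(2n,-2l)$, $(2n,-l,-l)$, $(n,n,-2l)$ and $(n,n,-l,-l)$; each falls under a ``negative entry'' subcase of Theorem~\ref{thm:hyp-intro}, so no $\gcd$ hypothesis is needed. The decisive question is whether a symmetric configuration is primitive. Writing $\divisor{\xi}=2D$ on the hyperelliptic curve $X$ with involution $\sigma$, the differential $\xi$ is a global square exactly when the $2$-torsion class $\eta=[D]-K_X\in\Pic^0(X)[2]$ vanishes. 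Placing the zeros and poles at \Weierstrass points and in conjugate pairs and reducing modulo the relations $2W_i\sim K_X$ expresses $\eta$ as a combination of differences $W_i-W_j$ of \Weierstrass points; a short parity count then gives $\eta\neq 0$ precisely when $n$ and $l$ are not both even. Hence in case~(i) the symmetric differentials are primitive and fill out an irreducible (so connected) locus of full dimension, which is closed in the stratum and therefore a genuine connected component, while its complement is nonempty because a generic configuration is asymmetric; in the both-even situation the very same configurations become squares, so no primitive hyperelliptic differential exists and the potential extra component disappears.

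Next I would prove that the non-hyperelliptic primitive locus is connected, by induction on the genus $g$ and on the number of singularities $n$; in case~(ii) this locus is all of $\Omega^2\moduli(\mu)^{\prim}$, since then Theorem~\ref{thm:hyp-intro} produces no full-dimensional primitive hyperelliptic locus. The inductive step rests on the smooth, functorial compactification by \msds: degenerating a non-hyperelliptic differential to a boundary \msd whose levels realize either the \emph{breaking up of a zero} into two lower-order zeros or the \emph{bubbling of a handle}, one uses that smoothing such a boundary point singles out a unique component of the stratum. Running this over all admissible splittings forces the various non-hyperelliptic components to coincide, provided the relevant boundary strata are already connected by the inductive hypothesis. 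The base of the induction combines three inputs: genus one via Theorem~\ref{thm:compGenreUn}; the passage from a primitive quadratic differential to an abelian differential on its connected canonical double cover $\whX$, so that the meromorphic classification of \cite{boissymero} controls the low-complexity pieces; and a direct verification in genus two that the asymmetric (non-\Weierstrass) configurations form a single connected family.

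The hard part will be this last step, and in particular the bookkeeping required to keep every degeneration inside the primitive, non-hyperelliptic locus. One must guarantee that the \msds used in the induction smooth to primitive rather than square differentials and do not drift onto the hyperelliptic component isolated above; this amounts to tracking the $2$-torsion class $\eta$ and the local \prmas across each degeneration. The low-genus base cases are equally delicate: matching the canonical-cover picture with Boissy's components and excluding any further component in genus two is where the argument is most likely to demand case-by-case care rather than a single clean principle.
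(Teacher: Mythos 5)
Your first step---isolating the hyperelliptic locus via Theorem~\ref{thm:hyp-intro} and checking that the symmetric configurations are primitive exactly when $n$ and $l$ are not both even---is sound and matches what the paper does (there the both-even case is absorbed into the ``abelian type'' components of Theorem~\ref{thm:quad-comp}, since $n(2w_1)-l(2w_2)\sim(n-l)g^1_2\sim K_X$ makes those hyperelliptic differentials global squares). But your opening ``guiding simplification'' is logically backwards: the constancy of the spin parity on primitive strata (Theorem~\ref{thm:parity-intro} for $k=2$) only says that parity fails to \emph{detect} extra components; it gives no upper bound on their number and cannot rule out other disconnecting structures. The paper explicitly cautions that a fixed-parity locus may still be disconnected, so you cannot use this as a substitute for an actual connectivity argument.

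The genuine gap is in the upper bound, i.e.\ the step you defer to ``running this over all admissible splittings.'' Connectivity of boundary strata plus smoothness of the multi-scale compactification does not close your induction: you must first show that \emph{every} component of the stratum actually reaches the designated boundary divisor, and then control \emph{which} boundary configurations it can reach. The paper does this in two stages that are absent from your proposal. First, it reduces an arbitrary stratum to the minimal stratum (one analytic zero) by \emph{merging} zeros (Propositions~\ref{prop:mergezerokdiff} and~\ref{prop:lanneauCor2.7}), which goes in the opposite direction from your breaking-up induction and requires the flat-geometric basic-domain decomposition of Lemma~\ref{lem:half-planes} to find a shrinkable saddle connection in every component. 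Second, for minimal strata it shows every component is of the form $\calC_0\oplus s_1\oplus\cdots\oplus s_g$ obtained by iterated bubbling from a connected genus-zero stratum (Proposition~\ref{prop:bubkdiff}, again via the basic-domain argument), and then uses the commutation and reduction relations of Proposition~\ref{prop:oplus} together with the gcd-tricks (Lemmas~\ref{lm:oplusg1} and~\ref{lm:oplusg2}) to collapse all such sequences to at most four normal forms, which are then matched bijectively with the invariants. Finally, passing back from the minimal stratum to general $\mu$ requires the explicit path constructions of Propositions~\ref{prop:joinhypandnonhyp}--\ref{prop:joinabandnonab} to show that breaking the single zero asymmetrically merges the hyperelliptic (resp.\ abelian-parity) components with non-hyperelliptic ones. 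Without the $\oplus$ calculus and the merging direction of the induction, your argument establishes only the lower bound on the number of components, not the claimed classification.
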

\par
A more comprehensive statement of the above result including connected components arising from squares of abelian differentials can be found in Theorem~\ref{thm:quad-comp} and Table~\ref{tab:CC}.
\par 
\subsection*{Applications}
Besides obvious relations with surface dynamics and Teichm\"uller theory, connected components of the strata of differentials can be used to study the birational geometry and tautological rings of various moduli spaces (see e.g.~\cite{muleffcon, muleffdiv, barrosuni, bhpss}). In particular, Mullane used our classification of connected components of the strata of meromorphic quadratic differentials to construct extremal and rigid cycles in the moduli space of stable curves with marked points (see~\cite{mullanekdiff}). Moreover, Masur--Veech volumes for the strata of abelian and quadratic differentials can be generalized to all $k$-differentials of finite area (see~\cite{nguyen}), hence knowing connected components of the strata of $k$-differentials can provide refined information for relevant volume and intersection calculations (see e.g.~\cite{cmsz} for volumes of hyperelliptic and spin components of abelian differentials). In addition, connected components of the strata of $k$-differentials can provide interesting loci in the strata of abelian differentials via the canonical cover, and higher-order differentials (e.g. cubic differentials) often correspond to other geometrically meaningful structures (e.g. real projective structures). We thus expect that the results and techniques in this paper can motivate new discoveries along this circle of ideas.

\subsection*{Organization}
This paper is organized as follows. In Section~\ref{sec:msd} we review the notion of \mskds in \cite{CMZarea} and interpret it from our viewpoint. In Section~\ref{sec:operations} we generalize two important constructions in the classification of connected components known for the case $k \leq 2$ to all $k\geq 3$. In Section~\ref{sec:hyp-k} we define hyperelliptic components and characterize the strata of $k$-differentials that possess a hyperelliptic component, thus proving Theorem~\ref{thm:hyp-intro}. In Section~\ref{sec:parity-k} we define parity and show that there exist components of certain strata of $k$-differentials which are distinguished by this parity invariant, thus proving Theorem~\ref{thm:parity-intro}. In Section~\ref{sec:adjquad} we study adjacency of the strata of quadratic differentials from the viewpoint of merging zeros or poles.  In Section~\ref{sec:quad} we study quadratic differentials with arbitrary poles and prove Theorem~\ref{thm:classquadintro} about the classification of connected components of the corresponding strata. Finally in the Appendix we describe an approach to compute the parity of $k$-differentials in genus zero and one, which motivates a number-theoretic question of independent interest (see Conjecture~\ref{conj:g0-1-reduced}).  

\subsection*{Notations}
We identify (compact) Riemann surfaces with smooth (complex algebraic) curves and freely interchange our terminology between them.  Let $\xi$ be a $k$-differential on a Riemann surface. A singularity of order $m\geq0$ (resp. $m<0$) is called an {\em \anzero} (resp. {\em \anpole}) of $\eta$.  A  singularity of order $m>-k$ (resp. $m\leq-k$) is called a {\em \metzero} (resp. {\em \metpole}) of $\xi$.  A metric zero (resp. metric pole) has a neighborhood of finite (resp. infinite) area under the flat metric induced by $\xi$.  For convenience we also use $-m$ as the order for an analytic pole of order $m$, e.g., a pole of order $-1$ is a simple pole.  If a stratum has a singularity of order zero (i.e. an ordinary marked point), then its connected components correspond to bijectively those of the stratum by forgetting this ordinary point.  Hence we can assume that all entries of a signature are nonzero.  Similarly there is a bijection between connected components of a stratum with singularities labeled or not labeled. For convenience we consider labeled singularities.  We will often use $\komoduli[g](n_1,\ldots, n_r, -l_1,\ldots, -l_s)$ to denote the stratum of $k$-differentials with (analytic) zeros and poles of order $n_i$ and $l_j$ respectively, and we will specify in the context when we treat a singularity under the metric sense.  

\subsection*{Acknowledgements} 
We thank Martin M\"oller for helpful discussions on related topics. D.C. thanks Dubi Kelmer for interesting conversations on Conjecture~\ref{conj:g0-1-reduced}. We also thank the institutes AIM, CMO and MFO as well as relevant workshop organizers for inviting and hosting us to work together.

%\label{sec:intro}

%%%%%%%%%%%%%%%%%%%%
\section{Multi-scale and marked $k$-differentials}
\label{sec:msd}
%%%%%%%%%%%%%%%%%%%%

In this section we review the notion of \mskds together with an important ingredient called (prong) marked $k$-differentials.   

\subsection{Multi-scale $k$-differentials}

For the case of abelian differentials ($k=1$) the notion of multi-scale differentials was introduced in \cite{BCGGM2}, and it was extended to the case of $k\geq2$ in \cite{CMZarea}. The importance of this notion comes from the fact that the moduli space of \mskds gives a smooth compactification of the strata of $k$-differentials. For the reader's convenience, below we review their basic properties and interpret them from our viewpoint.  
\begin{df}
  \label{def:mskd}
A {\em \mskd $(X,\bfz,\xi,\cleq,\sigma)$ of type $\mu$}  consists of
\begin{itemize}
%\item[(0)] 
  \item[(i)] a stable pointed curve $(X,\bfz)$ with an enhanced level structure $\cleq$ on the dual graph~$\Gamma$ of~$X$,
  \item[(ii)] a twisted $k$-differential $(X,\bfz,\xi)$ of type~$\mu$ with a \kprma $\sigma$ compatible with the 
  enhanced level structure. 
\end{itemize}
\end{df}
\par
We remark that the idea of twisted differentials (without prong-matching) was known earlier (see~\cite{fapa, gendron, chendiff, plumb}). The notion of prong-matching is relatively new and plays a key role for the smoothness of the moduli space of \mskds. In particular, the compatibility condition in (ii) requires a \kprma to satisfy a global $k$-residue condition (see~\cite[Definition 1.4]{BCGGM3}) so that the resulting \mskd can be smoothed into the interior of the corresponding stratum of $k$-differentials. For the purpose of our applications, we mainly focus on the explanation of \kprma (see \cite[Section 5.4]{BCGGM2} and \cite[Section 3.2]{CMZarea} for more details).  
\par
Let $\xi$ be  a $k$-differential on a Riemann surface~$X$ which locally near a point $p$ is put in standard form 
\begin{equation}\label{eq:standard_coordinates}
    \phi^*(\xi) \=
    \begin{cases}
      z^\kappa\, \left(\tfrac{dz}{z} \right)^{k} &\text{if $\kappa > 0$ or  $k\nmid \kappa$,}\\
      \left( \frac{s dz}{z} \right)^{k} &\text{if $\kappa = 0$,}\\
        \left(z^{\kappa/k} + s\right)^{k}\left(\tfrac{dz}{z} \right)^{k} &\text{if $\kappa < 0$ and $k\mid \kappa$}
    \end{cases}
\end{equation}
where  $s \in \CC$ (and $s\neq 0$ in the case $\kappa = 0$). In particular, $\xi$ has a zero or pole of order $\kappa - k$ at $p$.  The {\em $k$-residue} of $\xi$ at $p$ is defined as $\Res^k_p \xi = s^{k}$ in the last two cases and zero in  the first case. In the case $\kappa \neq 0$, we define the {\em (incoming) $k$-prongs} by the~$|\kappa|$
tangent vectors $e^{2\pi {\rm i} j/|\kappa|} \tfrac{\partial}{\partial z}$ 
and the {\em (outgoing) $k$-prongs} by $-e^{2\pi {\rm i} j/|\kappa|} \tfrac{\partial}{\partial z}$
for $j=0,\dots,|\kappa|-1$. At a pole of order $k$ (i.e. $\kappa = 0$), we do not need to define $k$-prongs. Note that in the case of $\kappa < 0$ and $k\mid \kappa$, the choice of a $k$-prong $\sigma$ gives a consistent way to choose a $k$-th root of the $k$-residue. More precisely, we define the {\em $k$-th root induced by $\sigma$} as the $k$-th root $s$ of $\Res^k_p \xi$ such that the $k$-prong $\sigma$ is horizontal under the flat metric induced by the $k$-th root $\left(z^{\kappa/k} + s\right)\tfrac{dz}{z}$ of the standard form of $\xi$. 
\par
There exists a canonical cover $\pi\colon \wX \to X$ of degree $k$ such that $\pi^{*}\xi = \wh\omega^k$ for an abelian differential $\wh\omega$ on $\wX$. For any primitive $k$-th root of unity $\zeta$, there is a deck transformation $\tau\colon \wX \to \wX$ such that $\tau^{*}\wh\omega = \zeta \wh\omega$ and the map $\pi$ is given by taking the quotient of $\wX$ by the group action generated by~$\tau$. 
Consider a singularity $p$ of $\xi$ which has order $\neq -k$ (i.e. $\kappa\neq 0$).  
For a $k$-prong $\sigma$ of $\xi$ at $p$, define the {\em pullback of $\sigma$} as the set of $k$ equivariant tangent vectors at $\pi^{-1}(p)$ which project to~$\sigma$. In particular, there is exactly one vector in the pullback for each direction $2j\pi/k$ for $j=0,\dots,k-1$ under the flat metric. Moreover, there are $\gcd(k,\kappa)$ preimages of $p$ and at each preimage there are $k/\gcd (\kappa,k)$ preimages of~$\sigma$ in the pullback. 
\par
Given a vertical edge~$e$ of the enhanced level graph~$\Gamma$, define a {\em (local) $k$-prong-matching} $\sigma_e$ to be a cyclic order-reversing bijection between the $k$-prongs at the upper and lower ends of~$e$. A {\em (global) $k$-prong-matching} is a collection
$\sigma = (\sigma_e)_{e \in E(\Gamma)^v}$ of local \kprmas at every vertical edge.
\par
Next we define a \ptwd $(\wX,\wh\omega,\wsigma)$ associated to a \ptwkd $(X,\xi,\sigma)$ via 
the cover $\pi\colon \wX \to X$. Let $(X_{v},\xi_{v})$ be the restriction of $\xi$ to each irreducible component~$X_v$ represented by a vertex $v$ of the dual graph, with the canonical cover $(\wX_{v},\wh\omega_{v})$.  
We want to glue the preimages of the nodes of $X$ to form $(\wX, \wh\omega)$ and then define the \prma~$\wsigma$ for $\wX$. For each horizontal node of $X$, we glue its preimages in a way such that the sum of the residues of $\wh\omega$ at the two branches of each preimage node is equal to zero. For each vertical node of $X$, we glue its preimages that have the pullback of $\sigma$ in the same directions. The \prma $\wh\sigma$ of $\wX$ is then given by identifying the prongs in the same direction. In other words (as in~\cite[Section 3.2]{CMZarea}), $\wsigma$ is a prong-matching for the twisted abelian differential $\wh\omega$ which is equivariant with respect to the action $\tau$ and consistent with the $k$-prong-matching $\sigma$ via the cover $\pi$. In particular, $\wsigma$ and $\sigma$ determine each other.  
\par
We summarize the above discussion in the following notion.
\begin{df}\label{def:cancovmskd}
 Let $(X, \bfz, \xi \cleq, \sigma)$ be a \mskd of type $\mu$ on $X$. Then the {\em (associated) canonical cover} is the \msd $(\wX,\wh\bfz,\wh\omega,\wh\cleq,\wh\sigma)$ together with the map $\pi\colon\wX\to X$ such that $(\wX,\wh\omega,\wsigma)$ and $\pi\colon \wX \to X$ is defined as in the previous paragraph, $\wh\bfz$ is the preimage of $\bfz$ under $\pi$ and $\wh\cleq$ is the order such that $\wX_{i} \wh\cleq \wX_{j}$ if and only if $X_{i} \cleq X_{j}$.
\end{df}
\par
Note that a prong-matching $\sigma$ of $X$ 
 gives a \emph{welded surface} $X_{\sigma}$ by using $\sigma$ to identify isometrically the boundary circles out of the real oriented blowup at each vertical node of~$X$ (as explained in \cite[Section 5.3]{BCGGM2}).  We say that $X_{\sigma}$ is of {\em abelian type} if the canonical cover $\wX$ consists of $k$ connected components, which means $(\wX,\wh\omega,\wsigma)$ is the $k$-th power of a multi-scale (abelian) differential.   
 \par
There is a natural action on \mskds that do not modify the associated welded surfaces. Choose a level, and multiply all differentials at that level by a nonzero complex number $c$ as well as rotate all prongs of these differentials by the argument of~$c$ (see \cite[Section~6.1]{BCGGM2} for a detailed discussion). This induces naturally an equivalence relation on the set of \kprmas of a \mskd, and two \kprmas are equivalent if and only if they are in the same orbit of the action. In the following we will use $\sigma$ to denote an {\em equivalence class of \kprmas}. 
\par
There is a crucial global $k$-residue condition (see~\cite[Definition 1.4]{BCGGM3}) that justifies when a $k$-\ptwd is compatible with the enhanced level graph (i.e. when a $k$-\ptwd can be smoothed into the interior of the corresponding stratum).  Below we reformulate the global $k$-residue condition from our viewpoint.  
\begin{prop}
\label{prop:GRCk}
Let $(X, \bfz, \xi \cleq, \sigma)$ be a \mskd of type $\mu$ on $X$. The \kprma $\sigma$ is compatible with the enhanced level graph if and only if it satisfies the {\em global $k$-residue condition}. Namely, for every level $L$ and every connected component $Y$ of $\Gamma_{>L}$, one of the following conditions holds:
\begin{itemize}
\item[(i)] $Y$ contains a marked pole of $\xi$.
\item[(ii)] $Y$ contains a vertex $v$ such that $\xi_v$ is not a $k$-th power of an  abelian differential.
  \item[(iii)] (``Horizontal criss-cross in $Y$'') For every vertex $v$ of $Y$ the $k$-differential~$\xi_v$ is the $k$-th power of an abelian differential $\omega_v$. Moreover, for every choice of a collection of $k$-th roots of unity $\lbrace \zeta_v:v\in Y\rbrace$ there exists a horizontal edge~$e$ in $Y$ where the differentials $\left\{\zeta_v\omega_v\right\}_{v \in Y}$ do not satisfy the matching residue condition.

  \item[(iv)] (``Compatibility of the \kprma'')
For every $Y$ such that the welded surface $Y_{\sigma}$ is of abelian type,  the $k$-residues at the edges $e_{1},\ldots,e_{N}$ joining $Y$ to $\Gamma_{=L}$ satisfy the equation
  \begin{equation}\label{eq:GRCk}
   \sum_{i=1}^{N} s_{i} = 0 \, 
  \end{equation}
  where $s_{i}$ is the $k$-th root of  $\Resk_{q_{e_{i}}^-}\xi_{v^-(e_{i})}$ induced by the \kprma $\sigma$.
\end{itemize}
\end{prop}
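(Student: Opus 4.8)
The plan is to derive all four alternatives from the single abelian global residue condition applied to the canonical cover. By Definition~\ref{def:cancovmskd} and the construction of $\wsigma$ out of $\sigma$ preceding it, compatibility of the \kprma $\sigma$ with the enhanced level graph of $X$ is, by the very definition in \cite{BCGGM3}, equivalent to the associated canonical cover $(\wX,\wh\omega,\wsigma)$ being a smoothable \msd, i.e.\ to $\wh\omega$ satisfying the abelian global residue condition on $\wX$. So the entire task is to translate, level by level, this condition on $\wX$ into a condition on $X$, and to verify that the resulting dichotomy is exactly the list (i)--(iv). I would set this up as a purely combinatorial analysis of how connected components of $\wh\Gamma_{>L}$ sit over those of $\Gamma_{>L}$ under $\pi$ and the deck group.

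Fix a level $L$ and a connected component $Y$ of $\Gamma_{>L}$. Since $\pi$ preserves levels and is finite, $\pi^{-1}(Y)$ is a union of connected components of $\wh\Gamma_{>L}$, permuted by the deck group $\langle\tau\rangle\cong\ZZ/k$. The abelian condition must be imposed on each such component $\wh Y$ containing no pole of $\wh\omega$, requiring the sum of residues of $\wh\omega$ over the edges joining $\wh Y$ to $\wh\Gamma_{=L}$ to vanish. The local computation preceding Definition~\ref{def:cancovmskd} shows that a marked (metric) pole of $\xi$ in $Y$ lifts to poles of $\wh\omega$ in $\pi^{-1}(Y)$; hence whenever (i) holds the abelian condition is vacuous over $Y$, which accounts for alternative (i).

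The remaining analysis turns on whether $Y_\sigma$ is of abelian type, i.e.\ whether $\pi^{-1}(Y)$ splits into $k$ copies each mapping isomorphically onto $Y$. If it does not, then some component $\wh Y$ is preserved by a nontrivial power $\tau^{j}$ with $0<j<k$; applying $\tau^{j}$ to the residue sum $R=\sum_{\wh e}\Res_{\wh e}\wh\omega$ and using $\tau^{*}\wh\omega=\zeta\wh\omega$ together with the fact that $\tau^{j}$ merely permutes the relevant edges yields $R=\zeta^{j}R$ with $\zeta^{j}\neq 1$, forcing $R=0$. Thus the abelian condition is automatically satisfied, and I would then identify the two ways abelian type can fail: either some $\xi_v$ is already primitive, so the cover is connected over that vertex, giving alternative (ii); or every $\xi_v$ is a $k$-th power but no equivariant choice of roots $\{\zeta_v\omega_v\}$ makes the residues match across the horizontal edges, so the horizontal gluings fuse the $k$ naive sheets into fewer components, giving alternative (iii). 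Conversely, if $Y_\sigma$ is of abelian type, the abelian condition on one of the $k$ isomorphic sheets reads $\sum_i\Res_{\wh e_i}\wh\omega=0$, and by the definition of the $k$-th root induced by $\sigma$ together with the equivariance of $\wsigma$, the residue on that sheet over $e_i$ is exactly the root $s_i$ of $\Resk_{q^-_{e_i}}\xi_{v^-(e_i)}$; this is alternative (iv) and equation~\eqref{eq:GRCk}.

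The main obstacle is the combinatorial characterization of abelian type via (ii) and (iii): one must prove that failure of abelian type is equivalent to the disjunction ``some vertex carries a primitive differential'' or ``the horizontal criss-cross condition holds'', and in particular that the criss-cross condition faithfully detects when horizontal gluings connect the sheets that would otherwise split. A secondary but essential point is the bookkeeping in alternative (iv), where one must check that the residue of $\wh\omega$ on the chosen sheet agrees with the $\sigma$-induced root $s_i$ rather than one of its other $k$-th-root conjugates; this is exactly where the horizontality condition defining the induced root and the equivariance of $\wsigma$ enter in an essential way.
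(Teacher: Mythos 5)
The first thing to note is that the paper does not really prove this proposition: its entire justification is the remark following the statement, namely that items (i)--(iii) are verbatim those of \cite[Definition~1.4]{BCGGM3} and that item (iv) merges items (iv) and (v) of that definition, the key observation being that the left-hand side of \eqref{eq:GRCk} is one of the factors of the polynomial $P_{n,k}$ appearing there. Your route --- pushing the abelian global residue condition down from the canonical cover --- is genuinely different and, if completed, more self-contained and more illuminating: the $R=\zeta^{j}R$ argument cleanly explains \emph{why} the condition is vacuous outside the abelian-type case, and the identification of $s_i$ with the residue of $\wh\omega$ on a chosen sheet explains exactly where the \kprma enters. What it costs is the starting premise: in this paper ``compatible'' is \emph{defined} via \cite[Definition~1.4]{BCGGM3}, so the equivalence of compatibility with smoothability of $(\wX,\wh\omega,\wsigma)$, i.e.\ with the abelian global residue condition on the cover, is a theorem you must import from \cite{BCGGM3} or \cite{CMZarea}, not a definition; without citing it, your argument establishes a different equivalence from the one asserted.

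On the obstacle you flag: you have set yourself a harder and in fact false goal. You do not need the two-sided equivalence ``$Y_\sigma$ not of abelian type $\Leftrightarrow$ (ii) or (iii)''. Since item (iv) only constrains those $Y$ for which $Y_\sigma$ \emph{is} of abelian type, it is vacuously satisfied otherwise, so it suffices to prove the one implication that (ii) or (iii) forces $Y_\sigma$ not to be of abelian type --- and each disjunct does this easily (a $\xi_v$ that is not a $k$-th power has fewer than $k$ sheets over $X_v$; failure of every equivariant choice of roots to match residues at the horizontal edges fuses the sheets). The converse implication genuinely fails, because whether $Y_\sigma$ is of abelian type also depends on the \kprmas at the vertical edges internal to $Y$, which (ii) and (iii) do not see; this is precisely why the vacuous reading of (iv) is needed, and your write-up should be restructured around that observation rather than around the equivalence. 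A second point requiring care is item (i): an analytic pole of $\xi$ of order strictly between $-k$ and $0$ lifts to analytic \emph{zeros} of $\wh\omega$, so the dictionary ``pole downstairs gives pole upstairs'' is valid only for metric poles (order $\leq -k$); your parenthetical ``(metric)'' suggests you are aware of this, but it is the one place where the translation between $X$ and $\wX$ is not the naive one and it deserves an explicit sentence.
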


Note that the left-hand side of Equation~\eqref{eq:GRCk} is a factor of the polynomial $P_{n,k}$ defined in \cite[Equation~(1.1)]{BCGGM3}.   Moreover, our items (i)--(iii) are identical to the corresponding items of \cite[Definition 1.4]{BCGGM3}, while our item (iv) combines both items (iv) and (v) therein, rephrased in terms of $k$-prong-matching. 

\subsection{Marked $k$-differentials and \kprmas.}

We will show that in some cases there is a unique equivalence class of \kprmas. To do this, we first study $k$-differentials together with some choices of prongs. 
\par
A $k$-differential with a choice of $k$-prongs at some singularities is called a {\em (partially) marked $k$-differential}, which is an important ingredient in the notion of \mskds. In this section we generalize some results of \cite{boissy} in the classification of connected components of the strata of (partially) marked $k$-differentials. 

Given a stratum of $k$-differentials, we first consider the case when a unique singularity (of order $\neq -k$) is marked with a $k$-prong.
\begin{lm}\label{lm:markedunzero}
Let $\calC$ be a connected component of a stratum  of $k$-differentials. Then the component $\calC^{{\rm marked}}$ parameterizing $k$-differentials in $\calC$ marked with a $k$-prong at a uniquely chosen singularity is connected. 
\end{lm}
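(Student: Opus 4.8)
The plan is to view the forgetful map $\calC^{{\rm marked}}\to\calC$ as a finite unramified covering and to prove that its monodromy acts transitively on a fibre. A point of $\calC^{{\rm marked}}$ is a $k$-differential in $\calC$ together with a choice of one incoming $k$-prong at the distinguished singularity. If that singularity has order $\kappa-k$ with $\kappa\neq0$, then by the standard form \eqref{eq:standard_coordinates} there are exactly $|\kappa|$ such prongs, and they vary continuously (indeed holomorphically) in families since \eqref{eq:standard_coordinates} is locally constant along the stratum. Hence $\calC^{{\rm marked}}\to\calC$ is a covering of degree $|\kappa|$, and because $\calC$ is connected it suffices to exhibit a loop in $\calC$ whose monodromy permutes the $|\kappa|$ prongs transitively.

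First I would use the loop coming from the circle inside the $\CC^{*}$-scaling action on the unprojectivised stratum. For $\phi\in[0,2\pi]$ set $\xi_{\phi}=e^{{\rm i}\phi}\xi$; since $e^{2\pi{\rm i}}=1$ this is a genuine loop based at $\xi$, and scaling preserves the order of every singularity, so the loop stays in $\calC$. The essential computation is local: writing $\xi$ near the marked point in the form $z^{\kappa}\,(dz/z)^{k}$ and bringing $\xi_{\phi}$ back to standard form via the coordinate change $w=e^{{\rm i}\phi/\kappa}z$, one sees that all $|\kappa|$ prongs rotate rigidly by $-2\pi/\kappa$ over the loop, that is by exactly one prong-spacing. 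Consequently the continuous lift of this loop sends each prong to the neighbouring one, so its monodromy is a single $|\kappa|$-cycle. A single $|\kappa|$-cycle acts transitively, which gives the connectedness of $\calC^{{\rm marked}}$.

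I expect the main point to be the bookkeeping in the local computation: one must check that the net rotation over the full loop is precisely one prong-spacing (and not some multiple or proper divisor of it), uniformly for $\kappa>0$ and $\kappa<0$, so that the resulting permutation is a full cycle rather than a product of shorter cycles. A secondary technical point is to justify that $\calC^{{\rm marked}}\to\calC$ really is a covering in the orbifold category, so that the ``connected iff transitive monodromy'' criterion applies; this is harmless because connectedness only sees the underlying topological spaces. An alternative, closer to Boissy's original flat-geometric arguments, would be to construct an explicit surgery rotating a single prong by one step while fixing the rest of the flat structure; the scaling loop above packages all of these rotations simultaneously and avoids any case analysis, so I would prefer it.
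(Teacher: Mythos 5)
Your proposal is correct and is essentially the paper's own argument: the paper also uses the loop $(X, e^{{\rm i}t}\xi)$, $t\in[0,2\pi]$, and observes that the marked prong is carried to the adjacent one (a rotation by one prong-spacing $2\pi/|\kappa|$), so that iterating connects all fiber points. Your covering-space/monodromy packaging and the explicit local computation with $w=e^{{\rm i}\phi/\kappa}z$ are just a more formal presentation of the same idea.
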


\begin{proof}
Since $\calC$ is connected, it suffices to show that we can join any two fiber points in $\calC^{{\rm marked}}$ over a $k$-differential $(X,\xi)$ via a continuous path in $\calC$. This can be done by using the continuous family of marked $k$-differentials $(X, e^{{\rm i}t} \xi, e^{-{\rm i}t / \kappa} (v))$ for $t\in\RR$, where $\kappa - k$ is the order of the singularity, $v$ is the marked $k$-prong and $e^{-{\rm i}t/\kappa}(v)$ means turning the tangent vector $v$ in the reverse direction by the angle $t/\kappa$ under the flat metric. In particular for $t = 2\pi$, the $k$-differential turns back but the $k$-prong turns to the next one.    
\end{proof}

Next we consider the case when two singularities of relatively prime orders are marked with prongs.  

\begin{lm}\label{lm:markeddeuxzero}
Let $\calC$ be a connected component of a stratum of $k$-differentials. Then the component $\calC^{{\rm marked}}$ parameterizing $k$-differentials in $\calC$ marked with prongs at two chosen singularities whose orders plus $k$ are relatively prime is connected.
\end{lm}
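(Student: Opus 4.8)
The plan is to mimic the proof of Lemma~\ref{lm:markedunzero}, but to rotate and track both prongs at once and then invoke the Chinese Remainder Theorem. Denote the two chosen singularities by $p_1,p_2$, let $m_i$ be the order of $\xi$ at $p_i$, and set $\kappa_i=m_i+k$, so that the hypothesis reads $\gcd(\kappa_1,\kappa_2)=1$ and there are exactly $|\kappa_i|$ prongs at $p_i$. A point of $\calC^{{\rm marked}}$ over a $k$-differential $(X,\xi)\in\calC$ is the choice of one prong at each $p_i$, so the fiber is naturally a torsor under $\ZZ/|\kappa_1|\times\ZZ/|\kappa_2|$ by cyclically advancing the prongs. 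As in Lemma~\ref{lm:markedunzero}, since $\calC$ is connected it suffices to join any two fiber points over a fixed $(X,\xi)$ by a continuous path inside $\calC^{{\rm marked}}$.

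First I would write down the analogue of the rotation family of Lemma~\ref{lm:markedunzero} for two prongs:
\[
  \bigl(X,\,e^{{\rm i}t}\xi,\,e^{-{\rm i}t/\kappa_1}(v_1),\,e^{-{\rm i}t/\kappa_2}(v_2)\bigr),\qquad t\in[0,2\pi N],
\]
where $v_i$ is the marked prong at $p_i$. For each $t\in 2\pi\ZZ$ the $k$-differential equals $\xi$, while after $t$ has run from $0$ to $2\pi$ the prong at $p_i$ has turned by the angle $2\pi/\kappa_i$, i.e. advanced by $\epsilon_i:=\operatorname{sign}(\kappa_i)\in\{\pm1\}$ steps among its $|\kappa_i|$ prongs, exactly as in the single-prong case. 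Hence running the family over $[0,2\pi N]$ moves a fiber point $(j_1,j_2)$ to $(j_1+N\epsilon_1,\,j_2+N\epsilon_2)$.

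The last step is to realize an arbitrary target $(j_1',j_2')$ this way. Since $\epsilon_i=\pm1$ is a unit modulo $|\kappa_i|$, reaching $(j_1',j_2')$ from $(j_1,j_2)$ amounts to solving the simultaneous congruences $N\equiv\epsilon_1(j_1'-j_1)\pmod{|\kappa_1|}$ and $N\equiv\epsilon_2(j_2'-j_2)\pmod{|\kappa_2|}$ for an integer $N$, which is possible by the Chinese Remainder Theorem precisely because $\gcd(|\kappa_1|,|\kappa_2|)=1$. Choosing such an $N$ gives a path in $\calC^{{\rm marked}}$ joining the two fiber points, proving connectedness.

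I expect the only real subtlety to be this last step, and it is exactly where the coprimality hypothesis is indispensable: the scaling family couples the two prong rotations rigidly, advancing both by one step simultaneously, so a single such loop generates only the cyclic subgroup of order $\operatorname{lcm}(|\kappa_1|,|\kappa_2|)$. This equals the full group $\ZZ/|\kappa_1|\times\ZZ/|\kappa_2|$ if and only if $\gcd(\kappa_1,\kappa_2)=1$; otherwise the orbit of a fiber point is a proper ``diagonal'' subset and one coupled generator cannot suffice. Thus the coprimality of the orders-plus-$k$ is not a technical convenience but the precise condition that makes the coupled rotation sweep out every pair of prongs.
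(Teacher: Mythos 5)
Your proof is correct and follows the paper's own argument: the paper likewise invokes the rotation family from Lemma~\ref{lm:markedunzero} and observes that the element $(1,1)$ generates $\ZZ/\kappa_1\times\ZZ/\kappa_2$ when $\gcd(\kappa_1,\kappa_2)=1$, which is exactly your Chinese Remainder Theorem step. You have simply written out in detail what the paper states in one sentence.
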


Note that the relatively prime assumption is necessary in Lemma~\ref{lm:markeddeuxzero} (see~\cite[Corollary~7.9]{gendron}). 

\begin{proof}
The proof is similar to the one of Lemma~\ref{lm:markedunzero}, taking into account that the element $(1,1)$ generates the group $\ZZ/\kappa_1 \times \ZZ/\kappa_2$ for $\kappa_1$ and $\kappa_2$ relatively prime, where $\kappa_i - k$ is the order of each singularity.  
\end{proof}

From the proofs of Lemmas~\ref{lm:markedunzero} and~\ref{lm:markeddeuxzero} we can deduce two useful consequences as follows.

\begin{cor}\label{cor:uniquepm}
 Given a twisted $k$-differential with two levels and a unique edge between them, all \kprmas are equivalent on this twisted $k$-differential. 
\end{cor}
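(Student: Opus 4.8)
The plan is to reduce the statement to a transitivity assertion for a cyclic group action, paralleling exactly the computation in the proof of Lemma~\ref{lm:markedunzero}. First I would fix notation. Since the twisted $k$-differential has exactly two levels joined by a single vertical edge $e$, a \kprma consists of just the one local \kprma $\sigma_e$. Writing $\kappa^+ - k$ and $\kappa^- - k$ for the orders of the singularities at the upper and lower ends of $e$, both ends carry the same number $p := |\kappa^+| = |\kappa^-|$ of $k$-prongs. A local \kprma is a cyclic order-reversing bijection between these two cyclically ordered sets of $p$ prongs, and there are exactly $p$ of them: once the image of a single prong is prescribed, the order-reversing cyclic condition determines the rest. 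Hence the set of local \kprmas is a torsor under $\ZZ/p$, where the generator acts by shifting the matching cyclically by one prong.

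Next I would invoke the level rotation action on the lower level, which is the source of the equivalence relation on \kprmas. Applying it with parameter $c = e^{{\rm i}t}$ rescales the bottom differential and rotates its prongs, and this is precisely the continuous family appearing in the proof of Lemma~\ref{lm:markedunzero}, now carried out at the lower end of $e$: as $t$ runs from $0$ to $2\pi$, the bottom differential turns back to itself while the prong at the lower end of $e$ turns to the adjacent one. Consequently the local \kprma $\sigma_e$ is shifted by one step in its $\ZZ/p$-torsor. Iterating, the orbit of any fixed $\sigma_e$ under the level rotation action is the whole torsor, so all $p$ local \kprmas lie in a single equivalence class, which is exactly the assertion.

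The only point that genuinely requires care is the bookkeeping of the previous paragraph: one must check that rescaling the lower level shifts the matching by \emph{exactly} one prong, hence generates all of $\ZZ/p$ rather than a proper subgroup, and this is the very computation ``the $k$-differential turns back but the $k$-prong turns to the next one'' from Lemma~\ref{lm:markedunzero}. I would also emphasize where the hypothesis of a unique edge enters. Rescaling a level rotates the prongs at all of its nodes simultaneously, so when several vertical edges are present the induced shifts on the various local \kprmas are correlated and cannot be realized independently; with a single edge this correlation is vacuous, and one circle's worth of level rotation already acts transitively on the $p$ possibilities.
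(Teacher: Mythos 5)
Your argument is correct and is essentially the paper's own: the paper derives this corollary directly from the proof of Lemma~\ref{lm:markedunzero}, namely the observation that rotating the lower-level differential by $e^{{\rm i}t}$ for $t\in[0,2\pi]$ returns the differential to itself while advancing the prong at the lower end of the edge by exactly one step, so the level rotation action is transitive on the $|\kappa|$ local \kprmas. Your additional bookkeeping (the torsor structure under $\ZZ/|\kappa|$ and the remark on why a unique edge is needed) is a faithful elaboration of the same idea.
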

\begin{cor}\label{cor:uniquepmtwonodes}
  Given a twisted $k$-differential with two levels and two edges of prong numbers $\kappa_{1}$ and $\kappa_{2}$ between them, if $\gcd(\kappa_{1},\kappa_{2})=1$, then all \kprmas are equivalent on this twisted $k$-differential. 
\end{cor}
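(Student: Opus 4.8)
The plan is to reduce the statement to the elementary group-theoretic fact, already invoked in the proof of Lemma~\ref{lm:markeddeuxzero}, that $(1,1)$ generates $\ZZ/\kappa_1\times\ZZ/\kappa_2$ once $\gcd(\kappa_1,\kappa_2)=1$. First I would identify the set of \kprmas with $\ZZ/\kappa_1\times\ZZ/\kappa_2$. Indeed, on each vertical edge $e_i$ a local \kprma is a cyclic order-reversing bijection between the $\kappa_i$ prongs at its two ends; fixing one such bijection as a reference, every other differs from it by a cyclic shift, which identifies the local choices at $e_i$ with $\ZZ/\kappa_i$. A global \kprma is then a pair $(a_1,a_2)\in\ZZ/\kappa_1\times\ZZ/\kappa_2$.

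Next I would track how the level-rotation action defining the equivalence relation acts on this set. Following the computation in the proof of Lemma~\ref{lm:markedunzero}, rotating all differentials on the lower level through the continuous family $e^{{\rm i}t}\xi$ (with the prongs turned accordingly) returns the lower-level differential to itself at $t=2\pi$, while the bottom prong at a node of prong number $\kappa_i$ advances by exactly one notch, since its prong spacing is $2\pi/\kappa_i$ and it rotates by $-2\pi/\kappa_i$. Because both edges run between the same two levels, this single $2\pi$ rotation shifts the matchings on $e_1$ and $e_2$ simultaneously and in the same rotational sense, so it acts on $\ZZ/\kappa_1\times\ZZ/\kappa_2$ by $(a_1,a_2)\mapsto(a_1+1,a_2+1)$. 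Consequently the equivalence classes of \kprmas are precisely the orbits of the cyclic subgroup $\langle(1,1)\rangle$.

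Finally, invoking $\gcd(\kappa_1,\kappa_2)=1$, the Chinese Remainder Theorem shows that $(1,1)$ generates all of $\ZZ/\kappa_1\times\ZZ/\kappa_2$, so there is a single orbit and all \kprmas are equivalent. This is the two-edge analogue of the single-edge Corollary~\ref{cor:uniquepm}, where already $\langle 1\rangle=\ZZ/\kappa$ yields one orbit; the same rotation mechanism underlies both.

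The step I expect to be the main obstacle is the second one: verifying carefully that the $2\pi$ rotation of the lower level shifts the combinatorial prong-matching on \emph{each} edge by exactly one, and in the \emph{same} rotational direction on both edges, so that it genuinely acts as $(1,1)$ rather than by unequal or oppositely signed shifts. This requires matching the continuous rotation family to the cyclic indexing of the order-reversing bijections and pinning down the signs; once this is in place, the coprimality conclusion is immediate.
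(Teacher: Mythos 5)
Your proposal is correct and follows essentially the same route as the paper: the paper deduces this corollary directly from the proofs of Lemmas~\ref{lm:markedunzero} and~\ref{lm:markeddeuxzero}, where the level-rotation family $e^{{\rm i}t}\xi$ advances the prong at each node by one notch per full turn and the coprimality hypothesis is used exactly via the fact that $(1,1)$ generates $\ZZ/\kappa_1\times\ZZ/\kappa_2$. Your added care about the simultaneous, equally signed shift on both edges is the right point to pin down, but it is the same mechanism the paper relies on.
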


%\label{sec:multiscalkdiffs}

%%%%%%%%%%%%%%%%%%
\section{Basic operations on $k$-differentials}
\label{sec:operations}
%%%%%%%%%%%%%%%%%%

In this section we generalize two operations, called {\em breaking up a (metric) zero} and {\em bubbling a handle},
originally due to Kontsevich--Zorich~\cite{kozo1} for holomorphic differentials and further studied by Lanneau~\cite{lanneauquad} for quadratic differentials with metric zeros and by Boissy~\cite{boissymero} for meromorphic differentials. To do this we mix the viewpoints of flat geometry with algebraic geometry, as these operations correspond to smoothing certain \mskds reviewed in Section~\ref{sec:msd}. Then we study in detail the properties of these operations.

\subsection{Breaking up a \metzero} 
\label{subsec:break}

Recall that a metric zero of a $k$-differential has singularity order $> - k$. We would like to break up a \metzero of order~$n_0$ into $r$ distinct \metzeros of order $n_1,\dots,n_r$ where $n_0= n_1+ \cdots + n_r$. 

Let $(X_{1},\xi_{1})$ be a $k$-differential with a \metzero $z_{0}$ of order $n_0$. Take another $k$-differential $(\PP^1, \xi_2)$ in  the stratum $\komoduli[0](n_{1},\dots,n_{r},-n_0-2k)$.  Identifying $z_0$ with the pole $p_0$ of order $-n_0-2k$ of $\xi_{2}$, we obtain a twisted $k$-differential $(X,\xi)$, as illustrated in Figure~\ref{fig:breaking}. The \mskd is obtained by taking the unique equivalence class of \kprmas $\sigma$ at the node (as shown in Corollary~\ref{cor:uniquepm}).  
Then we define the operation of \emph{breaking up} the \metzero $z_{0}$ as the smoothing of the \mskd $(X,\xi,\sigma)$ into the respective stratum of $k$-differentials. 
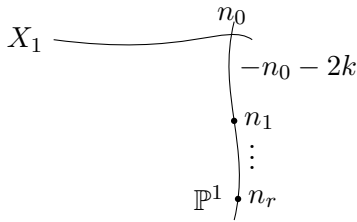
\begin{figure}[ht]
\begin{tikzpicture}[scale=1.2]
\draw (-3.2,0) coordinate (x1).. controls (-1.7,-.2) and (-1.3,.2) .. (-1,0) coordinate[pos=.78](q1);
\draw (-1.2,0.2) .. controls (-1.4,-.6) and (-1,-1.2) .. (-1.2,-2) coordinate (x2)
coordinate [pos=.5] (z)coordinate [pos=.9] (t)coordinate [pos=.65] (u) coordinate [pos=.22] (q2);
\fill (z) circle (1pt);\node[right] at (z) {$n_{1}$};
\fill (t) circle (1pt);\node[right] at (t) {$n_{r}$};
\node[right] at (u) {$\vdots$};

\node [left] at (x1) {$X_{1}$};\node [above left] at (x2) {$\PP^{1}$};
\node [above] at (q1) {$n_{0}$};\node [right] at (q2) {$-n_{0}-2k$};
\end{tikzpicture}
 \caption{The multi-scale $k$-differential used in the operation of breaking up a \metzero.} \label{fig:breaking}
\end{figure}
\par
By the global $k$-residue condition $(X,\xi,\sigma)$ is always smoothable except for one case. This exceptional case occurs when $\xi_{1}$ is the $k$-th power of a holomorphic (abelian) differential and the $k$-residue of $\xi_{2}$ at $p_{0}$ is nonzero. By \cite[Th\'{e}or\`{e}me~1.10]{geta} the stratum $\komoduli[0](n_{1},\dots,n_{r},-n_0-2k)$ does not contain any $k$-differential $\xi_2$ with zero $k$-residue at~$p_0$ if and only if it is of the type $\komoduli[0](n_{1},n_{2},-n_0-2k)$ with $k \mid n_0$ but $k\nmid n_{i}$ for $i = 1, 2$.  Therefore, we obtain the following conclusion.
\begin{prop}\label{prop:breakpos}
The only non-realizable case of breaking up a metric zero is for breaking up a zero of the $k$-th power of a holomorphic differential into two zeros of order not divisible by $k$.  
\end{prop}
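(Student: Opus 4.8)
The plan is to apply the global $k$-residue condition of Proposition~\ref{prop:GRCk} directly to the two-level \mskd $(X,\xi,\sigma)$ produced by the breaking-up construction, and to pin down exactly which configurations make the single nontrivial level passage fail all four conditions (i)--(iv). The dual graph $\Gamma$ has just two vertices joined by one vertical edge $e$, with $X_1$ on top and $\PP^1$ on the bottom, and by Corollary~\ref{cor:uniquepm} there is a unique equivalence class of \kprmas, so the root $s_1$ appearing in~\eqref{eq:GRCk} is unambiguous. Taking $L$ to be the bottom level, the only component of $\Gamma_{>L}$ to test is $Y=\{X_1\}$; since $Y$ carries no horizontal edge, condition (iii) can never be the operative one here, which is the first simplification I would record.

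Next I would dispose of the smoothable cases one by one. If $\xi_1$ is not a $k$-th power of an abelian differential, condition (ii) holds and the \mskd smooths. If $\xi_1=\omega_1^k$ with $\omega_1$ having a pole on $X_1$, then $Y$ contains a marked pole and condition (i) holds. This leaves the case $\xi_1=\omega_1^k$ with $\omega_1$ holomorphic, where (i)--(iii) all fail; since $Y$ is a single vertex carrying a global $k$-th power, its welded surface $Y_\sigma$ is of abelian type, the only edge joining $Y$ to the lower level is $e$, and condition (iv) collapses to $s_1=0$, that is $\Resk_{p_0}\xi_2=0$. Hence the operation is non-realizable precisely when $\xi_1=\omega_1^k$ is holomorphic \emph{and} every $\xi_2$ in the genus-zero stratum $\komoduli[0](n_1,\dots,n_r,-n_0-2k)$ has nonvanishing $k$-residue at $p_0$.

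It then remains to convert this into the arithmetic statement. Because $\xi_1=\omega_1^k$ forces the order $n_0$ of the zero $z_0$ to equal $k\cdot\ord_{z_0}\omega_1$, we automatically have $k\mid n_0$. I would invoke \cite[Th\'{e}or\`{e}me~1.10]{geta}, which characterizes the genus-zero strata all of whose $k$-differentials have nonzero $k$-residue at the distinguished pole; applied to $\komoduli[0](n_1,\dots,n_r,-n_0-2k)$ this forces $r=2$ together with $k\nmid n_1$ and $k\nmid n_2$. Combined with $k\mid n_0$, this is exactly the asserted exceptional configuration, and in every other case some $\xi_2$ with vanishing $k$-residue exists, so condition (iv) becomes satisfiable and the break-up is realizable.

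The main obstacle I anticipate is the bookkeeping at the level passage: one must argue carefully that for a single-vertex component carrying a global $k$-th power the welded surface is genuinely of abelian type, so that condition (iv) is the operative one rather than being vacuously satisfied, and that conditions (i)--(iii) are correctly excluded in the holomorphic case. Once realizability is cleanly reduced to the nonvanishing of all $k$-residues in the genus-zero stratum, the number-theoretic input is supplied entirely by the cited result of Gendron--Tahar, so no independent residue computation is required.
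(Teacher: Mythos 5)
Your proposal is correct and follows essentially the same route as the paper: the paper likewise reduces realizability to the global $k$-residue condition on the two-level \mskd, observes that the only obstruction arises when $\xi_1$ is the $k$-th power of a holomorphic differential and $\Resk_{p_0}\xi_2\neq 0$, and then invokes \cite[Th\'{e}or\`{e}me~1.10]{geta} to identify the exceptional genus-zero strata $\komoduli[0](n_1,n_2,-n_0-2k)$ with $k\mid n_0$ and $k\nmid n_i$. Your explicit case-check of conditions (i)--(iv) of Proposition~\ref{prop:GRCk}, including the observation that a single vertex carrying a global $k$-th power has welded surface of abelian type, simply fills in details the paper leaves implicit.
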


\subsection{Bubbling a handle}
\label{subsec:bubble}

This operation allows us to increase the genus of a $k$-differential by one. It adds $2k$ to the order of a \metzero and keep the other singularity orders unchanged. 

Let $(X_{1},\xi_{1})$ be a $k$-differential in 
$\komoduli(n_0, \ldots, n_r, - l_1, \ldots, - l_s)$,
where~$z_0$ is the \metzero of order $n_{0}$. 
Let $X_2$ be an irreducible rational one-nodal curve with a $k$-differential $\xi_2$ having a \metpole $p_0$ of order $-n_{0}-2k$, a \metzero $z$ of order $n_{0}+2k$, and two poles of order $-k$ at the two branches $N_{1}$ and $N_{2}$ of the node such that 
the $k$-residues~$R_{1}$ and $R_{2}$ of $\xi_2$ at $N_{1}$ and $N_{2}$ satisfy the matching residue condition
\begin{equation}\label{eq:matchres}
 R_{1} = (-1)^{k}R_{2}.
\end{equation}
Identifying the pole $p_0$ with the zero $z_0$ and putting the unique \kprma equivalence class $\sigma$ at the resulting node,  we obtain a \mskd $(X, \xi,\sigma)$ which is illustrated in Figure~\ref{cap:bubbling}. The operation of {\em bubbling a handle} at the \metzero $z_{0}$ is the smoothing of the \mskd $(X,\xi,\sigma)$
into the stratum of genus $g+1$ $k$-differentials $\komoduli[g+1](n_0+2k, n_1, \ldots, n_r, - l_1, \ldots, - l_s)$. 
\begin{figure}[ht]
\begin{tikzpicture}[scale=1.2,decoration={
    markings,
    mark=at position 0.5 with {\arrow[very thick]{>}}}]

%Curve
\draw (-2.7,0) coordinate (x1).. controls (-1.8,.2) and (-1.3,-.2) .. (0,0) coordinate[pos=.9] (z0) ;
\draw (-.4,0.2) .. controls ++(270:.6) and ++(270:.8) .. (.4,-1)  coordinate[pos=.515] (n)
                 .. controls ++(90:.8) and ++(90:.6) .. (-.4,-2) coordinate (x2) coordinate [pos=.8] (z);
\fill (z) circle (1pt);\node[left] at (z) {$z$};

\node[above left] at (z0) {$z_{0}$};\node[below right] at (z0) {$p_{0}$}; \fill (z0) circle (1pt);
\node[left] at (n) {$N_{1}\sim N_{2}$};\fill (n) circle (1pt);
\node [left] at (x1) {$X_{1}$};\node [right] at (x2) {$X_{2}$};
\end{tikzpicture}
 \caption{The multi-scale $k$-differential used in the operation of bubbling a handle.} \label{cap:bubbling}
\end{figure}
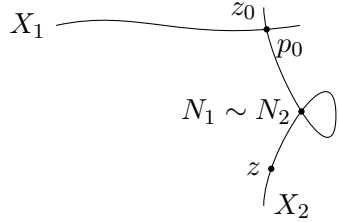

By the global $k$-residue condition $(X,\xi,\sigma)$ is always smoothable except for the case when $\xi_{1}$ is the $k$-th power of a holomorphic (abelian) differential and the $k$-residue of $\xi_{2}$ at $p_{0}$ is nonzero. Therefore, we obtain the following conclusion. 
\begin{prop}\label{prop:bubbpos}
If the underlying $k$-differential is not the $k$-th power of a holomorphic differential, then bubbling a handle at a metric zero is always realizable.  
\end{prop}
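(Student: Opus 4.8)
The plan is to deduce realizability directly from the global $k$-residue condition of Proposition~\ref{prop:GRCk}, since a \kprma is compatible with the enhanced level graph (equivalently, the \mskd smooths into the interior of the stratum) precisely when that condition holds. First I would record the combinatorics of the enhanced level graph $\Gamma$ of $(X,\xi,\sigma)$. Gluing the \metzero $z_0$ (order $n_0>-k$) on $X_1$ to the \metpole $p_0$ (order $-n_0-2k<-k$) on $X_2$ creates a single vertical edge $e_0$ with $X_1$ on the upper level and $X_2$ on the lower level, of prong number $n_0+k$; the self-node $N_1\sim N_2$ on $X_2$, both of whose branches have order $-k$, is horizontal and carries only the matching residue condition~\eqref{eq:matchres}. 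Hence $\Gamma$ has exactly two levels joined by the unique vertical edge $e_0$, so Corollary~\ref{cor:uniquepm} indeed supplies the unique equivalence class of \kprmas $\sigma$ used in the construction.

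Next I would run through Proposition~\ref{prop:GRCk}. Taking $L$ to be the lower level, the only connected component of $\Gamma_{>L}$ that carries a nonvacuous condition is $Y=X_1$, and the unique edge descending from $Y$ to $\Gamma_{=L}$ is $e_0$, whose lower end is the point $p_0$ on $X_2$. I would then split into three mutually exclusive and exhaustive cases for $\xi_1$. If $\xi_1$ is not the $k$-th power of an abelian differential, then alternative~(ii) is satisfied and $(X,\xi,\sigma)$ smooths. If $\xi_1=\omega_1^k$ for a meromorphic abelian differential $\omega_1$, then $\ord_{z_0}\omega_1=n_0/k>-1$ forces $\omega_1$ to be holomorphic at $z_0$, so every pole of $\omega_1$, and hence of $\xi_1$, lies at one of the prescribed singularities; thus $Y=X_1$ contains a marked pole and alternative~(i) applies. (Alternative~(iii) is automatically vacuous for $Y=X_1$, which contains no horizontal edge.) In the remaining case $\xi_1=\omega_1^k$ with $\omega_1$ holomorphic, the welded surface $Y_\sigma$ is of abelian type, so only alternative~(iv) remains; as $e_0$ is the sole descending edge, the sum in~\eqref{eq:GRCk} collapses to the single induced $k$-th root of $\Resk_{p_0}\xi_2$, and~(iv) holds exactly when $\Resk_{p_0}\xi_2=0$.

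Finally I would assemble the conclusion. The first two cases are precisely the situations in which $\xi_1$ is \emph{not} the $k$-th power of a holomorphic differential, and in both the \mskd smooths into $\komoduli[g+1](n_0+2k,n_1,\ldots,n_r,-l_1,\ldots,-l_s)$, which is the claim. (The third case is the excluded one, where realizability fails exactly when $\Resk_{p_0}\xi_2\neq 0$, matching the exception stated before the proposition.) I expect the delicate step to be this reduction of alternative~(iv): one must check that $Y=X_1$ is of abelian type precisely when $\xi_1$ is a $k$-th power, correctly single out $p_0$ (rather than $z_0$) as the lower node whose $k$-residue enters~\eqref{eq:GRCk}, and in the meromorphic case confirm that the residue truly escapes to a marked pole instead of being absorbed at $z_0$. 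As a preliminary I would also verify that a $k$-differential $\xi_2$ with the prescribed orders and matching residues~\eqref{eq:matchres} exists on the rational one-nodal curve, which follows from the same genus-zero realizability results invoked for breaking up a zero.
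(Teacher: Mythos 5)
Your proof is correct and takes essentially the same route as the paper, which disposes of the proposition in one sentence by invoking the global $k$-residue condition; your explicit case analysis of alternatives (i), (ii) and (iv) of Proposition~\ref{prop:GRCk} (non-power $\Rightarrow$ (ii); meromorphic $k$-th power $\Rightarrow$ a marked pole on $X_1$, so (i); holomorphic $k$-th power $\Rightarrow$ abelian type, so only (iv) with the single edge $e_0$) is precisely the reasoning the paper leaves implicit. The exceptional case you isolate, namely $\xi_1$ a $k$-th power of a holomorphic differential with $\Resk_{p_0}\xi_2\neq 0$, matches the paper's stated exception exactly.
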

Let us explain the exceptional case, which will be important in Section~\ref{sec:locprop}. If $\xi_{1}$ is the $k$-th power of a holomorphic differential, then $n_0$ is divisible by~$k$. Consequently the order of every singularity of $\xi_{2}$ is divisible by~$k$. This implies that $\xi_{2}$ is the $k$-th power of a meromorphic differential~$\omega_{2}$ on $X_2$.  By the residue theorem, the $k$-residue of $\xi_2 = \omega_{2}^k$ at $p_0$ is zero if and only if the residues of~$\omega_{2}$ at~$N_1$ and $N_2$ are opposite (i.e. $\omega_{2}$ is a stable differential of the nodal curve $X_2$).  However, this condition is not automatically implied by the matching residue condition of Equation~\eqref{eq:matchres} (as $r_1^k = (-r_2)^k$ does not imply that $r_1+r_2 = 0$ for $k > 1$). Hence in this case the realizability 
of bubbling a handle depends on the situation of $(X_2, \omega_2)$, which will be discussed in detail in Proposition~\ref{prop:bubbpos2}.

\subsection{Local properties of the operations}
\label{sec:locprop}
We start with an observation for both operations.
\begin{lm}\label{lem:invarianceprong}
Let $\calC$ be a connected component of $\komoduli(n_0, \ldots, n_r, - l_1, \ldots, - l_s)$ with a metric zero $z_0$ of order $n_0$. Then the connected component $\calC'$ which contains a $k$-differential obtained by breaking up $z_0$ or by bubbling a handle at $z_0$ depends only on the choice of $(X_{2},\xi_{2})$ in the operation. 
\end{lm}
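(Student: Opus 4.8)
The plan is to derive the statement from the smoothness of the moduli space of \mskds proved in \cite{CMZarea}, together with the uniqueness of the \kprma at a single node from Corollary~\ref{cor:uniquepm}. The guiding principle, already used in the introduction, is that the smoothing of a \mskd into the interior of a stratum lands in a single connected component; otherwise two distinct components would share a boundary point, contradicting the local irreducibility of a smooth orbifold.

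First I would fix $(X_2,\xi_2)$ and form the locus $\calB$ in the boundary of the compactified stratum consisting of all \mskds produced by the operation, namely the two-level curves obtained by gluing $(X_1,\xi_1)$ to $(X_2,\xi_2)$ at the node $z_0=p_0$ with the \kprma $\sigma$ at the unique separating vertical edge (and, in the bubbling case, with the self-node of $X_2$ subject to the matching residue condition~\eqref{eq:matchres}). As $(X_1,\xi_1)$ ranges over $\calC$ the gluing varies continuously, and by Corollary~\ref{cor:uniquepm} the \kprma at the single vertical edge is unique up to equivalence, hence contributes no discrete choice; thus each $(X_1,\xi_1)$ determines a single point of $\calB$, and $\calB$ is a continuous image of the connected space $\calC$, so $\calB$ is connected. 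I would also observe that realizability of the operation, i.e.\ whether $\xi_1$ is a $k$-th power of a holomorphic differential, is a discrete invariant constant on $\calC$, so the map $\calC\to\calB$ is defined on all of $\calC$ (otherwise the statement is vacuous).

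Next I would show that the assignment sending $b\in\calB$ to the component $\calC'(b)$ into which $b$ smooths is locally constant. Since the moduli space of \mskds is a smooth complex orbifold whose boundary is a normal crossing divisor, a neighborhood of $b$ is modeled on a polydisk in which the open stratum is the complement of the coordinate hyperplanes cut out by the plumbing parameters of the smoothed node(s). This complement is connected, so every sufficiently small smoothing of $b$ lies in one and the same component, and the same chart yields $\calC'(b')=\calC'(b)$ for all nearby $b'$. Hence $b\mapsto\calC'(b)$ is locally constant.

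Local constancy on the connected set $\calB$ then forces $\calC'(b)$ to be globally constant, equal to a single component $\calC'$ depending only on $(X_2,\xi_2)$ and not on the choice of $(X_1,\xi_1)\in\calC$; the reasoning is identical for both operations, since each yields a two-level \mskd with a single separating edge. The step I expect to be the main obstacle is the local analysis near $b$: one has to be sure that the smooth chart of \cite{CMZarea}, with its level-rotation action and the orbifold root structure attached to the prongs, genuinely presents the open stratum as the connected complement of a normal crossing divisor, so that no choice of smoothing direction can slip into a different component.
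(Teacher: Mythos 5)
Your proposal is correct and follows essentially the same route as the paper: the paper's proof likewise invokes the smoothness of the moduli space of \msds from \cite{CMZarea} to conclude that two components of a stratum cannot contain boundary points joined by a continuous path in the boundary, with the connectedness of that boundary path coming from varying $(X_1,\xi_1)$ over $\calC$ and the uniqueness of the \kprma at the separating node. Your version merely makes explicit the local polydisk model and the local constancy argument that the paper leaves implicit.
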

In other words, this lemma says that given the connected component containing $(X_{1},\xi_{1})$ and given $(X_{2},\xi_{2})$, the resulting connected components out of the two operations 
do not depend on any other choices in the smoothing process. We remark that the cases of $k=1$ and $k=2$ were known in~\cite{kozo1} and \cite{lanneauquad} respectively. 

\begin{proof}
According to \cite{CMZarea}, the moduli space of \mskds is a smooth compactification of the corresponding stratum.  Hence two connected components of the stratum cannot contain respectively two boundary points which can be joined by a continuous path in the boundary, for otherwise the total space would be singular.
\end{proof}

Recall for breaking up a metric zero, if the underlying $k$-differential $(X_{1},\xi_{1})$ is not the $k$-th power of a holomorphic differential, then there is no $k$-residue constraint imposed on $(X_{2},\xi_{2})$, hence we can continuously vary the choice of $(X_{2},\xi_{2})$ in the connected stratum $\komoduli[0](n_{1},\ldots, n_r, -n_0-2k)$. Combining with Lemma~\ref{lem:invarianceprong} and using the same proof, we thus obtain the following result.
\begin{lm}\label{lm:CCbreak}
Suppose we break up a \metzero whose underlying $k$-differential $(X_{1}, \xi_{1})$ is not the $k$-th power of a holomorphic differential. Then the connected component after this operation depends only on the connected component containing $(X_{1}, \xi_{1})$ and does not depend on the choice of $(X_{2},\xi_{2})$. 
\end{lm}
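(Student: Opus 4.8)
The plan is to invoke Lemma~\ref{lem:invarianceprong} together with the connectedness of the genus-zero stratum in which $(X_{2},\xi_{2})$ is chosen, and then to run the smoothness argument of Lemma~\ref{lem:invarianceprong} once more along a path in that stratum. By Lemma~\ref{lem:invarianceprong} the component $\calC'$ obtained by breaking up $z_0$ depends only on the component $\calC$ containing $(X_{1},\xi_{1})$ and on the chosen $(X_{2},\xi_{2})$, with no dependence on the remaining smoothing data (the \kprma in particular being unique up to equivalence by Corollary~\ref{cor:uniquepm}). I would therefore fix once and for all a representative $(X_{1},\xi_{1})\in\calC$, so that it remains to show $\calC'$ is unchanged as $(X_{2},\xi_{2})$ varies over the stratum $\komoduli[0](n_{1},\dots,n_{r},-n_0-2k)$. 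Since this stratum has genus zero it is connected (see the introduction), so any two choices of $(X_{2},\xi_{2})$ can be joined by a continuous path $(X_{2}^{t},\xi_{2}^{t})$ inside it.

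The next step is to verify that breaking up is defined uniformly along such a path. In the construction the component $\PP^1$ carrying the high-order pole $p_0$ sits at the lower level, so $(X_{1},\xi_{1})$ occupies the top level. Because $\xi_{1}$ is not a $k$-th power of an abelian differential, item (ii) of the global $k$-residue condition (Proposition~\ref{prop:GRCk}) holds at every level $L$: the dual graph here has two vertices and one edge, so the unique connected component of $\Gamma_{>L}$ always contains the top vertex carrying $\xi_{1}$, regardless of the $k$-residue of $\xi_{2}^{t}$ at $p_{0}$. Equivalently, the \mskd is realizable by Proposition~\ref{prop:breakpos}, whose only non-realizable case requires $\xi_{1}$ to be the $k$-th power of a holomorphic differential. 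Hence the \kprma imposes no residue constraint on $(X_{2}^{t},\xi_{2}^{t})$, and $(X^{t},\xi^{t},\sigma^{t})$ traces a continuous path of smoothable boundary points in the moduli space of \mskds.

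Finally I would conclude exactly as in Lemma~\ref{lem:invarianceprong}. Since that moduli space is smooth by \cite{CMZarea}, each smoothable boundary point lies in the closure of a unique connected component of the target stratum, and this assignment is locally constant along the boundary; along the connected path $(X^{t},\xi^{t},\sigma^{t})$ it is therefore constant, so the two endpoints break up into the same component. The only delicate point is the uniformity obtained in the second paragraph: one must be sure that the non-$k$-th-power hypothesis on $(X_{1},\xi_{1})$ removes the $k$-residue obstruction for \emph{all} choices of $(X_{2},\xi_{2})$ at once, since it is exactly the failure of this uniformity in the $k$-th-power case that makes breaking up genuinely depend on $(X_{2},\xi_{2})$ there.
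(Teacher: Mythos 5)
Your proposal is correct and follows essentially the same route as the paper: fix $(X_1,\xi_1)$, use connectedness of the genus-zero stratum to join any two choices of $(X_2,\xi_2)$ by a path of smoothable \mskds (no $k$-residue constraint since $\xi_1$ is not a $k$-th power of a holomorphic differential), and conclude by the smoothness argument of Lemma~\ref{lem:invarianceprong}. One small imprecision: the hypothesis only excludes $\xi_1$ being a $k$-th power of a \emph{holomorphic} differential, so smoothability may come from item (i) rather than item (ii) of Proposition~\ref{prop:GRCk} when $\xi_1$ is a $k$-th power of a meromorphic differential, but your fallback appeal to Proposition~\ref{prop:breakpos} covers this.
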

\par
On the contrary, we will discuss how the choice of $\xi_2$ affects the connected component~$\calC'$ obtained after bubbling a handle. 
In this operation the (projectivized) stratum $\PP\komoduli[0](n_{0}+2k,-k,-k,-n_{0}-2k)$ containing $\xi_2$ (after normalizing $X_2$ as $\PP^1$) is one-dimensional. Hence the matching $k$-residue condition $R_{1} = (-1)^{k}R_{2}$ at the two poles~$N_1$ and $N_2$ of order $-k$ determines finitely many choices of $\xi_2$ up to scale. For such a $k$-differential $(X_{2},\xi_{2})$, the homology group $H_{1}(X_{2}\setminus\{p_{0},N_{1},N_{2}\}, z_0)$ is generated by two simple closed curves~$\gamma_{1}$ and $\gamma_{2}$ turning around~$N_{1}$ and $N_{2}$ respectively. Without loss of generality we can assume that $\gamma_{1}$ and $\gamma_{2}$ are ``horizontal'' (self) saddle connections of the unique \metzero~$z_0$ (where being ``horizontal" is up to a $k$-th root of unity). They give a  partition of the cone angle $(n_{0}+3k)\frac{2\pi}{k}$ at $z_{0}$ into four angular sectors of respective angle $\pi$, $s_{1}\frac{2\pi}{k}$,~$\pi$ and $s_{2}\frac{2\pi}{k}$,  where $s_1$ and $s_2$ satisfy that 
\begin{equation}
\label{eq:oplus-range}
  1\leq s_{i}\leq n_{0}+2k-1 \quad \mbox{and} \quad s_{1}+s_{2}=n_{0}+2k.  
\end{equation}
An example of this partition is illustrated in Figure~\ref{cap:invariantbubb}. Conversely given such parameters~$s_1$ and $s_2$, one can recover $\xi_2$ 
by using broken half-planes as basic domains for the construction of meromorphic differentials as in \cite{boissymero} (and using broken $\frac{1}{2k}$-planes as basic domains for general $k$, see~\cite[Section 2.3]{BCGGM3}).

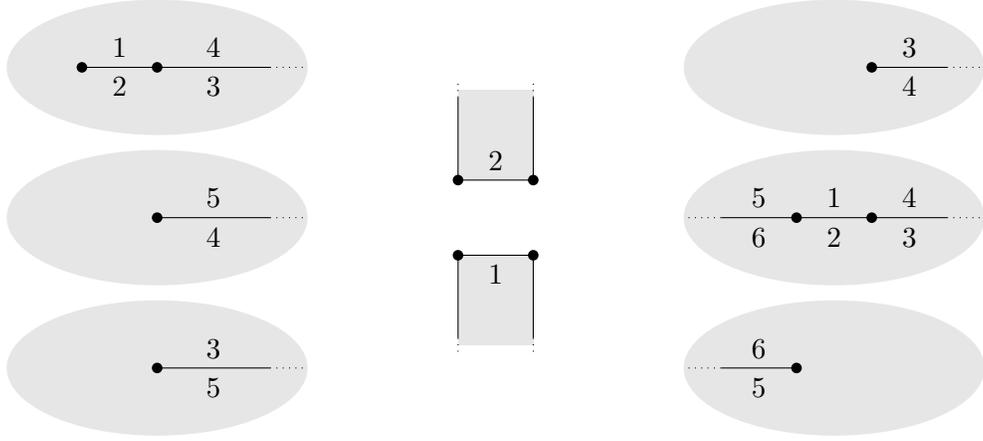
\begin{figure}[ht]
\centering
\begin{tikzpicture}[scale=1]
  \begin{scope}[xshift=-5cm]

\begin{scope}[yshift=1.5cm]
      \fill[fill=black!10] (0,0) ellipse (2cm and .9cm);
   \draw (-1,0) coordinate (a) -- node [below] {$2$} node [above] {$1$} 
(0,0) coordinate (b);
 \draw (0,0) -- (1.5,0) coordinate[pos=.5] (c);
  \draw[dotted] (1.5,0) -- (2,0);
 \fill (a)  circle (2pt);
\fill[] (b) circle (2pt);
\node[above] at (c) {$4$};
\node[below] at (c) {$3$};

    \end{scope}
%deuxieme dessin

\begin{scope}[yshift=-.5cm]
      \fill[fill=black!10] (0,0) coordinate(b) ellipse (2cm and .9cm);
      \draw (b) -- (1.5,0) coordinate[pos=.5] (c);
  \draw[dotted] (1.5,0) -- (2,0);
\fill[] (b) circle (2pt);
\node[above] at (c) {$5$};
\node[below] at (c) {$4$};

    \end{scope}
    
\begin{scope}[yshift=-2.5cm]
      \fill[fill=black!10] (0,0) coordinate(b) ellipse (2cm and .9cm);
      \draw (b) -- (1.5,0) coordinate[pos=.5] (c);
  \draw[dotted] (1.5,0) -- (2,0);
\fill[] (b) circle (2pt);
\node[above] at (c) {$3$};
\node[below] at (c) {$5$};

    \end{scope}    
\end{scope}

%pole 1,2
\begin{scope}[xshift=0cm,yshift=-1cm]
\coordinate (a) at (-1,1);
\coordinate (b) at (0,1);

    \fill[fill=black!10] (a)  -- (b)coordinate[pos=.5](f) -- ++(0,1.2)
--++(-1,0) -- cycle;
    \fill (a)  circle (2pt);
\fill[] (b) circle (2pt);
 \draw  (a) -- (b);
 \draw (a) -- ++(0,1.1) coordinate (d)coordinate[pos=.5](h);
 \draw (b) -- ++(0,1.1) coordinate (e)coordinate[pos=.5](i);
 \draw[dotted] (d) -- ++(0,.2);
 \draw[dotted] (e) -- ++(0,.2);
\node[above] at (f) {$2$};
\end{scope}

%pole 1,3
\begin{scope}[xshift=0cm,yshift=-2cm]
\coordinate (a) at (-1,1);
\coordinate (b) at (0,1);

    \fill[fill=black!10] (a)  -- (b)coordinate[pos=.5](f) -- ++(0,-1.2)
--++(-1,0) -- cycle;
    \fill (a)  circle (2pt);
\fill[] (b) circle (2pt);
 \draw  (a) -- (b);
 \draw (a) -- ++(0,-1.1) coordinate (d)coordinate[pos=.5](h);
 \draw (b) -- ++(0,-1.1) coordinate (e)coordinate[pos=.5](i);
 \draw[dotted] (d) -- ++(0,-.2);
 \draw[dotted] (e) -- ++(0,-.2);
\node[below] at (f) {$1$};
\end{scope}

  \begin{scope}[xshift=4cm]

\begin{scope}[yshift=-.5cm]
      \fill[fill=black!10] (0,0) ellipse (2cm and .9cm);
   \draw (-.5,0) coordinate (a) -- node [below] {$2$} node [above] {$1$} 
(.5,0) coordinate (b);
 \draw (b) -- (1.5,0) coordinate[pos=.5] (c);
  \draw[dotted] (1.5,0) -- (2,0);
  \draw (a) -- (-1.5,0) coordinate[pos=.5] (d);
  \draw[dotted] (-1.5,0) -- (-2,0);
 \fill (a)  circle (2pt);
\fill[] (b) circle (2pt);
\node[above] at (c) {$4$};
\node[below] at (c) {$3$};
\node[above] at (d) {$5$};
\node[below] at (d) {$6$};

    \end{scope}
%deuxieme dessin

\begin{scope}[yshift=1.5cm]
      \fill[fill=black!10] (0,0) coordinate(b) ellipse (2cm and .9cm);
      \draw (.5,0) -- (1.5,0) coordinate[pos=.5] (c);
  \draw[dotted] (1.5,0) -- (2,0);
\fill[] (.5,0) circle (2pt);
\node[above] at (c) {$3$};
\node[below] at (c) {$4$};

    \end{scope}
    
\begin{scope}[yshift=-2.5cm]
      \fill[fill=black!10] (0,0) coordinate(b) ellipse (2cm and .9cm);
      \draw (-.5,0) -- (-1.5,0) coordinate[pos=.5] (c);
  \draw[dotted] (-1.5,0) -- (-2,0);
\fill[] (-.5,0) circle (2pt);
\node[above] at (c) {$6$};
\node[below] at (c) {$5$};

    \end{scope}    
\end{scope}
\end{tikzpicture}
 \caption{Two differentials in $\omoduli[0](4,-1,-1,-4)$ with zero residue at the pole of order $-4$, where $k=1$, $n_0 = 2$, and $\gamma_1$ and $\gamma_2$ are labelled by $1$ and $2$ respectively.  The invariants $(s_1, s_2)$ are $(1, 3)$ or $(3, 1)$ for the differential on the left and $(2, 2)$ for the differential on the right. } \label{cap:invariantbubb}
\end{figure}

\begin{df}
\label{def:oplus}
If $(X_{1},\xi_{1})$ belongs to a connected component $\calC$, we denote by $\calC \oplus s_1$ the connected component that contains the differentials obtained by bubbling a handle using the differential $(X_{2},\xi_{2})$ with invariant $s_{1}$ as above. 
\end{df}

This notation $\oplus$ was originally introduced in~\cite{lanneauquad} for quadratic differentials.   

\begin{rem}
If $d\mid n_{0}$ and $d\mid k$, then the operation $\oplus d\ell$ at a \metzero of order $n_{0}$ of a $k$-differential $\xi = \eta^d$ 
is the same as the operation $\oplus \ell$ at the corresponding \metzero of order $\tfrac{n_{0}}{d}$ of the $\tfrac{k}{d}$-differential $\eta$.  This is due to $\xi_2 = \eta_2^d$ in the respective strata of genus zero used in the operations with $s_1 = d\ell$ for $\xi_2$ and $s_1 = \ell$ for $\eta_2$.  
\end{rem}

This remark together with Proposition~\ref{prop:bubbpos} and the discussion after it leads to the following characterization of realizable cases of  bubbling a handle.
\begin{prop}\label{prop:bubbpos2}
If $\xi_{1}$ is not the $k$-th power of a holomorphic differential, then all the operations $\oplus s$ for $s \in \lbrace 1, \dots, n_{0}+2k-1 \rbrace$ are realizable.  If $\xi_{1}$ is the $k$-th power of a holomorphic differential, then the realizable cases of bubbling a handle at a zero of order $n_{0}=km_{0}$ are the operations $\oplus k\ell$ for $\ell \in \lbrace 1, \dots, m_{0}+1 \rbrace$. 
\end{prop}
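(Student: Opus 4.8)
The plan is to treat the two regimes of the statement separately, in each case reading off the realizable invariants from Proposition~\ref{prop:bubbpos} together with the explicit parametrization of $(X_2,\xi_2)$ recorded before Definition~\ref{def:oplus}. Recall that the invariants $s_1,s_2$ of the preceding discussion satisfy \eqref{eq:oplus-range}, and that the matching $k$-residue condition $R_1=(-1)^kR_2$ singles out finitely many admissible $\xi_2$ up to scale, exactly one for each $s\in\{1,\dots,n_0+2k-1\}$, each of which can be reconstructed by the broken $\tfrac{1}{2k}$-plane construction of \cite[Section 2.3]{BCGGM3}. When $\xi_1$ is not the $k$-th power of a holomorphic differential, Proposition~\ref{prop:bubbpos} imposes no $k$-residue constraint, so every one of these $\xi_2$ yields a realizable bubbling; hence all $\oplus s$ with $s\in\{1,\dots,n_0+2k-1\}$ occur, which is the first assertion.

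For the second assertion, suppose $\xi_1=\omega_1^k$ with $\omega_1$ holomorphic, so that $n_0=km_0$. As observed after Proposition~\ref{prop:bubbpos}, every order of $\xi_2$ is then divisible by $k$; since on $\PP^1$ a $k$-differential whose divisor has all coefficients divisible by $k$ is automatically a $k$-th power --- the divisor $\tfrac1k\operatorname{div}(\xi_2)$ has degree $-2$ and hence equals the divisor of some meromorphic $1$-form --- we may write $\xi_2=\omega_2^k$ with $\omega_2\in\omoduli[0](m_0+2,-1,-1,-m_0-2)$. By Proposition~\ref{prop:bubbpos} the bubbling is realizable exactly when the $k$-residue of $\xi_2$ at $p_0$ vanishes, which by the residue theorem applied to $\omega_2$ means $\Res_{N_1}\omega_2+\Res_{N_2}\omega_2=0$, i.e. $\omega_2$ is a stable differential. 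I would then invoke the remark preceding the proposition with $d=k$: performing $\oplus k\ell$ on $\xi_1$ through $\xi_2=\omega_2^k$ coincides with performing $\oplus\ell$ on $\omega_1$ through $\omega_2$, the two invariants being related by $s=k\ell$. For $k=1$ the matching condition $\Res_{N_1}\omega_2=-\Res_{N_2}\omega_2$ is identical to stability, so the classical handle bubbling of Kontsevich--Zorich \cite{kozo1} realizes precisely the invariants $\ell\in\{1,\dots,m_0+1\}$ at a zero of order $m_0$. Transporting this back through $\xi_2=\omega_2^k$ shows that the realizable operations on $\xi_1$ are exactly $\oplus k\ell$ for $\ell\in\{1,\dots,m_0+1\}$.

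The step I expect to be the main obstacle is the converse inclusion in the second case, namely ruling out any realizable invariant $s$ that is not a multiple of $k$. Because the invariant is defined through the flat structure of the $k$-differential --- via saddle connections that are only horizontal up to a $k$-th root of unity --- one cannot a priori exclude admissible $\xi_2$ with $s\notin k\ZZ$. The way around this is the rigidity just used: realizability forces the $k$-residue at $p_0$ to vanish, the divisibility argument then forces $\xi_2$ to be an honest $k$-th power $\omega_2^k$ of a stable abelian differential, and the identity $s=k\ell$ from the remark finally pins $s$ to the arithmetic progression $\{k,2k,\dots,k(m_0+1)\}$, so that no other invariant survives.
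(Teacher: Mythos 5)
Your proposal is correct and follows essentially the same route as the paper, which justifies the proposition by combining Proposition~\ref{prop:bubbpos}, the discussion of the exceptional case (where $\xi_2$ is forced to be $\omega_2^k$ and the $k$-residue at $p_0$ vanishes iff $\omega_2$ is stable), and the remark identifying $\oplus k\ell$ on $\xi_1=\omega_1^k$ with $\oplus\ell$ on $\omega_1$. You merely spell out more explicitly than the paper the converse step ruling out invariants $s\notin k\ZZ$, which is a faithful elaboration of the same argument rather than a different one.
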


A technical tool that we will use to bound the number of connected components of the strata is the following generalization of \cite[Proposition 2.9]{lanneauquad} (see also~\cite[Lemma~3.2 and Remark 3.3]{boissymero}). 
\begin{prop}\label{prop:oplus}
Let $\calC$ be a connected component of $\komoduli(n,n_{1},\ldots,n_{r}, -l_1, \ldots, -l_s)$ with a \metzero of order $n$. Then the following
statements hold:  
\begin{itemize}

\item[(i)] If $1\leq s \leq n+2k-1$, then $$\calC \oplus s = \calC \oplus (n+ 2k - s).$$

\item[(ii)] If $1\leq s_1, s_2 \leq n + 2k - 1$ and $s_1 + s_2 < n + 3k$, then 
$$\calC \oplus s_1 \oplus s_2 = \calC \oplus s_2 \oplus s_1.$$

\item[(iii)]  If 
$1\leq s_1 \leq n+k-1$ and $ k+1\leq s_2 \leq n+2k-1$, then $$\calC \oplus s_1 \oplus s_2 = \calC \oplus (s_2-k) \oplus (s_1 + k).$$

\item[(iv)] If 
$1\leq s_1 \leq n+2k-1$, $1\leq s_2 \leq n+4k-1$ and 
$s_2 - s_1 \geq 2k$, then $$\calC \oplus s_1 \oplus s_2 = \calC \oplus (s_2 - 2k) \oplus s_1.$$

\end{itemize}
\end{prop}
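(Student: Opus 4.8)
The plan is to reduce every identity to a connectivity statement in the boundary of the moduli space of \msds, using Lemma~\ref{lem:invarianceprong}: two bubblings produce the same connected component provided the corresponding configurations can be joined by a continuous path. Part~(i) requires no path at all. By Definition~\ref{def:oplus} the genus-zero differential $(X_2,\xi_2)$ defining $\calC\oplus s_1$ lies in the one-dimensional stratum $\PP\komoduli[0](n+2k,-k,-k,-n-2k)$, and as a flat surface it is determined by the \emph{unordered} pair $\{s_1,s_2\}$ with $s_1+s_2=n+2k$, namely the two angular widths cut out by the saddle connections $\gamma_1,\gamma_2$ (cf.~\eqref{eq:oplus-range}). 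Reading the partition starting from one sector rather than the other merely interchanges the labels, so the same $\xi_2$ realizes both invariant $s_1$ and invariant $n+2k-s_1$. Since by Lemma~\ref{lem:invarianceprong} the operation $\calC\oplus s$ depends only on $\calC$ and on $\xi_2$, this gives $\calC\oplus s=\calC\oplus(n+2k-s)$.

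For (ii)--(iv) I would perform both bubblings explicitly by flat-geometric surgery. After the first operation the \metzero of order $n$ becomes a \metzero of order $n+2k$, and after the second a \metzero $Z$ of order $n+4k$ with cone angle $(n+5k)\tfrac{2\pi}{k}$. Building the two handles from broken $\tfrac{1}{2k}$-planes as in \cite[Section~2.3]{BCGGM3}, the whole local picture at $Z$ becomes a union of angular sectors: the saddle connections $\gamma_1,\gamma_2$ of the first handle and $\gamma_1',\gamma_2'$ of the second, together with their $\pi$-slits, cut the cone angle into sectors whose widths record the ordered pair of invariants. By Lemma~\ref{lem:invarianceprong} and Corollary~\ref{cor:uniquepm} (the \kprma is unique at each node, hence imposes no further constraint), it suffices to connect the two sides of each identity through a continuous motion of these sectors that never collapses a sector nor drives an attaching separatrix into $Z$.

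Each identity then becomes one elementary move. For (ii), the hypothesis $s_1+s_2<n+3k$ guarantees that the angular footprints of the two handles at $Z$ can be made disjoint; sliding the second handle around $Z$ past the first, without the two overlapping, exchanges the order of the two operations and yields $\calC\oplus s_1\oplus s_2=\calC\oplus s_2\oplus s_1$. For (iii) and (iv) I would instead slide the attaching separatrix of one handle across part of, respectively all of, the angular footprint of the other handle, whose total width is $2k\cdot\tfrac{2\pi}{k}$: crossing half of it shifts the ordered invariants by $k$, yielding $\calC\oplus s_1\oplus s_2=\calC\oplus(s_2-k)\oplus(s_1+k)$, while crossing all of it shifts by $2k$, yielding $\calC\oplus s_1\oplus s_2=\calC\oplus(s_2-2k)\oplus s_1$. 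The ranges $1\le s_1\le n+k-1$, $k+1\le s_2\le n+2k-1$ in (iii) and $s_2-s_1\ge 2k$ in (iv) are exactly the conditions under which the relevant slide is admissible, i.e. stops before the attaching separatrix reaches $Z$ or before a sector degenerates. That the shifts are multiples of $k$ (rather than $1$ or $2$ as for $k\le 2$) reflects the $\ZZ/k$-symmetry of the horizontal separatrices under the $k$-th roots of unity, and can be cross-checked by transporting the known abelian relations to the canonical cover.

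The main obstacle I anticipate is the bookkeeping of the angular sectors: one must verify, move by move, that the displayed inequalities are precisely the conditions under which the deformation stays in the open part of the stratum — no sector width reaching $0$ and no separatrix passing through the cone point $Z$ — and that each move produces exactly the claimed change of invariants. The extra care needed compared with $k\le 2$ lies in tracking the $\tfrac{1}{2k}$-plane pieces and in checking that the unique \kprma is preserved throughout the motion, so that the smoothed differentials genuinely remain in a single connected component at every stage.
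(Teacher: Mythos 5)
Your plan matches the paper's proof: part (i) follows from the symmetry of the defining partition $\{s, n+2k-s\}$, and parts (ii)--(iv) are proved by realizing both bubblings on a single two-handled configuration, reading off the cyclic list of angular sectors cut out by $\gamma_1,\gamma_1',\gamma_2,\gamma_2'$ at the merged zero, and observing that each passage across a half-infinite cylinder end (plus its adjacent half-disk sectors) contributes $2\pi = k\cdot\tfrac{2\pi}{k}$ to a sector width --- exactly your shift by $k$ (resp.\ $2k$), with the stated inequalities being precisely the admissibility conditions for the configuration and the degenerations. The one subtlety your checklist does not name explicitly is that in (ii) the obstruction to shrinking one handle first is neither a sector collapsing nor a separatrix hitting the cone point, but the two saddle connections $\gamma_2,\gamma_2'$ of the surviving handle crossing each other; the paper rules this out by elementary plane geometry using $s_2 < n+2k$, which is automatic from the stated range.
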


\begin{proof}
The claim (i) is clear due to the symmetry of $s$ and $n+2k-s$ in the definition of the operation $\oplus$. In particular, this implies that it suffices to consider the range $1\leq s \leq [\tfrac{n+2k}{2}] $ in order to obtain all connected components of the type $\calC\oplus s$. 
\par 
For the remaining claims, we first explain the idea of the proof from the viewpoint of algebraic geometry.  Let  $(X_{1},\xi_{1})$ be the $k$-differential that we want to bubble at the metric zero~$z_{0}$ of order $n$. We construct a \mskd $(X,\xi,\sigma)$ by gluing a twisted $k$-differential $(X_{2},\xi_{2})$ to $(X_{1},\xi_{1})$ at $z_{0}$. The curve $X_{2}$ has geometric genus zero and has two non-separating nodes $N_{i} \sim N_{i}'$ for $i = 1, 2$. The twisted $k$-differential~$\xi_{2}$ has poles of order $k$ at the two nodes, a pole at the point glued to $X_{1}$ and a zero in the smooth locus, with invariant $s_i$ for the (horizontal) saddle connection configuration enclosing the pole $N_{i} \sim N_{i}'$ for $i = 1, 2$. The \kprma $\sigma$ is the unique one up to equivalence at the separating node joining $X_{1}$ and $X_2$.  The resulting curve $X$ is illustrated on the left of Figure~\ref{fig:2-nodal}. We further degenerate this twisted $k$-differential $(X,\xi)$ into two different \mskds as illustrated on the right of Figure~\ref{fig:2-nodal}. These two degenerations (and the inverse smoothing) correspond to respectively the bubbling operation $\calC\oplus s_{1} \oplus s_{2}$ on one side and the bubbling operation on the other side of the desired equalities. 
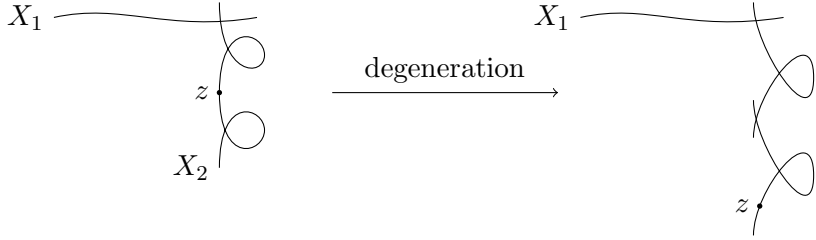
\begin{figure}[ht]
\begin{tikzpicture}[scale=1,decoration={
    markings,
    mark=at position 0.5 with {\arrow[very thick]{>}}}]
%Curve
\draw (-2.2,0) coordinate (x1).. controls (-1.3,.2) and (-.7,-.2) .. (0.5,0) coordinate[pos=.9] (z0) ;
\node [left] at (x1) {$X_{1}$};
\draw [] (0, -2) coordinate (x1)
  .. controls ++(90:1) and ++(90: .3) .. ( .6, -1.5)
  .. controls ++(-90:.3) and ++(-90:1) .. ( 0, -1) coordinate  (z)
  .. controls ++(90: 1) and ++(90:.3) .. (.6, -.5)
  .. controls ++(-90: .3) and ++(-90: 1) .. ( 0, .2);
\fill (z) circle (1pt);\node[left] at (z) {$z$};

\node [left] at (x1) {$X_{2}$};

\draw[->] (1.5,-1) -- (4.5,-1) coordinate[pos=.5]  (r);
\node[above] at (r) {degeneration};

%Curve
\begin{scope}[xshift=7.5cm]
\draw (-2.7,0) coordinate (x1).. controls (-1.8,.2) and (-1.3,-.2) .. (0,0) coordinate[pos=.9] (z0) ;
\draw (-.4,0.2) .. controls ++(270:.6) and ++(270:.8) .. (.4,-.8)  coordinate[pos=.52] (n)
                 .. controls ++(90:.8) and ++(90:.6) .. (-.4,-1.6) coordinate (x2) coordinate [pos=.8] (z);
 \draw (-.4,-1.1) .. controls ++(270:.6) and ++(270:.8) .. (.4,-2.1)  coordinate[pos=.52] (n)
                 .. controls ++(90:.8) and ++(90:.6) .. (-.4,-2.9) coordinate (x2) coordinate [pos=.8] (z);                
\fill (z) circle (1pt);\node[left] at (z) {$z$};

\node [left] at (x1) {$X_{1}$};
\end{scope}
\end{tikzpicture}
 \caption{Twisted $k$-differentials underlying the proof of Proposition~\ref{prop:oplus}.} \label{fig:2-nodal}
\end{figure}
\par
In order to prove (ii), consider the operation $\calC \oplus s_1 \oplus s_2$ for a 
multi-scale $k$-differential illustrated on the right of Figure~\ref{fig:2-nodal}.  
As in the previous paragraph, we denote the intermediate lower level twisted $k$-differential that we obtain by $(X_{2},\xi_{2})$ illustrated on the left of Figure~\ref{fig:2-nodal}.  Let $\gamma_i$ and $\gamma'_i$ be the saddle connections enclosing the poles $N_{i}$ and~$N_{i}'$ of $X_2$ respectively for $i = 1, 2$. After the total operation $ \calC\oplus s_1 \oplus s_2$, consider the following list of (outgoing and incoming) rays emanating from the new zero starting from $\gamma_1$ in cyclic order:  
$$ \gamma_1, \ldots, \gamma'_1, \gamma'_1, \ldots, \gamma_2, \gamma_2, \ldots, \gamma'_2, \gamma'_2, \ldots ,\gamma_{1}, $$
where the angle between $\gamma_1, \gamma'_1$ is $s_1 \frac{2\pi}{k}$, the angle between $\gamma_2, \gamma'_2$ is $s_2 \frac{2\pi}{k}$, and the angle between any two adjacent $\gamma_i, \gamma_i$ or $\gamma'_i, \gamma'_i$ is $\pi$. 
Let $a_1 \frac{2\pi}{k}$ and $a_2 \frac{2\pi}{k}$ be the remaining angles between $\gamma'_1, \gamma_2$ and between $\gamma'_2, \gamma_1$, respectively.  Then the total angle at the new zero is $(s_1 + s_2 + a_1 + a_2) \frac{2\pi}{k} + 4\pi = (n + 5k) \frac{2\pi}{k}$, which implies that $s_1 + s_2 + a_1 + a_2 = n + 3k$.  Note that this list of rays can exist if and only if $a_1 + a_2 > 0$, namely, $s_1 + s_2 < n + 3k$. 
Figure~\ref{fig:oplusperm} illustrates such an example in the case of $k = 2$.

\begin{figure}[ht]
\centering
\begin{tikzpicture}[scale=1]
  \begin{scope}[xshift=-5cm]

\begin{scope}[yshift=1.5cm]
      \fill[fill=black!10] (0,0) ellipse (2cm and .9cm);
   \draw (-1,0) coordinate (a) -- node [below] {$\gamma_{1}'$} node [above] {$\gamma_{1}$} 
(0,0) coordinate (b);
\draw (0,-.5) coordinate (c) -- (0,.5) coordinate (d) coordinate[pos=.25] (e) coordinate[pos=.75] (f);

\fill[] (d) circle (2pt);
 \fill (a)  circle (2pt);
\fill[] (c) circle (2pt);
\fill[] (b) circle (2pt);

    \fill[white] (c) -- (d) -- ++(2,0) --++(0,-1) -- cycle;
 \draw (c) -- ++(1.7,0) coordinate[pos=.5] (g);
 \draw (d) -- ++(1.6,0) coordinate[pos=.5] (h);
 \draw (c) -- (d);
\node[right] at (e) {$\gamma_{2}$};
\node[right] at (f) {$\gamma_{2}'$};
\node[below] at (g) {$2$};
\node[above] at (h) {$3$};
    \end{scope}

\begin{scope}[yshift=-.5cm]
      \fill[fill=black!10] (0,0) coordinate(b) ellipse (2cm and .9cm);
      \draw (b) -- (1.5,0) coordinate[pos=.5] (c);
  \draw[dotted] (1.5,0) -- (2,0);
\fill[] (b) circle (2pt);

\node[above] at (c) {$2$};
\node[below] at (c) {$3$};
    \end{scope}
 \end{scope}

% les poles simples
%pole 1,2
\begin{scope}[xshift=0cm,yshift=0cm]
\coordinate (a) at (-1,1);
\coordinate (b) at (0,1);

    \fill[fill=black!10] (a)  -- (b)coordinate[pos=.5](f) -- ++(0,1.2)
--++(-1,0) -- cycle;
    \fill (a)  circle (2pt);
\fill[] (b) circle (2pt);
 \draw  (a) -- (b);
 \draw (a) -- ++(0,1.1) coordinate (d)coordinate[pos=.5](h);
 \draw (b) -- ++(0,1.1) coordinate (e)coordinate[pos=.5](i);
 \draw[dotted] (d) -- ++(0,.2);
 \draw[dotted] (e) -- ++(0,.2);
\node[above] at (f) {$\gamma_{1}'$};
\end{scope}

%pole 1,3
\begin{scope}[xshift=0cm,yshift=-1cm]
\coordinate (a) at (-1,1);
\coordinate (b) at (0,1);

    \fill[fill=black!10] (a)  -- (b)coordinate[pos=.5](f) -- ++(0,-1.2)
--++(-1,0) -- cycle;
    \fill (a)  circle (2pt);
\fill[] (b) circle (2pt);
 \draw  (a) -- (b);
 \draw (a) -- ++(0,-1.1) coordinate (d)coordinate[pos=.5](h);
 \draw (b) -- ++(0,-1.1) coordinate (e)coordinate[pos=.5](i);
 \draw[dotted] (d) -- ++(0,-.2);
 \draw[dotted] (e) -- ++(0,-.2);
\node[below] at (f) {$\gamma_{1}$};
\end{scope}

%pole 1,4
\begin{scope}[xshift=3cm,yshift=1.5cm]
\coordinate (a) at (0,0);
\coordinate (b) at (0,.5);

    \fill[fill=black!10] (a)  -- (b)coordinate[pos=.5](f) -- ++(1.2,0)
--++(0,-.5) -- cycle;
    \fill (a)  circle (2pt);
\fill[] (b) circle (2pt);
 \draw  (a) -- (b);
 \draw (a) -- ++(1.1,0) coordinate (d)coordinate[pos=.5](h);
 \draw (b) -- ++(1.1,0) coordinate (e)coordinate[pos=.5](i);
 \draw[dotted] (d) -- ++(.2,0);
 \draw[dotted] (e) -- ++(.2,0);
\node[left] at (f) {$\gamma_{2}'$};
\end{scope}

%pole 1,5
\begin{scope}[xshift=3cm,yshift=-.5cm]
\coordinate (a) at (0,0);
\coordinate (b) at (0,.5);

    \fill[fill=black!10] (a)  -- (b)coordinate[pos=.5](f) -- ++(1.2,0)
--++(0,-.5) -- cycle;
    \fill (a)  circle (2pt);
\fill[] (b) circle (2pt);
 \draw  (a) -- (b);
 \draw (a) -- ++(1.1,0) coordinate (d)coordinate[pos=.5](h);
 \draw (b) -- ++(1.1,0) coordinate (e)coordinate[pos=.5](i);
 \draw[dotted] (d) -- ++(.2,0);
 \draw[dotted] (e) -- ++(.2,0);
\node[left] at (f) {$\gamma_{2}$};
\end{scope}

\end{tikzpicture}
 \caption{A twisted quadratic differential $(X_{2},\xi_{2})$ used in the proof of the equality $\calC\oplus1\oplus2=\calC\oplus2\oplus1$ for the case of $k=2$, $n=2$, $s_1 = 2$, $s_2 = 5$, and $a_1 =  a_2 = \frac{1}{2}$.} \label{fig:oplusperm}
\end{figure}
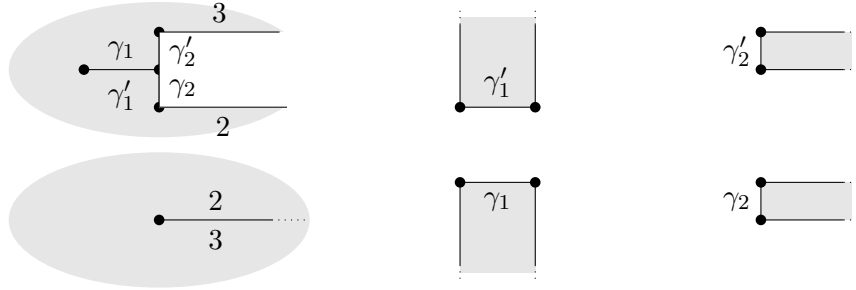
\par
Under the same cyclic order we can rewrite this list of rays as 
$$ \gamma_2, \ldots, \gamma'_2, \gamma'_2, \ldots, \gamma_1, \gamma_1, \ldots, \gamma'_1, \gamma'_1, \ldots,\gamma_2, $$
which corresponds to the other operation $ \calC\oplus s_2 \oplus s_1$.  Therefore, to verify (ii) it remains to show that we can shrink $\gamma_{i}$ and $\gamma_{i}'$ to zero for both $i=1, 2$ in either order without creating unexpected singularities caused by self crossing, which in principle could occur in the situation of Figure~\ref{fig:opluspermbis} as therein $\gamma_2$ can cross $\gamma'_2$ when shrinking $\gamma_1$ and $\gamma'_1$. In contrast, by elementary plane geometry if the sum of the angles between $\gamma_2, \gamma'_1$, between $\gamma'_1, \gamma_1$ and between $\gamma_1, \gamma'_2$ is bigger than $2\pi$, i.e. if $(a_1 + a_2 + s_1) \frac{2\pi}{k} > 2\pi$, then this crossing issue does not occur.  Using $a_1 + a_2 + s_1 + s_2 = n + 3k$, this inequality is equivalent to $s_2 < n + 2k$.   
Note that $s_1 < n + 2k$ holds automatically by the assumption on its range. 
This thus verifies (ii). 
 \par
\begin{figure}[ht]
\begin{tikzpicture}[scale=1]
\centering
%premier
\begin{scope}[xshift=-5cm]
\begin{scope}[yshift=3cm]
\draw (0,0) coordinate (a1) --  (1.5,0) coordinate[pos=.5] (b1) coordinate (a2) --  ++(-100:.7) coordinate[pos=.5] (b2) coordinate (a3) -- ++(-45:1)coordinate[pos=.5] (b3) coordinate (a4) -- ++(45:1) coordinate[pos=.5] (b4) coordinate (a5) -- ++(170:.7) coordinate[pos=.8] (b5) coordinate (a6) --  ++(1.5,0) coordinate[pos=.7] (b6) coordinate (a7);
\clip (-1.5,-2) rectangle (9,.7);
\fill[black!10] (a1) -- (a2) -- (a3) -- (a4) -- (a5) -- (a6) -- (a7) -- (4,-.57) arc (0:165.7:2.3) -- cycle;  
\foreach \i in {2,...,6}
\fill (a\i) circle (1pt);

\draw[dotted] (a1)-- ++(-.4,0);
\draw[dotted] (a7)-- ++(.4,0);

\node[below] at (b1) {$1$};
\node[below] at (b6) {$1$};
\node[below,rotate=80] at (b2) {$\gamma_{2}$};
\node[below,rotate=-45] at (b3) {$\gamma_{1}'$};
\node[below,rotate=45] at (b4) {$\gamma_{1}$};
\node[below,rotate=-10] at (b5) {$\gamma_{2}'$};

\draw[->] (4.5,0) -- (6.5,0);
\end{scope}

\end{scope}

%%%%%
%second
\begin{scope}[xshift=3cm]
\begin{scope}[yshift=3cm]
\draw (0,0) coordinate (a1) --  (1.5,0) coordinate[pos=.5] (b1) coordinate (a2) --  ++(-100:.7) coordinate[pos=.5] (b2) coordinate (a3) -- ++(-45:.5)coordinate[pos=.5] (b3) coordinate (a4) -- ++(45:.5) coordinate[pos=.5] (b4) coordinate (a5) -- ++(170:.7) coordinate[pos=.8] (b5) coordinate (a6) --  ++(1.5,0) coordinate[pos=.7] (b6) coordinate (a7);
\clip (-1.5,-2) rectangle (9,.7);
\fill[black!10] (a1) -- (a2) -- (a3) -- (a4) -- (a5) -- (a6) -- (a7) -- (4,-.57) arc (0:165.7:2.3) -- cycle;  
\foreach \i in {2,...,6}
\fill (a\i) circle (1pt);

\draw[dotted] (a1)-- ++(-.4,0);
\draw[dotted] (a7)-- ++(.4,0);

\node[below] at (b1) {$1$};
\node[below] at (b6) {$1$};
\end{scope}

\end{scope}
\end{tikzpicture}
\caption{Shrinking $\gamma_1$ and $\gamma'_1$ can make $\gamma_2$ and $\gamma'_2$ cross when $s_2 \geq n + 2k$.  The case represented is $k=4$, $n=-2$, $s_1 = 1$ and $s_2 = 7$. Here we omit the four half-infinite cylinders glued to $\gamma_i$ and $\gamma'_i$.}
\label{fig:opluspermbis}
\end{figure}
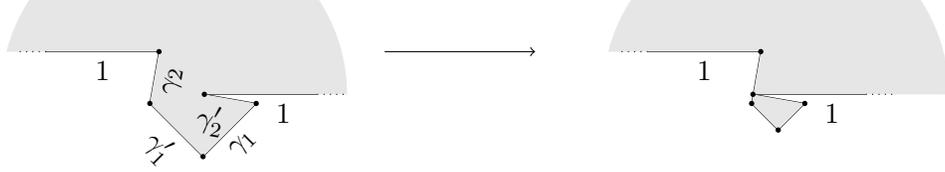

Next we consider (iii) and use the same notation as in the proof of (ii).  For the operation $\calC\oplus s_1\oplus s_2$, consider the following list of rays when going around the zero of~$\xi_{2}$ in cyclic order:
$$ \gamma_1, \ldots, \gamma_2, \gamma_2, \ldots, \gamma'_1, \gamma'_1, \ldots, \gamma'_2, \gamma'_2, \ldots, \gamma_{1}.$$
By the assumption on the ranges of $s_{1}$ and $s_2$, such a list can exist. Indeed since $k+1 \leq s_{2}$, the saddle connections $\gamma_{2}$ and $\gamma'_2$ are outside of the infinite annulus cut out by $\gamma_{1}$ and $\gamma'_1$ (as a neighborhood of 
the pole $N_1 \sim N'_1$ of order $-k$), and vice versa. Note that the sector 
corresponding to 
$$(\gamma_1, \ldots,\gamma_2, \gamma_2, \ldots, \gamma'_1)$$ 
has angle $s_1 \tfrac{2\pi}{k} + 2\pi = (s_1 + k) \tfrac{2\pi}{k}$, where we gain $\pi$ from the half-infinite cylinder bounded by $\gamma_{2}$ and another $\pi$ from the special half-disk sectors adjacent to it (in the sense of~\cite[Fig. 8]{emz}). Moreover, by definition the sector corresponding to 
$$(\gamma_2, \ldots,\gamma_{1}', \gamma_{1}', \ldots, \gamma'_2)$$ 
has angle~$s_{2} \tfrac{2\pi}{k}$. If we first shrink $\gamma_{1}$ and $\gamma_{1}'$ to zero, then the angle of this sector becomes $s_{2} \tfrac{2\pi}{k} -2\pi = (s_2 - k)\tfrac{2\pi}{k}$, as we lose $\pi$ for the half-infinite cylinder bounded by $\gamma'_{1}$ as well as another $\pi$ from the special half-disk sectors adjacent to it.  This thus verifies the equality claimed in (iii). 

Similarly for (iv), consider the following list of rays going around the zero of $\xi_{2}$ in cyclic order: 
$$ \gamma_2, \ldots, \gamma_1, \gamma_1, \ldots, \gamma'_1, \gamma'_1, \ldots, \gamma'_2, \gamma'_2, \ldots \gamma_2.$$ 
Again by assumption such a list can exist. The sector corresponding to $(\gamma_1,  \ldots, \gamma'_1)$ 
has angle $s_1\tfrac{2\pi}{k}$. Moreover, the sector corresponding to 
$$(\gamma_2, \ldots, \gamma_1,\gamma_1,  \ldots, \gamma'_1, \gamma'_1, \ldots, \gamma'_2)$$ has angle~$s_{2} \tfrac{2\pi}{k}$. If we first shrink $\gamma_{1}$ and $\gamma_{1}'$ to zero, then the angle of this sector becomes $s_{2} \tfrac{2\pi}{k} -4\pi = (s_2 - 2k)\tfrac{2\pi}{k}$. This thus verifies the equality claimed in (iv).
\end{proof}

We now discuss some consequences of Proposition~\ref{prop:oplus}. For the operation $\calC\oplus s_1\oplus s_2$, by using (i) we can restrict to the range $1 \leq s_1 \leq \left[\tfrac{n+2k}{2}\right]$  and $1\leq s_2\leq \left[\tfrac{n+4k}{2}\right]$. Consider first the case $ n > 0$. In this case as long as $(s_1, s_2) \neq (\tfrac{n+2k}{2}, \tfrac{n+4k}{2})$ (when $n$ is even), we have 
$s_1 + s_2 < n + 3k$. Since $\left[\tfrac{n+4k}{2}\right] \leq n + 2k - 1$ (as $n > 0$), we can interchange the order of $s_1$ and $s_2$ by using (ii). In contrast for $(s_1, s_2) = (\tfrac{n+2k}{2}, \tfrac{n+4k}{2})$, (ii) does not apply (as $s_1 + s_2 = n + 3k)$, (iv) does not apply (as $s_2 - s_1 = k < 2k)$, while (iii) applies with $(s_2 - k, s_1 + k) = (s_1, s_2)$ which does not change the pair.  Next consider the case $-k < n \leq 0$. If $s_2 \leq n + 2k -1$, then $s_1 + s_2 \leq \left[\tfrac{n+2k}{2}\right] + n + 2k -1 < n + 3k$ (as $n \leq 0$), hence we can interchange the order of $s_1$ and $s_2$ by using (ii). In contrast for $n + 2k \leq s_2 \leq \left[\tfrac{n+4k}{2}\right]$, it is outside of the allowed ranges in (ii) and (iii). Moreover for (iv), $s_2 - 2k \leq \left[\tfrac{n+4k}{2}\right] - 2k \leq 0$ (as $n \leq 0$), hence (iv) does not apply. Summarizing the above discussion, we make the following definition to isolate the types of $(s_1, s_2)$ for which Proposition~\ref{prop:oplus} does not help simplify the related operations. 

\begin{df}\label{def:balance}
  The parameters $(s_1, s_2)$ in the operation $\calC\oplus s_1\oplus s_2$ are called \emph{of balanced type} if  
  $(s_1, s_2) = (\tfrac{n+2k}{2},\tfrac{n+4k}{2})$ for $n > 0$ and even, and if $n + 2k \leq s_2 \leq \left[\tfrac{n+4k}{2}\right]$ (and $1\leq s_1 \leq \left[\tfrac{n+2k}{2}\right]$)  for $-k < n \leq 0$. 
   \end{df}

Next we give a useful application of Proposition~\ref{prop:oplus}.
\begin{cor}\label{cor:grow}
Let $\calC$ and $\calC_{0}$ be two connected components with a unique \metzero satisfying that $\calC=\calC_{0}  \oplus s_1 \oplus \cdots \oplus s_n$. Then there exist $1\leq s_{1}'\leq \dots \leq s_{n}'$ such that $\calC=\calC_{0}  \oplus s_1' \oplus \cdots \oplus s_n'$.
\end{cor}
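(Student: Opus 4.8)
The plan is to produce the sorted sequence not by transforming the given one step by step, but by realizing it as a minimizer of a suitable monovariant over the finite set of all bubbling sequences representing $\calC$, and then showing that a minimizer can have no adjacent inversion because Proposition~\ref{prop:oplus} always removes one while strictly decreasing the monovariant.

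First I would fix the bookkeeping. Let $m$ be the order of the unique \metzero of $\calC_0$; since $m>-k$ and each bubbling raises this order by $2k$, the \metzero has order $N_i:=m+2(i-1)k$ just before the $i$-th operation, so a parameter $s$ in slot $i$ is admissible exactly when $1\le s\le N_i+2k-1=m+2ik-1$. Because $m>-k$ these windows are all nonempty, so the set $S$ of admissible tuples $(t_1,\dots,t_n)$ with $\calC_0\oplus t_1\oplus\cdots\oplus t_n=\calC$ is finite and contains $(s_1,\dots,s_n)$. Fix strictly decreasing positive weights $w_i=n-i+1$ and choose $(t_1,\dots,t_n)\in S$ minimizing $\Phi=\sum_i w_i t_i$. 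I claim this minimizer is nondecreasing, which is exactly the assertion of the corollary.

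The core step is that any adjacent inversion $t_i>t_{i+1}$ can be eliminated by a single application of Proposition~\ref{prop:oplus} to slots $i,i+1$, taken over the base component $\calC_0\oplus t_1\oplus\cdots\oplus t_{i-1}$ whose \metzero has order $N=N_i$. Writing $a=t_i$, $b=t_{i+1}$ with $a>b$, one first observes $b<a\le N+2k-1$, so both lie in $[1,N+2k-1]$, and that $a+b\ge N+3k$ forces $a,b\ge k+1$. I would then split into three cases: (1) if $a+b<N+3k$, part~(ii) swaps the pair to $(b,a)$; (2) if $a+b\ge N+3k$ and $a\le N+k-1$, part~(iii) replaces it by $(b-k,a+k)$, which is ordered since $b<a$; (3) if $a+b\ge N+3k$ and $a\ge N+k$, part~(i) reflects $a$ to $a^\ast=N+2k-a\le k<k+1\le b$, giving $(a^\ast,b)$. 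In each case the new tuple is again admissible (the slot bounds are checked by interval arithmetic, the lower bound in case~(3) using precisely $m>-k$ to get $a^\ast\le N+2k-1$) and still lies in $S$ by Proposition~\ref{prop:oplus}. Finally $\Phi$ strictly decreases: in cases~(1) and~(2) only slots $i,i+1$ change, with $\Delta\Phi=(b-a)(w_i-w_{i+1})<0$ and $\Delta\Phi=(b-a-k)(w_i-w_{i+1})<0$ respectively; in case~(3) only slot $i$ changes, with $\Delta\Phi=-w_i(a-a^\ast)<0$. This contradicts minimality, so the minimizer has no adjacent inversion and is therefore nondecreasing.

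The main obstacle is the regime $a+b\ge N+3k$, where the plain swap~(ii) is unavailable and one must instead invoke the shearing relation~(iii) together with the reflection~(i), checking in each case that the substituted parameters remain inside the admissible window of their slot. The only genuinely delicate verification is the lower endpoint in case~(3), whose admissibility rests on the \metzero constraint $m>-k$; the remaining inequalities, the finiteness and nonemptiness of $S$, and the sign computations for $\Delta\Phi$ are all routine.
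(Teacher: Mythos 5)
Your proof is correct, and at its core it is the same argument as the paper's: both reduce to eliminating adjacent inversions by means of Proposition~\ref{prop:oplus}. The differences are organizational. The paper's proof is shorter because, faced with an inversion $s_i>s_{i+1}$, it first uses part (i) to replace $s_i$ by $\min(s_i,\,n+2k-s_i)\le\lfloor n/2\rfloor+k$; after this normalization either the inversion has disappeared or $s_i+s_{i+1}<2s_i\le n+2k<n+3k$, so part (ii) always applies and the swap goes through. This preprocessing step absorbs your case (3) and makes your case (2) --- and hence the appeal to part (iii) --- unnecessary. What your version buys in exchange is a rigorous treatment of termination: the paper leaves the convergence of its normalize-and-swap procedure implicit, whereas your strictly decreasing weighted sum $\Phi=\sum_i w_i t_i$ over the finite set of admissible representing tuples settles it cleanly. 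One small point where your write-up is terse: the inequality $a^\ast<a$ in case (3) does not follow from $a\ge N+k$ alone when $N\le 0$, but it does follow from $2a>a+b\ge N+3k$, so the sign computation $\Delta\Phi<0$ is valid; it is worth recording that extra line.
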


\begin{proof}
Suppose there are two adjacent $s_i$ and $s_{i+1}$ such that $s_{i} > s_{i+1}$. Let $n$ be the order of the unique \metzero of the differentials in $\calC_{0}  \oplus s_1 \oplus \cdots \oplus s_{i-1}$.
Then by Proposition~\ref{prop:oplus} (i) we can assume that $s_{i}\leq [\tfrac{n}{2}]+k$, hence $s_i + s_{i+1} < 2s_i \leq n+2k$, and then we can exchange $s_i$ and $s_{i+1}$  by Proposition~\ref{prop:oplus} (ii).
\end{proof}

Note that the assumption of a unique zero in the above is not essential.  As long as we keep performing the operations $\oplus$ for a designated zero, the same argument and conclusion still hold.   

Since we often focus on primitive $k$-differentials, the following fact is useful to know. 

\begin{rem}
\label{rem:primitive}
Suppose we break up a metric zero or bubble a handle for a primitive $k$-differential $(X, \xi)$. Then the $k$-differentials we obtain remain to be primitive. This is because a (multi-scale) $k$-differential is primitive if and only if its canonical cover is connected.  If after smoothing 
we obtain a non-primitive $k$-differential, then the canonical cover of the multi-scale $k$-differential used in the operations is disconnected, which contains a disconnected canonical cover over the component $(X, \xi)$, thus contradicting that $(X, \xi)$ is primitive. 
\end{rem}

\subsection{Global properties of the operations}
\label{sec:globalop}

The classification of connected components of the strata of meromorphic abelian differentials in genus one was given in \cite[Theorem 4.1]{boissymero} (see \cite[Section 3.2]{chco} for another viewpoint). Since the canonical bundle of a genus one curve is trivial, the same classification holds for connected components of the strata of $k$-differentials in genus one for all $k$. 

We first generalize the rotation number description in \cite[Section 4.2]{boissymero} to meromorphic $k$-differentials on genus one curves for all $k$. Let $(X,\xi)$ be a $k$-differential in a stratum
$\komoduli[1](n_1, \ldots, n_r, -l_1, \ldots, -l_s)$ for $X$ of genus one.  Suppose $\gamma\colon S^{1}\to X$ is a simple closed curve 
such that $\gamma$ does not contain any singularity of~$\xi$. Define a map 
$$\Gamma\colon S^1\to S^1/\left<\exp(\tfrac{2\pi {\rm i}}{k})\right> \quad \mbox{by} \quad t\mapsto \frac{\gamma'(t)}{|\gamma'(t)|}$$
where a unit tangent vector is taken with respect to the flat metric induced by $\xi$. 
We quotient the target $S^1$ by $\left<\exp(\tfrac{2\pi {\rm i}}{k})\right>$, as the holonomy of the $\tfrac{1}{k}$-translation structure induced by $\xi$ is contained in this group. Regarding the target of $\Gamma$ still as $S^1$, this map $\Gamma$ thus defines 
the \emph{index} $\ind(\gamma)$ of~$\gamma$ in the usual way.  Let $(a, b)$ be a symplectic basis of $H_1(X, \ZZ)$, and choose $(\alpha, \beta)$ as arc-length representatives of $(a, b)$ that avoid the singularities of $\xi$. As in \cite[Definition 4.2]{boissymero}, we define the \emph{rotation number} of a $k$-differential $(X, \xi)$ by
$$ \rot(X,\xi) = \gcd (\ind(\alpha), \ind(\beta), n_1, \ldots, n_r, l_1, \ldots, l_s). $$

This invariant allows us to describe geometrically the classification of connected components of the strata of $k$-differentials in genus one, thus generalizing the viewpoint of \cite[Theorem 4.3]{boissymero} from $k=1$ to all $k$.
\begin{thm}
\label{thm:compGenreUn}
Let $S = \komoduli[1](n_1, \ldots, n_r, -l_1, \ldots, -l_s)$ be a stratum of $k$-differentials on genus one curves. Then the rotation number is an invariant for any connected component of $S$. 

Let  $d$ be a positive divisor of $\gcd (n_1, \ldots, n_r, l_1, \ldots, l_s)$. Then~$d$ can be realized as a rotation number for a unique connected component of $S$, except that $d = n$ does not occur for the stratum $\komoduli[1](n, -n)$. 

Moreover when $\sum_{i=1}^r n_i > k$, any connected component of $S$ can be realized by the operations of bubbling a handle and breaking up a zero. 
\end{thm}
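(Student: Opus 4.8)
The plan is to reduce the statement to Boissy's classification in the abelian case and to isolate the genuinely new input, which concerns the behaviour of the $k$-differential rotation number and the realizability by the operations of Section~\ref{sec:operations}. Since the canonical bundle of a genus one curve is trivial, a point of $S=\komoduli[1](n_1,\dots,n_r,-l_1,\dots,-l_s)$ is the datum of a configuration of marked points satisfying the Abel relation together with a nonzero scaling, exactly as for the abelian stratum $\omoduli[1](n_1,\dots,n_r,-l_1,\dots,-l_s)$; the resulting isomorphism of moduli spaces identifies their connected components. By \cite[Theorem~4.3]{boissymero} the abelian stratum has exactly one component for each positive divisor of $\gcd(n_1,\dots,n_r,l_1,\dots,l_s)$, with the single exception that $d=n$ is not attained for $\omoduli[1](n,-n)$. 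It therefore suffices to prove that $\rot$ is a well-defined invariant valued in the positive divisors of this gcd, and that every such divisor is attained: a well-defined invariant that surjects onto a set of the same finite cardinality as the set of components is automatically a bijection, which yields the invariance and the uniqueness assertions simultaneously.

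To prove invariance I would verify that $\rot$ is independent of the arc-length representatives $\alpha,\beta$, independent of the symplectic basis $(a,b)$, and locally constant. The index $\ind(\alpha)$ can only change when a homotopy sweeps $\alpha$ across a singularity, so the first point is a local computation: in the $\tfrac1k$-translation structure induced by $\xi$ a singularity of order $m$ has cone angle $(m+k)\tfrac{2\pi}{k}$, whence the excess over $2\pi$ equals $m\tfrac{2\pi}{k}$, i.e.\ precisely $m$ copies of the fundamental domain of the quotient circle $S^1/\langle\exp(\tfrac{2\pi{\rm i}}{k})\rangle$. Thus crossing a singularity of order $m$ changes $\ind$ by $m$, and this ambiguity is absorbed by the gcd with the $n_i$ and $l_j$. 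Independence of the basis follows by adapting Boissy's additivity lemma: modulo $\gcd(n_1,\dots,l_s)$ the index descends to a homomorphism on $H_1(X,\ZZ)$, so the gcd of its values on a symplectic basis is invariant under the $\SL_2(\ZZ)$-action. Local constancy is immediate because $\ind(\alpha)$ and $\ind(\beta)$ are integers that vary continuously as $(X,\xi)$ moves in $S$. This establishes the first assertion and shows that $\rot$ takes values among the divisors of the gcd.

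For surjectivity and the final assertion I would realize each divisor by a single bubbling. In genus zero there are no handle cycles, so the rotation number of a genus zero differential is just the gcd of its singularity orders; bubbling a handle with invariant $s$ creates a new symplectic pair whose index in the quotient circle is governed by the angular partition $s,\,n_0+2k-s$ of the cone angle at the zero. Given the final signature with $\sum_{i=1}^r n_i>k$, the genus zero stratum $\komoduli[0](\sum_{i=1}^r n_i-2k,-l_1,\dots,-l_s)$ is nonempty and its unique zero is metric (as $\sum_{i=1}^r n_i-2k>-k$); bubbling a handle there with invariant $s$ and breaking the resulting zero of order $\sum_{i=1}^r n_i$ into $n_1,\dots,n_r$ yields a differential of signature $\mu$ with rotation number $\gcd(s,n_1,\dots,l_s)$. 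Choosing $s=d$ then realizes the rotation number $d$ for each divisor $d$ of the gcd, proving surjectivity; combined with the cardinality argument this gives completeness of $\rot$, the excluded value $d=n$ for $\komoduli[1](n,-n)$ being inherited from the abelian case. That every component arises from these operations follows at once, since the construction reaches a representative of each rotation number, while Corollary~\ref{cor:grow} and Proposition~\ref{prop:oplus} ensure that admissible invariants $s$ sharing the same value of $\gcd(s,n_1,\dots,l_s)$ lie in one component.

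The step I expect to be the main obstacle is the bookkeeping in the $\tfrac1k$-translation structure underlying the two index computations---that crossing a singularity of order $m$ shifts $\ind$ by $m$, and that bubbling with invariant $s$ produces rotation number $\gcd(s,n_1,\dots,l_s)$---since both require tracking the turning of the tangent direction in the quotient circle $S^1/\langle\exp(\tfrac{2\pi{\rm i}}{k})\rangle$ rather than in $S^1$, so that Boissy's $k=1$ formulas must be correctly rescaled by the factor $k$. Once these are pinned down, completeness of $\rot$ comes for free from the cardinality count rather than from a direct injectivity argument, and the realizability by operations reduces to the angular-partition description of bubbling already developed in Section~\ref{sec:operations}.
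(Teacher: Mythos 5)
Your overall route is the same as the paper's: establish invariance of $\rot$ by observing that the index of a loop jumps by the singularity order when the loop is swept across a singularity, realize each divisor $d$ of the gcd by bubbling a handle with invariant $s=d$ on a genus zero differential and then breaking up the resulting zero, and deduce uniqueness by counting against Boissy's classification (which transfers because the canonical bundle of a genus one curve is trivial). Your formula $\rot=\gcd(s,n_1,\dots,l_s)$ after bubbling agrees with the paper's computation of the indices of the core curve of the bubbled cylinder (index $0$) and its dual curve (index $s$), and the index bookkeeping you flag as the main obstacle is exactly what the paper carries out.

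There is, however, a genuine gap: your surjectivity argument is confined to $\sum_{i=1}^r n_i>k$, since only then is the singularity of order $\sum n_i-2k$ in the genus zero stratum a metric zero at which bubbling is defined; for $\sum n_i\leq k$ it is a metric pole and the operation is unavailable. The second assertion of the theorem is not restricted to $\sum n_i>k$, and the low-order range is nonvacuous for $k\geq 2$: for example $\komoduli[1](4,-4)$ with $k\geq 5$ has two components (rotation numbers $1$ and $2$) that must both be realized, and likewise $\komoduli[1](n,-n)$ for any composite $2\leq n\leq k$, or signatures such as $(2,2,-4)$ for large $k$. Without surjectivity in these cases your cardinality count yields neither realizability nor uniqueness there. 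The paper closes this range by exhibiting explicit flat polygonal models (Figures~\ref{fig:k-k} and~\ref{fig:n-n}) of differentials in $\komoduli[1](k,-k)$ and $\komoduli[1](n,-n)$ with $2\leq n<k$ realizing every admissible rotation number; you would need to supply such a direct construction (or argue via the torsion number as in Proposition~\ref{prop:rotalg}) to complete the proof.
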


\begin{proof}
The proof of the first claim that the rotation number is an invariant for any connected component of $S$ is the same as in \cite{boissymero}. Indeed it suffices to observe that when crossing a singularity of order $n_i$ the index of a simple closed curve changes by adding $\pm n_i$. 

Next we prove the remaining claims about the realization of rotation numbers. For $n\geq k+1$ and $1\leq t \leq n-1$, apply the operation of bubbling a handle $\oplus t$ to a $k$-differential in the stratum $\komoduli[0](n-2k, -l_1, \ldots, -l_s)$ with $n = l_1 + \cdots + l_s$. We then obtain a $k$-differential in $\komoduli[1](n, -l_1, \ldots, -l_s)$. 
We can construct a symplectic basis for the first homology group of the underlying torus as follows.  Take $\alpha$ to be the core curve of the bubbled cylinder and $\beta$ to be the curve that goes through the non-separating node and turns around the newborn \metzero of order $n$. The indices of $\alpha$ and $\beta$ are respectively equal to $0$ and $t$ (or $n-t$). 
Now we break up the zero of order $n$ to $r$ \metzeros of order $n_1, \ldots, n_r$. The indices of $\alpha$ and $\beta$ change continuously, and hence they remain unchanged. 
By taking $t$ to be any positive divisor $d$ of $\gcd (n_1, \ldots, n_r, l_1, \ldots, l_s)$ (with $d < n$), we thus obtain a $k$-differential of rotation number $d$ in $\komoduli[1](n_1,\ldots, n_r, -l_1, \ldots, -l_s)$ when $n = \sum_{i=1}^r n_i > k$. Note that the rotation number $n$ never arises in this way. 

For the case $n= k$ one can check it directly by using Figure~\ref{fig:k-k}, where $\alpha$ and $\beta$ can be chosen as closed paths connecting the middle points of the saddle connections $1$ and~$2$ respectively and both have index $d$ for any $1 \leq d< k$ and $d\mid k$. Similarly by using Figure~\ref{fig:n-n} one can check it directly for the case $2\leq n <k$.  
 \begin{figure}[htb]
\begin{tikzpicture}[scale=1.1]

\fill[fill=black!10] (0,0) --(4,2) -- (8,0)  -- ++(0,2.4) -- ++(-8,0) -- cycle;

      \draw (0,0) coordinate (a1) -- node [below,sloped] {$1$} (2,1) coordinate (a2) -- node [below,sloped] {$2$} (4,2) coordinate (a3) -- node [above,sloped,rotate=180] {$1$} (6,1) coordinate (a4) -- node [above,sloped,rotate=180] {$2$} (8,0)
coordinate (a5);
  \foreach \i in {1,2,...,5}
  \fill (a\i) circle (2pt);

  \draw (a1)-- ++(0,2.5)coordinate[pos=.6](b);
    \draw (a5)-- ++(0,2.5)coordinate[pos=.6](c);
    \node[left] at (b) {$3$};
     \node[right] at (c) {$3$};
  \draw[->] (3.6,1.8) arc (205:-20:.4); \node at (4,2.7) {$d\frac{2\pi}{k}$};
\end{tikzpicture}
\caption{A $k$-differential in $\Omega^{k}\moduli[1](k,-k)$ of rotation number $d$.}
\label{fig:k-k}
\end{figure}
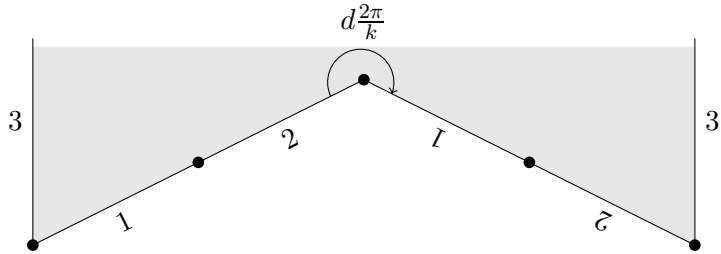

 \begin{figure}[htb]
\begin{tikzpicture}[scale=1.5]
\filldraw[fill=black!10] (0,0) coordinate (a1) --node [below,sloped] {$1$} (1.5,.25)coordinate (a2)--node [below,sloped] {$2$} (3,.5)coordinate (a3)--node [above,sloped,rotate=180] {$1$} (4.5,.25)coordinate (a4) --node [above,sloped,rotate=180] {$2$} (6,0) coordinate (a5) --node [below,sloped,rotate=180] {$3$}(3,2) coordinate (a6)  -- node [above,sloped] {$3$} (a1);
  \foreach \i in {1,2,...,5}
  \fill (a\i) circle (2pt);
    \fill[white] (a6) circle (2pt);\draw (a6) circle (2pt);

    \draw[->] (2.8,.46) arc (205:-20:.2); \node at (3.5,.7) {$d\frac{2\pi}{k}$};
  \draw[->] (2.8,1.87) arc (205:340:.2); \node at (2.8,1.5) {$n\frac{2\pi}{k}$};
\end{tikzpicture}
\caption{A $k$-differential in $\Omega^{k}\moduli[1](n,-n)$  of rotation number $d$ for $2\leq n < k$.}
\label{fig:n-n}
\end{figure}
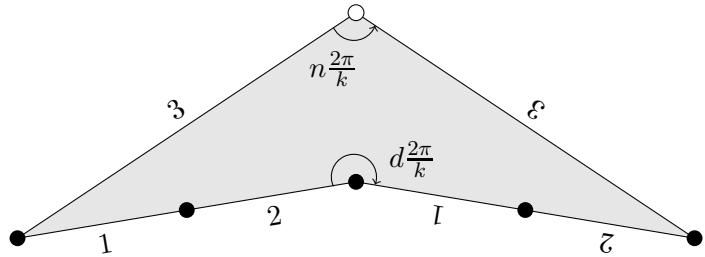
The previous discussion provides at least as many connected components as the number given in \cite[Theorem 4.1]{boissymero}. Therefore, we conclude that each rotation number is realizable by a unique connected component, which moreover can be realized by the operations of bubbling a handle and breaking up the resulting zero when the total zero order $n > k$. 
\end{proof}

We remark that connected components of the strata in genus one can also be classified by another invariant from the algebraic viewpoint (see~\cite[Section 3.2]{chco}).  To define it, let $(X, \xi)$ be a $k$-differential in a stratum $ \komoduli[1](\mu)$ with 
$\mu = (m_1, \ldots, m_n)$ such that $\sum_{i=1}^n m_i = 0$.  Denote by $\gcd (\mu) = \gcd (m_1,\ldots, m_n)$.  Let $d$ be a positive divisor of $\gcd (\mu)$ (except that $d = n$ is not allowed for $\mu = (n, -n)$).  We say that $(X, \xi)$ has \emph{torsion number} $d$, if $d$ is the largest integer such that $\sum_{i=1}^n (m_i / d) z_i \sim 0$ in $X$ (i.e. $\sum_{i=1}^n (m_i / d) z_i$ represents the trivial divisor class).  Then there is a one-to-one correspondence between the connected components of $ \komoduli[1](\mu)$ and the loci of $(X, \xi)$ with fixed torsion numbers.  

It is natural to ask whether the torsion number coincides with the rotation number. This was indeed confirmed in~\cite[Section 3.4]{tachamb} for abelian differentials. Herein we adapt the same argument and generalize it to higher $k$.  

\begin{prop}
\label{prop:rotalg}
 The rotation number coincides with the torsion number for $k$-differentials on genus one curves.  
\end{prop}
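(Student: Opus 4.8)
The plan is to realize both invariants explicitly on $X=\CC/\Lambda$ with $\Lambda=\ZZ\omega_1+\ZZ\omega_2$, and to show that each equals $\gcd(a,b,\gcd(\mu))$ for integers $a,b$ defined as follows. Since the canonical bundle is trivial, write $\xi=f\,(dz)^k$ for an elliptic function $f$ (unique up to scale) with $\divisor{f}=\sum_i m_i z_i=\divisor{\xi}$, where $\mu=(m_1,\dots,m_n)$. As $\divisor{f}$ is principal, Abel's theorem gives $\sum_i m_i z_i\equiv 0 \bmod\Lambda$; fixing lifts $z_i\in\CC$ we obtain a lattice vector $\sum_i m_i z_i=a\omega_1+b\omega_2$. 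First I would record that $\gcd(a,b,\gcd(\mu))$ is independent of the lifts, since replacing $z_i$ by $z_i+\lambda$ alters $(a,b)$ by a multiple of $m_i$. Then, because $\sum_i(m_i/d)z_i\sim 0$ holds precisely when $d\mid m_i$ for all $i$ and $\tfrac1d(a\omega_1+b\omega_2)\in\Lambda$, the torsion number is the largest such $d$, namely $\gcd(a,b,\gcd(\mu))$. The exceptional stratum $\komoduli[1](n,-n)$ is handled automatically: $d=n$ would force $z_1-z_2\in\Lambda$, i.e. $z_1=z_2$, which is excluded.

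Next I would compute the index. For a loop $\gamma$ avoiding the singularities, the direction of $\gamma'$ in the $\tfrac1k$-translation structure is $\arg\!\big(f^{1/k}\gamma'\big)$, well defined modulo $\tfrac{2\pi}{k}$, so that $\ind(\gamma)=\tfrac{k}{2\pi}\oint_\gamma d\arg(f^{1/k}\gamma')$. Splitting $\arg(f^{1/k}\gamma')=\tfrac1k\arg f+\arg\gamma'$ gives
\[
\ind(\gamma)=\frac{1}{2\pi}\,\Im\oint_\gamma d\log f\;+\;\frac{k}{2\pi}\oint_\gamma d\arg\gamma',
\]
where the first term is independent of $k$; this is exactly why the argument of \cite{tachamb} carries over. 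Choosing $\alpha,\beta$ to be straight segments in the $z$-coordinate representing the two generators of $\Lambda$ (perturbed to avoid the finitely many singularities), the tangent $\gamma'$ is constant, so the turning term vanishes and $\ind(\alpha)=\tfrac{1}{2\pi}\Im\oint_\alpha d\log f$, and likewise for $\beta$. As noted in the proof of Theorem~\ref{thm:compGenreUn}, crossing a singularity of order $m_i$ changes an index by $\pm m_i$, so the value $\gcd(\ind(\alpha),\ind(\beta),\gcd(\mu))$ does not depend on these choices.

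It then remains to identify the periods of $d\log f$. Writing $p=\tfrac{1}{2\pi i}\oint_\alpha d\log f$ and $q=\tfrac{1}{2\pi i}\oint_\beta d\log f$, which are integers because $f$ is single valued, I would integrate $z\,d\log f$ over the boundary of a fundamental parallelogram and pair opposite sides (using the $\Lambda$-periodicity of $d\log f$) to obtain the classical identity $\sum_i m_i z_i=q\,\omega_1-p\,\omega_2$, up to the orientation conventions for $\alpha,\beta$. Comparing with $\sum_i m_i z_i=a\omega_1+b\omega_2$ yields $a=q$ and $b=-p$, hence $\ind(\alpha)=p=-b$ and $\ind(\beta)=q=a$. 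Therefore $\rot(X,\xi)=\gcd(-b,a,\gcd(\mu))=\gcd(a,b,\gcd(\mu))$, which is precisely the torsion number computed above.

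The main obstacle is the bookkeeping in the index formula: one must treat the $k$-th-root multivaluedness of $f^{1/k}$ consistently, so that the direction is genuinely $\tfrac{2\pi}{k}$-periodic and the degree normalization $\tfrac{k}{2\pi}$ is correct, and one must check that the turning term truly vanishes for the chosen representatives while justifying that perturbing $\alpha,\beta$ off the singular set does not affect the relevant gcd. Once $\ind(\gamma)=\tfrac1{2\pi}\Im\oint_\gamma d\log f$ is established for these representatives, the remaining steps are the Abel-type period computation and a routine gcd comparison.
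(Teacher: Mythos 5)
Your proof is correct, but it follows a genuinely different route from the paper's. You compute both invariants explicitly in terms of Abel--Jacobi data: writing $\xi=f\,(dz)^k$ on $\CC/\Lambda$ with $\sum_i m_i z_i=a\omega_1+b\omega_2\in\Lambda$, you identify the torsion number as $\gcd(a,b,\gcd(\mu))$, and then, by splitting the winding of $\arg(f^{1/k}\gamma')$ into the winding of $\arg f$ plus the turning of $\gamma'$ (which vanishes for straight representatives) and invoking the classical period identity $\sum_i m_i z_i=q\,\omega_1-p\,\omega_2$ with $(p,q)=\tfrac{1}{2\pi i}\left(\oint_\alpha d\log f,\oint_\beta d\log f\right)$, you show the rotation number is the same gcd. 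The paper instead proves only the divisibility statement --- the torsion number divides the rotation number --- by writing $\xi=f^d(dz)^k$ where $f(dz)^k$ has signature $\mu/d$ and using the argument principle to see that indices are multiplied by $d$; equality is then deduced from the fact that both invariants put the connected components of $\komoduli[1](\mu)$ in bijection with the admissible divisors of $\gcd(\mu)$ (implicitly, a permutation $\sigma$ of a finite set of positive integers with $d\mid\sigma(d)$ for all $d$ must be the identity). Your argument buys self-containedness --- it does not lean on Theorem~\ref{thm:compGenreUn} or on the algebraic torsion-number classification recalled before the proposition --- and it produces an explicit formula for both invariants; the paper's argument is shorter because it recycles the component counts already established. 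The bookkeeping points you flag all check out: the direction is genuinely $2\pi/k$-periodic so the degree normalization $k/2\pi$ is correct, and perturbing $\alpha,\beta$ off the singular set only changes indices by integer combinations of the $m_i$, which the gcd absorbs.
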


\begin{proof}
 If  $\gcd(\mu) = 1$, then the stratum $\komoduli[1](\mu)$ is irreducible, and by definition the rotation number and the torsion number are both equal to~$1$ in this case. In general, suppose that $(X,\xi) \in \komoduli[1](\mu)$ has torsion number $d$. It implies that 
 there exists a $k$-differential $(X,\xi')$  in the stratum $\komoduli[1](\mu/d)$ with the same underlying pointed curve $X$. Moreover, up to scaling we can write $\xi'=f(z)(dz)^{k}$ and $\xi=f(z)^d (dz)^{k}$ with~$f$ a locally meromorphic function. By Cauchy's argument principle, the indices of~$\alpha$ and $\beta$ relative to the metric induced by $\xi$ is $d$ times the indices of these cycles relative to the metric induced by~$\xi'$. It implies that the rotation number of $(X,\xi)$ is $d$ times the rotation number of $(X,\xi')$, and hence the rotation number of $(X,\xi)$ is divisible by its torsion number.  The desired claim then follows from the fact that there is a unique connected component of $\komoduli[1](\mu)$ for each $d$, regardless of $d$ being the rotation number or the torsion number.   
\end{proof}

 As a corollary of Theorem~\ref{thm:compGenreUn} we obtain the following useful result, which generalizes \cite[Proposition 4.4]{boissymero}.
 \begin{lm}[The gcd-trick]
 \label{lm:oplusg1}
  Let $S=\komoduli[0](n,-l_{1},\dots,-l_{s})$ be a stratum of genus zero $k$-differentials with a \metzero of order $n$. If $\gcd(s_{1},l_{1},\dots,l_{s})=\gcd(s_{2},l_{1},\dots,l_{s})$, then $S\oplus s_{1} = S \oplus s_{2}$. 
 \end{lm}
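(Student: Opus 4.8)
The plan is to reduce the statement to the genus one classification of Theorem~\ref{thm:compGenreUn} via the rotation number. First I would observe that bubbling a handle at the metric zero of order $n$ sends the genus zero stratum $S$ (which is irreducible, hence connected, being a moduli space of pointed $\PP^1$) into the genus one stratum $\komoduli[1](n+2k, -l_1, \ldots, -l_s)$, so that by Definition~\ref{def:oplus} each $S \oplus s_i$ is a well-defined connected component of this genus one stratum. Since a genus zero $k$-differential is never a $k$-th power of a holomorphic differential (there are no holomorphic abelian differentials on $\PP^1$), Proposition~\ref{prop:bubbpos2} guarantees that $\oplus s$ is realizable for every $s$ in the admissible range $\{1, \ldots, n+2k-1\}$, so the two operations in question make sense.

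The crux is then to compute the rotation number of a differential produced by $S \oplus s$. I would build the symplectic basis $(\alpha, \beta)$ of $H_1$ of the bubbled torus exactly as in the proof of Theorem~\ref{thm:compGenreUn}: take $\alpha$ to be the core curve of the bubbled cylinder and $\beta$ the curve passing through the non-separating node and encircling the newborn zero of order $n+2k$. The same index bookkeeping as there gives $\ind(\alpha)=0$ and $\ind(\beta)=s$. Hence
\[
\rot(S \oplus s) = \gcd\bigl(\ind(\alpha), \ind(\beta), n+2k, l_1, \ldots, l_s\bigr) = \gcd\bigl(s, n+2k, l_1, \ldots, l_s\bigr).
\]
The key simplification is the genus zero constraint $n + 2k = l_1 + \cdots + l_s$, which forces $\gcd(l_1, \ldots, l_s) \mid (n+2k)$ and therefore lets me drop the redundant entry, yielding $\rot(S \oplus s) = \gcd(s, l_1, \ldots, l_s)$.

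Finally, I would invoke Theorem~\ref{thm:compGenreUn}, which asserts that the rotation number is a complete invariant distinguishing the connected components of a genus one stratum. Under the hypothesis $\gcd(s_1, l_1, \ldots, l_s) = \gcd(s_2, l_1, \ldots, l_s)$, the two components $S \oplus s_1$ and $S \oplus s_2$ carry the same rotation number and hence coincide.

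I expect the only delicate point to be the verification that $\ind(\beta) = s$ up to the symmetry $s \mapsto n+2k-s$ of Proposition~\ref{prop:oplus}(i); this is precisely the index computation in the proof of Theorem~\ref{thm:compGenreUn}, and the $\gcd$ is insensitive to the replacement $s \leftrightarrow n+2k-s$ because $n+2k$ is divisible by $\gcd(l_1, \ldots, l_s)$. One should also confirm that the exceptional case $\komoduli[1](n,-n)$ of Theorem~\ref{thm:compGenreUn}, where the top divisor is not attained, never obstructs the argument, since the realized value $\gcd(s, l_1, \ldots, l_s)$ is a proper divisor of $n+2k$ for every admissible $s$.
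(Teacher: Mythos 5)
Your proposal is correct and follows essentially the same route as the paper: both compute the rotation number of the bubbled genus one differential via the symplectic basis $(\alpha,\beta)$ with indices $0$ and $s_i$ (or $n+2k-s_i=\sum_j l_j - s_i$), observe that the gcd hypothesis forces equal rotation numbers, and conclude by Theorem~\ref{thm:compGenreUn}. The extra checks you flag (realizability via Proposition~\ref{prop:bubbpos2} and the exceptional stratum $\komoduli[1](n,-n)$) are sound but not needed in the paper's terser argument.
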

 
 \begin{proof}
 Take two $k$-differentials out of the two bubbling operations respectively. 
 Take $\alpha$ to be the core curve of the bubbled cylinder and $\beta$ to be the curve that goes through the non-separating node and turns around the newborn \metzero of order $n+2k$. The indices of $\alpha$ and $\beta$ are respectively equal to $0$ and $s_{i}$ (or $n+ 2k - s_{i} = \sum_{j=1}^s l_j - s_i$). Hence by the $\gcd$ assumption both $k$-differentials after the two bubbling operations have the same rotation number. The claim thus follows from Theorem~\ref{thm:compGenreUn}. 
 \end{proof}
 
In the same vein, the following result will be useful for classifying connected components of the strata. 
  \begin{lm} \label{lm:oplusg2}
  Let $\calC$ be a connected component of $\komoduli[g](n,-l_{1},\dots,-l_{s})$ with a \metzero of order $n$ and $s\geq 1$. If $\gcd(s_{1},n+2k)=\gcd(s_{2},n+2k)$, then $\calC\oplus s_{1} = \calC \oplus s_{2}$. 
 \end{lm}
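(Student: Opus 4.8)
The plan is to reduce the statement to the genus-one classification of Theorem~\ref{thm:compGenreUn} by factoring the bubbling operation through a genus-one ``bubble.'' Recall that $\calC\oplus s$ is realized by gluing the nodal rational curve $(X_2,\xi_2)$ (with $N_1\sim N_2$ and bubbling invariant $s$) to a representative $(X_1,\xi_1)\in\calC$ at the node $z_0\sim p_0$, and then smoothing; here the unique prong-matching at this node is guaranteed by Corollary~\ref{cor:uniquepm}. First I would smooth only the node $N_1\sim N_2$ of $X_2$, obtaining a genuine genus-one $k$-differential $E_s$ in the stratum $\komoduli[1](n+2k,-(n+2k))$: its zero $z$ has order $n+2k$ and its pole $p_0$ (still to be glued to $z_0$) has order $-(n+2k)$. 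Note that the prong numbers $n+k$ at the two branches of $z_0\sim p_0$ agree, consistent with the unique prong-matching there.

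The key computation is that the rotation number of $E_s$ equals $\gcd(s,n+2k)$. Indeed, let $\alpha$ be the core curve of the handle created by smoothing $N_1\sim N_2$ and $\beta$ the dual curve passing through the vanishing node. Exactly as in the proof of Theorem~\ref{thm:compGenreUn}, one has $\ind(\alpha)=0$ and $\ind(\beta)=s$ (or $n+2k-s$, which does not affect the gcd), so that
$$\rot(E_s)=\gcd\bigl(\ind(\alpha),\ind(\beta),n+2k\bigr)=\gcd(s,n+2k).$$
Since $1\leq s\leq n+2k-1$, this value is a proper divisor of $n+2k$, so the rotation number excluded for $\komoduli[1](n+2k,-(n+2k))$ in Theorem~\ref{thm:compGenreUn} never occurs here.

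Next I would argue that the component $\calC\oplus s$ depends only on $\calC$ and on the connected component of $E_s$ in $\komoduli[1](n+2k,-(n+2k))$. The smoothing of the multi-scale differential can be carried out in two stages, first the node $N_1\sim N_2$ and then the node $z_0\sim p_0$; by the smoothness of the moduli space of \msds from \cite{CMZarea}, the fully smoothed differential lies in one and the same connected component irrespective of the order of smoothing. As we transport $E_s$ along a continuous path inside its connected component and simultaneously move $(X_1,\xi_1)$ inside $\calC$, the glued two-level multi-scale differential (with its unique prong-matching at $z_0\sim p_0$) varies continuously, and by the argument of Lemma~\ref{lem:invarianceprong} its smoothing remains within a single connected component. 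Here the hypothesis $s\geq 1$ is used: it guarantees a pole on the upper component, so that the global $k$-residue condition of Proposition~\ref{prop:GRCk} holds throughout the family and the bubbling stays realizable. Finally, since $\gcd(s_1,n+2k)=\gcd(s_2,n+2k)$, the differentials $E_{s_1}$ and $E_{s_2}$ share the same rotation number, hence by Theorem~\ref{thm:compGenreUn} lie in the same connected component of $\komoduli[1](n+2k,-(n+2k))$; combined with the previous step this yields $\calC\oplus s_1=\calC\oplus s_2$.

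The main obstacle is the middle step: making rigorous that the smoothing of the glued multi-scale differential \emph{factors through} the connected component of the genus-one bubble $E_s$, rather than depending on the full choice of $(X_2,\xi_2)$. This is the continuity-and-invariance argument, and it must be accompanied by a check that smoothability (the global residue condition) persists along the whole family obtained by dragging $E_s$ through its connected component, which is precisely where $s\geq 1$ enters.
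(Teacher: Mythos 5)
Your proposal is correct and takes essentially the same route as the paper: smooth the non-separating node of $(X_2,\xi_2)$ first to obtain a genus-one $k$-differential in $\komoduli[1](n+2k,-n-2k)$, identify its connected component via the rotation number $\gcd(s_i,n+2k)$ using Theorem~\ref{thm:compGenreUn}, and then connect the two resulting \mskds by a path. The ``middle step'' you flag as the main obstacle is handled in the paper exactly as you suggest, by noting that the pole on $X_1$ (guaranteed by $s\geq 1$) makes the global $k$-residue condition automatic and then invoking the smoothness of the moduli space of \mskds as in Lemma~\ref{lem:invarianceprong}.
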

\begin{proof}
 In the bubbling operation (see Figure~\ref{cap:bubbling}), we can first smooth the non-separating node of $(X_{2},\xi_{2})$ to obtain a genus one $k$-differential $(X_{2}',\xi_{2}')$ in $\komoduli[1](n+2k,-n-2k)$. By the $\gcd$ assumption, such $k$-differentials $(X_{2}',\xi_{2}')$ for the operations $\oplus s_i$ for $i = 1, 2$ have the same rotation number. Hence they belong to the same connected component of the stratum $\komoduli[1](n+2k,-n,-2k)$. Moreover, since the top level components $(X_1, \xi_1)$ contain poles, they are not $k$-th powers of holomorphic differentials. Hence the global $k$-residue condition is satisfied, which implies that these \mskds are smoothable. Therefore, the  $k$-differentials obtained by smoothing further the separating node between $X_1$ and $X'_2$ can be connected by a continuous path in the space of \mskds of signature $(n + 2k,-l_{1},\dots,-l_{s})$. This implies that they belong to the same connected component of the corresponding stratum. 
\end{proof}

%\label{sec:op}

%%%%%%%%%%%%%%%%%%
\section{Hyperelliptic components of the strata of $k$-differentials}
\label{sec:hyp-k}
%%%%%%%%%%%%%%%%%%

In this section we define and classify hyperelliptic components of the strata of $k$-differentials.  We begin with extending the known definition of hyperelliptic components from the case of abelian and quadratic differentials to all $k$-differentials. 

\begin{df}
\label{def:hyp}
A $k$-differential $(X,\xi)$ is called {\em hyperelliptic} if $X$ is a hyperelliptic curve and $\xi$ is $(-1)^{k}$-invariant under the hyperelliptic involution. 
 A connected component of the strata of $k$-differentials is called a {\em hyperelliptic component} if every $k$-differential $(X,\xi)$ in this component is hyperelliptic. 
\end{df}

In affine coordinates a hyperelliptic curve $X$ of genus $g$ can be represented by the equation $x^2 = (y-y_1)\cdots (y-y_{2g+2})$ with $y_1, \ldots, y_{2g+2}$ distinct fixed points. The map $(x, y) \mapsto y $ is the hyperelliptic double cover and $(x, y) \mapsto (-x, y)$ is the hyperelliptic involution~$\iota$. The points $w_i = (0, y_i)$ for $i = 1,\ldots, 2g+2$ give the $2g+2$ Weierstrass points of~$X$.  For a $k$-differential $\xi$ on $X$ satisfying that $\iota^{*}\xi = (-1)^k \xi$, if $p$ is a singularity of $\xi$ which is not a Weierstrass point, then clearly the conjugate $p' = \iota (p)$ has to be a singularity of $\xi$ with the same order.  If a Weierstrass point is a singularity of order~$n$, since 
$\iota^{*} x^n (dx)^k = (-1)^{n+k} x^n (dx)^k$, for $\xi$ to be $(-1)^{k}$-invariant we conclude that $n$ must be even. Therefore, the associated $k$-canonical divisor of a hyperelliptic $k$-differential $(X, \xi)$ is of the form 
$$\sum_{i=1}^r 2m_i w_i + \sum_{j=1}^s l_j (p_j + p'_j)$$ 
where $p_j$ and $p'_j$ are hyperelliptic conjugates. Conversely, such a $k$-canonical divisor determines a hyperelliptic $k$-differential up to a scalar multiple.   

Note that the locus of hyperelliptic $k$-differentials with a given signature can be a lower dimensional subspace in the corresponding stratum.  In order to obtain a hyperelliptic component, we need to check that the dimension of the locus of hyperelliptic $k$-differentials with a given signature equals the total dimension of the corresponding stratum.  Using this strategy we can classify hyperelliptic components of the strata of $k$-differentials as follows, thus proving Theorem~\ref{thm:hyp-intro}.   

\begin{thm}
\label{thm:hyp}
 Let $\mu=(2m_{1},\ldots,2m_{r}, l_{1},l_{1},\ldots,l_{s},l_{s})$ be a partition of $k(2g-2)$ (with possibly negative entries). Then the stratum $\komoduli[g](\mu)$ has a hyperelliptic component if and only if $\mu$ is one of the following types: 
 \begin{itemize}
  \item $\mu=(2m_{1},2m_{2})$ with one of the $m_i$ being negative, or $m_1, m_2 > 0$ and $k\nmid \gcd (m_1, m_2)$,
 \item $\mu = (2m, l, l)$ with $m$ or $l$ negative, or $m, l > 0$ and $k\nmid \gcd (m,l)$,
 \item $\mu = (l_1, l_1, l_2, l_2)$ with some $l_i < 0$, or $l_1, l_2 > 0$ and $k\nmid \gcd (l_1, l_2)$,
 \item $\mu = (k(2g-2))$,
 \item $\mu = (k(g-1), k(g-1))$.
 \end{itemize}
\end{thm}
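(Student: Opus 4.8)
The plan is to prove both directions by the dimension count announced just before the statement: the hyperelliptic locus $\komoduli[g](\mu)^{\hyp}$ is a connected component exactly when it is equidimensional with the part of $\komoduli[g](\mu)$ containing its generic point. First I would compute $\dim\komoduli[g](\mu)^{\hyp}$. Passing to the quotient $\pi\colon X\to\PP^1=X/\iota$, a hyperelliptic $k$-differential is cut out by its $\iota$-invariant $k$-canonical divisor $\sum 2m_i w_i+\sum l_j(p_j+p_j')$; since $2w_i\sim H$ and $p_j+p_j'\sim H$ (with $H$ the hyperelliptic pencil), the degree identity $\sum m_i+\sum l_j=k(g-1)$ shows this divisor is automatically $k$-canonical, so $\xi$ is determined up to scale by the curve together with the base points of the conjugate pairs. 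Parametrizing by the $2g+2$ branch points modulo $\operatorname{PGL}_2(\CC)$, the $s$ images $\pi(p_j)\in\PP^1$, and the overall scale gives
\[
\dim\komoduli[g](\mu)^{\hyp}=(2g+2)-3+s+1=2g+s,
\]
where the even-order Weierstrass singularities contribute nothing beyond the branch-point moduli.

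Next I would compare this with the local dimension of $\komoduli[g](\mu)$, and here the structure splits into two regimes. For $k\ge 2$ the generic (primitive) part has dimension $2g-2+n$ with $n=r+2s$, whereas if $\mu=k\mu'$ the locus of $k$-th powers of abelian differentials in $\Omega\calM_g(\mu')$ has the one-larger dimension coming from the abelian formula. If the generic hyperelliptic differential of signature $\mu$ is \emph{not} an abelian $k$-th power, then $\komoduli[g](\mu)^{\hyp}$ lies in the primitive-dimensional part, and it is a component iff $2g+s=2g-2+r+2s$, i.e. $r+s=2$; this yields exactly the three types $(2m_1,2m_2)$, $(2m,l,l)$, $(l_1,l_1,l_2,l_2)$, while $r+s\ge 3$ forces the hyperelliptic locus to be a proper subvariety. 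If instead the generic hyperelliptic differential is an abelian $k$-th power $\omega^k$, then $\komoduli[g](\mu)^{\hyp}$ is the image under the finite $k$-th power map of a hyperelliptic locus of abelian differentials, hence is a component iff the latter is a component of the corresponding abelian stratum. By the Kontsevich--Zorich classification the only abelian hyperelliptic components occur for $\Omega\calM_g(2g-2)$ and $\Omega\calM_g(g-1,g-1)$, producing precisely $\mu=(k(2g-2))$ and $\mu=(k(g-1),k(g-1))$.

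It then remains to decide, signature by signature, which regime applies, and this is where the arithmetic hypotheses ($k\nmid\gcd$ and the sign conditions) should enter. I would test whether the generic hyperelliptic $\xi$ admits a global $k$-th root by a divisor-class computation: such a root exists iff $\tfrac1k\operatorname{div}(\xi)$ is an integral divisor with $\tfrac1k\operatorname{div}(\xi)\sim K_X$, and reducing each term through $2w_i\sim H$ and $p_j+p_j'\sim H$ converts this into a divisibility and $2$-torsion condition on the half-orders $m_i,l_j$ relative to $k$. Carrying this out is expected to reproduce exactly the stated conditions as those keeping $\komoduli[g](\mu)^{\hyp}$ in the primitive regime (so that it is full-dimensional), and to show that when they fail the hyperelliptic differentials become powers and the locus is absorbed. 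One must also reconcile overlaps among the five types (for instance $(2m_1,2m_2)$ specializing to $(k(g-1),k(g-1))$ when $m_1=m_2$), which I would settle by direct inspection in the low-genus and degenerate signatures.

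The hard part is exactly this interaction with the locus of powers: because that locus is one dimension larger than the primitive part, a hyperelliptic locus consisting of $k$-th powers is swallowed by a strictly larger locus and fails to be a component, so the crux of the argument is the torsion/divisor-class bookkeeping that determines, for each $\mu$, whether the hyperelliptic differentials are generically primitive. Once a candidate locus is known to be closed, irreducible, and full-dimensional, I would invoke the smoothness of the moduli space of \msds from Section~\ref{sec:msd}: since distinct components cannot meet inside a smooth compactification, such a hyperelliptic locus is automatically a connected component, and conversely a proper subvariety cannot be one. This last step is what upgrades the dimension count into the sharp ``if and only if'' of the statement.
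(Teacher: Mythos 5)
Your skeleton is the paper's: compute $\dim {\rm Hyp}^k(\mu)=2g+s$ (unprojectivized), force $r+s\le 2$ by comparing with the minimal component dimension $2g-2+r+2s$, and then decide case by case whether the generic hyperelliptic differential is a $k$-th power of an abelian differential. But there is a concrete error in the second step that breaks all the meromorphic cases. You assert that whenever $\mu=k\mu'$ the locus of $k$-th powers of abelian differentials of signature $\mu'$ is one dimension larger than the primitive part. That is true only when $\mu'$ is \emph{holomorphic}; for a meromorphic abelian stratum the dimension is $2g-2+n$ (unprojectivized), exactly equal to that of the primitive locus of $k$-differentials. Consequently your ``power regime'' conclusion --- that a hyperelliptic locus consisting of abelian $k$-th powers is a component only for $\mu=(k(2g-2))$ and $(k(g-1),k(g-1))$, by Kontsevich--Zorich --- is false. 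Take $k=2$, $g=2$, $\mu=(8,-4)$: here $k\mid\gcd(m_1,m_2)$ and $4w_1-2w_2\sim K_X$, so \emph{every} hyperelliptic quadratic differential in this stratum is the square of an abelian differential in $\omoduli[2](4,-2)$; your argument places this in the power regime and, citing only the holomorphic classification, would deny the hyperelliptic component, whereas the first bullet of the theorem (one $m_i$ negative, with no divisibility hypothesis) asserts it exists. The same failure occurs for $(2m,l,l)$ and $(l_1,l_1,l_2,l_2)$ with negative entries all divisible by $k$.

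The paper sidesteps this entirely: as soon as a pole is present, every component of $\PP\komoduli(\mu)$ --- primitive or not --- has dimension $2g-3+r+2s$, so the closed irreducible locus ${\rm Hyp}^k(\mu)$ of that same dimension is automatically a component whenever $r+s=2$, and one never needs to decide primitivity in the meromorphic cases. If you insist on your regime split, you must supplement Kontsevich--Zorich with Boissy's classification of hyperelliptic components of \emph{meromorphic} abelian strata, which supplies exactly the cases your argument loses. A secondary point: the ``torsion/divisor-class bookkeeping'' that is supposed to produce the conditions $k\nmid\gcd$ is left as a plan; the actual content is only needed in the holomorphic cases and amounts to the observation that $(m_1/k)(2w_1)+(m_2/k)(2w_2)\sim(g-1)H=K_X$ when $k\mid\gcd(m_1,m_2)$ (and likewise via $p_j+p_j'\sim H$), so that the hyperelliptic locus then sits inside the strictly larger holomorphic power locus and fails to be a component, while $k\nmid\gcd$ keeps it generically primitive and full-dimensional. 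As written, that step is a correct expectation but not yet an argument.
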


\begin{proof}
We first assume that $g\geq 2$. Consider the space ${\rm Hyp}^k(\mu)$ parameterizing genus $g$ hyperelliptic curves with $k$-canonical divisors of the form
$$ \sum_{i=1}^r 2m_i w_i + \sum_{j=1}^s l_j (p_j + p'_j) $$
where $w_i$ are Weierstrass points and $p_j, p'_j$ are hyperelliptic conjugates. It is easy to see that ${\rm Hyp}^k(\mu)$ is irreducible with dimension
$$ \dim {\rm Hyp}^k(\mu) \= 2g - 1 + s. $$
Since every connected component of $\PP\komoduli(\mu)$ has dimension $\geq 2g-3 + r + 2s$ (see e.g.~\cite[Theorem 1]{BCGGM3}), if ${\rm Hyp}^k(\mu)$ gives a connected component of $\PP\komoduli(\mu)$, then we have 
$$ 2g - 1 + s \geq 2g-3 + r + 2s. $$
It implies that  $ r + s \leq 2$. We now treat each case separately.
\par
Consider the case $r=2$ and $s=0$, i.e., $\mu = (2m_1, 2m_2)$ where $m_1 + m_2 = k(g-1)$. If at least one of $m_1$ and $m_2$ is negative, or if
$m_1, m_2 > 0$ and $k\nmid \gcd (m_1, m_2)$, then
$$\dim \PP\komoduli(2m_1, 2m_2) \= 2g-1 \= \dim {\rm Hyp}^k(2m_1, 2m_2).$$
Hence ${\rm Hyp}^k(2m_1, 2m_2)$ gives rise to a connected component.
If $m_1, m_2 > 0$ and $k \mid \gcd (m_1, m_2)$, then
$\PP\komoduli(2m_1, 2m_2)$ has some component of dimension $2g$ arising from the $k$-th powers of abelian differentials of signature $(2m_1/k, 2m_2/k)$.
In this case any differential in ${\rm Hyp}^k(2m_1, 2m_2)$ is also the $k$-th power of an abelian differential, because
$(m_1/k) (2w_1) + (m_2/k)(2w_2)$ 
is a canonical divisor. 
Since the dimension of $ {\rm Hyp}^k(2m_1, 2m_2) $ is $ 2g-1 < 2g$, ${\rm Hyp}^k(2m_1, 2m_2)$ does not give a component. We remark that if $(2m_{1},2m_{2}) =(k(g-1),k(g-1))$, then there is a hyperelliptic component, but in our notation it is for the case $r=0$ and $s=1$ to be discussed later.  
\par
Consider $s=2$ and $r=0$, i.e., $\mu = (l_1, l_1, l_2, l_2)$. If some $l_i < 0$, or if $l_1, l_2 > 0$ and $k\nmid \gcd (l_1, l_2)$, then
$$\dim \PP\komoduli(l_1, l_1, l_2, l_2) \= 2g+1 \= \dim {\rm Hyp}^k(l_1, l_1, l_2, l_2). $$
Hence the locus ${\rm Hyp}^k(l_1, l_1, l_2, l_2)$ gives rise to a connected component. If $l_1, l_2 > 0$ and $k\mid \gcd (l_1, l_2)$, then
$ \PP\komoduli(l_1, l_1, l_2, l_2)$ has some component of dimension $2g+2$ arising from the $k$-th powers of abelian differentials of signature
$(l_1/k, l_1/k, l_2/k, l_2/k)$. In this case  any differential in ${\rm Hyp}^k(l_1, l_1, l_2, l_2)$ is also the $k$-th power of an abelian differential, because the divisor $(l_1/k) (p_1 + p'_1) + (l_2/k)(p_2 + p'_2)$ is canonical. 
By comparing dimensions it implies that ${\rm Hyp}^k(l_1, l_1, l_2, l_2)$ does not give a component in this case.
\par
Consider $s=r =1$, i.e., $\mu = (2m, l, l)$. If $m$ or $l$ is negative, or if $m, l > 0$ and $k\nmid \gcd (m,l)$, then
$$\dim \PP\komoduli(2m,l,l) \= 2g \= \dim {\rm Hyp}^k(2m,l,l). $$
Hence ${\rm Hyp}^k(2m,l,l)$ gives rise to a connected component.
If $m, l > 0$ and $k\mid \gcd (m,l)$, then
$ \PP\komoduli(2m,l,l)$ has some component of dimension $2g+1$ arising from the $k$-th powers of abelian differentials of signature
$(2m/k, l/k, l/k)$. In this case any differential in ${\rm Hyp}^k(2m,l,l)$ is also the $k$-th power of an abelian differential, because the divisor 
$ (m/k)(2w) + (l/k)(p+p')$ is canonical. 
Hence ${\rm Hyp}^k(2m,l,l)$ does not give a component.
\par
Consider $r=1$ and $s=0$, i.e., $\mu = (k(2g-2))$. In this case a differential in ${\rm Hyp}^k(k(2g-2))$ is the $k$-th power of an abelian differential of signature
$(2g-2)$, because $(2g-2)w$ is a canonical divisor. 
Hence ${\rm Hyp}^k(k(2g-2))$ can be identified with the hyperelliptic component of $\PP\omoduli{(2g-2)}$, giving rise to a connected component.
\par
Consider $s=1$ and $r = 0$, i.e., $\mu = (k(g-1), k(g-1))$. In this case any differential in ${\rm Hyp}^k(k(g-1), k(g-1))$ is the $k$-th power of an abelian differential of signature
$(g-1, g-1)$, because $(g-1)(p+p')$ is a canonical divisor. 
Hence ${\rm Hyp}^k(k(g-1), k(g-1))$ can be identified with the hyperelliptic component of $\PP\omoduli{(g-1,g-1)}$, giving rise to a connected component.
\par 
For the case $g = 1$, Weierstrass points can be interpreted as fixed points under the elliptic involution (i.e. $2$-torsion points on curves of genus one), and in this sense the above proof goes through without any change. Finally for the case $g = 0$, a double cover from $\PP^1$ to $\PP^1$ can be determined by specifying two ramification points in the domain, or one ramification point with a pair of conjugate points, or two pairs of conjugate points, which correspond to the types of signatures in the claim.    
\end{proof}
\par 
Recall the operation $\oplus$ introduced in Definition~\ref{def:oplus}. 
We conclude this section by analyzing when this operation gives hyperelliptic components. 
 \begin{lm}\label{lm:oplushyperell}
  Let $S$ be the hyperelliptic component of the stratum $\komoduli[g-1](2m_1, 2m_2)$ or $\komoduli[g-1](2m_1, l, l)$ listed in Theorem~\ref{thm:hyp} such that the singularity of order $2m_1$ is a metric zero. Then the hyperelliptic component of the stratum  $\komoduli(2m_1+2k, 2m_2)$ or $\komoduli(2m_1+2k, l, l)$ can be given by $S \oplus (m_1 + k)$.
 \end{lm}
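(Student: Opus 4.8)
The plan is to exhibit a single hyperelliptic $k$-differential inside the connected component $S\oplus(m_1+k)$ and then invoke the fact that the hyperelliptic locus is itself a connected component. By Lemma~\ref{lem:invarianceprong} the operation $\oplus(m_1+k)$ lands in a well-defined connected component $\calC$ of the target stratum, and this operation is realizable because $(X_1,\xi_1)$ is not the $k$-th power of a holomorphic differential (guaranteed by the defining conditions of its hyperelliptic type in Theorem~\ref{thm:hyp}), so Proposition~\ref{prop:bubbpos2} applies. The \metzero constraint $2m_1>-k$ forces the target signature to remain of hyperelliptic type, since one checks that the inequalities of Theorem~\ref{thm:hyp} are preserved under $m_1\mapsto m_1+k$ (for instance, if $-k/2<m_1<0$ then $0<m_1+k<k$, so $k\nmid(m_1+k)$ automatically). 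Hence the hyperelliptic locus $H$ of the target is a genuine connected component, and since two connected components of the same stratum are either disjoint or equal, it suffices to show $\calC\cap H\neq\emptyset$.

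To produce a point of $\calC\cap H$, I would start from a hyperelliptic $(X_1,\xi_1)\in S$ with involution $\iota$ and \metzero $z_0$ of order $2m_1$ at a \Weierstrass point, so $\iota(z_0)=z_0$. The value $s=m_1+k$ is exactly the self-paired value of the symmetry $s\mapsto(2m_1+2k)-s$ from Proposition~\ref{prop:oplus}(i), and the key observation is that this balanced choice is the flat-geometric shadow of an order-two symmetry of the genus-zero piece $(X_2,\xi_2)\cong(\PP^1,\xi_2)$. Concretely, the symmetric partition $\pi,\,(m_1+k)\tfrac{2\pi}{k},\,\pi,\,(m_1+k)\tfrac{2\pi}{k}$ of the cone angle at the new zero $z$ is invariant under rotation by half the total angle, and this rotation interchanges the two branches $N_1,N_2$ of the handle node while fixing both $z$ and the pole $p_0$. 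I would realize this rotation by an involution $\iota_2$ of $\PP^1$ with $\iota_2^*\xi_2=(-1)^k\xi_2$: at the fixed points $z$ and $p_0$ the invariance holds because both orders are even, and at the swapped poles $N_1,N_2$ the compatibility $\iota_2^*\xi_2=(-1)^k\xi_2$ is precisely the matching residue condition $R_1=(-1)^kR_2$ of Equation~\eqref{eq:matchres}.

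Next I would glue the two pieces. Since $\iota(z_0)=z_0$ and $\iota_2(p_0)=p_0$, the two involutions agree at the node $z_0\sim p_0$ and assemble into an involution of the whole \mskd $(X,\xi,\sigma)$ restricting to $\iota$ on $X_1$ and to $\iota_2$ on $X_2$, with $(-1)^k$-invariant $\xi$. The unique \kprma class at the single vertical edge (Corollary~\ref{cor:uniquepm}) is automatically preserved by this involution, so $(X,\xi,\sigma)$ is an equivariant multi-scale $k$-differential lying in the boundary of $H$. Smoothing it equivariantly — smoothing the vertical node $z_0\sim p_0$ performs the bubbling into the interior, while smoothing the horizontal handle node $N_1\sim N_2$ raises the genus by one, both carried out symmetrically — yields a nearby smooth hyperelliptic $k$-differential in the target stratum. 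By construction it lies in $\calC=S\oplus(m_1+k)$, whence $\calC\cap H\neq\emptyset$ and therefore $\calC=H$. The $(2m_1,l,l)$ case runs identically, with the conjugate pair $p,p'$ of order $l$ preserved by $\iota$ and carried along unchanged.

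The main obstacle, and the step deserving the most care, is pinning down the symmetric genus-zero model $(X_2,\xi_2,\iota_2)$ and verifying that the flat symmetry coming from $s_1=s_2=m_1+k$ genuinely coincides with an analytic involution satisfying $\iota_2^*\xi_2=(-1)^k\xi_2$ — in particular that the handle poles $N_1,N_2$ are interchanged compatibly, which is exactly where the matching residue condition enters decisively. Once this local symmetric picture is established, the gluing of the two involutions and the equivariance of the smoothing become routine, as does the final component-matching argument.
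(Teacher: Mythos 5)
Your proposal is correct, and its constructive core coincides with the paper's argument for the converse direction: both build the involution on the two-component \mskd by combining the hyperelliptic involution of $(X_1,\xi_1)$ with an involution of the symmetric genus-zero piece determined by the self-paired value $s=m_1+k$ of the symmetry $s\mapsto 2m_1+2k-s$, and then smooth equivariantly. Where you diverge is in the logical packaging. The paper proves \emph{necessity} first: for any hyperelliptic differential obtained by bubbling, the cycle $\alpha$ through the sector of angle $s\tfrac{2\pi}{k}$ and the bubbled cylinder has index $s$, while $-\alpha$ has index $2m_1+2k-s$, so $\iota_*\alpha=-\alpha$ forces $s=m_1+k$; it then proves sufficiency by exhibiting the involution. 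You skip necessity entirely and instead close the argument by checking that the target signature still satisfies the conditions of Theorem~\ref{thm:hyp} (so the hyperelliptic locus $H$ is a genuine component) and invoking that two intersecting connected components coincide. This is a clean and valid alternative for the statement as written; the one thing it does not deliver is the byproduct of the paper's necessity computation, namely that $S\oplus s$ is \emph{non}-hyperelliptic for every $s\neq m_1+k$, which the paper later uses (e.g.\ in the proof of Proposition~\ref{prop:joinhypandnonhyp}).

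Two small points deserve attention. First, an involution with $\iota^*\xi=(-1)^k\xi$ is not automatically the hyperelliptic involution; one must check that the quotient is rational, equivalently that there are $2g+2$ fixed points. The paper does this explicitly ($2g-1$ \Weierstrass points of $X_1$ away from $z_0$, the zero of $\xi_2$, the center of the bubbled cylinder, and the midpoint of the transversal saddle connection); your construction yields the same count since both quotients $X_1/\iota$ and $X_2/\iota_2$ are rational, but you should say so. Second, your realizability claim via Proposition~\ref{prop:bubbpos2} is fine for the types of $\mu$ in question (a negative entry rules out a holomorphic $k$-th power, and $k\nmid\gcd$ rules out a $k$-th power altogether), but it is worth recording that this is exactly why the exceptional case of bubbling does not intervene here.
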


 \begin{proof}
  Let $(X,\xi)$ be a hyperelliptic $k$-differential of genus $g$ obtained by the operation $\oplus s$ of bubbling a genus one differential 
  $(X_{2},\xi_{2})$ at the \metzero of order $2m_1$ of a genus $g-1$ hyperelliptic differential $(X_{1},\xi_{1})$ (see Figure~\ref{cap:bubbling}). The hyperelliptic involution $\iota$ acts on both $(X_{i},\xi_{i})$. Take a homology class $\alpha$ in $X_2$ represented by a closed path  that goes through the sector of angle $s \tfrac{2\pi}{k}$ and the bubbled cylinder, so that the index of $\alpha$ is~$s$. One can represent $-\alpha$ by another closed path that goes through the complementary sector of angle $(2m_1+2k-s) \tfrac{2\pi}{k}$ and the bubbled cylinder, so that the index of $-\alpha$ is $2m_1+2k-s$. Since $\iota_{*} \alpha = -\alpha$, we conclude that $s = 2m_1 + 2k - s$, hence $s = m_1 + k$. 
 
For the converse, it suffices to show that a $k$-differential obtained by applying the bubbling operation $\oplus (m_1+k)$ to a hyperelliptic $k$-differential of genus $g-1$ has an involution with $2g+2$ fixed points. This can be verified for the twisted $k$-differential $(X, \xi)$ as the nodal union of $(X_i, \xi_i)$ in the definition of bubbling a handle, and the smoothing process preserves this property (see \cite[Section~6]{gendron} for more details). In particular, $2g-1$ of the fixed points together with the separating node correspond to the Weierstrass points of $X_1$. The remaining three fixed points come from the zero of~$\xi_2$, the center of the bubbled cylinder, and the middle point of the saddle connection transversal to the boundary of the cylinder.  
 \end{proof}

 %\label{sec:hyp}

%%%%%%%%%%%%%%%%%%
\section{Parity of the strata of $k$-differentials} 
\label{sec:parity-k}
%%%%%%%%%%%%%%%%%%

In this section we define a parity for $k$-differentials and characterize in Theorem~\ref{thm:parite} the strata of $k$-differentials that have components distinguished by this parity invariant. 

\subsection{The parity}
\label{sec:rappel}

Recall from \cite{kozo1} (and \cite{boissymero} in the meromorphic case) that for a stratum $\omoduli(2m_{1},\dots,2m_{n})$ of abelian differentials with singularities of even order only, we can define an invariant in the following way. 
Given $(X,\omega)$ in this stratum, the {\em parity} of $\omega$ is defined as 
\be \label{eq:defpar}
\Phi(\omega) \coloneqq h^{0}\left(X,\ \tfrac{1}{2}\divisor{\omega} \right) \pmod{2}. 
\ee

Alternatively, let $(\alpha_{1},\dots,\alpha_{g},\beta_{1},\dots,\beta_{g})$ be a symplectic basis of $H_{1}(X,\ZZ/2)$ which does not meet the singularities of $\omega$. Then the {\em parity} of $\omega$ can also be defined as the parity of the Arf-invariant
\be \label{eq:defarf}
\Phi(\omega) \coloneqq \sum_{i=1}^{g}(\ind_{\omega}(\alpha_{i})+1)(\ind_{\omega}(\beta_{i})+1) \pmod{2}. 
\ee

The notion of parity was extended to quadratic differentials in 
\cite[Section~3.2]{lanneauspin}, and we now generalize it to $k$-differentials for all $k$. Let $\komoduli(\mu)$ be a stratum of $k$-differentials with $\mu=(m_{1},\dots,m_{s},-l_1,\dots, -l_r)$. Given a (primitive) $k$-differential $(X,\xi)$ in this stratum, we denote by $(\whX,\whomega)$ the (connected) canonical $k$-cover of $(X,\xi)$ (see \cite{BCGGM3}). For any integer $m$, we define  $\wh m =\tfrac{m+k}{\gcd(m,k)}-1$. A singularity of $\xi$ of order $m$ gives rise to $\gcd(m,k)$ singularities of order $\wh m$ of $\whomega$. 
If $\wh m_{i}$ and $\wh{-l_{j}}$ are even for all $i$ and $j$, then the {\em parity} $\Phi(\xi)$ of $(X,\xi)$ is defined to be the parity of the canonical cover $(\whX,\whomega)$. 

The following result describes when the canonical cover of a $k$-differential has singularities of even order only. The proof is an easy computation left to the reader. We use the $2$-adic valuation of $n$ as the highest exponent $v_{2}(n)$ such that $2^{v_{2}(n)}$ divides $n$.
\begin{prop}\label{prop:partype}
 Let $(X,\xi)$ be a $k$-differential in the stratum $\komoduli(\mu)$. Then the canonical cover of $(X,\xi)$ has only even order singularities if and only if the $2$-adic valuation of every entry of $\mu$ is not equal to $v_{2}(k)$. 
\end{prop}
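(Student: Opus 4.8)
The plan is to reduce the statement to a purely arithmetic fact about the parity of the exponent $\wh m$ attached to a single entry $m$ of $\mu$, and then to translate that parity into a condition on $2$-adic valuations. Since a singularity of $\xi$ of order $m$ contributes $\gcd(m,k)$ singularities of $\whomega$, all of order $\wh m$, the canonical cover $(\whX,\whomega)$ has only even order singularities precisely when $\wh m$ is even for every entry $m$ of $\mu$. Hence it suffices to prove that for a single integer $m$ one has $\wh m$ even if and only if $v_{2}(m)\neq v_{2}(k)$.

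First I would set $d=\gcd(m,k)$ and write $m=dm'$ and $k=dk'$ with $\gcd(m',k')=1$. Cancelling $d$ gives
\[
\wh m \;=\; \frac{m+k}{\gcd(m,k)}-1 \;=\; m'+k'-1,
\]
so $\wh m$ is even exactly when $m'+k'$ is odd, i.e. when precisely one of $m',k'$ is even. The coprimality $\gcd(m',k')=1$ forbids $m'$ and $k'$ from being simultaneously even, so the only alternative to ``exactly one even'' is ``both odd''.

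Next I would read off the parities of $m'$ and $k'$ from $2$-adic valuations. Since $v_{2}(d)=\min(v_{2}(m),v_{2}(k))$, one has $v_{2}(m')=v_{2}(m)-\min(v_{2}(m),v_{2}(k))$ and $v_{2}(k')=v_{2}(k)-\min(v_{2}(m),v_{2}(k))$. Splitting into the three cases $v_{2}(m)<v_{2}(k)$, $v_{2}(m)>v_{2}(k)$, and $v_{2}(m)=v_{2}(k)$, one finds that exactly one of $m',k'$ is even in the first two cases, whereas both are odd in the last; thus $\wh m$ is even if and only if $v_{2}(m)\neq v_{2}(k)$. Quantifying over all entries of $\mu$ yields the proposition.

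The computation is elementary, so I do not expect a genuine obstacle. The only points requiring care are the bookkeeping of the three valuation cases and the observation that the whole argument is insensitive to the sign of $m$: for a pole the $\gcd$, the formula for $\wh m$, and the parity count all depend only on $|m|$, so negative entries are handled verbatim. One should also keep in mind that the order formula $\wh m = (m+k)/\gcd(m,k)-1$ itself comes from the local picture of the canonical cover recalled after \eqref{eq:standard_coordinates}, where a singularity of order $m$ (so $\kappa=m+k$) has $\gcd(m,k)$ preimages each of ramification index $k/\gcd(m,k)$.
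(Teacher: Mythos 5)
Your argument is correct and is precisely the elementary computation the paper has in mind (the paper explicitly leaves this proof to the reader): writing $m=dm'$, $k=dk'$ with $d=\gcd(m,k)$ reduces everything to the parity of $m'+k'-1$, and the three-way case split on $v_2(m)$ versus $v_2(k)$ settles it. The handling of negative entries is also fine, since the parity of $m'$ depends only on $|m'|$ even though the value of $\wh m$ does not depend only on $|m|$.
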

\par
The strata satisfying the hypotheses of Proposition~\ref{prop:partype} are called of {\em parity type}.  
We now state the main result of this section, which combining with Theorem~\ref{thm:specialstrata} about the special strata in genus two refines Theorem~\ref{thm:parity-intro} in the introduction.
\begin{thm}\label{thm:parite}
Let $S=\komoduli(\mu)^{\prim}$ be a stratum of primitive $k$-differentials of parity type with $g\geq1$.   
If $k$ is even or $S= \Omega^{3}\moduli[2](6)^{\prim}$, then the parity is an invariant of the entire stratum $S$. If $k$ is odd and $S\neq \Omega^{3}\moduli[2](6)^{\prim}$, then there exist components of $S$ with distinct parity invariants. 
\end{thm}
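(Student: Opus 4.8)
The plan is to compute $\Phi$ on the canonical cover $\whX$ and to exploit the $\ZZ/k$-symmetry living there. Recall that $\Phi(\xi)=\Phi(\whomega)$ is the mod-$2$ dimension of the theta characteristic $\tfrac12\divisor{\whomega}$ on $\whX$; being the parity of a theta characteristic it is deformation invariant, hence already locally constant, so it is constant on each connected component of $S$. Thus the statement for $k$ even asserts that a single value of $\Phi$ occurs on all of $S$, while for $k$ odd at least two values occur. Both cases are driven by the same mechanism: the order-$k$ deck group forces the ``new'' homology created by the basic operations to appear in $\tau$-orbits of size $k$, whose contribution to the Arf invariant is a multiple of $k$ modulo $2$.

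For $k$ even I would argue by a clean involution reduction to the quadratic case. Writing $\zeta=e^{2\pi{\rm i}/k}$ for the primitive root with $\tau^{*}\whomega=\zeta\whomega$, set $\iota=\tau^{k/2}$. Then $\iota$ is an involution of $\whX$ with
\[
\iota^{*}\whomega \= \zeta^{k/2}\whomega \= -\whomega,
\]
so $\whomega$ is anti-invariant and $\whomega^{2}$ descends to a quadratic differential $q$ on the quotient $Z=\whX/\iota$. Since $\whomega$ is genuinely anti-invariant it does not descend, so $q$ is primitive and $\whX\to Z$ is exactly its canonical (orienting) double cover. Consequently $\Phi(\xi)=\Phi(\whomega)$ is, by definition, the spin parity of $(Z,q)$. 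The signature of $q$ is determined by $\mu$ together with the fixed-point data of $\iota$, both of which are constant throughout $S$, so every $\xi\in S$ produces $q$ in one fixed quadratic stratum; by \cite{lanneauspin} the spin parity is constant on that stratum, and hence $\Phi$ is constant on all of $S$. For $k=2$ this reduction is the identity ($Z=X$, $q=\xi$) and recovers Lanneau's statement verbatim.

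For $k$ odd I would produce two differentials in $S$ of opposite parity by bubbling. The core $\alpha$ of the bubbled cylinder encircles a pole of order $-k$ (i.e.\ $\kappa=0$), around which the canonical cover has trivial monodromy; hence the cylinder splits into exactly $k$ isometric cylinders upstairs, permuted cyclically by $\tau$, contributing $k$ new handles to $\whX$. Choosing a symplectic basis in which these new handles $(\wh{\alpha}_{j},\wh{\beta}_{j})$ are symplectically orthogonal to the rest, their contribution to the Arf invariant is
\[
\sum_{j=1}^{k}\bigl(\ind(\wh{\alpha}_{j})+1\bigr)\bigl(\ind(\wh{\beta}_{j})+1\bigr)
\= \sum_{j=1}^{k}\bigl(\ind(\wh{\beta}_{j})+1\bigr),
\]
since each $\wh{\alpha}_{j}$ is a cylinder core with $\ind(\wh{\alpha}_{j})=0$. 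As $\tau$ permutes the handles and a global rotation of $\whomega$ leaves winding numbers unchanged, all $\ind(\wh{\beta}_{j})$ equal a common value $I(s)$, so the contribution is $k\,(I(s)+1)\pmod 2$. For $k$ even this vanishes (consistent with the first part), whereas for $k$ odd it equals $I(s)+1\pmod 2$. A direct index bookkeeping on the cover expresses $I(s)\bmod 2$ as an explicit function of $s$ that takes both parities as $s$ ranges over its admissible interval $1\le s\le n+2k-1$, and choosing $s,s'$ with $I(s)\not\equiv I(s')$ yields $S'\oplus s$ and $S'\oplus s'$ in $S$ with $\Phi$ differing by $1$. To reach every $S$ I would induct on $(g,r)$, realizing $S$ by bubbling from a genus $g-1$ component $S'$ whenever $\mu$ carries a metric zero of order $\ge 2k$ (via Proposition~\ref{prop:bubbpos2} and Lemma~\ref{lm:CCbreak}), with the genus-one base handled by the rotation-number classification of Theorem~\ref{thm:compGenreUn} together with the explicit genus-zero and genus-one parity computations.

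The main obstacle is the explicit evaluation of $I(s)\bmod 2$: it requires tracking the honest tangent winding of $\wh{\beta}_{j}$ through the degree-$k$ cover, that is, the monodromy around the lifted cylinders and the angular sectors at the lifted zero, and this is precisely where the arithmetic of $\gcd(n,k)$ enters. The secondary difficulty is delimiting the low-genus strata for which the admissible range of $s$ is too short to realize two parameters of opposite parity; carrying out this bound is exactly what isolates the exceptional stratum $\Omega^{3}\moduli[2](6)^{\prim}$, whose connectedness (hence single value of $\Phi$, despite $k=3$ being odd) I would invoke from Theorem~\ref{thm:specialstrata}.
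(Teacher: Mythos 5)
Your even-$k$ argument is essentially the paper's: your $Z=\whX/\langle\tau^{k/2}\rangle$ with the descended quadratic differential $q$ is exactly the intermediate canonical $d$-cover $(Y,\eta)$ (with $k=2d$) that the paper constructs in Proposition~\ref{prop:evenparity}, since $\langle\tau^{k/2}\rangle$ is the unique order-two subgroup of the deck group; both routes identify $\Phi(\xi)$ with the spin parity of a primitive quadratic differential in a fixed stratum of parity type and then invoke Lanneau (with the same caveat, which you should state, that his argument extends to quadratic differentials with higher-order poles). That part is fine.

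The odd-$k$ part takes a genuinely different route — a direct Arf-invariant computation of the effect of $\oplus s$ on the canonical cover, generalizing Kontsevich--Zorich's Lemma~11 — but it has a gap exactly at its load-bearing step. Your decomposition of the new homology into $k$ symplectically orthogonal pairs $(\wh\alpha_j,\wh\beta_j)$ with the $\wh\beta_j$ being $\tau$-translates crossing one lifted cylinder each requires the lift of the dual curve $\beta$ to close up after a single circuit, i.e.\ the monodromy of $\pi$ along $\beta$ to be trivial. But that monodromy is $\tau^{\pm s}$ (the index of $\beta$ is $s$ or $n_0+2k-s$, as in the proof of Theorem~\ref{thm:compGenreUn}), so it is trivial only when $k\mid s$ — and those are precisely the values of $s$ for which $\oplus s$ can stay inside the $k$-th--power locus. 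For $k\nmid s$ the lifts of $\beta$ traverse several cylinders before closing, the intersection matrix with the $\wh\alpha_j$ is not $\delta_{ij}$, the corrected dual cycles are no longer $\tau$-translates of one another, and the identity ``all $\ind(\wh\beta_j)=I(s)$'' is unjustified. On top of this, the non-constancy of $I(s)\bmod 2$ in $s$ — the entire content of the odd case — is asserted as ``direct index bookkeeping'' rather than carried out, and your induction needs a separate device for parity-type strata with no entry exceeding $k$ (e.g.\ $\Omega^{5}\calM_g(2,\dots,2)$), which cannot be reached by bubbling at all. Be careful also that your genus-zero/genus-one base cases must rest only on the unconditional computations (Lemma~\ref{lm:zeroresgun}, Theorem~\ref{thm:g1-2}), not on Theorem~\ref{thm:g1-parity}, which is conditional on Conjecture~\ref{conj:g0-1}. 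The paper sidesteps all of this by exhibiting explicit surfaces in $\komoduli[1](2kn,-2kn)^{\prim}$ with zero $k$-residue and both parities, degenerating to multi-scale differentials, and propagating parity through the gluing via the additivity formula of Lemma~\ref{lm:sumparbis}; if you want to salvage your approach you would in effect have to reprove that lemma together with the winding-number analysis it encapsulates.
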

\par
Note that in Theorem~\ref{thm:parite} we do not claim that the parity is enough to distinguish all connected components of the strata of parity type, as there might be some other invariants such as hyperelliptic structures. In other words, the locus with the same parity invariant in a stratum of parity type may still be disconnected.  
\par
We separate the proof of Theorem~\ref{thm:parite} in three steps. In Section~\ref{sec:keven} we treat the case of even $k$ and in Section~\ref{sec:kodd} the case of odd $k$, except several sporadic strata in genus two which are treated in Section~\ref{sec:specialstrata}.
\par
Before starting the proof of Theorem~\ref{thm:parite}, we make two remarks. The first one generalizes the definition of parity to non-primitive $k$-differentials.  
\par 
\begin{rem}
\label{rem:parity-nonprim}
For our applications it will be useful to consider parities for non-primitive $k$-differentials as well. Suppose $\xi = \eta^d$ where $\eta$ is a primitive $(k/d)$-differential of parity type. Then we define $\Phi(\xi) \coloneqq d \Phi(\eta)$. In this case the canonical cover $\whX$ of $\xi$ consists of $d$ connected components, each of which is a (connected) canonical cover of $\eta$. Therefore, we can still apply Equations~\eqref{eq:defpar} and~\eqref{eq:defarf} to compute $\Phi(\xi)$, replacing $X$ by the disconnected cover $\whX$. Note that if $k$ is odd (and hence $d$ is odd), 
the definition reduces to $\Phi(\xi) = \Phi(\eta)$.
\end{rem}
\par
The next remark is related to a construction of Boissy.
\begin{rem}
In \cite[Section 5.3]{boissymero}, Boissy defines a parity for meromorphic abelian differentials in the strata $\omoduli(2m_{1},\dots, 2m_{n}, -1,-1)$ with $m_{i}\geq1$ for all $i$.  For such differentials $\omega$, by the residue theorem $\omega$ has opposite residues at the two simple poles. Hence one can construct a stable differential by identifying the two poles as a node. The parity of  $\omega$ is then defined as the parity of (the smoothing of) this stable differential of (arithmetic) genus $g+1$, which can have distinct parities in general.  

One can try to adapt this construction to the strata of $k$-differentials of analogous types.  However, some direct generalizations do not work. For instance, consider the strata $\komoduli(m_{1},\dots,m_{n}, -k, -k)$ with two poles of order $k$. Identify the two poles as before. Note that for $k > 1$, there is no $k$-residue theorem.  Hence the resulting nodal $k$-differential may not satisfy the matching $k$-residue condition at the node, and consequently it may fail to be a smoothable stable $k$-differential.  

Alternatively, consider the strata of type $\komoduli(m_{1},\dots,m_{n}, -k,\dots,-k)$ with $k$ even and $v_{2}(m_{i}) \neq v_{2}(k)$ for all $i$.  
Given such a $k$-differential $(X,\xi)$, let $(\whX, \wh\omega)$ be the canonical cover of $(X,\xi)$. We can form a stable differential by identifying pairwise the preimages of the poles of order $k$ in $(\whX, \wh\omega)$ that have opposite residues (since $k$ is even). Then the parity of $(X,\xi)$ can be defined as the parity of (the smoothing of) this stable differential. Note that the deck transformation $\tau$ of the canonical cover extends to the (equivariant) smoothing, hence the quotient is a $k$-differential in the stratum  $\komoduli[g](m_{1},\dots,m_{n}, -k/2,\dots,-k/2)$ of parity type, where each pole of order~$k$ in $\xi$ yields a pair of poles of order $k/2$ arising from the two involution fixed points of the finite cylinder after truncating the infinite cylinder neighborhood of a polar node of order $k$ (i.e. smoothing a simple polar node of the abelian differential $\wh\omega$ in the cover). From the algebraic viewpoint, splitting a pole of order $-k$ into two poles of order $-k/2$ can also be seen by attaching a rational $k$-differential of signature $(-k, -k/2, -k/2)$ with matching $k$-residue and then smoothing the resulting multi-scale $k$-differential. By Theorem~\ref{thm:parite}, nevertheless, there is a unique parity of $k$-differentials in $\komoduli[g](m_{1},\dots,m_{n}, -k/2,\dots,-k/2)$ for $k$ even. Hence this construction does not provide distinct parities either.  
\end{rem}

\subsection{The case of even $k$}
\label{sec:keven}

In this section we study the strata of $k$-differentials of parity type when $k$ is even.
\begin{prop}\label{prop:evenparity}
Let  $\komoduli(m_{1},\dots,m_{n})^{\prim}$ be a primitive stratum of $k$-differentials with $v_{2}(k) \neq v_{2}(m_{i})$ for all $i$.
If $k$ is even, then the parity is an invariant of the stratum.
\end{prop}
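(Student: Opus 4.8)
The plan is to treat the canonical cover as the carrier of the invariant and to transport the parity through the two operations of Section~\ref{sec:operations}. Since $\Phi(\xi)$ is by definition the Arf invariant of the spin structure $\tfrac{1}{2}\divisor{\whomega}$ on the connected cover $(\whX,\whomega)$, and $(\whX,\whomega)$ varies continuously with $(X,\xi)$, the parity is locally constant on the stratum and hence constant on each connected component. The real content is therefore to show that any two components of $\komoduli(m_1,\dots,m_n)^{\prim}$ carry the same value. I would establish this by induction on the genus and the number of singularities, using that the genus-zero stratum is irreducible (so its parity is a single unambiguous number) and that every component in higher genus is produced from a genus-zero one by a sequence of bubblings $\oplus s$ and break-ups. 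It then suffices to check that each operation changes $\Phi$ by an amount \emph{independent of the discrete choices} it involves, so that the value propagated up from genus zero is forced.

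The decisive computation is the effect of bubbling a handle on the cover. A handle bubbled at a \metzero of $(X,\xi)$ is glued along a node whose two branches are poles of order $-k$; over such a pole the cover $\pi\colon\whX\to X$ is unramified, with exactly $\gcd(k,k)=k$ preimages on which the deck group $\langle\tau\rangle\cong\ZZ/k$ acts simply transitively. Consequently the bubbled cylinder lifts to a single $\tau$-orbit of $k$ cylinders, producing $k$ new symplectic pairs $(\wh\alpha_j,\wh\beta_j)=(\tau^j\wh\alpha_0,\tau^j\wh\beta_0)$, $j=0,\dots,k-1$, orthogonal to the old homology. Because $\tau^{*}\whomega=\zeta\whomega$ merely rotates all flat directions by a constant angle, winding numbers are $\tau$-invariant, so $\ind(\wh\alpha_j)$ and $\ind(\wh\beta_j)$ do not depend on $j$. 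By the additivity of the Arf invariant in~\eqref{eq:defarf}, bubbling changes $\Phi$ by
\[
\sum_{j=0}^{k-1}\bigl(\ind(\wh\alpha_j)+1\bigr)\bigl(\ind(\wh\beta_j)+1\bigr)\=k\,\bigl(\ind(\wh\alpha_0)+1\bigr)\bigl(\ind(\wh\beta_0)+1\bigr)\ \equiv\ 0 \pmod 2,
\]
since $k$ is even. In particular the change vanishes regardless of the parameter $s$ in $\calC\oplus s$ (Definition~\ref{def:oplus}); this is precisely where the parity of $k$ intervenes, for when $k$ is odd the same sum reduces to $\ind(\wh\beta_0)+1$, whose dependence on $s$ is what forces components of distinct parity in Theorem~\ref{thm:parite}. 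An analogous equivariant bookkeeping handles breaking up a \metzero: the genus-zero bottom level contributes its cover's cycles in full $\tau$-orbits, and for even $k$ their contribution to~\eqref{eq:defarf} is again a multiple of an even quantity, so $\Phi$ is unchanged independently of the choice of $(X_2,\xi_2)$.

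With these two invariances in hand I would close the induction: the base case is genus zero, where the stratum is irreducible and the parity is a single well-defined number; in genus one Theorem~\ref{thm:compGenreUn} realizes every component of large enough total zero order by bubbling from genus zero, with the remaining small cases checked directly on the flat pictures of Figures~\ref{fig:k-k} and~\ref{fig:n-n}; and in genus $g\geq2$ every component is reached from genus zero by bubblings and break-ups, each of which now preserves $\Phi$. Hence $\Phi$ takes the same value on all of $\komoduli(m_1,\dots,m_n)^{\prim}$. The step I expect to be the main obstacle is not the Arf computation itself but the equivariant bookkeeping underlying it: one must verify that the lifted cycles genuinely form full $\tau$-orbits that are symplectically orthogonal to the previously chosen basis, so that additivity of the Arf invariant applies, and one must control the break-up, where the genus of the cover actually changes, carefully enough to see that the newly created cycles still organize into $\tau$-orbits contributing evenly. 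Establishing that every connected component is reachable by these operations — the structural input borrowed from the Kontsevich--Zorich/Boissy strategy — is the other place where genuine care is required.
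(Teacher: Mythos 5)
Your proposal takes a genuinely different route from the paper, but as written it has a gap that is not merely technical. The paper's proof is a two-line reduction: for $k=2d$ it factors the canonical cover through the intermediate $d$-cyclic cover $(Y,\eta)$, where $\eta$ is a primitive quadratic differential of parity type, and then quotes Lanneau's theorem \cite{lanneauspin} that the parity is an invariant of primitive strata of quadratic differentials of parity type; since $\Phi(\xi)=\Phi(\eta)=\Phi(\wheta)$ by the commutative diagram of canonical covers, the invariance is inherited. No induction, no surgery operations, no reachability statement is needed. Your argument instead rests on the claim that every connected component of $\komoduli(\mu)^{\prim}$ in genus $g\geq 2$ is produced from a connected genus-zero stratum by a chain of bubblings and break-ups. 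That reachability statement is the Kontsevich--Zorich/Boissy/Lanneau structural input and is only available for $k\leq 2$; the paper proves its quadratic version (Proposition~\ref{prop:bubkdiff}) using the vertical-trajectory basic-domain decomposition, and Remark~\ref{rem:2-to-k} explicitly states that the analogous decomposition, and hence the adjacency/reachability machinery, is \emph{not} available for $k\geq 3$ and is left as an open problem. Since Proposition~\ref{prop:evenparity} covers all even $k$, including $k=4,6,\dots$, your induction cannot close for precisely the cases where the statement is new.

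There is also a secondary problem in the equivariant Arf bookkeeping itself. For the bubbled handle, the core curves of the $k$ lifted cylinders do form a free $\tau$-orbit, but the dual cycles need not: the preimage of $\beta$ is a disjoint union of $\gcd(j_\beta,k)$ curves, where $\zeta^{j_\beta}$ is the monodromy of the $k$-th root along $\beta$, so when $j_\beta$ is coprime to $k$ the lift is a single curve satisfying $\tau_*\wh\beta=\wh\beta$ and meeting \emph{all} $k$ core curves; one cannot then read off $k$ orthogonal symplectic pairs with equal indices, and the clean factor of $k$ in your displayed sum disappears. For the break-up the situation is worse: the node joining $(X_1,\xi_1)$ to the genus-zero piece has order $n_0=\sum n_i$, and $v_2(n_0)$ can equal $v_2(k)$ even when every $v_2(n_i)\neq v_2(k)$ (already for $k=2$, $n_1=n_2=1$), so the intermediate twisted differential is not of parity type, the vanishing cycles over that node have even index, and their contribution to \eqref{eq:defarf} is neither zero nor obviously choice-independent — this is exactly the hypothesis under which Lemma~\ref{lm:sumparbis} kills those terms, and it fails here. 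If you want a surgery-based proof you would need to redo that lemma without the parity-type assumption at the node, or argue as the paper does and let the quadratic case absorb all of this.
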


We remark that it was known in~\cite{lanneauspin} that the parity is an invariant for any primitive stratum of quadratic differentials of parity type. Moreover, although it was stated for the case of quadratic differentials with metric zeros only, the same argument therein works for meromorphic quadratic differentials with arbitrary poles.  

\begin{proof}
Given a $k$-differential $(X, \xi)$, recall that the canonical cover $\pi_k\colon \wh X\to X$ associated to~$\xi$ is the $k$-cyclic cover of $X$ obtained by taking a $k$-th root $\wh\omega$ of $\xi$ (see e.g.~\cite[Section 2.1]{BCGGM3}). Suppose $k=dk'$ with $1<d,k'<k$.  Then we can similarly construct an intermediate canonical $d$-cyclic cover  $\pi_d\colon Y \to X$ by taking a $d$-th root of $\xi$, that is, $\pi_d^{*}\xi = \eta^{d}$ for a $k'$-differential $\eta$ on $Y$.  We can further take the canonical $k'$-cyclic cover $\pi_{k'}\colon \wh Y \to Y $ such that $\pi_{k'}^{*} \eta = {\wh \eta}^{k'}$ for an abelian differential $\wh \eta$ on~$\wh Y$.  By the universal property of canonical covers, we have the following commutative diagram
\begin{equation}\label{eq:diagCanCov}
\begin{tikzpicture}
\matrix (m) [matrix of math nodes, row sep=2.10em, column sep=3.5em,
text height=1.5ex, text depth=0.25ex]
{(\wh Y, \wh \eta)  & & (\whX,\wh \omega) \\
(Y,\eta) &  &   \\
  & (X,\xi) &  \\};
\path[->,font=\scriptsize]
(m-1-1) edge [above] node {$\phi$} (m-1-3)
(m-1-1) edge [right] node {$\pi_{k'}$} (m-2-1)
(m-2-1) edge [above right] node {$\pi_{d}$} (m-3-2)
(m-1-3) edge [below right] node {$\pi_{k}$} (m-3-2);
\end{tikzpicture}
\end{equation}
where $\phi\colon \wh Y \to \wh X$ is an isomorphism such that $\phi^{\ast}(\wh\omega)= \zeta \wh \eta$ with $\zeta$ a $k$-th root of unity. 

Now for even $k = 2d$, consider any $k$-differential $(X,\xi)$ in a primitive stratum $S$ of parity type. In the above setting we can take the canonical $d$-cover $(Y, \eta)$ where $\eta$ is a primitive quadratic differential of parity type. Note that the parity of $(Y ,\eta)$ is an invariant of the respective stratum of quadratic differentials according to~\cite{lanneauspin}. It then follows from the commutative diagram~\eqref{eq:diagCanCov} that the parity of $(X,\xi)$ is an invariant of the stratum $S$, since the parity of $\xi$ equals the parity of $\eta$ (both equal to the parity of~$\wh\eta$ by definition).    
\end{proof}

\begin{rem}
\label{rem:parity-explicit}
For a primitive stratum $\Omega^{2}\moduli(m_{1},\dots,m_{n})^{\prim}$ of quadratic differentials of parity type, the parity can be computed by 
\cite[Theorem 4.2]{lanneauspin} as 
\[ \frac{n_{+}-n_{-}}{4} \pmod{2} \,\]
where $n_{+}$ is the number of
$m_{i} \equiv 1 \pmod{4}$ and $n_{-}$ is the number of $m_{i} \equiv 3 \pmod{4}$. Now consider $(X, \xi)$ in a primitive stratum $\Omega^{k}\moduli(m_{1},\dots,m_{n})^{\prim}$ of $k$-differentials of parity type for even $k = 2d$.  Note that a singularity of order $m$ in $\xi$ has 
$r = \gcd(m, d)$ preimages under $\pi_{d}$, each of order $\tfrac{m+k}{r}-2$ in the quadratic differential $\eta$.  Combining with the above formula it thus gives the parity of the stratum $\Omega^{k}\moduli(m_{1},\dots,m_{n})^{\prim}$. 
\end{rem}

\subsection{The case of odd $k$}
\label{sec:kodd}

 In this section we construct $k$-differentials of both parities for~$k$ odd and all singularity orders even, except the strata $\Omega^{3}\moduli[2](6)$, $\Omega^{3}\moduli[2](4,2)$ and $\Omega^{3}\moduli[2](2,2,2)$ which will be treated in Section~\ref{sec:specialstrata}. The construction goes as follows. We first find some explicit examples in the minimal strata. Then we extend to all strata using certain \mskds built on these examples. In order to compute the parity of the $k$-differentials obtained by smoothing the \mskds, we first prove the following result.

 \begin{lm}
 \label{lm:sumparbis}
 For $i = 0, 1$ let $(X_{i},\xi_{i})$ be two $k$-differentials of parity type such that $\xi_i = \eta_i^{d_i}$ with $\eta_i$ a primitive $(k/d_{i})$-differential.  Denote by $(X,\xi)$ a smoothing of the \mskd $(X',\xi',\sigma')$ obtained by gluing a singularity of $\xi_{0}$ to a singularity of~$\xi_{1}$. Then $(X,\xi)$ is of parity type and 
  \begin{equation}\label{eq:paritybis}
 \Phi(\xi) = d_{0} \Phi(\eta_0) + d_{1}\Phi(\eta_{1}).
\end{equation}
 \end{lm}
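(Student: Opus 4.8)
The plan is to move the whole computation to the abelian canonical cover, where the parity is the classical spin (Arf) invariant, and to prove that this invariant is additive under the gluing-and-smoothing operation.

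First I would set up two reductions. By the construction of the canonical cover in Definition~\ref{def:cancovmskd}, passing to the canonical cover commutes with smoothing, so $(\whX,\whomega)$ is the smoothing of the canonical cover $(\wh{X'},\wh{\omega'})$ of the \mskd $(X',\xi',\sigma')$. The singularities of $\xi$ are precisely those of $\xi_0$ and $\xi_1$ other than the glued pair, whose orders $a$ and $-2k-a$ sum to $-2k$. As these form a subset of the union of the two signatures, and each $\xi_i$ is of parity type, every entry of the signature of $\xi$ has $2$-adic valuation different from $v_2(k)$; hence Proposition~\ref{prop:partype} shows that $(X,\xi)$ is again of parity type and that $\whomega$ has zeros of even order only. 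This settles the first assertion and reduces \eqref{eq:paritybis} to a statement about the spin parity of $\whomega$.

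Next I would describe $\wh{X'}$ explicitly. By Remark~\ref{rem:parity-nonprim}, over $X_i$ the canonical $k$-cover of $\xi_i=\eta_i^{d_i}$ is a disjoint union of $d_i$ copies of the connected canonical $(k/d_i)$-cover $(\whX_{\eta_i},\wheta)$ of $\eta_i$, each carrying a differential isometric to $\wheta$ up to a root of unity. The glued node has $p:=\gcd(a,k)$ preimages, matched in pairs by the \prma, so $\wh{X'}$ is the nodal union of the $d_0$ copies of $\whX_{\eta_0}$ and the $d_1$ copies of $\whX_{\eta_1}$ along these $p$ nodes, and $(\whX,\whomega)$ is its smoothing. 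Computing $\Phi(\whomega)$ from \eqref{eq:defarf} with a symplectic basis of $H_1(\whX,\ZZ/2)$ adapted to this decomposition — an internal symplectic basis in each of the $d_0+d_1$ copies together with symplectic pairs for the handles created when the $p$ parallel necks are opened — the internal part contributes $d_0\Phi(\eta_0)+d_1\Phi(\eta_1)$ by additivity of the Arf invariant over a disjoint union. Thus \eqref{eq:paritybis} is equivalent to the vanishing of the contribution of the extra handles.

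The hard part will be precisely this vanishing. For a single preimage ($p=1$) there are no extra handles and additivity is the classical connected-sum statement already used in \cite{kozo1} and \cite{boissymero}; the new phenomenon is that $p=\gcd(a,k)$ can exceed $1$, producing $p-1$ handles in the cover. To handle these I would use the order-$k$ deck transformation $\tau$, which preserves $\whomega$ up to a root of unity, fixes the spin parity, and permutes the $p$ node-preimages in a single cycle. The vanishing cycles $\gamma_1,\dots,\gamma_p$ (the horizontal core geodesics of the necks) satisfy $\sum_i\gamma_i=0$ in homology, so the handle subspace is the reduced cyclic $\langle\tau\rangle$-representation; I would evaluate the quadratic form $\ind(\cdot)+1$ on the classes $\gamma_i-\gamma_j$ and on their symplectic duals and use $\tau$-equivariance to force the total handle contribution to be even. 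Carrying out this $\tau$-equivariant index bookkeeping from the \prma and the flat geometry at the neck is the technical core of the proof.
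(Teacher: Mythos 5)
Your overall framework is the same as the paper's: pass to the canonical cover, note that it is the smoothing of the canonical cover of the \mskd, decompose $H_1(\whX,\ZZ/2)$ into the ``internal'' symplectic subspace spanned by bases of the $d_0+d_1$ components of $\wh{X'}$ plus a graph part spanned by vanishing cycles and their duals, and observe that the internal part already accounts for $d_0\Phi(\eta_0)+d_1\Phi(\eta_1)$. Up to that point the proposal matches the paper (the paper invokes the decomposition $H_1(\whX,\ZZ/2)\cong H_1'\oplus H^1(\Gamma,\ZZ/2)\oplus H_1(\Gamma,\ZZ/2)$ from \cite{acgh2} and checks the required intersection numbers vanish). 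A minor inaccuracy: the number of extra handles is $b_1(\Gamma)=E-V+1$ for the dual graph $\Gamma$ with $d_0+d_1$ vertices, which equals $p-1$ only when $d_0=d_1=1$.

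The genuine gap is the step you yourself flag as ``the technical core'': the vanishing of the handle contribution. The $\tau$-equivariant bookkeeping you propose is both unexecuted and insufficient as sketched --- equivariance shows that the contributions of handles in a single $\tau$-orbit are all equal, but if the orbit has odd length this does not force their sum to be even, and the quadratic form $q(\gamma)=\ind(\gamma)+1$ is not linear, so evaluating it on differences $\gamma_i-\gamma_j$ still reduces (via $q(a+b)=q(a)+q(b)+a\cdot b$ and the fact that vanishing cycles are disjoint) to knowing $q$ on the individual $\gamma_i$. The observation that actually closes the argument, and which the paper uses, is elementary: each vanishing cycle $\alpha_i$ is a small loop around a preimage of the glued singularity, which is a zero of $\whomega$ of order $\wh m$; hence $\ind(\alpha_i)=\wh m+1$, and $\wh m$ is even precisely because $\xi_0$ and $\xi_1$ are of parity type. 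Thus $\ind(\alpha_i)+1$ is even and every Arf term $(\ind(\alpha_i)+1)(\ind(\beta_i)+1)$ vanishes mod $2$ regardless of the dual cycles $\beta_i$, with no equivariance needed. Without this (or an equivalent) observation your proof does not go through.
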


  \begin{proof}
 Let $\whX'$ be the canonical cover of the \mskd $\xi'$ as defined in Definition~\ref{def:cancovmskd} and let $(\whX, \wh \omega)$ be the canonical cover of the smoothing $(X, \xi)$ of  $(\whX',\xi')$. 
  Note that $\whX$ degenerates to $\whX'$ by shrinking the vanishing cycles that become the nodes of $\whX'$. 
The $k$-differential $\xi$ is of parity type since the orders of singularities in the smooth locus of the \mskd are preserved in the smoothing process.   

Next we construct a symplectic basis of $H_1(\whX, \ZZ/2)$. Let $\whX_i$ be the canonical cover of $(X_i, \eta_i)$ for $i = 0,1$. Then $\whX'$ contains $d_0$ copies of $\whX_0$ and $d_1$ copies of $\whX_1$.  We can take a symplectic basis of each copy of $H_1(\whX_{i}, \ZZ/2)$ for $i=0,1$ so that they span a symplectic subspace $H'_1$ of $H_1(\whX, \ZZ/2)$.  To extend them to a full symplectic basis of $H_1(\whX, \ZZ/2)$, 
it suffices to take a basis of vanishing cycles of $\whX$ together with their dual cycles. More precisely, let $\Gamma$ be the dual graph of $\whX'$.  Then by \cite[(9.24)]{acgh2} we know that $H_1(\whX, \ZZ/2) \cong H'_1 \oplus H^1(\Gamma, \ZZ/2) \oplus H_1(\Gamma, \ZZ/2)$, where $H^1(\Gamma, \ZZ/2)$ can be identified with the group generated by the vanishing cycles which is dual to the group of loops in $\Gamma$ under the intersection paring. Moreover, it is easy to see that the intersection numbers are zero for any two (not necessarily distinct) vanishing cycles, for any two (not necessarily distinct) loops in $\Gamma$, and for any vanishing cycles or any loops with any elements in $H'_1$. Therefore, we can take a basis $\alpha_i$ of $H^1(\Gamma, \ZZ/2)$ and their dual cycles~$\beta_i$ as a basis of $H_1(\Gamma, \ZZ/2)$ to complete the desired symplectic basis of $H_1(\whX, \ZZ/2)$.  

Now we can compute the parity of $(\whX, \wh\omega)$ by using the above symplectic basis of $H_1(\whX, \ZZ/2)$ and the Arf-invariant in the definition of parity.  From the part of $H'_1$ the contribution to the parity is already $d_{0} \Phi(\eta_0) + d_{1}\Phi(\eta_{1})$ as claimed in Equation~\eqref{eq:paritybis}. Since the operation of plumbing a \mskd does not change the indices of any closed paths away from a neighborhood of the nodes, it suffices to show that the cycles~$\alpha_{i}$ and~$\beta_{i}$ constructed above do not contribute to the parity. Denote by $m$ the order of the singularity $q$ of $\xi_{0}$ at the node joining $X_0$ and $X_1$.  Note that the indices of the $\alpha_{i}$ cycles are all equal to~$\wh m +1$, where $\wh m$ is the singularity order of each  preimage of $q$ in $\whX_0$. Since by assumption $\xi_{0}$ and $\xi_1$ are of parity type, it implies that $\wh m$ is even, and hence the index of each $\alpha_{i}$ is odd. It follows that $(\ind_{\whomega}(\alpha_i)+1) (\ind_{\whomega}(\beta_i)+1) \equiv 0 \pmod{2}$, which does not contribute to the parity.
 \end{proof}

We now construct primitive $k$-differentials with distinct parities in the minimal strata $\komoduli(k(2g-2))^{\prim}$ for $k$ odd, except the stratum $ \Omega^{3}\moduli[2](6)^{\prim}$. The key tool for the construction is the following result. 

\begin{lm}\label{lm:zeroresgun}
 For $k\geq3$ odd and $n\geq1$, there exist $k$-differentials in the primitive stratum $\komoduli[1](2kn,-2kn)^{\prim}$ with zero $k$-residue at the pole. Moreover for $k\geq5$ odd, there exist such differentials with both parities.
\end{lm}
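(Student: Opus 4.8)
The plan is to work entirely on the canonical cover. By the discussion in Section~\ref{sec:rappel}, a primitive $k$-differential $(X,\xi)$ in $\komoduli[1](2kn,-2kn)^{\prim}$ with $k$ odd has a connected canonical cover $(\whX,\whomega)$ which is again of genus one: both the zero of order $2kn$ and the pole of order $-2kn$ have order divisible by $k$, so $\pi\colon\whX\to X$ is unramified of degree $k$, and $\whomega$ acquires $k$ zeros of order $2n$ together with $k$ poles of order $2n$. Here the deck transformation $\tau$ is a translation of order $k$ with $\tau^{*}\whomega=\zeta\whomega$ for a primitive $k$-th root of unity $\zeta$, and $\xi=\whomega^{k}$ descends to $X=\whX/\langle\tau\rangle$. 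Since the $k$-residue at the pole of $\xi$ is the $k$-th power of the residue of $\whomega$ at any preimage, it suffices to construct such a triple $(\whX,\whomega,\tau)$ whose $\whomega$ has vanishing residues at all poles; primitivity is then guaranteed by taking $\zeta$ primitive.

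For the first assertion ($k\geq3$) I would force the residues to vanish by a symmetry argument. Equip $\whX$ with its elliptic involution $\iota$ and look for $\whomega$ that is $\iota$-anti-invariant, $\iota^{*}\whomega=-\whomega$, with the zero-orbit and the pole-orbit of $\tau$ each globally preserved by $\iota$ (for instance by taking the two orbits symmetric under $\iota$, centered at $2$-torsion points of $\whX$). Writing $P_{0},\dots,P_{k-1}$ for the poles in one $\tau$-orbit, the relation $\tau^{*}\whomega=\zeta\whomega$ gives $\Res_{P_{j}}\whomega=\zeta^{j}R$ with $R=\Res_{P_{0}}\whomega$, whereas $\iota$-anti-invariance gives $\Res_{\iota(P_{j})}\whomega=-\zeta^{j}R$. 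Comparing the two and using that $-1$ is not a $k$-th root of unity for $k$ odd forces $R=0$, so every residue vanishes. The remaining point is to exhibit an $\iota$-anti-invariant, $\tau$-equivariant meromorphic function $\wh h$ with $\whomega=\wh h\,dz$ realizing the prescribed symmetric divisor: such $\wh h$ lies in the $\zeta$-eigenspace of translation by the $k$-torsion point generating $\langle\tau\rangle$, and its existence reduces to Abel's relation for the divisor (which holds) together with arranging the eigenvalue to be primitive (equivalently, connectedness of the cover). This yields the desired differentials for every odd $k\geq 3$.

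For the second assertion ($k\geq5$) I would organize the parity through the rotation number. A direct index computation shows that the canonical cover preserves the rotation number, $\rot(X,\xi)=\rot(\whX,\whomega)$, and since $\whX$ has genus one the parity is the Arf invariant $\Phi(\whomega)=(\ind(\wh\alpha)+1)(\ind(\wh\beta)+1)\pmod 2$ for a symplectic basis $(\wh\alpha,\wh\beta)$ of $H_{1}(\whX,\ZZ/2)$; this expresses $\Phi(\xi)$ through the parities of the two indices and hence ultimately through $\rot(X,\xi)$. Using Theorem~\ref{thm:compGenreUn} to realize prescribed rotation numbers in genus one, I would aim to produce two members of the zero-residue family whose rotation numbers have opposite parity (concretely $1$ and $2$, both coprime to the odd $k$ and hence primitive), which would give both parities.

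The main obstacle I expect is exactly the interaction between the vanishing-residue constraint and the realization of both parities. Imposing $\iota$-anti-invariance rigidifies the admissible zero- and pole-configurations, which in turn restricts the rotation numbers that actually occur inside the zero-residue locus; controlling this demands a careful bookkeeping of the indices of the symplectic basis on $\whX$, in the spirit of the explicit genus-one computations of the Appendix. It is precisely here that the threshold $k\geq5$ should enter: for $k=3$ the configurations compatible with vanishing residue should carry a single parity, matching the exceptional connectedness of $\Omega^{3}\moduli[2](6)^{\prim}$, whereas for $k\geq5$ there is enough room to realize rotation numbers of both parities. Verifying this dichotomy, most likely through an explicit small-case analysis at $k=3$, is the delicate step.
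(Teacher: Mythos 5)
Your route is genuinely different from the paper's: you build the object upstairs on the canonical cover (an unramified degree-$k$ translation cover of a torus, with an elliptic involution forcing the residues to vanish), whereas the paper writes down an explicit flat polygonal model of $(X,\xi)$ itself (Figure~\ref{fig:a-a,pasderes}) and reads everything off that picture. The involution trick for killing the residue is correct as stated: with $\iota^{*}\whomega=-\whomega$ and $\tau^{*}\whomega=\zeta\whomega$ one gets $\zeta^{\sigma(j)}R=-\zeta^{j}R$, and since $-1$ is not a power of $\zeta$ for $k$ odd this forces $R=0$. However, there is a genuine gap already in the first assertion: the eigenvalue $c$ of $\tau^{*}$ acting on the function $\wh h$ is \emph{determined} by the divisor $2n\sum_j Z_j-2n\sum_j P_j$ (it is, in effect, a Weil-pairing computation), and is not something you can "arrange" independently of the $\iota$-symmetry and the Abel condition you have already imposed. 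If $c$ has order $k'<k$ then $\whomega^{k'}$ descends and $\xi$ is a proper power, so it lands outside $\komoduli[1](2kn,-2kn)^{\prim}$. You must exhibit a concrete choice of $(\whX,t,Z_0,P_0)$ for which $c$ is primitive while keeping both orbits $\iota$-stable; this is plausible but unverified.

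The second assertion is not proved at all in your write-up: you correctly reduce "both parities" to "the zero-residue locus meets components of $\komoduli[1](2kn,-2kn)$ whose rotation numbers have opposite parity," and then explicitly defer the verification that the $\iota$-symmetric, zero-residue configurations realize such rotation numbers for $k\geq 5$. That verification is precisely the content of the statement, so the proposal stops exactly where the proof has to begin. The paper avoids both difficulties simultaneously: its polygonal surfaces come in a family indexed by $a_i\in\{1,\dots,\lfloor k/2\rfloor\}$, the identification by rotation of angle $2a_i\pi/k$ makes primitivity equivalent to $\gcd(a_i,k)=1$ by inspection, the residue vanishes by construction, and a direct index computation on the (connected) preimages of a symplectic basis shows the Arf invariant equals $a_i\bmod 2$. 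Taking $a_i=1$ and $a_i=2$ for $k\geq 5$ gives both parities, while for $k=3$ only $a_i=1$ is available --- which is why the threshold $k\geq 5$ appears. If you want to salvage your approach, the missing step is to compute the torsion number (equivalently rotation number) of $Z_0-P_0$ for your symmetric configurations and show that for $k\geq5$ both parities of $d$ occur, e.g.\ by invoking Theorem~\ref{thm:g1-2}.
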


\begin{proof}
We first exhibit a primitive $k$-differential $(X_1, \xi_1)$ in $\komoduli[1](2k,-2k)^{\prim}$ with zero $k$-residue at the pole in Figure~\ref{fig:a-a,pasderes}.
  \begin{figure}[htb]
 \centering
\begin{tikzpicture}[scale=1.2,decoration={
    markings,
    mark=at position 0.5 with {\arrow[very thick]{>}}}]

    \fill[fill=black!10] ++(10:2.3)  ellipse (3cm and 2cm);

      \draw (0,0) coordinate (a1) -- node [below left,sloped] {$1$} node [above left,sloped] {$2$}  ++(30:2.5) coordinate (a2) coordinate[pos=.5] (x1) --  node [below right,sloped] {$2$}node [above right,sloped] {$1$} ++(-30:2.5)
coordinate (a3)coordinate[pos=.5] (x2);
  \foreach \i in {1,2,...,3}
  \fill (a\i) circle (2pt);

  \draw[->] (a2)++(30:-.4) arc  (-150:-30:.4); \node at (15:2.2) {$\frac{(k-2a_{i})\pi}{k}$};

   \draw[postaction={decorate},blue] (x1) .. controls ++(120:.6) and ++(90:1.4) .. (-1,-.1)  coordinate(y1)
                 .. controls ++(270:1.4) and ++(-120:1) .. (x2);
  
  \node[left,blue] at (y1) {$\beta$};

   \draw[postaction={decorate},red] (x1) .. controls ++(-60:1) and ++(-90:1.4) .. (5.5,-.1)  coordinate(y2)
                 .. controls ++(90:1.4) and ++(60:.6) .. (x2);
  
  \node[right,red] at (y2) {$\alpha$};
\end{tikzpicture}
\caption{A primitive $k$-differential with zero $k$-residue at the pole in $\Omega^{k}\moduli[1](2k, -2k)^{\prim}$ for $a_{i}\in\left\{1,\dots,\lfloor \frac{k}{2} \rfloor\right\}$ relatively prime to $k$. We rotate the edges $1$ and $2$ on the left clockwise by angle $2a_i\pi/k$ and then identify them respectively with the edges $1$ and $2$ on the right via translation.  
}
\label{fig:a-a,pasderes}
\end{figure}
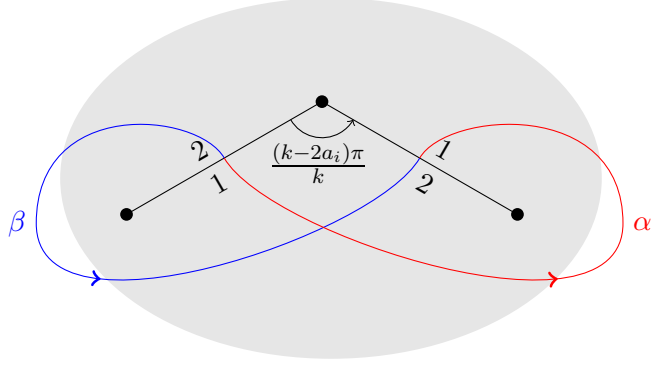
 Take a symplectic basis $(\alpha,\beta)$ of $H_{1}(X_{1}, \ZZ/2)$ as in the figure. By elementary geometry the total angle variation of the tangent vectors to $\alpha$ is equal to 
 $2\pi-\tfrac{2a_{i}}{k}\pi$, hence the index of $\alpha$ is $k-a_{i}$ by using the definition in Section~\ref{sec:globalop}. Similarly the index of $\beta$ 
 is also equal to $k-a_{i}$. Moreover, since the edge identifications are made by rotation of angle $2a_i\pi/k$, the corresponding $k$-differential is primitive if and only if~$a_i$ is relatively prime to $k$. 
 \par 
 Let $(\wh\alpha,\wh\beta)$ be the preimages of $(\alpha,\beta)$ in the canonical cover $(\whX_1, \wh\omega_1)$. Since the index $k-a_{i}$ of~$\alpha$ is relatively prime to $k$, the preimage $\wh\alpha$ is connected by gluing $k$ copies of~$\alpha$ consecutively. The total angle variation of the tangent vectors to $\wh\alpha$ is thus $k$ times that of $\alpha$, hence the index of $\wh\alpha$ (with respect to the abelian differential $\wh\omega_1$) is $k - a_i$ which is equal to the index of $\alpha$. Similarly the index of $\wh\beta$ is also $k-a_i$, equal to the index of $\beta$. If $k$ is odd, $(\wh\alpha,\wh\beta)$ form a symplectic basis of $H_{1}(\whX_1,\ZZ/2)$ as they intersect at $k$ points, and hence in this case the parity of the $k$-differential $\xi_1$ given in Figure~\ref{fig:a-a,pasderes} is equal to the parity of~$a_i$. This concludes the proof for the case $n = 1$, since for $k=3$ we can choose $a_{i}=1$ and for odd $k\geq 5$ we can choose $a_{i}$ to be~$1$ or~$2$.
 \par 
 To prove for general $n$, we can modify the above construction as follows. In Figure~\ref{fig:a-a,pasderes}, take a vertical half-infinite ray starting from the middle bullet point and going up, and cut the plane open along this ray.  Take $n-1$ ordinary planes and cut each of them open in the same way. Then we can glue these planes together consecutively along the edges of the half-infinite rays.  The resulting $k$-differential belongs to $\komoduli[1](2kn,-2kn)$.  Moreover, the cycles $\alpha$, $\beta$, and their indices are unchanged in this process.  Hence the claim follows from the same analysis as in the previous paragraph.  
 \end{proof}

\begin{rem}
\label{rem:rot-parity} 
Note that in the above proof if $\alpha$ and $\beta$ have indices $k-a_i$ with $a_i$ and~$k$ relatively prime, then the rotation number 
is $\gcd (2kn, k-a_i)$. Hence in this case the component of $\komoduli[1](2kn,-2kn)$ with rotation number $\gcd (2kn, k-a_i)$ has parity equal to the parity of $a_i$.  In particular, this confirms a special case of Theorem~\ref{thm:g1-2} in the Appendix, where we will study the parities of general strata of $k$-differentials in genus zero and one. 
\end{rem}

 We now apply Lemma~\ref{lm:sumparbis} to construct $k$-differentials in $\komoduli(k(2g-2))^{\prim}$ with distinct parities for the following cases: 
\begin{itemize}
 \item[(i)] $k\geq3$ odd and $g\geq4$,
 \item[(ii)] $k\geq5$ odd and $g\geq2$,
 \item[(iii)] $\Omega^{3}\moduli[3](12)^{\prim}$.
\end{itemize}
We first explain the idea of the construction. As in the notation of Lemma~\ref{lm:sumparbis}, we take a $k$-differential $(X_{0},\xi_{0})$ in the stratum $\komoduli[g-1](k(2g-4))$ and a $k$-differential $(X_{1},\xi_{1})$ in the stratum $\komoduli[1](k(2g-2),-k(2g-2))^{\prim}$.
Then we form a \mskd by gluing the zero of $\xi_{0}$ to the pole of $\xi_{1}$.  Choosing $\xi_0$ and $\xi_1$ carefully will lead to $k$-differentials with distinct parities after smoothing the \mskds.

In case (i), we choose $\xi_{0}$ to be the $k$-th power of an abelian differential in the stratum $\omoduli[g-1](2g-4)$. For $g\geq4$, there exist such differentials $\xi_{0}^{0}$ and $\xi_{0}^{1}$ with distinct parities (see \cite{kozo1}). The construction then follows from Equation~\eqref{eq:paritybis} by gluing to both $\xi_{0}^{0}$ and $\xi_{0}^{1}$ the same primitive $k$-differential $\xi_1$ in  $\komoduli[1](k(2g-2),-k(2g-2))^{\prim}$ with zero $k$-residue at the pole, which exists by Lemma~\ref{lm:zeroresgun}. Since $\xi_{1}$ is primitive, after smoothing the \mskds we thus obtain two primitive $k$-differentials of distinct parities.  

In case (ii), we choose any $\omega_0\in\omoduli[g-1](2g-4)$ and take as before $\xi_{0}=\omega_{0}^{k}$. By Equation~\eqref{eq:paritybis}, we can obtain two distinct parities for $\xi$ by taking two $\xi_{1}$ with distinct parities. This is possible for every $k\geq 5$ odd according to Lemma~\ref{lm:zeroresgun}. 

In case (iii), we start from a primitive cubic differential in $\Omega^{3}\moduli[2](6)^{\prim}$ and form a multi-scale $3$-differential by gluing to it the third power of an abelian differential $(X_{1},\omega_{1})$ in $\omoduli[1](4,-4)$. According to Lemma~\ref{lm:sumparbis}, we can obtain two distinct parities out of the smoothing by altering the parity of $(X_{1},\omega_{1})$, which indeed can have two distinct parities according to \cite{boissymero}.
\par
Using the minimal holomorphic strata, we now construct both parities in the general strata of $k$-differentials of parity type for odd $k$. Let $\komoduli(m_{1},\dots,m_{n})$ be a stratum of genus $g\geq 2$. Take a $k$-differential $(X_{0},\xi_{0})$ in the stratum of genus zero differentials  $\komoduli[0](m_{1},\dots,m_{n},-2kg)$. We can construct a \mskd $(X,\xi)$ by gluing $\xi_{0}$ to a primitive $k$-differential $(X_1, \xi_{1})$ in $\komoduli[g](k(2g-2))^{\prim}$. According to Lemma~\ref{lm:sumparbis} and the case of the minimal strata above, the smoothing of such \mskds can give primitive $k$-differentials with both parities in the stratum $\komoduli(m_{1},\dots,m_{n})^{\prim}$ if $(g,k) \neq (2,3)$, by choosing two $\xi_1$ with distinct parities.  
\par
It remains to treat the strata in genus two for $k=3$. Note that a stratum of cubic differentials is of parity type if and only if every singularity has even order.  In what follows we deal with the strata of meromorphic cubic differentials of parity type in genus two, and postpone the discussion of the remaining three holomorphic strata to Section~\ref{sec:specialstrata}.

We start with the strata $\Omega^{3}\moduli[2](2n+6, -2n)^{\prim}$. Take a multi-scale $3$-differential by gluing the zero of a $3$-differential $(X_0, \xi_0)$ in $\Omega^{3}\moduli[1](2n, -2n)^{\prim}$ with the pole of a $3$-differential $(X_1, \xi_1)$ in $\Omega^{3}\moduli[1](2n+6, -2n-6)^{\prim}$. Such multi-scale $3$-differentials are always smoothable by the primitivity assumption on $\xi_0$ and $\xi_1$. By Corollary~\ref{cor:g1-parity} we can choose two $(X_1, \xi_1)$ with distinct parities.  Then after smoothing the multi-scale $3$-differentials we can obtain both parities in the stratum $\Omega^{3}\moduli[2](2n+6, -2n)^{\prim}$ by Lemma~\ref{lm:sumparbis}.

Next we consider the meromorphic strata $\Omega^{3}\moduli[2](2n+6, -2\ell_{1},\dots, -2\ell_{s})^{\prim}$ with a unique (analytic) zero. Take 
a multi-scale $3$-differential by gluing the pole of a $3$-differential $(X_1, \xi_1)$ in $\Omega^{3}\moduli[2](2n+6, -2n)^{\prim}$ with the zero of a $3$-differential $(X_0, \xi_0)$ in the genus zero stratum $\Omega^{3}\moduli[0](2n-6, -2\ell_{1},\dots, -2\ell_{s})$. Since the top level differential $\xi_0$ is either primitive or contains a metric pole, such a multi-scale $3$-differential is smoothable.  By the preceding paragraph we can choose two $(X_1, \xi_1)$ with distinct parities.  Then after smoothing the multi-scale $3$-differentials we thus obtain both parities in  
$\Omega^{3}\moduli[2](2n+6, -2\ell_{1},\dots, -2\ell_{s})^{\prim}$ by Lemma~\ref{lm:sumparbis}.

Finally for the general meromorphic strata $\Omega^{3}\moduli[2](2n_{1},\dots, 2n_{r}, -2\ell_{1},\dots, -2\ell_{s})^{\prim}$, let $n= \sum_{i=1}^r n_{i}$.  Take a multi-scale $3$-differential by gluing the zero of a $3$-differential $(X_1, \xi_1)$ in 
$\Omega^{3}\moduli[2](2n, -2\ell_{1},\dots, -2\ell_{s})^{\prim}$ with the pole of a $3$-differential $(X_0, \xi_0)$ in the genus zero stratum 
$\Omega^{3}\moduli[0](2n_1, \ldots, 2n_r, -2n - 6)$. By the preceding paragraph we can choose two $(X_1, \xi_1)$ with distinct parities. Then after smoothing the multi-scale $3$-differentials we thus obtain both parities in  
$\Omega^{3}\moduli[2](2n_{1},\dots, 2n_{r}, -2\ell_{1},\dots, -2\ell_{s})^{\prim}$ by Lemma~\ref{lm:sumparbis}.  

\subsection{The strata $\Omega^{3}\moduli[2](6)$, $\Omega^{3}\moduli[2](4,2)$ and $\Omega^{3}\moduli[2](2,2,2)$.}
\label{sec:specialstrata}

In this section we study the remaining holomorphic strata of parity type in $g=2$ and $k=3$, which are 
$\Omega^{3}\moduli[2](6)$, $\Omega^{3}\moduli[2](4,2)$ and $\Omega^{3}\moduli[2](2,2,2)$. Note that $\Omega^{3}\moduli[2](6)$ contains 
a hyperelliptic component arising from the third power of abelian differentials in $\Omega\moduli[2](2)$, and its complement $\Omega^{3}\moduli[2](6)^{\prim}$ parameterizes primitive cubic differentials.  On the other hand, $\Omega^{3}\moduli[2](4,2)$ and $\Omega^{3}\moduli[2](2,2,2)$ parameterize primitive cubic differentials only, as their zero orders are not divisible by three. Moreover, each of them contains a hyperelliptic component by Theorem~\ref{thm:hyp}.  

We can describe connected components of these strata as well as their parities explicitly as follows.  

\begin{thm}
\label{thm:specialstrata}
 The primitive stratum $\Omega^{3}\moduli[2](6)^{\prim}$ is connected and has even parity. The strata $\Omega^{3}\moduli[2](4,2)$ and $\Omega^{3}\moduli[2](2,2,2)$ both have two connected components, one being hyperelliptic and the other non-hyperelliptic. Moreover, $\Omega^{3}\moduli[2](4,2)^{\hyp}$ has odd parity and $\Omega^{3}\moduli[2](4,2)^{\nonhyp}$ has even parity, while $\Omega^{3}\moduli[2](2,2,2)^{\hyp}$ has even parity and $\Omega^{3}\moduli[2](2,2,2)^{\nonhyp}$ has odd parity.
\end{thm}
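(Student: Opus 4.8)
The plan is to settle the minimal stratum $\Omega^{3}\moduli[2](6)^{\prim}$ first and then derive the structure of $\Omega^{3}\moduli[2](4,2)$ and $\Omega^{3}\moduli[2](2,2,2)$ by breaking up the order-$6$ zero. All three strata are of parity type since every entry is even (Proposition~\ref{prop:partype}), so the parity is defined. For the two larger strata the hyperelliptic locus ${\rm Hyp}^{3}(\mu)$ is irreducible (as recorded in the proof of Theorem~\ref{thm:hyp}), hence the hyperelliptic component is automatically connected; the real work is therefore to show that the non-hyperelliptic differentials form a single component, to compute all four parities, and to check that the parity separates the hyperelliptic from the non-hyperelliptic component so that no additional components can arise.

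First I would prove that $\Omega^{3}\moduli[2](6)^{\prim}$ is connected. Every hyperelliptic differential in $\Omega^{3}\moduli[2](6)$ is a cube (shown in the proof of Theorem~\ref{thm:hyp}), so the primitive locus is entirely non-hyperelliptic, and a primitive $\xi$ with $\operatorname{div}(\xi)=6p$ corresponds to a non-Weierstrass point $p$ together with the nonzero $3$-torsion class $\calO(K-2p)$; in particular the canonical cover is the connected \'etale triple cover $(\whX,\whomega)\in\omoduli[4](2,2,2)$. To connect any two such differentials I would pinch a non-separating cycle and degenerate them, inside the smooth moduli space of multi-scale $3$-differentials, to a genus-one boundary stratum $\Omega^{3}\moduli[1](6,-a,-b)^{\prim}$ with $a+b=6$. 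The components of genus-one strata are classified by the rotation number (Theorem~\ref{thm:compGenreUn}), and smoothness of the compactification (Lemma~\ref{lem:invarianceprong}) forces differentials degenerating to the same boundary component to lie in the same component upstairs; matching the admissible rotation numbers of the genus-one limits then yields connectedness. The even parity is computed from a single explicit flat model: build a primitive cubic differential by a polygon identification, pass to $(\whX,\whomega)$, and evaluate the Arf invariant \eqref{eq:defarf} on a symplectic basis of $H_{1}(\whX,\ZZ/2)$ lifted from the flat picture.

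With $\Omega^{3}\moduli[2](6)^{\prim}$ connected and of even parity, breaking up the order-$6$ zero produces the non-hyperelliptic parts of the remaining strata. The operation is realizable by Proposition~\ref{prop:breakpos} (the differential is primitive, hence not a cube), and by Lemma~\ref{lm:CCbreak} the resulting component depends only on the connected component of $(6)^{\prim}$; thus breaking $6$ into $(4,2)$, and into $(2,2,2)$, yields a single non-hyperelliptic component in each case. Its parity follows from the gluing formula of Lemma~\ref{lm:sumparbis}: it equals the even parity of $(6)^{\prim}$ plus the parity of the genus-zero cap in $\Omega^{3}\moduli[0](4,2,-12)$ or $\Omega^{3}\moduli[0](2,2,2,-12)$, and a direct genus-zero computation (as in the Appendix) shows these caps have parity even and odd respectively, giving the stated even and odd parities of the non-hyperelliptic components. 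For the hyperelliptic components I would compute the parity directly on the canonical covers $\whomega\in\omoduli[6](6,4)$ and $\whomega\in\omoduli[7](4,4,4)$, using the symmetric flat model arising from Weierstrass-point-supported zeros; this yields odd parity for $\Omega^{3}\moduli[2](4,2)^{\hyp}$ and even parity for $\Omega^{3}\moduli[2](2,2,2)^{\hyp}$. Since the two parities disagree in each stratum, the hyperelliptic and non-hyperelliptic components are genuinely distinct.

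Finally I would close the count at exactly two components. Given any primitive non-hyperelliptic differential in $\Omega^{3}\moduli[2](4,2)$ or $\Omega^{3}\moduli[2](2,2,2)$, merging all of its zeros into a single zero of order $6$ is the inverse of breaking up and lands in $\Omega^{3}\moduli[2](6)$; connectedness of the canonical cover is preserved under this local operation, so the limit lies in the connected locus $\Omega^{3}\moduli[2](6)^{\prim}$, and the differential therefore lies in the unique non-hyperelliptic component produced above. Together with the irreducibility of the hyperelliptic locus this gives exactly two components in each stratum. The main obstacle is the connectedness of $\Omega^{3}\moduli[2](6)^{\prim}$: unlike the larger strata it cannot be built from a lower-genus primitive holomorphic stratum by bubbling, since the only genus-one input is the cube $c\,(dz)^{3}$, which is non-primitive; one is thus forced into the boundary degeneration, and must verify that the primitive genus-one limits realize a single rotation-number class. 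This is precisely the point at which the sporadic genus-three and genus-four phenomena could enter, and ruling them out in genus two is exactly what the explicit low-genus analysis accomplishes.
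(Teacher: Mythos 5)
Your overall architecture (settle $\Omega^{3}\moduli[2](6)^{\prim}$, then break up the zero and use Lemma~\ref{lm:sumparbis} to propagate parities) is reasonable, and your parity numerology for the genus-zero caps is consistent with the Appendix. But the proposal has two genuine gaps, and they sit exactly at the two places where the paper is forced to do something different.

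First, the connectedness of $\Omega^{3}\moduli[2](6)^{\prim}$ is not established by your argument. Smoothness of the moduli space of multi-scale differentials gives you the \emph{easy} direction (two differentials degenerating to the same boundary point lie in the same component); the hard direction is showing that \emph{every} differential in the stratum can be degenerated to one and the same boundary component. You would need (i) that every primitive $(X,\xi)$ admits a pinchable non-separating configuration landing in a prescribed genus-one stratum $\Omega^{3}\moduli[1](6,-a,-b)$, and (ii) that the set of reachable (rotation number, prong-matching)-classes is a single class, or at least that the reachable classes are themselves interconnected. You flag (ii) yourself as the remaining obstacle but offer no mechanism for it, and (i) is not addressed at all; for cubic differentials there is no analogue of the vertical-trajectory/basic-domain decomposition that drives such arguments for $k\le 2$. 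The paper sidesteps this entirely: it proves irreducibility of $\Omega^{3}\moduli[2](6)^{\prim}$ (and likewise of $\Omega^{3}\moduli[2](4,2)^{\nonhyp}$ and $\Omega^{3}\moduli[2](2,2,2)^{\nonhyp}$) by a direct algebro-geometric parametrization: the linear system $|K+2z|$ realizes these differentials as plane cuspidal (resp.\ nodal) quartics with a prescribed tangent-line configuration, and the relevant coefficient space is a dense open subset of $\CC^{5}$, hence irreducible.

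Second, your closing step --- ``merging all of its zeros into a single zero of order $6$ is the inverse of breaking up'' --- uses an adjacency/merging statement for holomorphic cubic differentials that is not available. The merging results of Section~\ref{sec:adjquad} (Propositions~\ref{prop:mergezerokdiff} and~\ref{prop:lanneauCor2.7}) are proved only for quadratic differentials with at least one metric pole, via the flat basic-domain decomposition, and Remark~\ref{rem:2-to-k} explicitly states that the analogous decomposition is not known for general $k$. Without it, knowing that breaking up $6\mapsto(4,2)$ produces \emph{one} non-hyperelliptic component (which does follow from Proposition~\ref{prop:breakpos} and Lemma~\ref{lm:CCbreak}) does not bound the number of non-hyperelliptic components of $\Omega^{3}\moduli[2](4,2)$ from above. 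This is precisely why the paper proves irreducibility of each non-hyperelliptic locus separately by the plane-quartic method rather than by adjacency to the minimal stratum. Your parity computations for the hyperelliptic components (Arf invariants on covers in $\omoduli[6](6,4)$ and $\omoduli[7](4,4,4)$) are also only sketched; the paper instead degenerates to genus-one pieces whose parities are known from Theorem~\ref{thm:g1-2} and applies Lemma~\ref{lm:sumparbis}, which is more tractable than a direct Arf computation in genus $6$ or $7$.
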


\begin{proof}
We first show that the stratum $\Omega^{3}\moduli[2](6)^{\prim}$ is irreducible, and hence has a unique parity. Suppose $(X, \xi)$ is in this stratum with divisor $\divisor{\xi} = 6z \sim 3K$ where~$K$ is the canonical class of $X$.  Since $X$ is hyperelliptic, the point $z$ has a hyperelliptic conjugate which we denote by~$z'$. Note that $z' \neq z$, for otherwise $2z \sim K$ would contradict the primitivity assumption. Since $z + z' \sim K$, the previous condition on $z$ is equivalent to $3z\sim 3z'$, i.e., $K + 2z\sim 3z + z' \sim 4z'$. Consider the linear system 
$|K+ 2z|$ which maps $X$ to a plane quartic curve $C$. Since $h^0(X, K+z) = h^0(X, K)$, the image of $z$ (still denoted by $z$ for simplicity) is a cusp of $C$, and the cuspidal tangent line $L$ at $z$ cuts out $3z + z'$ in $C$ due to $K + 2z\sim 3z + z'$.  Moreover, since $4z' \sim K + 2z$, the tangent line to $C$ at $z'$ cuts out $4z'$ (i.e. $z'$ is a hyperflex). An example of such curves is illustrated in Figure~\ref{fig:cusp} (where the coefficients $a_{ij}$ will be introduced later in the proof).
  \begin{figure}[htb]
 \centering
  \scalebox{1.8}{
\begin{pspicture}(-2,-2)(2,2)
  \psaxes{->}(0,0)(-1.5,-1.5)(1.5,1.5)[$x$,0][$y$,90]
  \psplotImp[
    linecolor=red,
    stepFactor=0.1,
    algebraic,
    ](-1.5,-1.5)(1.5,1.5){y^2 +(x^3 - x^4)-x^2*y^2+(y^3-x*y^3)-2*y^4}
\end{pspicture}}
\caption{A plane cuspidal quartic given by the choice of coefficients $(a_{30}, a_{21}, a_{12}, a_{03}, a_{04}) = (1,0,0,1,-2)$, where the cuspidal tangent line (the $x$-axis) intersects the rest of the curve at a hyperflex.}
\label{fig:cusp}
\end{figure}
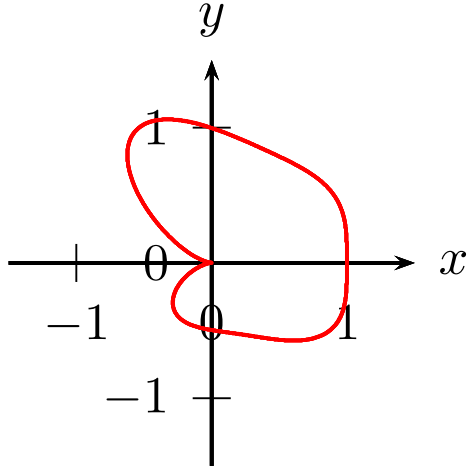

Conversely, suppose $C$ is a plane cuspidal quartic satisfying that the cuspidal tangent line~$L$ at the cusp $z$ cuts out $3z + z'$ with $z'\neq z$ and that the tangent line $L'$ to $C$ at~$z'$ cuts out the hyperflex $4z$. As long as $C$ has no other singularities besides the cusp $z$, we can recover $(X, \xi)\in \Omega^{3}\moduli[2](6)^{\prim}$ (up to scale) by taking $X$ to be the normalization of~$C$ and $z$ to be the unique zero of $\xi$. It is thus sufficient to show that the locus of such special quartics $C$ in the total space of plane quartics is irreducible.  
\par 
Let $x$ and $y$ be the affine coordinates of $\mathbb P^2$. Without loss of generality, we can choose $z = (0, 0)$, $z' = (1, 0)$, $L\colon y = 0$ and $L'\colon x - 1 = 0$, since different choices of such points and lines satisfying the same configuration are equivalent under the automorphisms of~$\mathbb P^2$.  Let 
$$f(x, y) = \sum_{i+j \leq 4} a_{ij}x^i y^j$$ 
be the defining equation for a plane quartic curve $C$. In other words, the coefficients~$a_{ij}$ give (homogeneous) coordinates for the parameter space $\mathbb P^{14}$ of plane quartics. The condition that $C$ has a cusp (or a further degeneration) at $z$ with $L$ as the cuspidal tangent line is equivalent to that $f$ belongs to the ideal generated by $y^2$ and $(x, y)^3$, i.e., 
$$ a_{00} = a_{10} = a_{01} = a_{20} = a_{11} = 0. $$
The condition that $L' \cap C = 4z'$ is equivalent to that $f(1,y)$ is divisible by $y^4$, i.e., 
$$  a_{30} + a_{40} = a_{21} + a_{31} = a_{02} + a_{12} + a_{22} = a_{03} + a_{13} = 0. $$
 Note that if $a_{02} = 0$, then $f\in (x, y)^3$ and consequently $C$ would have a triple point at~$z$ (i.e., a singularity worse than an ordinary cusp). Hence we can assume that up to scale $a_{02} = 1$. In this case $f(x,y)$ reduces to 
$$ f = y^2 + a_{30} (x^3 - x^4) + a_{21} (x^2 y - x^3 y) + a_{12} xy^2 - (a_{12}+1) x^2y^2 + a_{03} (y^3 - xy^3) + a_{04} y^4 $$
which is parameterized by the five independent coefficients $a_{30}, a_{21}, a_{12}, a_{03}, a_{04}$. 
One checks that a generic choice of these parameters gives rise to a desired cuspidal curve (with no other singularities), whose normalization together with the cusp determines $(X, z) \in \mathbb{P}\Omega^{3}\moduli[2](6)^{\prim}$. We thus conclude that $\mathbb{P}\Omega^{3}\moduli[2](6)^{\prim}$ is the image of 
a dense open subset of $\mathbb C^5 = \{(a_{30}, a_{21}, a_{12}, a_{03}, a_{04})\}$, hence is irreducible.  Alternatively, one checks that the subgroup of the automorphism group of $\mathbb{P}^2$ that fixes $z, z', L, L'$ is $3$-dimensional, and since $\dim \mathbb{P}\Omega^{3}\moduli[2](6)^{\prim} = 2$, it implies that the locus of the corresponding plane quartics must be $5$-dimensional, thus filling a dense subset of the parameter space $\mathbb C^5$.  
\par
Similarly we can show that the non-hyperelliptic locus $\Omega^{3}\moduli[2](4,2)^{\nonhyp}$ is irreducible, thus giving rise to a connected component of the stratum. 
Suppose $(X, z_1, z_2)$ is contained in this locus with $4z_1 + 2z_2 \sim 3K$ such that $z_1$ and $z_2$ are not Weierstrass points by the non-hyperelliptic assumption. The condition is equivalent to that $K+2z'_1 \sim 2z_1 + 2z_2$. In this case the linear system $|K+2z'_1|$ maps $X$ to a plane quartic~$C$ with a cusp at $z'_1$ such that the cuspidal tangent line $L_1$ cuts out $3z'_1 + z_1$ with $C$ and the tangent line $L_2$ to $C$ at $z_1$ cuts out $2z_1 + 2z_2$ (i.e. $L_2$ is a bitangent line). Using the above affine coordinates, we can choose $z'_1 = (0,0)$, $z_1 = (1,0)$, $z_2 = (1,1)$, $L_1\colon y = 0$ and $L_2\colon x-1 = 0$, and different choices of such points and lines satisfying the same configuration are equivalent under the automorphisms of~$\mathbb P^2$. 
Then the coefficients of the defining equation $f(x,y) = \sum_{i+j \leq 4} a_{ij}x^i y^j$ of $C$ satisfy that 
$$ a_{00} = a_{10} = a_{01} = a_{20} = a_{11} = 0, $$
$$a_{30} + a_{40} = a_{21} + a_{31} = 0, $$
$$a_{02} + a_{12} + a_{22} - a_{04} = a_{03} + a_{13} + 2a_{04} = 0. $$
As before we can assume that up to scale $a_{02} = 1$. In this case $f(x,y)$ can be parameterized by the five independent coefficients 
$a_{30}, a_{21}, a_{12}, a_{22}, a_{03}$.  We thus conclude that the non-hyperelliptic locus in $\mathbb{P}\Omega^{3}\moduli[2](4,2)$ is the image of 
a dense open subset of $\mathbb C^5 = \{(a_{30}, a_{21}, a_{12}, a_{22}, a_{03})\}$, hence is irreducible.  Alternatively, one checks that the subgroup of the automorphism group of $\mathbb{P}^2$ that fixes $z_1, z'_1, z_2, L_1, L_2$ is $2$-dimensional, and since $\dim \mathbb{P}\Omega^{3}\moduli[2](4,2) = 3$, it implies that the locus of the corresponding plane quartics must be $5$-dimensional, thus filling a dense subset of the parameter space $\mathbb C^5$.  
\par  
Next we show that the non-hyperelliptic locus $\Omega^{3}\moduli[2](2,2,2)^{\nonhyp}$ is irreducible. Suppose 
$(X, z_1, z_2, z_3)$ is contained in this locus with $2z_1 + 2z_2 + 2z_3 \sim 3K$ such that none of the $z_i$ is a Weierstrass point by the non-hyperelliptic assumption. The condition is equivalent to that $K + z_1 + z_2 \sim z'_1 + z'_2 + 2z'_3$.  In this case the linear system $|K+z_1+z_2|$ maps $X$ to a plane nodal quartic $C$ where $z_1$ and $z_2$ coincide at the node. Moreover, the two tangent lines $L_1$ and $L_2$ to the two branches of the node respectively cut out $2z_1 + z_2 + z'_1$ and $2z_2 + z_1 + z'_2$ in $X$, and the line $L_3$ spanned by $z'_1$ and $z'_2$ is tangent to~$C$ at $z'_3$. We can choose $z_1 = z_2 = (0,0)$, $z'_1 = (1,0)$, $z'_2 = (0,1)$, $z'_3= (\frac{1}{2}, \frac{1}{2})$, $L_1\colon y = 0$, $L_2\colon x = 0$ and $L_3\colon x + y - 1 = 0$, and different choices of such points and lines satisfying the same configuration are equivalent under the automorphisms of~$\mathbb P^2$. The coefficients of the defining equation $f(x,y) = \sum_{i+j \leq 4} a_{ij}x^i y^j$ of $C$ satisfy that 
$$ a_{00} = a_{10} = a_{01} = a_{20} = a_{02} = a_{30} + a_{40} = a_{03} + a_{04} = 0, $$
$$ a_{30} + a_{21} + a_{31} - (a_{03} + a_{12} + a_{13}) = 0, $$
$$ 4a_{11} + a_{12} + a_{21} + a_{22} + 2(a_{30} + a_{21} + a_{31}) = 0. $$
Up to scale there are five independent parameters. Moreover, the subgroup of the automorphism group of $\mathbb{P}^2$ that fixes the above choice of points and lines is $1$-dimensional.  Therefore, we conclude that $\mathbb{P}\Omega^{3}\moduli[2](2,2,2)^{\nonhyp}$ gives rise to a $4$-dimensional connected component of the (projectivized) stratum.  
\par 
Now we show that the irreducible stratum $\Omega^{3}\moduli[2](6)^{\prim}$ has even parity.  The idea is to construct cubic differentials parameterized in this stratum by smoothing a multi-scale cubic differential formed by gluing the third power of an abelian differential in  the stratum $\omoduli[1](0)$ with a primitive cubic differential in $\Omega^{3}\moduli[1](6,-6)^{\prim}$ which has a zero $3$-residue at the pole. Note that $\Omega^{3}\moduli[1](6,-6)^{\prim}$ has two connected components $\Omega^{3}\moduli[1](6,-6)^{1}$ and 
$\Omega^{3}\moduli[1](6,-6)^{2}$, which parameterize respectively the difference $z - p$ of the zero and the pole of the differentials $(E, \xi)$ being respectively a primitive $6$-torsion and a $3$-torsion in the underlying elliptic curve $E$.  By Lemma~\ref{lm:zeroresgun} and Remark~\ref{rem:rot-parity} (for $k = 3$, $n=1$ and $a_i = 1$), the component $\Omega^{3}\moduli[1](6,-6)^{2}$ of rotation number two contains cubic differentials with zero $3$-residue at the pole and with odd parity. We thus conclude that the irreducible stratum $\Omega^{3}\moduli[2](6)^{\prim}$ arises from smoothing the aforementioned multi-scale cubic differential formed by using the component $\Omega^{3}\moduli[1](6,-6)^{2}$, and after smoothing the parity being even follows from Lemma~\ref{lm:sumparbis} and the fact that $\omoduli[1](0)$ has odd parity.  
\par
Next we show that the stratum $\Omega^{3}\moduli[2](4,2)$ has both parities.  A cubic differential in the stratum $\Omega^{3}\moduli[2](4,2)$ can be obtained by smoothing the multi-scale cubic differential obtained by gluing a cubic differential $(X_1, \xi_{1})$ in the stratum $\Omega^{3}\moduli[1](2,-2)$ with a cubic differential $(X_2, \xi_{2})$ in the stratum $\Omega^{3}\moduli[1](4,-4)$ at their poles. Since $\xi_{1}$ and $\xi_2$ are primitive, the global $3$-residue condition holds automatically. Moreover, $\Omega^{3}\moduli[1](2,-2)$ is irreducible and has even parity by Theorem~\ref{thm:g1-2}. It is easy to see that any cubic differential with torsion number two in $\Omega^{3}\moduli[1](4,-4)^2$ admits a double cover of $\PP^1$ ramified at the zero and the pole, and hence the smoothing of the corresponding multi-scale cubic differential leads to the hyperelliptic component. Moreover by Theorem~\ref{thm:g1-2} the parities of $\Omega^{3}\moduli[1](4,-4)^2$ and $\Omega^{3}\moduli[1](4,-4)^1$ are odd and even respectively.  We thus conclude that $\Omega^{3}\moduli[2](4,2)^{\hyp}$ and $\Omega^{3}\moduli[2](4,2)^{\nonhyp}$ have odd and even parity respectively by using Lemma~\ref{lm:sumparbis}.   
\par
To obtain both parities in the stratum $\Omega^{3}\moduli[2](2,2,2)$, it suffices to break the zero of order four of two cubic differentials in $\Omega^{3}\moduli[2](4,2)$ with distinct parities into two double zeros. According to Proposition~\ref{prop:breakpos} this operation is realizable and preserves (non-)hyperellipticity in this case. 
\par 
Finally we show that the (non-)hyperelliptic component of $\Omega^{3}\moduli[2](2,2,2)$ has parity different from the corresponding (non-)hyperelliptic component of $\Omega^{3}\moduli[2](4,2)$. By Lemma~\ref{lm:sumparbis} it suffices to show that cubic differentials $(\PP^1, \xi_0)$ in the connect stratum $\Omega^{3}\moduli[0](2,2,-10)$ used in the preceding operation of breaking up the zero have odd parity, which is verified in the Appendix (see Proposition~\ref{prop:g0-1} and Remark~\ref{rem:conj-small}).
\end{proof}

\begin{rem}
Although in the above proof we do not use the component $\Omega^{3}\moduli[1](6,-6)^{1}$ in which $z - p$  is a primitive $6$-torsion, to supplement our understanding, one can show that every cubic differential in $\Omega^{3}\moduli[1](6,-6)^{1}$ has a nonzero $3$-residue at the pole. If this is not the case, then the irreducible stratum $\Omega^{3}\moduli[2](6)^{\prim}$ would also arise from smoothing a multi-scale cubic differential formed by gluing the third power of an abelian differential in $\omoduli[1](0)$ with a cubic differential in $\Omega^{3}\moduli[1](6,-6)^{1}$. Since the parity of $\Omega^{3}\moduli[1](6,-6)^{1}$ is even by Theorem~\ref{thm:g1-2}, it would imply that the parity of $\Omega^{3}\moduli[2](6)^{\prim}$ is odd, which contradicts Theorem~\ref{thm:specialstrata}.  
\end{rem}

%\label{sec:parity}

 \section{Adjacency of the strata of quadratic differentials}
 \label{sec:adjquad}

 The notion of adjacency was used by \cite{boissymero} for abelian differentials and by \cite{lanneauquad} for quadratic differentials of finite area. In this section we extend their results to the case of quadratic differentials of infinite area (i.e., allowing metric poles).
 \par
 First we extend the notion of adjacency to $k$-differentials for all $k$.
 \begin{df}
 Let  $\calC \subset\komoduli(\mu)$ and $\calC_0\subset \komoduli(\nu) $ be two connected components of strata of $k$-differentials. We say that $\calC$ is \emph{adjacent} to $\calC_0$ and denote it by $\overline{\calC} \supset \calC_0$, if there is a $k$-differential in $\calC$ which can be obtained by breaking up a \metzero of a $k$-differential in $\calC_0 $.
 \end{df}
  
Meromorphic abelian differentials have flat geometric representations in terms of (broken) half-planes and half-cylinders as basic domains, as shown in Section~3.3 of~\cite{boissymero}, which provides a powerful tool in the study of meromorphic differentials.  Below we show that the same type of basic domain decomposition holds for quadratic differentials with metric poles.  
  
  \begin{lm}\label{lem:half-planes}
   Let $q$ be a quadratic differential with at least one pole of order $\geq 2$. Then~$q$ can be obtained by gluing (broken) half-planes and half-cylinders with polygonal boundaries.
  \end{lm}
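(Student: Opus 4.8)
The plan is to reduce the statement to the known decomposition for meromorphic abelian differentials by passing to the orientation (canonical) double cover. First I would form the canonical cover $\pi\colon\whX\to X$ with $\pi^{*}q=\whomega^{2}$ for a meromorphic abelian differential $\whomega$, together with the deck involution $\tau$ satisfying $\tau^{*}\whomega=-\whomega$; when $q$ happens to be a global square the cover splits into two copies of $X$ and one simply works on a single sheet. The hypothesis that $q$ has a pole of order $\geq 2$ guarantees, via the ramification formula $\wh m=\tfrac{m+2}{\gcd(m,2)}-1$ recalled in Section~\ref{sec:parity-k}, that $\whomega$ has at least one honest pole: an odd-order pole $-l$ of $q$ lifts to a single pole of order $-(l-1)\leq -2$, while an even-order pole $-l$ lifts to two poles of order $-l/2\leq -1$. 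Thus $(\whX,\whomega)$ is a genuinely meromorphic abelian differential, to which the basic-domain decomposition of \cite[Section~3.3]{boissymero} applies, exhibiting $\whX$ as a union of half-planes and half-cylinders glued along polygonal boundaries.

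Next I would make this decomposition $\tau$-equivariant and descend it. Since $\tau$ acts on the flat surface $(\whX,\whomega)$ as the local half-turn $z\mapsto -z$ (the holonomy of the induced $\tfrac12$-translation structure), it preserves the horizontal foliation and hence the collection of horizontal separatrices. As Boissy's construction cuts $\whX$ precisely along this separatrix data, the resulting family of half-planes and half-cylinders is automatically permuted by $\tau$, so no extra symmetrization step is needed. I would then pass to the quotient $X=\whX/\tau$. No half-plane can be $\tau$-invariant, because the half-turn $z\mapsto -z$ interchanges $\{\Im z>0\}$ and $\{\Im z<0\}$; consequently the half-planes (and likewise the half-cylinders) occur in genuine $\tau$-pairs, and each pair descends isometrically to a single half-plane (resp.\ half-cylinder) on $X$. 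Reassembling the images of the boundary segments then realizes $q$ as a gluing of such domains with polygonal boundaries.

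The step I expect to be the main obstacle is the behavior at the fixed points of $\tau$, which are the \Weierstrass points lying over the odd-order singularities of $q$ (the branch locus of $\pi$). Such a fixed point sits on the common boundary of a half-plane and its $\tau$-image, or on the core of a half-cylinder; in the quotient the two sides meeting there are folded together, producing a corner where the polygonal boundary turns by $\pi$. This is exactly the \emph{broken} half-plane (resp.\ broken half-cylinder) in the sense of \cite[Section~2.3]{BCGGM3}. I would verify that away from these finitely many points the quotient map is a local isometry, so that all boundary identifications stay polygonal and no domain degenerates, and then carry out the bookkeeping confirming that each type of singularity of $q$ (metric pole of even versus odd order, simple pole, and \metzero) contributes the expected broken or unbroken domain. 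This local analysis at the branch points, rather than the global combinatorics, is the delicate part; the remainder is a direct transcription of the abelian case through the double cover.
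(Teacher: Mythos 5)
Your route through the canonical double cover is genuinely different from the paper's proof, which never leaves the quadratic differential: the paper rotates $q$ so that there are no vertical saddle connections, invokes Strebel's trajectory-structure theory for quadratic differentials (\cite[Section~11.4]{Strebel}) to decompose the surface into half-planes and bi-infinite vertical strips, and then cuts and reglues these pieces into the desired broken half-planes and half-cylinders. Measured against that, your plan has a gap at its very first step. The hypothesis that $q$ has a pole of order $\geq 2$ does \emph{not} guarantee that $\whomega$ has a pole of order $\geq 2$: a pole of order $-2$ of $q$ is unramified in the cover and lifts to two \emph{simple} poles of $\whomega$. For instance, $q\in\calQ(3,3,-2)$ in genus two has canonical cover in $\omoduli[4](4,4,-1,-1)$, with only simple poles. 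What the basic-domain decomposition of \cite[Section 3.3]{boissymero} is built on is the presence of free half-planes at poles of order at least two upstairs; the condition you actually verify (``$\whomega$ has at least one honest pole'') is not the relevant one, so the reduction is unjustified precisely for the strata $\calQ(\ldots,-2)$ and $\calQ(\ldots,-2,-2)$, which are among the cases the paper most needs this lemma for in Sections~6 and~7.

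The second gap is the equivariance claim. The assertion that the basic domains ``occur in genuine $\tau$-pairs, so no extra symmetrization step is needed'' is false as stated: a simple pole of $q$ has odd order, hence is a branch point, and lifts to a $\tau$-fixed \emph{regular} point of $\whomega$ which generically sits in the interior of a basic domain, forcing that domain to be $\tau$-invariant. As you yourself observe, a half-plane cannot be preserved by an anti-translation $z\mapsto -z+c$, and neither can a half-infinite cylinder with horizontal core; so the invariant domain must be one of the intermediate bi-infinite vertical strips. Boissy's reassembly of strips into half-cylinders cuts each strip along a chosen saddle connection, a choice that is not automatically $\tau$-equivariant, and when it is made equivariant the cut passes through the fixed point, so the quotient is a folded object that still has to be recognized as a broken basic domain downstairs. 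This is exactly the analysis you postpone as ``bookkeeping,'' but it is the real content of the descent, and the blanket pairing claim that precedes it is what would allow you to skip it. The paper's direct argument sidesteps all of this because Strebel's theory already treats quadratic differentials, with their angle-$\pi$ cone points, natively.
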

  
  \begin{proof}
  Up to multiplying $q$ by a complex number of norm one (i.e., up to rotation), we can assume that $q$ has no vertical saddle connections.  Then by the description of Section~11.4 of~\cite{Strebel} every vertical trajectory is either an infinite line or a half ray emanating from a conical singularity.  The vertical flow decomposes the surface into half-planes or infinite strips, each of which has a single conical point on every boundary vertical line.  Take a horizontal ray emanating from the boundary conical point of each half-plane to cut these half-planes into $\tfrac{1}{4}$-planes. Now the desired (broken) half-planes are obtained by gluing the vertical rays in the boundary of the $\tfrac{1}{4}$-planes. 
Finally, cut each vertical infinite strips along the saddle connection joining the two conical points on the boundary of the strip, and glue the resulting half-infinite strips to form the desired half-cylinders according to adjacency of the vertical boundary rays.
 \end{proof}

 According to \cite[Proposition~6.1]{boissymero} each connected component of a stratum of meromorphic abelian differentials contains 
  differentials that are obtained by the bubbling operation. We generalize the result to the case of quadratic differentials.
\begin{prop}\label{prop:bubkdiff}
Let $\mu = (n, -l_1, \ldots, -l_s)$ be a partition of $4g-4$ with $g\geq 1$, $n\geq 3$, $l_i \geq 1$ and at least one $l_j \geq 2$ (i.e., with a unique analytic zero and at least one metric pole).  Then every connected component $\calC$ of the stratum $\Omega^{2}\moduli(\mu)$ contains quadratic differentials obtained by bubbling a handle from a quadratic differential of genus $g-1$.   
\end{prop}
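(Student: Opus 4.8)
The plan is to adapt Boissy's argument for meromorphic abelian differentials \cite[Proposition~6.1]{boissymero}, with the half-plane decomposition of Lemma~\ref{lem:half-planes} playing the role of its abelian counterpart. The guiding principle is that bubbling a handle is the inverse of \emph{collapsing a simple cylinder}, by which I mean a finite cylinder whose core curve is non-separating and each of whose two boundary circles consists of a single saddle connection based at the analytic zero $z_0$ (the flat-geometric trace of the handle produced by smoothing a non-separating node carrying poles of order $-k=-2$ on both branches, as in Figure~\ref{cap:bubbling}). It therefore suffices to produce, in some representative of $\calC$, such a simple cylinder and to collapse it; the resulting genus $g-1$ differential then exhibits the given one as a bubbling.

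First I would take any $(X,q)\in\calC$ and, after multiplying $q$ by a unit complex number so that it has no vertical saddle connections, apply Lemma~\ref{lem:half-planes} to decompose $(X,q)$ into (broken) half-planes and half-cylinders. The hypothesis that some $l_j\geq2$ is used precisely here, since it guarantees at least one infinite-area basic domain, providing the room in which the surgery takes place. Because $z_0$ is the unique analytic zero and has order $n\geq3$, its cone angle equals $(n+2)\pi\geq5\pi$, so it emits $n+2\geq5$ vertical separatrices; the remaining singularities $-l_1,\ldots,-l_s$ will be carried along unchanged throughout the construction.

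The heart of the proof, and the step I expect to be the main obstacle, is to locate the simple cylinder. Following Boissy, one exploits the abundance of separatrices at $z_0$ together with an infinite-area half-plane to select two separatrices that can be joined by saddle connections at $z_0$ cobounding a cylinder; the factor $k=2$ enters only through cone angles now being measured in units of $\pi$ rather than $2\pi$. The delicate points are to arrange that the two bounding saddle connections are parallel and of equal length, so that they genuinely cobound a cylinder, and that its core is non-separating, so that collapsing lowers the genus rather than disconnecting the surface; since we only need one bubbled differential in $\calC$, we are free to deform within the connected component to reach a convenient configuration. One must also ensure that the bounding saddle connections are based at $z_0$ rather than at a simple pole, so that the surgery alters the order of $z_0$ alone.

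Once such a cylinder $C$ is found, collapsing it lowers the genus by one and decreases the order of $z_0$ by $2k=4$ (equivalently, the cone angle at $z_0$ drops by $4\pi$), producing a genus $g-1$ quadratic differential $(X_1,q_1)$ of signature $(n-4,-l_1,\ldots,-l_s)$. Here the hypothesis $n\geq3$ is exactly what is required: it gives $n-4\geq-1>-k$, so that $z_0$ persists as a genuine metric singularity of $q_1$ and $(X_1,q_1)$ lies in an honest stratum of genus $g-1$, while the holonomy bookkeeping along the collapsed cylinder reproduces the matching residue condition~\eqref{eq:matchres}. By construction $(X,q)$ is recovered from $(X_1,q_1)$ by bubbling a handle with the appropriate invariant $s$, so $\calC$ contains a bubbled differential, which proves the proposition.
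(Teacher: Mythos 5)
Your overall plan---realize bubbling as the inverse of a flat surgery and produce the surgered configuration in an arbitrary representative of $\calC$---is reasonable, but the proof has a genuine gap at exactly the step you yourself flag as ``the main obstacle.'' You never establish that every connected component contains a quadratic differential with a \emph{simple cylinder} (a finite cylinder with non-separating core, each boundary circle a single closed saddle connection based at $z_0$). Saying that one ``exploits the abundance of separatrices'' and that ``we are free to deform within the connected component to reach a convenient configuration'' is circular: knowing that some deformation of $(X,q)$ inside $\calC$ carries such a cylinder is essentially equivalent to the proposition being proved, since a surface carrying such a cylinder \emph{is} (up to the degeneration/smoothing bookkeeping) a bubbled surface. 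Arranging two parallel saddle connections of equal length cobounding a cylinder with non-separating core is a nontrivial configuration statement, and no mechanism in your write-up produces it. (Your peripheral remarks are fine: $n\geq 3$ does give $n-4\geq -1>-k$, and the hypothesis $l_j\geq 2$ is indeed what makes Lemma~\ref{lem:half-planes} applicable.)

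The paper avoids having to find a cylinder at all. After the basic-domain decomposition, it takes the $2g+s-1$ boundary saddle connections (which give period coordinates and hence span $H_1(X,\ZZ)$), observes that each one either is closed at $z_0$ or joins $z_0$ to a simple pole (two simple poles being impossible for $g\geq 1$), and extracts two \emph{closed} ones $\gamma_i,\gamma_j$ with intersection number $\pm 1$. Shrinking these two transversally intersecting loops---a much softer existence statement than producing a cylinder---pinches off a genus-one subsurface, yielding in the limit a \msd with a genus $g-1$ top level and a lower level in $\Omega^{2}\moduli[1](n,-n)$. One then degenerates that genus-one piece further to the nodal rational configuration of Figure~\ref{cap:bubbling}, and the smoothness of the moduli space of \mstwods (as in Lemma~\ref{lem:invarianceprong}), together with the triviality of the global $2$-residue condition because the top level carries a pole, forces the resulting smoothings---which are bubblings by definition---to lie in $\calC$. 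If you want to salvage your cylinder-based route, you would need to supply the missing configuration argument; otherwise the degeneration-plus-smoothness argument is the workable path.
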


\begin{proof}
Let $(X, q)$ be a quadratic differential in $\calC$. 
Up to rotation, we can assume that $q$ does not admit vertical saddle connections, hence it has a basic domain decomposition by (broken) half-planes and half-cylinders as in Lemma~\ref{lem:half-planes}. Local coordinates of $\calC$ at $q$ are given by $2g + s - 1$ saddle connections $\gamma_i$ at the boundary of the basic domains. 

We first show that the endpoints of each $\gamma_i$ cannot be two simple poles. Otherwise, since there is no vertical saddle connection, analyzing vertical trajectories in the basic domain decomposition implies that this case can occur if and only if the stratum is $\Omega^{2}\moduli[0](-1,-1,-2)$, which contradicts the assumption that $g\geq 1$.  Therefore, each $\gamma_i$ either joins the unique (analytic) zero $z$ to itself or joins $z$ to a simple pole. Since the span of all $\gamma_i$ contains the absolute homology $H_1(X, \mathbb{Z})$, let $\gamma_1, \ldots, \gamma_m$ be those boundary saddle connections joining $z$ to itself which generate $H_1(X, \mathbb{Z})$. The intersection number between any two such closed paths is either~$0$ or~$\pm 1$. Since
$\gamma_1, \ldots, \gamma_m$ generate $H_1(X, \mathbb{Z})$, there exist $\gamma_i$ and $\gamma_j$ such that
their intersection number is $\pm 1$. Shrink~$\gamma_i$ and~$\gamma_j$ until they are very short compared to the other $\gamma_{k}$ which stay unchanged. Then a small neighborhood enclosing the saddle connections $\gamma_i$ and $\gamma_j$ is isometric to the complement of
a neighborhood of the pole for a flat surface of genus one in $\Omega^{2}\moduli[1](n, -n)$. To see it, note that in the shrinking process the zero remains to be of order $n$. Since the intersection number of $\gamma_i$ and $\gamma_j$ is~$\pm 1$, the boundary of this neighborhood is connected. It implies that the resulting differential has a unique pole. Since it has two saddle connections, the genus must be one.

Summarizing the above discussion, we conclude that the limit of shrinking  $\gamma_{i}$ and~$\gamma_{j}$ to zero gives rise to a \mstwod consisting of a quadratic differential of genus $g-1$ attached to the quadratic differential of genus one at the pole of order~$-n$. We can further replace the (smooth) differential of genus one by a rational nodal differential $(X_2, \eta_2)$ as in Figure~\ref{cap:bubbling}. Note that the resulting \mstwod remains smoothable even if the $2$-residue at the separating node is nonzero, since the top level component of genus $g-1$ contains a pole and hence the global $2$-residue condition is trivial. Thus the smoothing of this \mstwod gives the desired bubbling operation.  
\end{proof}

 In \cite[Proposition~7.1]{boissymero} an adjacency property for the strata of meromorphic abelian differentials is described by merging zeros. We generalize the result to the case of quadratic differentials. 
\begin{prop}\label{prop:mergezerokdiff}
Let $\calC$ be a connected component  of the stratum of quadratic differentials 
$\Omega^{2}\moduli(n_1,\dots, n_r, -1^{a}, 
- l_1, \ldots, -l_s)$ with $n_i \geq 0$, $l_j \geq 2$ and $s > 0$ (i.e., with at least one metric pole). Then for any $0 \leq b \leq a$, 
there exists a connected component $\calC_{0}$ of $\Omega^{2}\moduli(\sum_{i=1}^r n_{i} - b, -1^{a-b},  -l_1, \ldots, -l_s)$
 such that $\calC_0 \subset \overline{\calC}$. 
\end{prop}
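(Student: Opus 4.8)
The plan is to prove the adjacency $\overline\calC\supset\calC_0$ by running the breaking-up operation of Section~\ref{subsec:break} in reverse, constructing $\calC_0$ as the component obtained by colliding singularities of a differential in $\calC$. Set $N=n_1+\cdots+n_r-b$. From the relation $\sum_{i=1}^r n_i-a-\sum_j l_j=4g-4$ together with $g\geq1$, $s>0$, $b\leq a$ and each $l_j\geq2$ one gets $N\geq2$, so $N$ is a genuine metric zero; moreover, since for $k=2$ every simple pole has order $-1>-2$ and is therefore a metric zero, the tuple $(n_1,\dots,n_r,-1^{b})$ is exactly what a metric zero of order $N=n_1+\cdots+n_r-b$ breaks up into under the additive rule of Section~\ref{subsec:break}. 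Because $s>0$, every differential in the target stratum carries a metric pole and so is never the square of a holomorphic differential; hence Proposition~\ref{prop:breakpos} guarantees that this breaking-up is always realizable, and Lemma~\ref{lm:CCbreak} guarantees that the component it produces depends only on the component being broken up.

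I would carry out the collision one singularity at a time. Starting from $(X,q)\in\calC$, rotate so that $q$ has no vertical saddle connection and invoke the basic domain decomposition of Lemma~\ref{lem:half-planes}; the finite cone points, namely the zeros and the simple poles (which are precisely the metric zeros), lie on the polygonal boundaries of the half-planes and half-cylinders and are joined by saddle connections. First I would merge the zeros $z_1,\dots,z_r$ into a single zero of order $\sum_i n_i$, then absorb $b$ of the simple poles, each absorption lowering the order of the growing zero by $1$; since $\sum_i n_i\geq2$ this growing zero stays of order $\geq 2$ throughout. Each individual merge is realized by shrinking one saddle connection $\gamma$ joining the two colliding points to zero length: this is a path in the stratum whose limit is a multi-scale quadratic differential whose top level carries the merged singularity together with all untouched poles, while the vanishing of $\gamma$ produces a genus-zero bubble, carrying the two colliding singularities, attached along a single node. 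Smoothing this multi-scale differential is precisely a breaking-up, so the component of the top level is adjacent to the current component; iterating and using that the closure operation is idempotent (so that $\calC_{i+1}\subset\overline{\calC_i}$ for all $i$ forces the final component into $\overline\calC$) produces the desired $\calC_0$.

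The main obstacle is the flat-geometric input of the single merge: one must produce a saddle connection joining the growing zero to a cluster member of the prescribed type---another zero, or a simple pole to be absorbed---rather than to a metric pole (where no saddle connection ends) or to a simple pole one wishes to keep. Here the hypothesis $s>0$ is essential, as the metric pole makes the half-plane decomposition available and leaves enough room to deform $(X,q)$ within $\calC$ so that the required short saddle connection is present, exactly as in the meromorphic abelian case of \cite[Proposition~7.1]{boissymero}. The collapse of a single saddle connection automatically yields a genus-zero bubble attached along one node, so no further control of the degeneration is needed, and the orders add correctly because a collision of two finite cone points of a quadratic differential removes a cone angle of $2\pi$, just as for abelian differentials. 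Once this single-merge step is secured, the order bookkeeping and the idempotency argument assemble the full statement.
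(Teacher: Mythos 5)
Your overall strategy matches the paper's: pass to the basic domain decomposition of Lemma~\ref{lem:half-planes}, collide the singularities one at a time by shrinking a boundary saddle connection, read each collision backwards as a breaking-up (so that Proposition~\ref{prop:breakpos} and Lemma~\ref{lm:CCbreak} make the resulting adjacencies well defined), and compose. The order bookkeeping and the reduction to single merges are fine.

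The gap sits exactly at the step you yourself flag as the main obstacle. Your proposed resolution --- that the metric pole ``leaves enough room to deform $(X,q)$ within $\calC$ so that the required short saddle connection is present'' --- is an assertion, not an argument, and it is not how the paper (or \cite{boissymero}) proceeds: no deformation of the differential is used to create the saddle connection. The paper's argument is combinatorial on the fixed decomposition. The boundaries of the basic domains carry saddle connections; for $g\geq 1$ no boundary saddle connection can join two simple poles (this is ruled out in the proof of Proposition~\ref{prop:bubkdiff}, the only exception being the genus-zero stratum $\Omega^{2}\moduli[0](-1,-1,-2)$); and if a boundary saddle connection joins a simple pole to an analytic zero while other analytic zeros remain, the cone angle $\pi$ at the simple pole forces that saddle connection to appear twice on the boundary of a single basic domain, so connectedness of $X$ supplies a further boundary saddle connection to examine, and iterating eventually produces one joining two analytic zeros. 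Once all analytic zeros are merged, each remaining simple pole is joined to the merged zero by a boundary saddle connection, and one chooses which $b$ of these to shrink. You need to supply this (or an equivalent) argument. Two smaller points: the paper treats $g=0$ separately and trivially (every genus-zero stratum is connected, so any two singularities can be merged), and your inequality $N\geq 2$ presupposes $g\geq 1$; and the ``idempotency of closure'' you invoke is really the statement that successive break-ups of the same growing zero compose to a single break-up, which is where Lemma~\ref{lm:CCbreak} and the connectedness of the genus-zero strata used in the operation actually enter.
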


The notation $-1^a$ stands for the signature of $a$ simple poles.  The above result says that we can merge all analytic zeros together with any specified number of simple poles.  We remark that it is possible that a primitive component $\calC$ is adjacent to a non-primitive component $\calC_{0}$. 

\begin{proof}
The claim obviously holds for $g = 0$, since any stratum of differentials of genus zero is connected and hence one can merge any two singularities. From now on we assume that $g\geq 1$. Let $(X,q)$ be a quadratic differential in the component $\calC$. Up to rotation, we can assume that $q$ does not have vertical saddle connections, hence~$q$ admits a basic domain decomposition given by Lemma~\ref{lem:half-planes}. 
Since~$X$ is connected, there exist two (broken) half-plane or half-cylinder basic domains $D_1$ and $D_2$ such that they contain the same saddle connection~$\gamma$ joining two distinct \metzeros on their boundary. Note that $D_1$ and $D_2$ are not necessarily distinct, and if they are identical, then $\gamma$ appears twice on the boundary of the same basic domain.

As we have seen in the proof of Propoisition~\ref{prop:bubkdiff}, the endpoints of $\gamma$ cannot be two simple poles for $g\geq 1$. 
Next suppose the endpoints of $\gamma$ consist of a simple pole $p$ and an analytic zero $z$.  Since the total angle at $p$ is $\pi$, then $\gamma$ appears twice on the boundary of the same basic domain $D$. If there is no other analytic zero besides $z$, we can choose to shrink $\gamma$ (or choose not to), and continue this process for the other simple poles. If there are some other analytic zeros, then besides~$\gamma$ there must exist another saddle connection on the boundary of two basic domains. Iterate this procedure until we find a saddle connection joining two analytic zeros.  Then we can shrink it to be arbitrarily short and consequently merge the two endpoint zeros.  During the shrinking process all the other boundary segments are fixed, hence the resulting surface remains to be smooth and connected.  By induction we can thus merge together all analytic zeros. Finally by the same argument as before, each of the remaining simple poles $p_i$ joins the totally merged zero by a boundary saddle connection $\gamma_i$ for $i = 1, \ldots, a$.  We can choose to shrink $b$ of the $\gamma_i$ for any $b\leq a$, thus proving the result.  
\end{proof}

Besides merging (metric) zeros, one can also merge poles together as given in the following companion result. 
\begin{prop}\label{prop:mergepolekdiff}
Let $\calC$ be a connected component of the stratum of quadratic differentials $\Omega^{2}\moduli(n_1, \ldots, n_r, -1^a, -l_1, \ldots, -l_s)$ with $n_i \geq 0$, $l_j \geq 2$ and $s> 0$ (i.e., with at least one metric pole). Then for any $0 \leq b \leq a$, there exists a connected component $\calC_0$ of the stratum $\Omega^{2}\moduli(n_1, \ldots, n_r, -1^{a-b}, -\sum_{i=1}^s l_i - b)$ such that $\calC_0 \subset \overline{\calC}$. 
\end{prop}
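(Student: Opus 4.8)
The plan is to adapt the flat-geometric argument of Proposition~\ref{prop:mergezerokdiff}, replacing the collapse of a saddle connection between two \metzeros (which fuses finite cone points) by a dual surgery performed at infinity that fuses two \metpoles into a single deeper pole. As there, the case $g=0$ is immediate because every stratum of genus zero quadratic differentials is connected, so I would assume $g\geq1$ and fix a differential $(X,q)\in\calC$. After a rotation I may assume $q$ has no vertical saddle connections, so by Lemma~\ref{lem:half-planes} it decomposes into (broken) half-planes and half-cylinders; in this picture the analytic zeros and the simple poles are the finite cone points on the boundaries of the basic domains, while each \metpole of order $-l$ is the point at infinity of a packet of $l-2$ infinite sectors (a half-cylinder when $l=2$). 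The guiding principle is that merging poles must \emph{add} their orders, matching on the analytic side the confluence $z^{-l_1}\cdots(z-\varepsilon)^{-l_s}\to z^{-\sum_{i=1}^{s} l_i}$; geometrically this is reflected in the count of separatrices, which behaves oppositely to the zero case, since a fusion of poles \emph{creates} two new separatrices rather than destroying two.

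Concretely, I would proceed as follows. First, as in Proposition~\ref{prop:mergezerokdiff}, connectivity of $X$ ensures that two distinct \metpoles $P_1,P_2$ have infinite packets adjacent along a common boundary separatrix (possibly after fusing through intermediate poles); a cut-and-paste along this separatrix concatenates the two packets into one, producing a pole with $(l_1-2)+(l_2-2)+2=l_1+l_2-2$ separatrices, that is, of order $-(l_1+l_2)$. Carrying out this surgery in a one-parameter family realizes the fusion as a continuous confluence with all other data held fixed, and iterating over the $s$ \metpoles fuses them into a single pole of order $-\sum_{i=1}^{s}l_i$. Second, to absorb $b$ of the simple poles I would use that a simple pole is a finite cone point of angle $\pi$ lying on the boundary of a sector adjacent to the fused pole; sliding it out to infinity into that packet raises the pole order by one, so absorbing $b$ of them yields a pole of order $-\sum_{i=1}^{s} l_i-b$. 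Throughout, the analytic zeros $n_1,\dots,n_r$ and the remaining $a-b$ simple poles are kept apart and fixed, so the surface stays smooth and connected; the limiting differential lies in $\Omega^{2}\moduli(n_1,\dots,n_r,-1^{a-b},-\sum_{i=1}^{s} l_i-b)$, hence in some component $\calC_0$, and being a limit of points of $\calC$ it satisfies $\calC_0\subset\overline{\calC}$.

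The hard part will be to make the surgery at infinity precise and to verify its two essential features: that the fused singularity has order exactly the sum of the merged orders, and that the surgery is a genuine continuous path inside $\calC$ whose limit lands in the \emph{interior} of the target stratum, with no unintended collision of the zeros or of the surviving simple poles. The separatrix bookkeeping above should settle the order arithmetic, and I expect the cleanest rigorous check to come through the compactification by \mstwods: the fused configuration is the two-level boundary point obtained by attaching at the fused pole a genus-zero bubble in $\Omega^{2}\moduli[0](\sum_{i=1}^{s} l_i+b-4,-l_1,\dots,-l_s,-1^{b})$ carrying the separated poles, glued at a single node and hence by the unique equivalence class of \prmas (Corollary~\ref{cor:uniquepm}). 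The lower level, carrying the pole of order $-\sum_{i=1}^{s} l_i-b$ together with the zeros $n_i$ and the $a-b$ simple poles, is precisely a differential of the target stratum lying in $\calC_0$, while the upper-level bubble contains the separated \metpoles, so case (i) of the global $2$-residue condition (Proposition~\ref{prop:GRCk}) applies and the boundary point is smoothable into $\calC$, exactly as in the proof of Proposition~\ref{prop:bubkdiff}. This simultaneously certifies that the order is $-\sum_{i=1}^{s} l_i-b$ and exhibits $\calC_0$ in $\overline{\calC}$; the remaining work—choosing the separatrices so that only the intended singularities fuse and checking the auxiliary genus-zero strata are nonempty—is routine once the surgery is set up.
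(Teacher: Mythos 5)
Your plan follows the same route as the paper's proof: after the genus-zero reduction you use the basic-domain decomposition of Lemma~\ref{lem:half-planes}, fuse two metric poles by a surgery at infinity along a shared boundary saddle connection of their domains (the paper phrases this as stretching that saddle connection to be arbitrarily long), and then absorb simple poles one by one by pushing them to infinity along a ray toward the remaining metric pole. Your separatrix bookkeeping for the merged pole order and the supplementary multi-scale verification (a genus-zero bubble carrying the separated poles, smoothable by case (i) of the global residue condition) are consistent with, but not essentially different from, the paper's argument.
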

In other words, the above result says that we can merge all metric poles together with any specified number of simple poles. 
\begin{proof}
Let $(X, q)$ be a meromorphic quadratic differential in $\calC$.  If $q$ has at least two metric poles, then there exist two metric poles $p_1$ and~$p_2$ such that they share a saddle connection $\gamma$ on the boundary of their basic domains $D_1$ and $D_2$. Take a ray $\ell_i$ emanating from the middle point of $\gamma$ into $D_i$ such that $\ell_i$ does not meet other singularities of $q$ for $i = 1, 2$. Then we can stretch each side of $\gamma$ to be arbitrarily long (i.e., push each side of $D_i$ far away from $\ell_i$), which merges the poles $p_1$ and $p_2$ in the limit.   
\par 
Next suppose we have reduced $q$ to have a unique metric pole~$p$ with some simple poles $z_i$ for $i = 1, \ldots, a$.  A metric neighborhood of $z_i$ is a half-disk with either a boundary ray or a saddle connection~$\gamma_i$.  If $\gamma_i$ extends as a boundary ray of a half-plane, then the stratum is $\Omega^{2}\moduli[0](-1, -3)$ and there is nothing to prove. Suppose $\gamma_i$ is a saddle connection.  Take a general ray $\ell_i$ emanating from $z_i$ to $p$ such that it does not meet the other singularities of $q$.  Cut the basic domain along~$\ell_i$ and push~$z_i$ to infinity from both sides of $\ell_i$.  Then $z_i$ and $p$ are merged in the limit. We can choose $b$ of the $a$ simple poles and merge them one by one with $p$, thus proving the claim.  
\end{proof}

Using the above results, we can show that the number of connected components of a stratum of quadratic differentials does not increase when metric zeros are merged together.  This generalizes \cite[Corollary 2.7]{lanneauquad}  to the case of quadratic differentials with metric poles. Below we adapt the same notation from Proposition~\ref{prop:mergezerokdiff}. 
\begin{prop}\label{prop:lanneauCor2.7}
Let $\calC_1$ and $\calC_2$  be two connected components of  the stratum of quadratic differentials $ \Omega^{2}\moduli(n_1, \ldots, n_r, -1^a,
- l_1, \ldots, - l_s)$, and let $\calC_0$ be a connected component of the stratum $\Omega^{2}\moduli(n, -1^{a-b}, - l_1, \ldots, - l_s)$ with $n = n_1 +\cdots + n_r - b$.  If $\calC_0 $ is contained in both~$ \overline{\calC}_1 $ and $ \overline{\calC}_2$, then $\calC_1 = \calC_2$. In particular, the number of connected components of $ \Omega^{2}\moduli(n_1, \ldots, n_r, -1^a, - l_1, \ldots, - l_s)$ is bounded above by the number of connected components of $\Omega^{2}\moduli(n, -1^{a-b}, - l_1, \ldots, - l_s)$.  
\end{prop}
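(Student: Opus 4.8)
The plan is to deduce everything from the rigidity of the breaking-up operation recorded in Lemma~\ref{lm:CCbreak}. The point is that passing from the merged signature $(n, -1^{a-b}, -l_1, \ldots, -l_s)$ to the finer signature $(n_1, \ldots, n_r, -1^a, -l_1, \ldots, -l_s)$ is realized by a single breaking-up: the metric poles $-l_j$ and the $a-b$ existing simple poles are untouched, while the unique analytic zero of order $n$ is broken into the $r$ zeros of orders $n_1, \ldots, n_r$ together with $b$ new simple poles (which count as metric zeros, since $-1 > -2 = -k$). As $n = n_1 + \cdots + n_r - b$, this is a legitimate breaking of one metric zero into $r+b$ metric zeros, carried out with a genus-zero differential $(X_2,\xi_2)$ in the connected stratum $\Omega^{2}\moduli[0](n_1, \ldots, n_r, -1^b, -n-4)$.

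First I would observe that the standing hypothesis $s>0$ with $l_j\geq 2$ forces every differential in $\calC_0$ to carry a metric pole, hence not to be the square of a holomorphic differential. This places us exactly in the hypothesis of Lemma~\ref{lm:CCbreak}, which guarantees that the connected component produced by the above breaking-up depends only on the component $\calC_0$ one starts from, and not on the choice of $(X_2,\xi_2)$ nor on any other datum in the smoothing. Since both $\calC_0\subset\overline{\calC}_1$ and $\calC_0\subset\overline{\calC}_2$ assert that $\calC_1$ and $\calC_2$ each arise from breaking the order-$n$ zero of differentials in $\calC_0$ into the prescribed pieces, Lemma~\ref{lm:CCbreak} forces $\calC_1=\calC_2$. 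Here one must confirm that the adjacency hypothesis pins down the breaking-up uniquely: to gain the extra simple poles (from $a-b$ to $a$) and the extra zeros (from one to $r$) one is compelled to split the order-$n$ zero, because splitting a simple pole cannot increase the number of poles (there is no way to write $-1$ as a sum of two orders each $>-2$ other than by introducing ignorable marked points) and a metric pole of order $\leq -2$ is never a metric zero and so cannot be broken at all.

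For the counting statement I would package the breaking-up as a map $\Psi$ from the set of connected components of the merged stratum to that of the finer stratum, sending $\calC_0$ to the unique component obtained by the breaking-up described above; the first part shows $\Psi$ is well defined. Surjectivity of $\Psi$ is precisely Proposition~\ref{prop:mergezerokdiff}: every component $\calC$ of the finer stratum satisfies $\calC_0\subset\overline{\calC}$ for some component $\calC_0$ of the merged stratum, that is $\calC=\Psi(\calC_0)$. A surjection from the components of the merged stratum onto those of the finer stratum immediately yields the stated bound on cardinalities. The routine verifications are light; the one step deserving genuine care—and which I regard as the main, though modest, obstacle—is the uniqueness of the breaking-up under the adjacency hypothesis, ensuring that Lemma~\ref{lm:CCbreak} is the right tool to invoke.
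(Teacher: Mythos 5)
Your argument is essentially the paper's: both proofs reduce the uniqueness claim to the rigidity of the breaking-up operation (the paper invokes Lemma~\ref{lm:markedunzero} on the connectedness of the prong-marked genus-zero stratum, you invoke Lemma~\ref{lm:CCbreak}, which is distilled from the same ingredients via Lemma~\ref{lem:invarianceprong} and the connectedness of the genus-zero stratum of $(X_2,\xi_2)$), and both obtain the cardinality bound from the surjectivity supplied by Proposition~\ref{prop:mergezerokdiff}. Your observation that $s>0$ forces every differential in $\calC_0$ to have a metric pole, so the residue obstruction in Lemma~\ref{lm:CCbreak} is vacuous, is exactly the right point and is implicit in the paper's proof.

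One caveat concerns the step you single out as the main obstacle. The parenthetical claim that splitting a simple pole cannot increase the number of poles is false: $-1 = (-1)+(-1)+1$ writes a simple pole as a sum of three orders each $>-2$, two of which are again $-1$, so breaking a simple pole can in principle also land in the finer stratum (for instance when some $n_i$ equals $n$ and the remaining $n_j$ sum to $b$). Your check only covers decompositions into two pieces. The paper does not attempt this verification at all: it simply reads the adjacency hypothesis as asserting that differentials in $\calC_i$ are obtained by breaking the order-$n$ zero into the prescribed pieces $n_1,\ldots,n_r,-1^b$, which is the only way the proposition is ever applied. Under that reading your proof is complete; under the literal definition of adjacency, neither your argument nor the paper's excludes the degenerate realizations, so you should either adopt the paper's reading explicitly or treat those exceptional configurations separately.
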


\begin{proof}
By assumption, there exist quadratic differentials in~$\calC_i$ for each $i = 1, 2$ obtained by breaking up the singularity of order $n$ of quadratic differentials in $\calC_{0}$ into \metzeros of order $n_1, \ldots, n_r, -1^{b}$.
Since~$\calC_{0}$ is connected, it suffices to check that the parameter space of quadratic differentials in $\Omega^{2}\moduli[0](-n-2k, n_{1}, \ldots, n_r, -1^b)$ with a prong marking at the pole of order $-n-2k$ is connected, and this holds by Lemma~\ref{lm:markedunzero}. Moreover, by Proposition~\ref{prop:mergezerokdiff} each connected component 
of $\Omega^{2}\moduli(n_1, \ldots, n_r, -1^a, - l_1, \ldots, - l_s)$ is adjacent to at least one connected component of $\Omega^{2}\moduli(n, -1^{a-b}, - l_1, \ldots, - l_s)$.  Hence the number of connected components of the former stratum is bounded by the latter.  
\end{proof}

By a completely analogous argument we can obtain the following result for merging poles. We adapt the same notation from Proposition~\ref{prop:mergepolekdiff}. 
\begin{prop}\label{prop:boundpole}
Let $\calC_1$ and $\calC_2$ be two connected components of the stratum of quadratic differentials $\Omega^{2}\moduli(n_1, \ldots, n_r, -1^a, 
- l_1, \ldots, - l_s)$, and let $\calC_0$ be a connected component of the stratum $\Omega^{2}\moduli(n_1, \ldots, n_r, -1^{a-b}, 
- l)$ with $l = \sum_{j=1}^{s}l_{j} + b$. If
$\calC_0$ is contained in both~$\overline{\calC}_1$ and~$\overline{\calC}_2$, then $\calC_1 = \calC_2$. In particular, the number of connected components of $\Omega^{2}\moduli(n_1, \ldots, n_r, -1^a, - l_1, \ldots, - l_s)$ is bounded above by the number of connected components of $\Omega^{2}\moduli(n_1, \ldots, n_r, -1^{a-b}, - l)$. 
\end{prop}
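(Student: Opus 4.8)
The plan is to mirror the proof of Proposition~\ref{prop:lanneauCor2.7} line by line, replacing the operation of breaking up a \metzero with its polar counterpart, the splitting of the merged pole of order $-l$. Two inputs carry the argument. First, Proposition~\ref{prop:mergepolekdiff} already shows that every connected component $\calC$ of the finer stratum $\Omega^{2}\moduli(n_1,\ldots,n_r,-1^a,-l_1,\ldots,-l_s)$ has at least one component of the coarser stratum $\Omega^{2}\moduli(n_1,\ldots,n_r,-1^{a-b},-l)$ in its closure. Second, I need the well-definedness statement that, given a component $\calC_0$ of the coarser stratum, the component of the finer stratum obtained by splitting its pole of order $-l$ into poles of orders $-l_1,\ldots,-l_s,-1^b$ is unique, i.e. depends only on $\calC_0$. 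Granting both, the assignment $\calC\mapsto\calC_0$ (with $\calC_0\subset\overline{\calC}$) is a well-defined injection from components of the finer stratum to components of the coarser stratum: it is total by Proposition~\ref{prop:mergepolekdiff} and injective precisely because of the statement $\calC_1=\calC_2$, whence the numerical bound.

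For the well-definedness step I would realize the reverse of pole-merging from the multi-scale viewpoint. Given $(X_1,q_1)\in\calC_0$ with its \metpole of order $-l$, I attach there a genus zero quadratic differential $(\PP^1,q_2)$ in $\Omega^{2}\moduli[0](l-2k,-l_1,\ldots,-l_s,-1^b)$, identifying the pole of order $-l$ with the node singularity of order $l-2k$ so that the two branch orders sum to $-2k$, equip the resulting two-level twisted differential with its unique \prma class (Corollary~\ref{cor:uniquepm}), and smooth it into the finer stratum. Since $-l\le l-2k$ for $l\geq 2$, here the rational component sits on the \emph{upper} level and carries the marked poles $-l_1,\ldots,-l_s$; condition (i) of the global residue condition (Proposition~\ref{prop:GRCk}) therefore holds automatically, so the multi-scale quadratic differential is always smoothable and $q_2$ may be chosen freely in its stratum. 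By the smoothness of the moduli space of multi-scale differentials, exactly as in Lemma~\ref{lem:invarianceprong}, the resulting finer component depends only on $\calC_0$ together with the path component of the attaching data. As $\calC_0$ is connected and the space of genus zero differentials in $\Omega^{2}\moduli[0](l-2k,-l_1,\ldots,-l_s,-1^b)$ carrying a prong marking at the node of order $l-2k$ is connected by Lemma~\ref{lm:markedunzero}, the splitting produces a single finer component, giving $\calC_1=\calC_2$.

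The genuinely new verification, and the step I expect to be the main obstacle, is the geometric identification that the pole-merging degeneration of Proposition~\ref{prop:mergepolekdiff} (pushing the metric poles apart to infinity along the rays $\ell_i$) is precisely the inverse of this algebraic splitting: one must check that the flat limit is the two-level multi-scale differential above, with the genus $g$ surface carrying the merged pole on the lower level and the rational bubble carrying the separated poles on the upper level. This is where the level structure differs from the zero-merging case (there the rational bubble lies below), so smoothability is secured through condition (i) rather than through the observation that the main component is not a square of a holomorphic differential. Once this identification is in place the remainder is a routine transposition; note that the degenerate case $l-2k=-k$, where no prong is marked at the node, does not occur since it forces $s=1$, $b=0$, and hence $\calC=\calC_0$.
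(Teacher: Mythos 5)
Your proposal is correct and follows essentially the same route as the paper, whose entire proof of Proposition~\ref{prop:boundpole} is the remark that the argument of Proposition~\ref{prop:lanneauCor2.7} transposes verbatim to poles; you supply exactly the intended details (connectedness of the prong-marked genus zero stratum $\Omega^{2}\moduli[0](l-2k,-l_1,\ldots,-l_s,-1^b)$ via Lemma~\ref{lm:markedunzero}, uniqueness of the \prma class via Corollary~\ref{cor:uniquepm}, independence of choices via the smoothness of the multi-scale space, and totality via Proposition~\ref{prop:mergepolekdiff}). Your added observations that the rational bubble now sits on the upper level and that smoothability is guaranteed by condition (i) of the global residue condition are correct and are precisely the points the paper leaves implicit.
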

\par
\begin{rem}\label{rem:2-to-k}
We expect that similar adjacency results by merging zeros or poles hold for the strata of $k$-differentials for general $k$. However, an analogue of vertical trajectories and the resulting basic domain decomposition for (meromorphic) quadratic differentials seems not available for $k$-differentials in general, although some weak form of decomposition by using (broken) $\frac{1}{k}$-planes, half-infinite cylinders and some finite core parts of the surface might exist. We leave it as an interesting question to investigate in future work.  
\end{rem}

%\label{sec:adjstrata}

%%%%%%%%%%%%%%%%%%%%
\section{Quadratic differentials with \metpoles}
\label{sec:quad}
%%%%%%%%%%%%%%%%%%%%

In this section we focus on quadratic differentials with at least one \metpole, i.e. a pole of order at least $2$. They correspond to half-translation surfaces of infinite area.  
Let $\mu \= (n_1, \ldots, n_r, -l_1, \ldots, -l_s)$ be a partition of $4g-4$ with negative entries $-l_i$, where at least one $l_i \geq 2$. For notation simplicity, we denote by $\calQ(\mu)$ the corresponding stratum of quadratic differentials.
Note that our definition of $\calQ(\mu)$ is slightly different from the setting of \cite{lanneauquad}, as we also include quadratic differentials arising from squares of abelian differentials.  
\par
We begin by introducing some notions to distinguish components of $\calQ(\mu)$. Let $\calC$ be a connected component of $\calQ(\mu)$. In Section~\ref{sec:hyp-k} we have introduced and classified the  \emph{hyperelliptic} components. If $\calC$ parameterizes squares of abelian differentials, we say that~$\calC$ is of \emph{abelian} type. If furthermore $\calC$ parameterizes fourth powers of (non-hyperelliptic) theta characteristics (i.e. half-canonical divisors), then according to their parity we say that $\calC$ is of \emph{abelian-even} or \emph{abelian-odd} type. We will write hyp, ab, ab-even, and ab-odd for brevity. A (possibly disconnected) component which is not of hyp (resp. ab) type is denoted by nonhyp (resp. nonab). These types characterize all possible connected components of the strata $\calQ(\mu)$ for $g \geq 2$. 

With these notations the goal of this section is to prove the following equivalent result of Theorem~\ref{thm:classquadintro} (see also the summary in Table~\ref{tab:CC}). 
\begin{thm}
\label{thm:quad-comp}
Suppose $\calQ(\mu)$ is a stratum of quadratic differentials with genus $\geq 2$ and at least one \metpole. Then the following statements hold: 
\begin{itemize}
\item[(1a)] If $\mu$ is
$(4n, -4l)$, $(4n, 4n, -4l)$, $(4n, -4l, -4l)$, $(4n, 4n, -4l, -4l)$, $(4n, -2, -2)$, or $(4n, 4n, -2, -2)$ (except $(8,-4)$, $(4,4,-4)$, $(8,-2,-2)$ and $(4,4,-2,-2)$), then $\calQ(\mu)$ has four connected components, which are of hyp, ab-even, ab-odd, and nonab-nonhyp type.  

\item[(1b)] If $\mu$ is $(8,-4)$, $(4,4,-4)$, $(8,-2,-2)$, or $(4,4,-2,-2)$, then $\calQ(\mu)$ has three connected components, where the hyperelliptic component coincides with the ab-odd component in the first two cases and with the ab-even component in the last two cases.

\item[(2)]  
 If $\mu$ is
$(2n, -2l)$, $(2n, 2n, -2l)$, $(2n, -2l, -2l)$ or $(2n, 2n, -2l, -2l)$, in all of which $l > 1$ and $n, l$ are not both even, or if $\mu$ is $(2n, 2n, -2, -2)$ with $n$ odd, then $\calQ(\mu)$ has three connected components, which are of hyp, ab, and nonab-nonhyp type.

\item[(3)]  If $\mu$ is
$(4n_1, \ldots, 4n_r, -4l_1, \ldots, -4l_s)$ with $r\geq 3$ or $s\geq 3$, or $(4n_1, 4n_2, -4l_1, -4l_2)$ with $n_1 \neq n_2$ or $l_1\neq \ell_2$, or $(4n, -4 l_1, -4l_2)$ with $l_1 \neq l_2$, or $(4n_1, 4n_2, -4l)$ with $n_1 \neq n_2$, 
or $(4n_1, \ldots, 4n_r, -2, -2)$ with $r\geq 3$, or $(4n_1, 4n_2, -2, -2)$ with $n_1\neq n_2$, 
then $\calQ(\mu)$ has three connected components, which are of ab-even, ab-odd, and nonab-nonhyp type.

\item[(4)]  
If $\mu$ is $(2n, -l, -l)$, $(n, n, -2l)$ or $(n, n, -l, -l)$, in all of which $n$ and $l$ are both odd, or if $\mu$ is $(2n, -2)$ or $(2n, 2n, -2)$, then $\calQ(\mu)$ has two connected components, which are of hyp and nonhyp type. 

\item[(5)] If $\mu$ is either
$(2n_1, \ldots, 2n_r, -2l_1, \ldots, -2l_s)$ with $s \geq 3$ or $r \geq 3$ (except the partitions $(2n_1, \ldots, 2n_r, -2)$), or $(2n_1, 2n_2, -2l_1, -2l_2)$ with $n_1 \neq n_2$ or $l_1\neq l_2$, or $(2n, -2l_1, -2l_2)$ with $l_1 \neq l_2$, or $(2n_1, 2n_2, -2l)$ with $n_1 \neq n_2$ and $l > 1$, in all of which $n_i$ and~$l_j$ are not all even, or if $\mu$ is $(2n_1, \ldots, 2n_r, -2, -2)$ with~$n_i$ not all even, 
then $\calQ(\mu)$ has two connected components, which are of ab and nonab-nonhyp type. 

\item[(6)] If $\mu$ is non of the above, i.e., $\mu$ contains at least one odd entry and is not of type {\rm (4)}, or $\mu$ is of the form $(2n_{1},\dots,2n_{r},-2)$ and not of type {\rm (4)}, then $\calQ(\mu)$ is connected (and is of nonab-nonhyp type).
\end{itemize}
\end{thm}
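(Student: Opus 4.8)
The plan is to prove the classification by exhibiting, for each signature $\mu$, a complete set of discrete invariants that separate the connected components, and then matching the resulting lower bound on the number of components with an upper bound coming from the adjacency machinery of Section~\ref{sec:adjquad}. Three invariants are available, each locally constant on $\calQ(\mu)$ and hence constant on components. First, the \emph{hyperelliptic structure}: by Theorem~\ref{thm:hyp} the locus of hyperelliptic quadratic differentials fills a component exactly for the listed signatures, so the presence of a $\hyp$ component is decided combinatorially. Second, the \emph{abelian type}: a quadratic differential lies in the $\ab$ locus precisely when its canonical double cover is disconnected, a topological condition that is locally constant, so the squares of abelian differentials form a union of components; this locus is nonempty exactly when all entries of $\mu$ are even and $\mu/2$ is a realizable abelian signature (in particular it is empty when $\mu/2$ would force a single simple pole, which is why $(2n,-2)$ and $(2n,2n,-2)$ in case~(4) are purely nonabelian), and when nonempty squaring identifies it with $\Omega\moduli(\mu/2)/\{\pm1\}$. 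Third, the \emph{spin parity}: when $\mu/2$ is of parity type (all zero orders divisible by $4$, and poles either divisible by $4$ or forming a pair $-2,-2$ handled by Boissy's residue construction in \cite[Section 5.3]{boissymero}), the abelian differential $\omega$ with $\omega^2=q$ carries a spin parity, splitting the $\ab$ locus into $\abeven$ and $\abodd$. Since $k=2$ is even, Theorem~\ref{thm:parite} guarantees that no such parity subdivides the primitive ($\nonab$) locus, which is why every $\nonab$-$\nonhyp$ piece remains a single component.

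These invariants produce the lower bounds: for each case one checks that the claimed types are simultaneously realizable and pairwise distinct (e.g.\ in case~(1a) all four of $\hyp$, $\abeven$, $\abodd$ and $\nonab$-$\nonhyp$ occur), using the classification of the abelian strata $\Omega\moduli(\mu/2)$ by Kontsevich--Zorich \cite{kozo1} and Boissy \cite{boissymero} to count the $\ab$ pieces and their parities. The coincidences of case~(1b) are exactly the statement that for the four small signatures the hyperelliptic abelian stratum $\Omega\moduli(\mu/2)$ consists of a single spin class (odd for $\mu/2=(4,-2)$ and $(2,2,-2)$, even for $\mu/2=(4,-1,-1)$ and $(2,2,-1,-1)$), so the $\hyp$ component coincides with one parity component rather than being separate; this is verified by an explicit spin computation in these minimal genus-two abelian strata.

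For the upper bounds I would run the inductive scheme of Lanneau \cite{lanneauquad} and Boissy \cite{boissymero}, now available for $\calQ(\mu)$ with metric poles thanks to Section~\ref{sec:adjquad}. By Propositions~\ref{prop:lanneauCor2.7} and~\ref{prop:boundpole}, the number of components of a general stratum is bounded by that of a reduced stratum obtained by merging all zeros into one and merging poles down to a single metric pole (or to a pair $-2,-2$); since merging never increases the count, it suffices to bound the base strata $\calQ(n,-l)$, $\calQ(n,-l_1,-l_2)$, $\calQ(n_1,n_2,-l)$ and their $-2,-2$ analogues. For these, Proposition~\ref{prop:bubkdiff} shows every component contains a differential produced by bubbling a handle, so each genus-$g$ component arises as $\calC_0\oplus s$ from a genus-$(g-1)$ component $\calC_0$; the relations of Proposition~\ref{prop:oplus}, together with Corollary~\ref{cor:grow} and the gcd-tricks of Lemmas~\ref{lm:oplusg1} and~\ref{lm:oplusg2}, collapse the possible parameters $s$ to a short list, so that induction on genus---bottoming out at the genus-one classification by rotation number in Theorem~\ref{thm:compGenreUn}---bounds the number of outcomes by exactly the number of invariant classes. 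Tracking how the invariants transform under $\oplus$ is routine: Lemma~\ref{lm:oplushyperell} records the hyperelliptic case, the $\ab$/$\nonab$ dichotomy is respected because primitivity is preserved by Remark~\ref{rem:primitive} while squares bubble to squares under the even parameters identified in the remark after Definition~\ref{def:oplus}, and the parity transforms additively by Lemma~\ref{lm:sumparbis}.

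The main obstacle will be the simultaneous matching of the upper and lower bounds across all sub-cases, i.e.\ verifying that the merging reduction lands in a base stratum whose component count already equals the number of invariant classes, leaving no room for a spurious extra component. The delicate points are the exceptional small signatures: in case~(1b) one must pin down \emph{which} parity the hyperelliptic component carries (a concrete spin computation rather than a dimension count), and throughout the reductions one must check that the merged-pole or merged-zero stratum is not itself an exceptional stratum where the expected invariant degenerates. I would treat these small cases by hand, using the explicit genus-one and genus-two models of Section~\ref{sec:specialstrata} and the Appendix to fix the parities, and otherwise let the inductive bound from bubbling close the gap.
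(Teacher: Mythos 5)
Your treatment of the minimal strata (lower bounds from the hyperelliptic/abelian/parity invariants, upper bounds from bubbling induction with Proposition~\ref{prop:oplus}, Corollary~\ref{cor:grow} and the gcd-tricks) matches the paper's Theorem~\ref{thm:CCminimal}, but your strategy for passing to general strata has a genuine gap. Merging zeros and poles (Propositions~\ref{prop:lanneauCor2.7} and~\ref{prop:boundpole}) only yields the inequality
$\#\,\mathrm{comp}\,\calQ(\mu) \leq \#\,\mathrm{comp}\,\calQ_{\min}$,
and in most of the cases of the theorem this bound is \emph{not} tight: for instance in case (3), $\calQ(4n_1,4n_2,-4l)$ with $n_1\neq n_2$ has three components, while its minimal stratum $\calQ(4n,-4l)$ has four; in case (5), $\calQ(2n_1,2n_2,-2l,-2l)$ with $n_1\neq n_2$ and $l>1$ odd has two components, while $\calQ(2n,-2l,-2l)$ has three. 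Merging poles makes matters worse, not better, since it can turn a non-hyperelliptic-type polar part into one of hyperelliptic type and thereby land in a stratum with \emph{more} components. So the "main obstacle" you flag at the end is not confined to exceptional small signatures: it is systematic, and your proposed resolution (hand-checking small cases, otherwise letting the bubbling induction close the gap) cannot work, because the bubbling induction only controls the minimal stratum and says nothing about how many of its components remain distinguishable after the zero is broken up asymmetrically.

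The missing ingredient is a mechanism to show that two \emph{distinct} components of $\calQ_{\min}$ lie in the closure of the \emph{same} component of $\calQ(\mu)$ whenever the zero partition fails to be of the relevant type. The paper supplies this with explicit degeneration paths in the boundary: Propositions~\ref{prop:joinhypandnonhyp}, \ref{prop:joinevenandodd} and~\ref{prop:joinabandnonab} construct paths in $\overline{\calQ}(n_j,\sum_{i\neq j}n_i,-l_1,\dots,-l_s)$ joining $\calQ_{\min}^{\hyp}$ to a non-hyperelliptic component (resp.\ $\calQ_{\min}^{\abeven}$ to $\calQ_{\min}^{\abodd}$, resp.\ $\calQ_{\min}^{\ab}$ to $\calQ_{\min}^{\nonab}$), built by reducing to genus one where breaking the zero of $\calQ(2n-4,-2n)\oplus n$ into unequal parts changes the rotation number from $n$ to $\gcd(n_1,n_2,2n)<n$ and then re-gluing onto a fixed genus $g-1$ differential. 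Breaking up the zero along such a path, and invoking the smoothness of the moduli space of multi-scale differentials (as in Proposition~\ref{prop:lanneauCor2.7}), forces the two adjacent components of $\calQ(\mu)$ to coincide, which is exactly what cuts the upper bound from four down to three (or three down to two, etc.). Your invariant-tracking under $\oplus$ cannot substitute for this: it identifies which component a single operation lands in, but never shows that two different starting components of $\calQ_{\min}$ become connected after the breaking. Without an analogue of these join propositions, the upper and lower bounds do not meet in cases (2)--(6), and the classification cannot be concluded.
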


\begin{table}[h]
\centering
\begin{tabular}{|c|l|c|c|}
  \hline
  Label & 
  \hfil $\mu$
& 
\# CC
&
  Type of Components
\\
  \hline
 & $(4n, -4l)$,  $(4n, 4n, -4l, -4l)$, &  &  hyp  \\
    (1a) & $(4n, -4l, -4l)$, $(4n, 4n, -4l)$, & 4 &  ab-even \\
   & $(4n, -2, -2)$,  $(4n, 4n, -2, -2)$, & & ab-odd\\
    & but not of type (1b) & & nonab-nonhyp\\
    \hline
& $(8,-4)$, $(4,4,-4)$, &  &  hyp = ab-odd, ab-even   \\
     (1b)  & $(8,-2,-2)$, $(4,4,-2,-2)$ & 3 & hyp = ab-even, ab-odd\\
   &&& nonab-nonhyp \\
  \hline 
  & $(2n, -2l)$, $(2n, -2l, -2l)$, $(2n, 2n, -2l, -2l)$, &  & hyp  \\
  (2) &  in all of which $l > 1$ and $n, l$ are not both even, & 3 &ab \\
  & $(2n, 2n, -2, -2)$ with $n$ odd & & nonab-nonhyp\\ 
 \hline
  & $(4n_1, \ldots, 4n_r, -4l_1, \ldots, -4l_s)$ with $r\geq 3$ or $s\geq 3$, &  &  \\ 
   & $(4n_1, 4n_2, -4l_1, -4l_2)$ with $n_1 \neq n_2$  or $l_1\neq \ell_2$, & &ab-even \\   
    (3) & $(4n, -4 l_1, -4l_2)$ with $l_1 \neq l_2$, & 3 &ab-odd \\
  & $(4n_1, 4n_2, -4l)$ with $n_1 \neq n_2$,  & &nonab-nonhyp \\
    & $(4n_1, \ldots, 4n_r, -2, -2)$ with $r\geq 3$, &  & \\
    & $(4n_1, 4n_2, -2, -2)$ with $n_1\neq n_2$  & & \\
  \hline
 & $(2n, -l, -l)$ with $l$ odd,  &  &    \\
   (4) & $(n, n, -2l)$ with $n$ odd, & 2 & hyp  \\
  & $(n, n, -l, -l)$ with $n$ and~$l$ not both even,  &  &  nonhyp \\
  & $(2n, -2)$, $(2n, 2n, -2)$  &  &   \\
  \hline
   & $(2n_1, \ldots, 2n_r, -2l_1, \ldots, -2l_s)$ with $s \geq 3$ or $r\geq 3$ &  &  \\
   & but not of type $(2n_{1},\dots,2n_{r},-2)$, &  & \\  
  & $(2n_1, 2n_2, -2l_1, -2l_2)$ with $n_1 \neq n_2$ or $l_1\neq l_2$, &   &ab   \\
 (5) &$(2n, -2l_1, -2l_2)$ with $l_1 \neq l_2$, & 2 & nonab-nonhyp  \\
  & $(2n_1, 2n_2, -2l)$ with $n_1 \neq n_2$ and $l > 1$, &  &   \\
  & in all of which $n_i$ and $l_j$ are not all even, &  & \\
  & $(2n_1, \ldots, 2n_r, -2, -2)$ with $n_i$ not all even &  &   \\
  \hline
  &$\mu$ has at least one odd entry, &  &  \\
  (6)  & $(2n_{1},\dots,2n_{r},-2)$, & 1 &   nonab-nonhyp \\
     &both not of type (4) &  &   \\
     \hline
\end{tabular}
\bigskip 
\caption{Connected components of the strata of quadratic differentials with at least one metric pole.}
\label{tab:CC}
\end{table}

To prove the theorem we will follow closely the strategy in \cite{boissymero} and remark on comparable results. Let us first review some notations and preparations. 

Recall that a quadratic differential in $\calQ(\mu)$ corresponds to a half-translation surface that has a basic domain decomposition as in \cite{boissymero}, where parallel edges can be identified by reflection besides translation, including the half-infinite boundary rays of each basic domain. Note that simple poles of a quadratic differential correspond to conical singularities of angle $\pi$ under the induced flat metric, and in our notation they are both \anpoles and \metzeros. In this section we will view these singularities as \anpoles with one exception, which is in genus zero when the singularity of highest order is a simple pole but viewed as a metric zero. In this sense we always have at least one \metzero for every stratum $\calQ(\mu)$. If $r=1$, i.e., when there is a unique \anzero, we say that $\calQ(\mu)$ is a \emph{minimal stratum}. 
Note that Proposition~\ref{prop:bubkdiff} implies that any minimal stratum contains a half-translation surface obtained from the operation of bubbling a handle.

We begin the proof of Theorem~\ref{thm:quad-comp} by bounding the number of connected components of the minimal strata.

\begin{prop}[{\cite[Proposition 6.2]{boissymero}}]\label{prop:boissy6.2}
Let $\calQ(n, - l_1, \ldots, -l_s)$ be a minimal stratum of quadratic differentials in genus $g\geq 2$ with at least one $l_i \geq 2$. 
Then the following statements hold:

\begin{itemize}
\item[(i)] If $n$ is odd, then $\calQ(n, - l_1, \ldots, -l_s)$ is connected.

\item[(ii)] If $n$ is even, then $\calQ(n, - l_1, \ldots, -l_s)$ has at most four connected components.  

\item[(iii)] If $n$ is even and at least one $l_i$ is odd, then $\calQ(n, - l_1, \ldots, -l_s)$ has at most two connected components.
\end{itemize}
\end{prop}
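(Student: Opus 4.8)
The plan is to follow the strategy of \cite[Proposition~6.2]{boissymero} for abelian differentials, the only genuinely new input being that our bubbling result Proposition~\ref{prop:bubkdiff} now supplies the same ``every component comes from bubbling'' principle in the infinite-area setting. I would prove all three statements simultaneously by induction on the genus $g$, with the genus-one classification of Theorem~\ref{thm:compGenreUn} as the base case and the bubbling operation driving the inductive step. The guiding principle is that the only invariants able to separate components of a minimal stratum of infinite area are the hyperelliptic structure of Definition~\ref{def:hyp} (classified in Theorem~\ref{thm:hyp}) and, when every entry of $\mu$ is even, the abelian (square) structure together with its spin parity; so the asserted bounds amount to checking (a) which of the classes hyp, ab-even, ab-odd and nonab-nonhyp can be nonempty, and (b) that each nonempty class is connected.

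For part (a) I would record the elementary exclusions. A square $q=\omega^{2}$ has all entries of $\mu$ even, so if some $l_i$ is odd (case~(iii)) there is no abelian component, and if $n$ is odd (case~(i)) there is neither an abelian nor a hyperelliptic component: in a minimal stratum the unique zero would have to be a Weierstrass point, whose order is forced even by the computation $\iota^{\ast}x^{n}(dx)^{2}=(-1)^{n}x^{n}(dx)^{2}$ used before Theorem~\ref{thm:hyp}. This already caps the component counts at $4$, $2$ and $1$, so the substance lies in the connectedness in part~(b).

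For part (b) the key reduction is that, by Proposition~\ref{prop:bubkdiff}, every component of the genus-$g$ minimal stratum contains a differential $\calC_0\oplus s$ obtained by bubbling a handle at the unique zero of a genus-$(g-1)$ differential, which again has a single zero and therefore lies in a minimal stratum; since a metric pole persists throughout, no realizability obstruction occurs (Proposition~\ref{prop:bubbpos2}). Iterating down to genus zero expresses every component as $S\oplus s_1\oplus\cdots\oplus s_g$ with $S$ the connected genus-zero stratum $\Omega^2\moduli[0](\textstyle\sum_j l_j-4,-l_1,\dots,-l_s)$. I would then invoke Corollary~\ref{cor:grow} to take $s_1\le\cdots\le s_g$ and Proposition~\ref{prop:oplus} together with the gcd-tricks of Lemmas~\ref{lm:oplusg1} and~\ref{lm:oplusg2} to collapse this tuple to a short list of balanced-type representatives in the sense of Definition~\ref{def:balance}. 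The goal is to show that after these moves two tuples producing the same values of (spin parity, hyperellipticity, abelian type) yield the same component, so that each nonempty invariant class is connected; with part~(a) this gives precisely the bounds $4$, $2$ and $1$.

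The main obstacle will be the bookkeeping inside this collapsing step: one must prove that the spin parity of $S\oplus s_1\oplus\cdots\oplus s_g$ is a controlled function of the $s_i$ (so that only the two parity classes survive, rather than a divisor's worth of $\gcd(s_i,\cdot)$ values), and one must rule out bubbling's creating hyperelliptic or abelian structure beyond those already classified in Theorem~\ref{thm:hyp}. A secondary difficulty is the low-complexity cases where $\sum_j l_j$ is so small that the designated singularity of the genus-zero seed becomes a metric pole rather than a metric zero; there the iteration must be seeded one genus higher and checked directly against the genus-one data of Theorem~\ref{thm:compGenreUn}.
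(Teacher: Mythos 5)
Your core engine — Proposition~\ref{prop:bubkdiff} to write every component as $\calC_0\oplus s_1\oplus\cdots\oplus s_g$ over the connected genus-zero stratum, then Corollary~\ref{cor:grow}, Proposition~\ref{prop:oplus} and the gcd-tricks to collapse the tuple — is exactly the paper's. But the logical architecture of your upper bound is flawed. In part (a) you claim that the exclusions of hyperelliptic and abelian structure ``already cap the component counts at $4$, $2$ and $1$.'' They do not: the number of available invariant classes bounds the number of components only if each nonempty class is connected, and that connectedness is precisely the hard content you defer to part (b). Worse, the tools you list cannot prove your stated goal in part (b) — that two tuples with the same invariants give the same component — because invariants can only distinguish components, never merge them; the only way to merge is via the $\oplus$-identities themselves. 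The paper's proof avoids this circularity entirely: it is a purely combinatorial normal-form argument showing that every $\oplus$-sequence reduces to one of at most four explicit shapes ($\calC_0\oplus1\oplus\cdots\oplus1\oplus2\oplus\cdots\oplus2$, the same with a terminal $\oplus3$ or $\oplus4$, and the balanced type), at most two when some $l_i$ is odd, and exactly one when $n$ is odd. No invariant enters the upper bound at all; the invariants are used only later, in Theorem~\ref{thm:CCminimal}, to show the surviving normal forms are genuinely distinct.

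The second gap is that what you call ``bookkeeping'' is the entire proof, and several of its steps require new geometric input you have not identified. When $n$ and all $l_i$ are even, the identifications $\calC_0\oplus1\oplus1=\calC_0\oplus1\oplus2=\calC_0\oplus1\oplus3$ (and $\calC_0\oplus1\oplus1=\calC_0\oplus1\oplus4$ for the four exceptional genus-zero seeds $\calQ(0,-4)$, $\calQ(0,-2,-2)$, $\calQ(0,-1,-3)$, $\calQ(0,-1,-1,-2)$) are not consequences of Proposition~\ref{prop:oplus} or the gcd-tricks; the paper proves them by degenerating to \msds on a two-nodal ``banana'' curve and invoking Corollary~\ref{cor:uniquepmtwonodes} plus smoothness of the moduli space. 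Likewise the odd-$n$ case has the sporadic balanced type $(s_1,s_2)=(1,3)$ arising from $\calQ(-1,-1,-2)$ and $\calQ(-1,-3)$, forcing a separate connectedness argument for $\calQ(7,-1,-2)$ and $\calQ(7,-3)$ via Corollary~\ref{cor:uniquepm}; and the stratum $\calQ(4g-2,-2)$ must be seeded from the genus-one stratum $\calQ(2,-2)$ rather than genus zero. You flag the seeding issue but none of the degeneration arguments, so as written the proposal does not close.
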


\begin{proof}
Let $\calC$ be a connected component of $\calQ(n, - l_1, \ldots, -l_s)$. By Proposition~\ref{prop:bubkdiff}, if $\calQ(n, - l_1, \ldots, -l_s)\neq \calQ(4g-2,-2)$, then there exist integers
$s_1, \ldots, s_g$ such that
$$ \calC \= \calC_0 \oplus s_1 \oplus \cdots \oplus s_g, $$
where $\calC_0$ is the connected stratum $\calQ(n-4g, -l_1, \ldots, -l_s)$ of quadratic differentials of genus zero,
 and $1 \leq s_i \leq n- 4g + 4i - 1$. In the case of $\calQ(4g-2,-2)$, there exist integers
$s_1, \ldots, s_{g-1}$ such that $$\calC\= \calC_0 \oplus s_1 \oplus \cdots \oplus s_{g-1},$$
where $\calC_0$ is the connected stratum $\calQ(2,-2)$ of quadratic differentials in genus one, and $1 \leq s_i \leq 4i + 1$.  

By Corollary~\ref{cor:grow} we can assume that $s_i \leq s_{i+1}$ for all $i$. 
By Proposition~\ref{prop:oplus} (iv) we can further assume that $s_i \leq s_{i+1}\leq s_i + 3$. Next 
suppose two adjacent~$s_i$ and $s_{i+1}$ are not of balanced type (see Definition~\ref{def:balance}). 
If $s_i \geq 3$, then apply Proposition~\ref{prop:oplus} (ii), (iii), (iv) and (ii) (in this order) so that we can reduce
 $s_i \oplus s_{i+1}$ to  $(s_i-2) \oplus (s_{i+1}-2)$.
Hence we can assume that $s_i \leq 2$ for all $i < g$
 and $s_g \leq s_{g-1}+3$. In addition, Proposition~\ref{prop:oplus} (iii) implies that $1 \oplus 4 = 2 \oplus 3$ and $2 \oplus 5 = 3 \oplus 4$, where the latter is further equal to $1\oplus 2$ as we have just seen.  Moreover, $\calQ(2,-2)\oplus 5 = \calQ(2,-2)\oplus 1$ by Proposition~\ref{prop:oplus} (i). 
Therefore, we conclude that $\calC$ is given by one of the following cases:
\begin{enumerate}[(1)]

 \item $\calC_0 \oplus 1 \oplus \cdots \oplus 1 \oplus 2 \oplus \cdots \oplus 2$;

 \item $\calC_0 \oplus 1 \oplus \cdots \oplus 1 \oplus 2 \oplus \cdots \oplus 2 \oplus 3$;

 \item $\calC_0 \oplus 1 \oplus \cdots \oplus 1 \oplus 2 \oplus \cdots \oplus 2 \oplus 4$;

\item $\calC_0 \oplus (\frac{n}{2} - 2g + 2) \oplus (\frac{n}{2} - 2g + 4) \oplus \cdots \oplus \frac{n}{2}$ if $g\geq 2$ and $n$ is even,
 \end{enumerate}
where the numbers of $1$ and $2$ in the sequences are allowed to be zero and the last case corresponds to the balanced type in Definition~\ref{def:balance}.  We remark that the zero order $n$ in the above differs from the one used in Proposition~\ref{prop:oplus}, as herein the zero order we bubble with parameter $s_i$ varies with the index $i$ in each step.  
\par
Consider first when $n$ is odd.  Note that in this case the only balanced type is $(s_1, s_2) = (1,3)$ when we bubble a metric zero of order $-1$ (i.e., $k=2$ and $n = -1$ in Definition~\ref{def:balance}), hence it is a special case of (2). The strata of genus zero with a simple pole (and at least one metric pole) we start with can only be 
$\calQ(-1,-1,-2)$ and $\calQ(-1, -3)$, hence after the operation $\oplus 1 \oplus 3$ they lead to the two special strata $\calQ(7,-1,-2)$ and $\calQ(7,-3)$ in genus two that will be treated separately.  
Since $n$ is odd, we can assume, using the gcd-trick of Lemma~\ref{lm:oplusg1}, that there is no $\oplus 2$ in any of the cases (1) to (3). Therefore, case (1) reduces to
$$ \calC_0 \oplus 1 \oplus \cdots \oplus 1. $$
For case (2), Proposition~\ref{prop:oplus} (i) allows us to change $\oplus 3$ by $\oplus (n-3)$ as long as $\calC_{0}$ is not one of the two connected strata $\calQ(-1,-1,-2)$ and $\calQ(-1,-3)$ (as applying $\oplus s$ to a metric zero of order $-1$ requires $s\leq 2$ by~\eqref{eq:oplus-range}). Keeping subtracting $4$ by Proposition~\ref{prop:oplus} (iv), case (2) reduces to case (1) or~(3). 
 In case (3), $\gcd (4, l_1, \ldots, l_s) = \gcd (2, l_1, \ldots, l_s) = 1$ (as $n$ being odd implies that some $l_i$ must be odd), hence case (3) reduces to case~(1).  It remains to prove that the two special strata $\calQ(7,-1,-2)$ and $\calQ(7,-3)$ are connected.
By Corollary~\ref{cor:uniquepm} the loci of multi-scale $2$-differentials obtained by identifying the marked zero of a quadratic differential respectively in $\calQ(3,-1,-2)$ or in $ \calQ(3,-3)$ to the marked pole of a quadratic differential in $\calQ(7,-7)$ are connected. Using Proposition~\ref{prop:bubkdiff}, this implies that $\calQ(7,-1,-2)$ and $\calQ(7,-3)$ are connected. 
In summary, we conclude that for~$n$ odd and $g\geq 2$ every connected component of the stratum $\calQ(n, -l_1, \ldots, -l_s)$ can be obtained by the operation $\oplus 1 \oplus \cdots \oplus 1$, and hence $\calQ(n, -l_1, \ldots, -l_s)$ is connected in this case, thus proving part (i) of Proposition~\ref{prop:boissy6.2}. 
\par
Next consider when $n$ is even and $g\geq 2$. We first discuss the balanced types.  Note that the balanced types $(s_1, s_2) = (1,4)$ and $(2,4)$ (when we bubble an ordinary point, i.e., a zero of order zero) are special cases of (3).   Moreover, the balanced type
 $$ \calC_0 \oplus (a + 2) \oplus (a+ 4)\oplus \cdots \oplus (a+2g)$$ with $a=\frac{n-4g}{2}$ 
cannot be reduced in general by using Proposition~\ref{prop:oplus}.  
The strata of genus zero that parameterize quadratic differentials with a zero of order zero and at least one metric pole are the following
\begin{equation}\label{eq:exeptionalcomp}
 \calQ(0,-4), \, \calQ(0,-2,-2), \, \calQ(0,-1,-3) \text{ and } \calQ(0,-1,-1,-2). 
\end{equation}
We would like to break the balanced type $\calC_{0}\oplus1\oplus4$ for $\calC_0$ being any one of the above four strata
and break the balanced type $\calC_{0}\oplus2\oplus4$ for $\calC_0$ being any one of the last two of the four strata. Since the last two strata listed in~\eqref{eq:exeptionalcomp} both contain a singularity of odd order, 
we can break the balanced type $\oplus2\oplus4$ for them by using the gcd-trick in Lemma~\ref{lm:oplusg1}. The break of $\oplus1\oplus4$ will be done explicitly later in the proof (see Figure~\ref{fig:defoplus1oplus2} and the paragraphs surrounding it).  
Summarizing this discussion, we have shown that all the balanced types different from 
$ \calC_0 \oplus (a + 2) \oplus (a+ 4)\oplus \cdots \oplus (a+2g)$
with $a=\frac{n-4g}{2}$ can be reduced to one of the cases (1) to (3).

From now on we consider when $n$ is even and the direct sum is not of balanced type, i.e., cases (1) to (3). 
First assume that $n \equiv 0 \pmod{4}$. 
For case (3), if there is a $\oplus1$, then $1 \oplus 4 = 2 \oplus 3$, hence it reduces to case (2). Moreover, changing $\oplus 3$ by $\oplus (n-3)$ can reduce case (2) to case (1). Therefore, we only need to consider the two cases
$$ \calC_0 \oplus 1 \oplus \cdots \oplus 1 \oplus 2 \oplus \cdots \oplus 2, $$
$$ \calC_0 \oplus 2 \oplus \cdots \oplus 2 \oplus 4.$$

Next consider the case when $n\equiv 2 \pmod{4}$. For case (3), if we change $\oplus 4$ to $\oplus (n-4)$ and keep subtracting $4$, then we can reduce it to case (1). 
For case (2), if there is a $\oplus 2$, then we can use $2\oplus 3= 1\oplus 4$ and the same method to reduce it to case (1). Therefore, we only need to consider
$$ \calC_0 \oplus 1 \oplus \cdots \oplus 1 \oplus 2 \oplus \cdots \oplus 2, $$
$$ \calC_0 \oplus 1 \oplus \cdots \oplus 1  \oplus 3.$$

If $n$ is even and at least one $l_i$ is odd, then $\gcd(s_1, l_1, \ldots, l_s) = \gcd (2s_1, l_1, \ldots, l_s)$. Hence we have 
 $$\calC_0 \oplus 2 \= \calC_0 \oplus 4 \= \calC_0 \oplus 1, $$
  $$\calC_0 \oplus 3\oplus 1 \= \calC_0 \oplus 6 \oplus 1 =  \calC_0 \oplus 2 \oplus 1, $$
  where we used the gcd-trick and the last equality follows from applying Proposition~\ref{prop:oplus} (ii) and (iv).  
It implies that in this case we only need to consider
  $$ \calC_0 \oplus 1 \oplus \cdots \oplus 1 $$
  besides a possible balanced type (which leads to a hyperelliptic component in some cases), thus verifying part (iii) of Proposition~\ref{prop:boissy6.2}.
    
Finally consider the case when $n$ and all $l_i$ are even. We claim that in this case 
$$ \calC_0 \oplus 1 \oplus 1 = \calC_0 \oplus 1 \oplus 2 = \calC_0 \oplus 1 \oplus 3. $$
This claim can be checked by degenerating quadratic differentials in each connected component in the above to a \mstwod on a $2$-nodal union of a genus one curve~$X_{1}$ with a rational curve $X_{2}$ (see Figure~\ref{fig:banana}), in such a way that 
the locus of such multi-scale $2$-differentials is irreducible. We will first prove the claim for those strata with a unique \metpole. Then for strata with arbitrary numbers of \anpoles the claim follows from combining Propositions~\ref{prop:mergepolekdiff} and~\ref{prop:boundpole} that deal with merging \anpoles (including at least one \metpole) to a single \metpole. 
 
 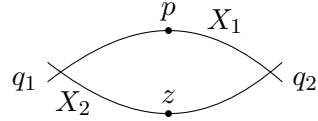
\begin{figure}[ht]
 \centering
\begin{tikzpicture}[scale=1.3]
%Curve
\draw (-4.2,-0.4) .. controls (-3.3,-1.1) and (-2.6,-1.1) .. (-1.8,-0.4) coordinate (x3)coordinate [pos=.1] (x2)
coordinate [pos=.5] (z);
\fill (z) circle (1pt);\node [above] at (z) {$z$};\node [below] at (x2) {$X_{2}$};
\draw (-4.2,-0.6) coordinate (q1).. controls (-3.3,.1) and (-2.6,.1) .. (-1.8,-.6)  coordinate[pos=.5] (p) node [pos=.5,above]{$p$} coordinate (q2) node [pos=.75,above]{$X_{1}$};
\fill (p) circle (1pt);

\node [left] at (q1) {$q_{1}$};
\node [right] at (q2) {$q_{2}$};
\end{tikzpicture}
 \caption{The pointed curve underlying the \mstwod used in the proof of $ \calC_0 \oplus 1 \oplus 1 = \calC_0 \oplus 1 \oplus 2 = \calC_0 \oplus 1 \oplus 3$ when $n$ and all~$l_i$ are even.} \label{fig:banana}
\end{figure}
\par
More precisely, a quadratic differential in  $\calQ(2m,-2m-4) \oplus 1 \oplus 1$ can be represented by gluing half-planes where one of them has a finite polygonal boundary of type $1 2 1 2 3 4 3 4$ with every segment identified by translation and reflection as shown on the left of Figure~\ref{fig:oplus1oplus2}. Similarly, a quadratic differential in $\calQ(2m,-2m-4) \oplus 1 \oplus 2$ can be represented by $1 2 1 2 3 4 3 4$ with the difference that the segments labeled by $3$ and $4$ are identified by translation as shown on the right of Figure~\ref{fig:oplus1oplus2}. 
 \begin{figure}[htb]
 \centering
\begin{tikzpicture}[scale=.75]
%oplus 1 oplus 1
\begin{scope}[xshift=-5cm]
\clip (-1,-2) rectangle (10,2);
\fill[black!10] (-2,0) -- (9,0) arc (0:-180:5) -- cycle;  
\draw (0,0) coordinate (a1) -- node [below] {$1$} (1,0) coordinate (a2) -- node [below] {$2$} (2,0) coordinate (a3) -- node [above,rotate=180] {$1$} (3,0) coordinate (a4) -- node [above,rotate=180] {$2$} (4,0)coordinate (a5) -- node [below] {$3$} (5,0) coordinate (a6) -- node [below] {$4$} (6,0) coordinate (a7) -- node [above,rotate=180] {$3$} (7,0) coordinate (a8) -- node [above,rotate=180] {$4$} (8,0)
coordinate (a9);
\foreach \i in {1,2,...,9}
\fill (a\i) circle (2pt);

\draw (a1)-- ++(-.5,0)coordinate[pos=.6](b);
\draw (a9)-- ++(.5,0)coordinate[pos=.6](c);
\node[below] at (b) {$5$};  \node[below] at (c) {$6$};
\draw[dotted] (a1)-- ++(-1,0);
\draw[dotted] (a9)-- ++(1,0);

 \fill[black!10] (2,1) -- (6,1) arc (0:180:2) -- cycle;  
 \draw (2.5,1) coordinate (b1) -- node [above] {$5$} (4,1) coordinate (b2) -- node [above] {$6$} (5.5,1)coordinate (b3);  
 \draw[dotted] (b1) -- ++(-1,0); \draw[dotted] (b3) -- ++(1,0);
   \fill (b2) circle (2pt);
\end{scope}

%oplus 1 oplus 2
\begin{scope}[xshift=7cm]
\clip (-1.5,-2.5) rectangle (9,2);
\fill[black!10] (-2,0) -- (5,0) arc (0:-180:3) -- cycle;  
\draw (0,0) coordinate (a1) -- node [] {$1$} (1,0) coordinate (a2) -- node [] {$2$} (2,0) coordinate (a3) -- node [rotate=180] {$1$} (3,0) coordinate (a4) -- node [rotate=180] {$2$} (4,0)coordinate (a5) -- node [left] {$3$}node [right] {$4$} (4,-1) coordinate (a6) -- node [left] {$4$}node [right] {$3$} (4,-2) coordinate (a7);
\foreach \i in {1,2,...,7}
\fill (a\i) circle (2pt);

\draw (a1)-- ++(-1,0)coordinate[pos=.6](b);
\draw (a5)-- ++(1,0)coordinate[pos=.6](c);
\node[] at (b) {$5$};  \node[] at (c) {$6$};
\draw[dotted] (a1)-- ++(-1.5,0);
\draw[dotted] (a5)-- ++(1.5,0);

 \fill[black!10] (0,1) -- (4,1) arc (0:180:2) -- cycle;  
 \draw (0.5,1) coordinate (b1) -- node [above] {$5$} (2,1) coordinate (b2) -- node [above] {$6$} (3.5,1)coordinate (b3);  
 \draw[dotted] (b1) -- ++(-1,0); \draw[dotted] (b3) -- ++(1,0);
   \fill (b2) circle (2pt);
\end{scope}
\end{tikzpicture}
\caption{Quadratic differentials in $\calQ(2m,-2m-4) \oplus 1 \oplus 1$ (left) and $\calQ(2m,-2m-4) \oplus 1 \oplus 2$ (right) for $m=0$.}
\label{fig:oplus1oplus2}
\end{figure}
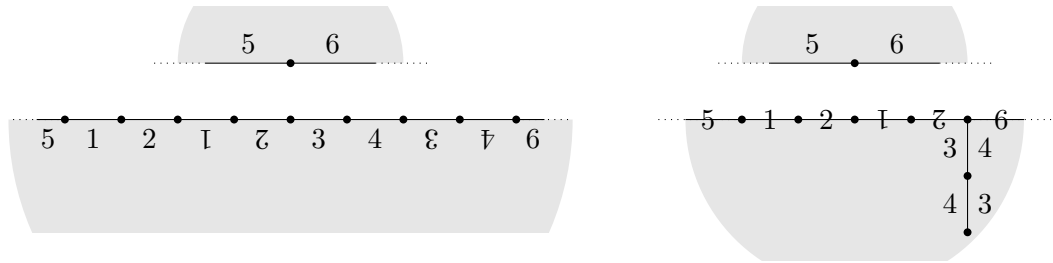

We first consider the special stratum $\calQ(6,-2)$, for which we will show the equality $\calQ(2,-2) \oplus 1 = \calQ(2,-2) \oplus 2$ (as $\calQ(2,-2)$ cannot be further realized as bubbling $\calQ(-2,-2)$ because $-2$ is not a metric zero). Quadratic differentials obtained on each side of the desired equality can be represented similarly as in Figure~\ref{fig:oplus1oplus2}, by deleting the top half-plane and replacing the lower half-plane by a half cylinder whose bottom segments are identified in the same way.
If we shrink the saddle connection labeled by $1$ to zero, then quadratic differentials on both sides degenerate to a \mstwod on a $2$-nodal curve as illustrated in Figure~\ref{fig:banana}, which is the union of a differential in $\calQ(-2, 3, -1)$ and a differential in $\calQ(6, -7, -3)$, where singularities of orders $(3,-7)$ and $(-1, -3)$ are glued together respectively to form the two nodes. According to Corollary~\ref{cor:uniquepmtwonodes} such multi-scale $2$-differentials have a unique $2$-prong-matching, hence this locus is irreducible. The fact that the moduli space of \mstwods is smooth implies that $\calQ(2,-2) \oplus 1 = \calQ(2,-2) \oplus 2$. 

Next we consider the other cases starting from $\calQ(2m,-2m-4)$ with $m\geq0$. In both cases $\calQ(2m,-2m-4) \oplus 1 \oplus 1$ and $\calQ(2m,-2m-4) \oplus1\oplus2$ we shrink the saddle connection labeled by $1$ to zero. The pointed curve underlying the resulting multi-scale $2$-differential is pictured in Figure~\ref{fig:banana} and the quadratic differentials on the two components of the curve are 
parameterized by $\calQ(-2m-4, 2m+5, -1)$ in genus one and $\calQ(2m+8, -2m-9, -3)$ in genus zero respectively. It follows from Corollary~\ref{cor:uniquepmtwonodes} that the locus of such \mstwods is irreducible. Since the moduli space of \mstwods is smooth, we conclude that $\calQ(2m,-2m-4) \oplus1 \oplus1 = \calQ(2m,-2m-4) \oplus1\oplus2$. 
 
 It remains to prove that $\calQ(2m,-2m-4) \oplus1 \oplus1 = \calQ(2m,-2m-4) \oplus1\oplus3$ for $m\geq0$. 
The pattern for $\oplus1\oplus3$ is given by $1 2 3 4 1 2 3 4$ with every segment identified by translation and reflection when modifying the surface on the left of Figure~\ref{fig:oplus1oplus2}. If we shrink the segment labeled by $4$ to zero, then we obtain the same \mstwod as in the preceding paragraphs. This proves that $\calQ(2m,-2m-4)\oplus1\oplus1=\calQ(2m,-2m-4)\oplus1\oplus3$ in the same way as before.

Finally we verify that $\calC_{0}\oplus1 \oplus1 = \calC_{0}\oplus1\oplus4$ for the four strata listed in~\eqref{eq:exeptionalcomp} as claimed earlier. We first give a detailed proof for the case $\calC_{0}=\calQ(0,-4)$. For a quadratic differential obtained by $\oplus1\oplus1$ and represented on the left of Figure~\ref{fig:oplus1oplus2}, we degenerate it by shrinking the segment labeled by $1$. We then obtain a banana curve as pictured in Figure~\ref{fig:banana} with one irreducible component in $\calQ(5,-1,-4)$ and the other in $\calQ(8,-3,-9)$. A quadratic differential obtained by $\oplus1\oplus4$  is illustrated on the left of Figure~\ref{fig:defoplus1oplus2}. 
We cut and paste part of the surface (the triangle $2 3 3'$ on the left) to obtain the new representation of this quadratic differential on the right of Figure~\ref{fig:defoplus1oplus2}. Then we shrink the segment labeled by $3'$ to zero. One checks that this degeneration leads to the same locus of \mstwods as before. It follows from Corollary~\ref{cor:uniquepmtwonodes} that this locus is irreducible, thus proving that $\calQ(0,-4)\oplus1 \oplus1 = \calQ(0,-4)\oplus1\oplus4$.

 \begin{figure}[htb]
 \centering
\begin{tikzpicture}[scale=.8]
%premier
\begin{scope}[xshift=-5cm]
\clip (-1.5,-2.5) rectangle (9,2);
\fill[black!10] (-2,0) -- (5,0) arc (0:-180:3) -- cycle;  
\fill[white] (2,0) rectangle (4,-2);
\draw (0,0) coordinate (a1) -- node [] {$1$} (1,0) coordinate (a2) -- node [] {$2$} (2,0) coordinate (a3) -- node [left] {$3$}  (2,-1) coordinate (a4) -- node [left] {$4$} (2,-2) coordinate (a5) -- node [rotate=180] {$1$} (3,-2) coordinate (a6) -- node [rotate=180] {$2$} (4,-2)coordinate (a7) -- node [right] {$3$} (4,-1) coordinate (a8) -- node [right] {$4$} (4,0) coordinate (a9);
\foreach \i in {1,2,...,9}
\fill (a\i) circle (2pt);

\draw (a1)-- ++(-1,0)coordinate[pos=.6](b);
\draw (a9)-- ++(1,0)coordinate[pos=.6](c);
\node[] at (b) {$5$};  \node[] at (c) {$6$};
\draw[dotted] (a1)-- ++(-1.5,0);
\draw[dotted] (a9)-- ++(1.5,0);

\fill[black!10] (0,1) -- (4,1) arc (0:180:2) -- cycle;  
\draw (0.5,1) coordinate (b1) -- node [above] {$5$} (2,1) coordinate (b2) -- node [above] {$6$} (3.5,1)coordinate (b3);  
\draw[dotted] (b1) -- ++(-1,0); \draw[dotted] (b3) -- ++(1,0);
\fill (b2) circle (2pt);
   
\draw[blue] (a2) -- node[below left] {$3'$} (a4);

\node at (-1,1.5) {\textcircled{1}};
\end{scope}
%second
\begin{scope}[xshift=5cm]
\clip (-1.5,-2.5) rectangle (9,2);
\fill[black!10] (-2,0) -- (5,0) arc (0:-180:3) -- cycle;  
\fill[white] (1,0) -- ++(1,-1) -- ++(0,-1) -- ++(0,-2) -- (2,-2) -- ++(2,0) -- ++(-1,1) -- ++(1,0) --(4,0) -- cycle;
\draw (0,0) coordinate (a1) -- node [] {$1$} (1,0) coordinate (a2)  -- node [below left] {$3'$}  (2,-1) coordinate (a3) -- node [left] {$4$} (2,-2) coordinate (a4) -- node [rotate=180] {$1$} (3,-2) coordinate (a5) -- node [rotate=180] {$2$} (4,-2)coordinate (a6) -- node [right] {$3'$} (3,-1)coordinate (a7)  -- node [above] {$2$} (4,-1) coordinate (a8) -- node [right] {$4$} (4,0) coordinate (a9);
\foreach \i in {1,2,...,9}
\fill (a\i) circle (2pt);

\draw (a1)-- ++(-1,0)coordinate[pos=.6](b);
\draw (a9)-- ++(1,0)coordinate[pos=.6](c);
\node[] at (b) {$5$};  \node[] at (c) {$6$};
\draw[dotted] (a1)-- ++(-1.5,0);
\draw[dotted] (a9)-- ++(1.5,0);

\fill[black!10] (0,1) -- (4,1) arc (0:180:2) -- cycle;  
\draw (0.5,1) coordinate (b1) -- node [above] {$5$} (2,1) coordinate (b2) -- node [above] {$6$} (3.5,1)coordinate (b3);  
\draw[dotted] (b1) -- ++(-1,0); \draw[dotted] (b3) -- ++(1,0);
\fill (b2) circle (2pt);

\node at (-1,1.5) {\textcircled{2}};
\end{scope}
\end{tikzpicture}
\caption{The deformation in the case $\calQ(0,-4)\oplus1\oplus4$.}
\label{fig:defoplus1oplus2}
\end{figure}
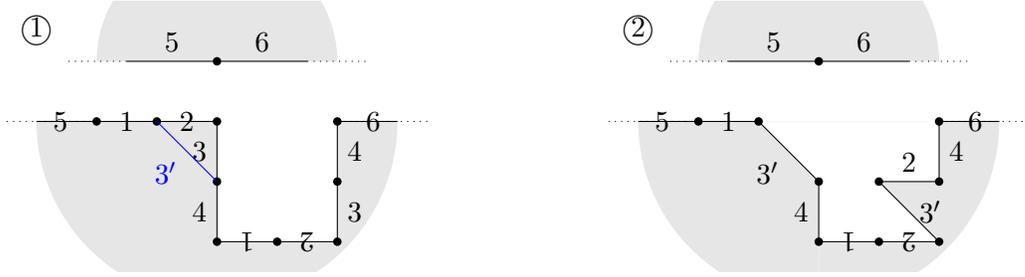
\par
For the other three strata listed in~\eqref{eq:exeptionalcomp}, one can easily verify the existence of such a saddle connection~$3'$ for quadratic differentials obtained from $\oplus 1\oplus 4$.  Then a similar construction leads to the degenerations to a banana curve with one irreducible component in $\calQ(5,-1,-3,-1)$,  $\calQ(5,-1,-2,-2)$ or  $\calQ(5,-1,-2,-1,-1)$, respectively, and the other in $\calQ(8,-3,-9)$.  Again the irreducibility of the corresponding loci of \mstwods implies that $\calC_{0}\oplus1 \oplus1 = \calC_{0}\oplus1\oplus4$ for these cases.
\par
Summarizing the above discussions, in the direct sum if $1$ and $2$ both appear, we can replace them by $1$ and $1$. Similarly if $1$ and $3$ both appear, we can also replace them by $1$ and $1$. In the end the remaining types are 
$$ \calC_0 \oplus (\tfrac{n}{2} - 2g + 2) \oplus (\tfrac{n}{2} - 2g + 4)\oplus \cdots \oplus \tfrac{n}{2}, $$
$$ \calC_0 \oplus 2 \oplus \cdots \oplus 2, $$
$$ \calC_0 \oplus 2 \oplus \cdots \oplus 2 \oplus 4, $$
$$ \calC_0 \oplus 1 \oplus \cdots \oplus 1, $$
thus verifying part (ii) of Proposition~\ref{prop:boissy6.2}. 
 \end{proof}
 
 \begin{rem}
 \label{rem:proof-type}
The above proof indeed gives more information compared to the statement of Proposition~\ref{prop:boissy6.2}, as it describes explicitly the possible bubbling operations that give rise to various connected components of the strata. 
\end{rem}
 
We next show that the existence of suitable pole orders  can eliminate some of the potential connected components listed in Proposition~\ref{prop:boissy6.2}. 

\begin{prop}[{\cite[Proposition 6.3]{boissymero}}]
\label{prop:boissy6.3}
Let $\calQ(n, -l_1, \ldots, -l_s)$ be a minimal stratum of quadratic differentials of genus $g\geq 2$ with $s\geq 2$, $n$ and all  $l_{i}$ even and at least one $l_{i}\geq2$. Let 
$\calC_0$ denote the (connected) genus zero stratum $\calQ(n-4g, -l_1, \ldots, -l_s)$. Then the following statements hold: 
\begin{itemize}
\item[(1)] If some $l_i$ is not divisible by $4$ and $\sum_{i=1}^s l_i  > 4$, then 
$$ \calC_0 \oplus 2 \oplus \cdots \oplus 2 = \calC_0 \oplus 2 \oplus \cdots \oplus 2 \oplus 4. $$
\item[(2)] If $s > 2$ or $l_1 \neq l_2$, then 
$$ \calC_0 \oplus (\tfrac{n}{2} - 2g + 2) \oplus (\tfrac{n}{2} - 2g + 4)\oplus \cdots \oplus \tfrac{n}{2} =  \calC_0 \oplus 2 \oplus \cdots \oplus 2 \oplus t $$
for some $t \in \{2, 4\}$. 
\end{itemize}
\end{prop}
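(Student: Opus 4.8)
The plan is to derive both identities from the commutation relations of Proposition~\ref{prop:oplus} together with the two gcd-tricks (Lemmas~\ref{lm:oplusg1} and~\ref{lm:oplusg2}), following the reduction already carried out in the proof of Proposition~\ref{prop:boissy6.2}. Throughout I write $m_0 = n - 4g = \sum_{i=1}^s l_i - 4$ for the order of the metric zero of $\calC_0$; in the hypotheses of both parts one has $\sum_{i=1}^s l_i \ge 6$ and hence $m_0 \ge 2 > 0$, so that the commutation relation Proposition~\ref{prop:oplus}(ii) is legal at every intermediate zero order along a chain.

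For part (1) I would start from the right-hand side $\calC_0 \oplus 2 \oplus \cdots \oplus 2 \oplus 4$ and push the trailing $\oplus 4$ to the front. Each such move exchanges an adjacent pair $\oplus 2\oplus 4$ over a base whose metric zero has order $m \ge m_0 \ge 2$; since then $2+4 = 6 < m+6$ and $4 \le m+3$, Proposition~\ref{prop:oplus}(ii) applies and permits the swap. After $g-1$ swaps the chain becomes $\calC_0 \oplus 4 \oplus 2 \oplus \cdots \oplus 2$, in which the first operation is now a bubbling of the genus-zero differential $\calC_0$. I then invoke the gcd-trick of Lemma~\ref{lm:oplusg1}: because all $l_i$ are even and some $l_i \equiv 2 \pmod 4$, we have $\gcd(4, l_1, \ldots, l_s) = \gcd(2, l_1, \ldots, l_s) = 2$, whence $\calC_0 \oplus 4 = \calC_0 \oplus 2$ and the chain collapses to $\calC_0 \oplus 2 \oplus \cdots \oplus 2$. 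The hypothesis $\sum l_i > 4$ enters only to guarantee $m_0 \ge 2$, so that $\oplus 4$ is a legal operation at every stage.

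For part (2) the key observation is that the balanced chain $\calC_0 \oplus b_1 \oplus \cdots \oplus b_g$, with $b_j = \tfrac n2 - 2g + 2j$, has first invariant $b_1 = \tfrac12\sum_{i=1}^s l_i$, and that this sequence resists every relation of Proposition~\ref{prop:oplus} precisely when the signature is hyperelliptic. I would break the balance through the gcd-trick on the first bubble. Writing $d = \gcd(l_1, \ldots, l_s)$, $l_i = d a_i$ and $A = \sum a_i$, a short computation gives $\gcd(b_1, l_1, \ldots, l_s) = \tfrac d2 \gcd(A,2)$, which is strictly smaller than $b_1 = \tfrac{dA}{2}$ exactly when $A \ge 3$. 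Since $A = 2$ forces $s = 2$ and $l_1 = l_2$ (the hyperelliptic signature $(n,-l,-l)$ of Theorem~\ref{thm:hyp}), the hypothesis ``$s > 2$ or $l_1 \ne l_2$'' is \emph{exactly} the condition $A \ge 3$. Thus Lemma~\ref{lm:oplusg1} replaces $b_1$ by the strictly smaller representative $e := \gcd(b_1, l_1, \ldots, l_s)$ of its gcd-class without changing the component, yielding $\calC_0 \oplus e \oplus b_2 \oplus \cdots \oplus b_g$.

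With the balance broken the chain is no longer self-symmetric, and I would reduce it as in the proof of Proposition~\ref{prop:boissy6.2}: since $e \mid b_1$ and $e < b_1$ give $e \le b_1/2$, one has $b_2 - e \ge \tfrac{b_1}{2} + 2 \ge 2k$, so Proposition~\ref{prop:oplus}(iv) applies and lowers the largest invariant by $2k = 4$; iterating this with the reordering of Corollary~\ref{cor:grow} and the exchanges (ii),(iii) collapses the chain onto one of the non-balanced forms of Proposition~\ref{prop:boissy6.2}. The main obstacle is the final bookkeeping: identifying the terminal form as $\calC_0 \oplus 2 \oplus \cdots \oplus 2 \oplus t$ and pinning down $t \in \{2,4\}$. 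I expect to control this by tracking the gcd-class of the genus-one first bubble together with Lemma~\ref{lm:oplusg2}, and by re-running the $n \equiv 0$ versus $n \equiv 2 \pmod 4$ case distinction of Proposition~\ref{prop:boissy6.2} (including its banana-curve irreducibility arguments). The residual ambiguity in $t$ is precisely the part-(1) ambiguity between $\oplus 2 \cdots \oplus 2$ and $\oplus 2 \cdots \oplus 2 \oplus 4$, which is why only the existence of such a $t$ is asserted.
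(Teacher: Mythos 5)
Your part (1) is correct and complete: commuting the trailing $\oplus 4$ to the front with Proposition~\ref{prop:oplus}(ii) (legal because $m_0=\sum l_i-4\geq 2$ under the hypothesis $\sum l_i>4$) and then collapsing $\calC_0\oplus 4=\calC_0\oplus 2$ by the genus-zero gcd-trick of Lemma~\ref{lm:oplusg1} is precisely Boissy's argument with every summand doubled, which is all the paper itself offers for this proposition. Your observation that the hypothesis of (2) is equivalent to $A\geq 3$, hence to the gcd-trick strictly decreasing $b_1$, is also correct and is the right way to break the balance.

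The gap is in part (2), and it sits exactly where you defer to ``final bookkeeping.'' What you establish is only that the balanced chain equals \emph{some} non-balanced chain, hence one of the terminal forms of Proposition~\ref{prop:boissy6.2}; the content of (2) is that this terminal form is $\calC_0\oplus 2\oplus\cdots\oplus 2\oplus t$ with $t$ \emph{even}, i.e.\ a component of abelian type, rather than the primitive chain $\calC_0\oplus 1\oplus\cdots\oplus 1$. This dichotomy is not bookkeeping: being a global square is detected by the connectedness of the canonical double cover and is therefore constant on connected components (indeed the square locus even has strictly larger dimension), so a chain that ever passes through a primitive component can never be identified with $\calC_0\oplus 2\oplus\cdots\oplus 2\oplus t$. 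Your very first move can force exactly that: $e=\tfrac d2\gcd(A,2)$ is odd precisely when $b_1=\tfrac12\sum l_i$ is odd, and in that case $\calC_0\oplus e$ is the genus-one component of odd rotation number $e$, hence primitive, and by Remark~\ref{rem:primitive} the entire chain stays primitive. (Concretely, for $\calQ(14,-4,-6)$ one gets $\calC_0\oplus 5\oplus 7=\calC_0\oplus 1\oplus 1$, which is disjoint from $\calC_0\oplus2\oplus t$.) So the reduction you set up, pushed through honestly, lands on the wrong side of the square/primitive divide whenever $\sum l_i\equiv 2\pmod 4$. The intended route --- and the meaning of the paper's one-line proof ``same as Boissy with each summand multiplied by two'' --- is to run the whole argument on the abelian square roots, i.e.\ inside the locus of squares, which requires all the balanced parameters $b_j$ to be even; this is automatic in the cases where the conclusion of (2) is actually used in the stated form (all orders divisible by $4$, or polar part $(-2,-2)$), and the remaining cases need a separate identification of the balanced chain. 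You should either add the parity hypothesis on $\tfrac12\sum l_i$ and work upstairs on the abelian differentials, or split off and treat the odd case by a different argument.
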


\begin{proof}
The upshot behind this result is that under the assumption of (1) the ab-parity type is not available and under the assumption of (2) the hyperelliptic type is not available. The proof is the same as \cite[Proposition 6.3]{boissymero} by multiplying each summand in the direct sums therein by two. 
\end{proof}

Using Propositions~\ref{prop:boissy6.2} and~\ref{prop:boissy6.3} we can classify the connected components of the minimal strata of quadratic differentials, which generalizes~\cite[Theorem~6.4]{boissymero} to the case of quadratic differentials and proves 
Theorem~\ref{thm:quad-comp} in the case of the minimal strata.  

\begin{thm}\label{thm:CCminimal}
Let $\mu$ be a signature of quadratic differentials for $g\geq 2$ with a unique \anzero and at least one \metpole. Then 
the connected components of the minimal stratum $\calQ(\mu)$ can be described as follows: 
\begin{itemize}

\item[(1a)] 
If $\mu$ is 
$(4n, -4l)$, $(4n, -4l, -4l)$, or $(4n, -2, -2)$ (except $(8,-4)$ and $(8,-2,-2)$), then 
$\calQ(\mu)$ has four connected components, which are of hyp, ab-even, ab-odd, and nonab-nonhyp type.  

\item[(1b)] 
If $\mu$ is $(8,-4)$ or $(8,-2,-2)$, then $\calQ(\mu)$ has three connected components, where the hyperelliptic component coincides with the ab-odd component in the first case and with the ab-even component in the second case.

\item[(2)] 
If $\mu$ is $(2n, -2l)$ with $l > 1$ odd or $(2n, -2l, -2l)$ with $l > 1$ odd, then 
$\calQ(\mu)$ has three connected components, which are of hyp, ab, and nonab-nonhyp type. 

\item[(3)] If $\mu$ is $(4n, -4l_1, \ldots, -4l_s)$ with $s\geq 3$ or $(4n, -4l_1, -4l_2)$ with $l_1 \neq l_2$, then $\calQ(\mu)$ has three connected components, which are of ab-even, ab-odd, and nonab-nonhyp type.

\item[(4)]  If $\mu$ is
$(2n, -l, -l)$ with $l$ odd or $(2n, -2)$, then $\calQ(\mu)$ has two connected components, which are of hyp and nonhyp type.

\item[(5)]  If $\mu$ is $(2n, -2l_1, \ldots, -2l_s)$ with $s \geq 3$ or $(2n, -2l_1, -2l_2)$ with $l_1 \neq l_2$, in both of which 
$n, l_1, \ldots, l_s$ are not all even, 
then $\calQ(\mu)$ has two connected components, which are of ab and nonab-nonhyp type.

\item[(6)]  If $\mu$ is non of the above, i.e., $\mu$ contains at least one odd entry and is not of type {\rm (4)}, 
 then $\calQ(\mu)$ is connected (and is of nonab-nonhyp type).  
\end{itemize}
\end{thm}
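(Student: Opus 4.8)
The plan is to pin down each stratum by squeezing its component count between an upper bound, furnished by the bubbling reductions of Proposition~\ref{prop:boissy6.2} together with the collapses of Proposition~\ref{prop:boissy6.3}, and a lower bound, furnished by three invariants that survive in $\calQ(\mu)$: whether the differential is a square of an abelian differential (the \emph{ab} versus \emph{nonab} dichotomy, equivalently whether its canonical double cover is disconnected), whether it is hyperelliptic (controlled by Theorem~\ref{thm:hyp}), and, inside the square locus, the spin parity. I would first dispatch the case $n$ odd: Proposition~\ref{prop:boissy6.2}(i) makes $\calQ(\mu)$ connected, and an odd entry forbids $\mu/2$ from being integral, so the unique component is of nonab-nonhyp type unless $\mu$ lies in the short hyperelliptic list, which recovers case~(6) together with the odd subcase of (4). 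For $n$ even I would record the ceilings of four components (all $l_i$ even) and two components (some $l_i$ odd) coming from Proposition~\ref{prop:boissy6.2}(ii),(iii).

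To produce the matching lower bounds I would analyse the three invariants in turn. The square locus is nonempty exactly when $\mu/2$ is integral, i.e.\ all entries of $\mu$ even; its connected components are the images under $\omega\mapsto\omega^2$ of the components of the abelian stratum $\omoduli(\mu/2)$, classified by Kontsevich--Zorich and Boissy, and they form genuine components of $\calQ(\mu)$ of the larger, abelian dimension. When in addition every entry of $\mu$ is divisible by $4$ (so every entry of $\mu/2$ is even) the Arf spin parity splits this locus into an ab-even and an ab-odd part; by the even-$k$ half of Theorem~\ref{thm:parite} the induced parity is a genuine invariant, so these two halves cannot merge. Hyperellipticity is read off from Theorem~\ref{thm:hyp}: among minimal signatures with a metric pole only $(2n,-2l)$, $(2n,-l,-l)$ and $(2n,-2)$ carry a hyperelliptic component, which I would exhibit through the balanced-type bubbling of Lemma~\ref{lm:oplushyperell}; the complementary nonab-nonhyp component is realised by the all-ones sequence $\calC_0\oplus1\oplus\cdots\oplus1$, whose odd parameters push the result off the (closed, non-primitive) square locus.

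I would then match the reduced bubbling shapes of Proposition~\ref{prop:boissy6.2}---the forms $\oplus1^{g}$, $\oplus2^{g}$, $\oplus2^{g-1}\oplus4$, and the balanced sequence---to these types, and invoke Proposition~\ref{prop:boissy6.3} to collapse whichever shapes are not supported by an invariant: part~(1) there fuses $\oplus2^{g}$ with $\oplus2^{g-1}\oplus4$ whenever some $l_i$ is not divisible by $4$ and $\sum l_i>4$ (deleting the spin splitting, so that the \emph{ab} entries of cases~(2) and~(5) remain single), while part~(2) deletes the balanced shape when $s>2$ or $l_1\neq l_2$ (removing hyperellipticity, giving the hyperelliptic-free cases~(3) and~(5)). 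Reading the surviving shapes against the divisibility of the entries of $\mu$ then yields precisely the component counts and type labels of (1a)--(6).

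The principal obstacle is the handful of borderline strata where the invariants coincide instead of separating: case~(1b), that is the two genus-two strata $\calQ(8,-4)$ and $\calQ(8,-2,-2)$ split off from (1a), together with the special strata built from the genus-one base $\calQ(2,-2)$ (most notably $\calQ(4g-2,-2)$ of case~(4) and the strata $\calQ(7,-3)$, $\calQ(7,-1,-2)$ arising in the odd analysis). For these $\sum l_i=4$ falls outside the reach of Proposition~\ref{prop:boissy6.3}(1), and one must instead decide by hand whether the primitive hyperelliptic component coincides with the ab-even or the ab-odd square component. I would settle this by computing the relevant spin parity in genus two directly, via the Arf-invariant formula evaluated on the explicit flat pictures underlying the $\oplus$ operation (paralleling the computations of Theorem~\ref{thm:specialstrata}), and by checking the divisor conditions that detect whether a given hyperelliptic configuration is a square. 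Verifying that the nonab-nonhyp shape never silently lands in the square locus, and that the genus-one-based strata are connected as claimed, is the most delicate bookkeeping in the argument.
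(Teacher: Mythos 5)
Your proposal reproduces the paper's proof in all essentials: the upper bound from the four reduced bubbling shapes of Proposition~\ref{prop:boissy6.2} collapsed by Proposition~\ref{prop:boissy6.3}, the lower bound from the ab/nonab, hyp/nonhyp and spin invariants, the matching of $\oplus 1^{g}$, $\oplus 2^{g}$, $\oplus 2^{g-1}\oplus 4$ and the balanced sequence with the nonab, ab-even, ab-odd and hyperelliptic types, and the by-hand treatment of $(8,-4)$, $(8,-2,-2)$ and the $\calQ(2,-2)$-based strata via explicit $h^{0}$ computations.

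One step would fail if executed exactly as written: the lower bound of four components for $(4n,-2,-2)$ in case (1a). You tie the ab-even/ab-odd splitting to every entry of $\mu$ being divisible by $4$, but here $\mu/2=(2n,-1,-1)$ has odd entries, so the Kontsevich--Zorich spin parity is undefined on $\omoduli(2n,-1,-1)$; the splitting comes instead from Boissy's parity for strata with exactly two simple poles, obtained by gluing the poles (which carry opposite residues) into a node and taking the spin of the resulting genus $g+1$ differential. Your earlier, more general statement --- that the components of the square locus biject with the components of $\omoduli(\mu/2)$ as classified by Kontsevich--Zorich \emph{and Boissy} --- already contains the fix, so this is an internal inconsistency rather than a missing idea. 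Two smaller slips: Theorem~\ref{thm:parite} for even $k$ concerns \emph{primitive} strata and is not what keeps ab-even and ab-odd apart (that separation is automatic once the square components are identified with distinct abelian components, which live in a higher-dimensional locus); and the signatures $(2n,-l,-l)$ with $l$ odd in case (4) have even zero order, so they are governed by the ceiling of two from Proposition~\ref{prop:boissy6.2}(iii), not by the odd-zero-order connectivity of part (i).
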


\begin{proof}
If the unique \anzero of $\mu$ has odd order, the claim follows from Proposition~\ref{prop:boissy6.2} (i). Hence we can assume that the unique zero has even order. Let $\calC$ be a connected component of $\calQ(\mu)$ and $\calC_0$ be the connected genus zero stratum of quadratic differentials $\calQ(2n-4g, -l_1, \ldots, -l_s)$. As pointed out in~Remark~\ref{rem:proof-type}, the proof of Proposition~\ref{prop:boissy6.2} shows that $\calC$ is given by one of the following four cases: 
\begin{enumerate}[(a)]
\item $ \calC = \calC_0 \oplus (n - 2g + 2) \oplus (n - 2g + 4)\oplus \cdots \oplus n,$ 
\item $ \calC = \calC_0 \oplus 2 \oplus \cdots \oplus 2,$
\item $ \calC = \calC_0 \oplus 2 \oplus \cdots \oplus 2 \oplus 4,$
\item $\calC = \calC_0 \oplus 1 \oplus \cdots \oplus 1. $
\end{enumerate}

When $\mu$ is $(4n, -4l)$, $(4n, -4l, -4l)$ or $(4n, -2, -2)$ (except $(8,-4)$ and $(8,-2,-2)$), it is easy to see that the four types all occur and are distinct. More precisely, case (a) corresponds to the hyperelliptic component as proved in Lemma~\ref{lm:oplushyperell}. Cases (b) and (c) correspond to the squares of even and odd abelian differentials respectively, which follows from the fact that the operation $\oplus 2s$ contributes $s+1 \pmod{2}$ to the parity (see~\cite[Lemma~11]{kozo1}). Case (d) corresponds to a connected component of primitive quadratic differentials, as the operation $\oplus 1$ requires gluing by rotation of angle $\pi$. We have thus verified (1a). 

For $\mu = (8,-4)$ in $g=2$, cases (a) and (c) are both equal to $\calC_0 \oplus 2 \oplus 4$. Let $z$ be the zero and $p$ be the pole.  In the hyperelliptic component given by (a), both $z$ and $p$ are Weierstrass points in $X$. Since $h^0(X, 2z - p) = h^0(X, p) = 1$ is odd, in this case the hyperelliptic component coincides with the ab-odd component.  Similarly for $\mu = (8,-2,-2)$ in $g=2$,  cases (a) and (c) are both equal to $\calC_0 \oplus 2 \oplus 4$. Let $z$ be the zero and $p_1, p_2$ be the two poles. Recall that the abelian parity in this case arises from gluing $p_1, p_2$ as a node to form an irreducible nodal curve $X'$ of (arithmetic) genus three.  
If~$z$ is a Weierstrass point in $X$ and $p_1, p_2$ are hyperelliptic conjugates (i.e., $2z \sim p_1 + p_2$ in $X$), we have $h^0(X', 2z) = 2$ is even, hence in this case the hyperelliptic component coincides with the ab-even component. We have thus verified (1b).  

When $\mu$ is $(2n, -2l)$ or $(2n, -2l, -2l)$, both with $l > 1$ odd, by Proposition~\ref{prop:boissy6.3} (1) we have 
$$ \calC_0 \oplus 2 \oplus \cdots \oplus 2 =  \calC_0 \oplus 2 \oplus \cdots \oplus 2 \oplus 4, $$
hence $\calQ(\mu)$ has at most three connected components. It is easy to see that this operation gives a connected component which is different from the components given by the other two operations. More precisely, case (a) corresponds to hyp, case (b)=(c) corresponds to ab, and case (d) corresponds to nonab, thus verifying (2).  

When $\mu$ is $(4n, -4l_1, \ldots, -4l_s)$ with $s\geq 3$ or $s=2$ and $l_1\neq l_2$, by Proposition~\ref{prop:boissy6.3} (2) we have 
$$ \calC_0 \oplus (2n - 2g + 2) \oplus (2n - 2g + 4)\oplus \cdots \oplus 2n =  \calC_0 \oplus 2 \oplus \cdots \oplus 2 \oplus t $$
for some $t \in \{2, 4\}$. Hence $\calQ(\mu)$ has at most three connected components. 
It is easy to see that there are three connected components, as 
case~(b) corresponds to ab-even, case (c) corresponds to ab-odd, and case (d) corresponds to nonab, thus verifying (3).  

When $\mu=(2n, -l, -l)$ with $l\geq 3$ odd, by Proposition~\ref{prop:boissy6.3} (1) we can identify case~(b) with case (c). Moreover, $\gcd (1, l, l) = \gcd (2, l, l) = 1$, hence we can further identify case (b) with case (d) by Lemma~\ref{lm:oplusg1}. Therefore, if $l\geq 3$ is odd, then $\calQ(2n, -l, -l)$ has at most 
two connected components. It is easy to see that both cases~(a) and (d) occur, which correspond to hyp and nonhyp respectively.

The case $\mu=(2n, -2)$ is special since we start the bubbling operation from the (connected) stratum $\calQ(2,-2)$ of genus one (not zero) .  We have checked in the proof of Theorem~\ref{prop:boissy6.2} that $\calQ(2,-2) \oplus 1 = \calQ(2, -2) \oplus 2$, both giving the same non-hyperelliptic connected component of $\calQ(6,-2)$. This identifies (b), (c) and (d) together. Therefore,  $\calQ(2n, -2)$ has at most two connected components. Indeed, case (a) is hyp and case (d) is nonhyp, hence $\calQ(2n, -2)$ has exactly two connected components. Combining this with the preceding paragraph thus verifies (4). 

When $\mu=(2n, -2l_1, \ldots, -2l_s)$ with $n, l_1, \ldots, l_s$ not all even and $s \geq 3$ or $s=2$ but $l_1\neq l_2$, it implies that some $l_i$ is odd, hence Proposition~\ref{prop:boissy6.3} (1) and (2) both apply. It follows that $\calQ(\mu)$ has at most two connected components. It is easy to see that in this case there are exactly two connected components corresponding to nonab in case (d) and ab in the other cases, thus verifying (5).  

Finally if the unique zero is of even order and we are not in one of the previous cases, 
then at least one \anpole is of odd order, which implies that  
$$\gcd (1, l_1, \ldots, l_s) = \gcd (2, l_1, \ldots, l_s) = \gcd (4, l_1, \ldots, l_s) = 1. $$
Moreover, in this case we have $s\geq 3$ or $s = 2$ but $l_1 \neq l_2$. Hence combining Lemma~\ref{lm:oplusg1} with Proposition~\ref{prop:boissy6.3} implies that all the cases (a)--(d) give rise to the same connected component, thus completing the proof of (6). 
\end{proof}

We now use Theorem~\ref{thm:CCminimal} and the results in Section~\ref{sec:adjquad} to study non-minimal strata of genus $g\geq2$.  We first bound the number of connected components of a general stratum by using adjacency to the corresponding minimal stratum, which generalizes \cite[Proposition 7.2]{boissymero} to the case of quadratic differentials.

Next we construct paths in the closure of certain strata that join a hyperelliptic component with a 
non-hyperelliptic component in the boundary strata, which generalizes~\cite[Proposition 7.3 (2)]{boissymero}. 

\begin{prop}
\label{prop:joinhypandnonhyp}
Let $\calQ = \calQ(2n, -2l)$ (resp. $\calQ(2n, -l, -l)$) be a genus $g\geq 2$ minimal stratum that contains a hyperelliptic component. For any $n_1 \neq n_2$ satisfying the equation $n_1 + n_2 = 2n$, there exists a path $\gamma(t) \in \overline{\calQ}(n_1, n_2, -2l)$
(resp. $\overline{\calQ}(n_1, n_2, -l, -l)$) such that $\gamma(0)$ is in the hyperelliptic component of $\calQ$ and $\gamma(1)$ is in a non-hyperelliptic component of $\calQ$. 
\end{prop}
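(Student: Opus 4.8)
The plan is to adapt the flat-geometric argument of \cite[Proposition 7.3(2)]{boissymero} to quadratic differentials with \metpoles, using the basic domain decomposition of Lemma~\ref{lem:half-planes}. First I would fix an explicit flat model for a differential $(X_0,\xi_0)$ in the hyperelliptic component of $\calQ=\calQ(2n,-2l)$ (resp. $\calQ(2n,-l,-l)$), in which the hyperelliptic involution $\iota$ is realized as the central symmetry of the defining polygon, the \anzero $z_0$ of order $2n$ sits at a \Weierstrass point with cone angle $(2n+2)\pi$, and $\iota$ acts near $z_0$ as the rotation of this cone by $(n+1)\pi$; in the second case $\iota$ additionally exchanges the two poles $p,p'$ of order $-l$. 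The goal is to build a single continuous family $\gamma(t)$, $t\in[0,1]$, with $\gamma(t)\in\calQ(n_1,n_2,-2l)$ (resp. $\calQ(n_1,n_2,-l,-l)$) for $t\in(0,1)$ and with both limits $\gamma(0),\gamma(1)$ lying in the boundary stratum $\calQ$, the first in its hyperelliptic component and the second in a non-hyperelliptic one (such a component exists by Theorem~\ref{thm:CCminimal}).

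Next I would break $z_0$ into two \metzeros $z_1,z_2$ of orders $n_1\neq n_2$ by the slit surgery of Section~\ref{subsec:break}, equivalently by smoothing the \mskd of Figure~\ref{fig:breaking} whose lower level is a genus zero differential in $\calQ(n_1,n_2,-2n-4)$. This introduces a short saddle connection $\delta$ of length $\epsilon$ joining $z_1$ and $z_2$; as soon as $\epsilon>0$ the surface carries two \metzeros of distinct orders, so it lies in the big stratum. The point is that the isomorphism class of the collapsed limit as $\epsilon\to0$ depends on the direction and position along which $z_1$ and $z_2$ fuse. I would choose the initial configuration so that $\delta$ is placed $\iota$-equivariantly about the fixed cone point, so that the collapse reconstitutes $(X_0,\xi_0)$, giving the hyperelliptic endpoint $\gamma(0)$. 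Then, keeping $\epsilon$ bounded away from $0$, I would transport the configuration by a continuous path in $\calQ(n_1,n_2,-2l)$ --- rotating the fusion direction, equivalently sliding $z_1$ along a path on $X_0$ that is not symmetric under $\iota$ --- and finally let $\epsilon\to0$ again. The terminal configuration is arranged so that the flat structure surrounding the fused zero is not invariant under the rotation by $(n+1)\pi$, whence the limit $\gamma(1)$ is a quadratic differential that is not $\iota$-invariant, i.e. a non-hyperelliptic differential. The case $\calQ(2n,-l,-l)$ is identical, carrying the $\iota$-exchanged pair of poles along unchanged.

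Two things require justification, and the second is the main obstacle. The continuity of $\gamma$ across the two collapses is where the compactification by \mskds enters: the family extends continuously to the boundary of the moduli space of \mskds, and its smoothness (Lemma~\ref{lem:invarianceprong}, \cite{CMZarea}) guarantees that the collision limits are genuine boundary points approached along a continuous path. The delicate step is verifying that the \emph{symmetric} collapse really yields a hyperelliptic limit: one must show that the $\iota$-equivariant slit configuration degenerates to a \mskd whose top level is $(X_0,\xi_0)$ with the involution restored, so that the limiting zero returns to a \Weierstrass point. I expect this to follow from an involution-equivariant version of the surgery, in the spirit of the hyperelliptic smoothing analysis of Lemma~\ref{lm:oplushyperell} and \cite[Section~6]{gendron}, by tracking that the fixed points created on the genus zero level and at the separating node supply the requisite \Weierstrass points in the limit. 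Once this is established, the non-hyperelliptic endpoint is automatic: being a hyperelliptic differential is exactly membership in the irreducible locus ${\rm Hyp}^2$, which is a closed-and-open union of components of $\calQ$, so a limit that fails $\iota$-invariance necessarily lies in a non-hyperelliptic component. The remaining checks --- that $z_1,z_2$ stay distinct and of orders $n_1,n_2$ for $t\in(0,1)$, and that the middle transport is an honest path in the big stratum --- are routine once the endpoint degenerations are controlled.
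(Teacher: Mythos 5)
Your proposal takes a different route from the paper: you work directly on the genus $g$ hyperelliptic surface, splitting the zero and re-colliding it in two different ways, whereas the paper reduces everything to genus one. There, by Lemma~\ref{lm:oplushyperell}, the hyperelliptic component is $\calC\oplus n$ with $\calC$ hyperelliptic of genus $g-1$; in the genus one stratum $\calQ(2n,-2n)$ the components are completely classified by the rotation number, breaking the zero of order $2n$ into $n_1\neq n_2$ drops the rotation number from $n$ to $s=\gcd(n_1,n_2,2n)<n$, and the resulting path in $\overline{\calQ}(n_1,n_2,-2n)$ joins $\calC_0\oplus n$ to $\calC_0\oplus s$. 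The genus $g$ statement is then obtained by gluing a fixed hyperelliptic genus $g-1$ piece to this genus one family via a \mstwod and smoothing. The advantage of this reduction is precisely that in genus one there is a complete, computable invariant (the rotation number) certifying that the two collapse limits lie in different components.

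That certificate is what is missing from your argument, and the gap is concentrated in the sentence asserting that the asymmetric collapse yields a non-hyperelliptic limit. The limit $\gamma(1)$ is a \emph{new} surface $X_1$, not $X_0$, so ``failing $\iota$-invariance'' is not a property you can read off from the asymmetry of the collapsing configuration on $X_0$: to exclude $\gamma(1)$ from the hyperelliptic component you must rule out the existence of \emph{any} hyperelliptic involution of $X_1$ placing the order-$2n$ zero at a \Weierstrass point (and the poles in the required configuration), and your argument only tracks the fate of the original $\iota$. A priori it is entirely possible that every degeneration of the component $\calC'\subset\calQ(n_1,n_2,-2l)$ obtained by merging $z_1$ and $z_2$ lands back in the hyperelliptic component --- indeed Proposition~\ref{prop:lanneauCor2.7} only forbids the reverse multiplicity, so nothing formal prevents this --- and excluding it is exactly the content of the proposition. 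The symmetric-collapse endpoint you analyze carefully is the easy half (re-collapsing immediately just undoes the breaking-up surgery); the asymmetric endpoint needs either an explicit invariant distinguishing the two limits, as the paper supplies through the genus one rotation number, or an explicit flat model of the second degeneration in which non-hyperellipticity of the top level can be verified. Without one of these the monodromy of the ``slide and re-collide'' loop is not controlled and the proof does not close.
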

We remark that the closure notation herein means taking the closure of a stratum in the corresponding moduli space 
of multi-scale $2$-differentials.  
\begin{proof}
By Lemma~\ref{lm:oplushyperell} the hyperelliptic component of $\calQ$ can be obtained as $\calC \oplus n$ where~$\calC$ is a hyperelliptic component in genus $g-1$, and any component given by $\calC \oplus s$ with $s\neq n$ is non-hyperelliptic. The idea of the proof is to use this fact and reduce to the case of genus one.  

The hyperelliptic component $\calQ(2n-4, -2n) \oplus n$ of the stratum $\calQ(2n, -2n)$ in genus one has rotation number
$\gcd (2n, n) = n$. Break up the \metzero of order $2n$ to two \metzeros of order $n_1$ and $n_2$. Since $n_1 + n_2 = 2n$ and $n_i \neq n$ for $i=1,2$, we have $\gcd (n_1, n_2, 2n) = s < n$. Therefore, as shown in the proof of Theorem~\ref{thm:compGenreUn}
 there exists a path in $\overline{\calQ}(n_1, n_2, -2n)$ (in the closure of the component of rotation number $s$) that joins the component
$\calQ(2n-4, -2n) \oplus n$ to the component $\calQ(2n-4, -2n) \oplus s$.

Now fix a quadratic differential $(X_{g-1},\eta_{g-1})$ in the hyperelliptic component of the stratum $\calQ(2n-4, -2l)$ in genus $g-1$. For a quadratic differential $(X_1,\eta_{1}) \in \calQ(2n, -2n)$ in genus one, we construct a \mstwod by gluing the pole of $\eta_{1}$ to the zero of $\eta_{g-1}$ and putting the unique equivalence class of $2$-prong-matchings at the node. After smoothing this \mstwod we obtain a quadratic differential $(X,\eta)\in \calQ(2n,  -2l)$. Similarly we can carry out the same construction by using $(X_1,\eta_{1}) \in \calQ(n_1, n_2, -2n)$ and obtain a quadratic differential $(X,\eta) \in \calQ(n_1, n_2, -2l)$ after smoothing. 
Therefore, the desired path can be obtained by smoothing the corresponding path in genus one, constructed in the preceding paragraph, that joins a quadratic differential $(X_1,\eta_{1})$ in the hyperelliptic component to a quadratic differential $(X_1,\eta_{1})$ in a non-hyperelliptic component. 

The case $\calQ(2n, -l, -l)$ is completely analogous.
\end{proof}

Next we construct paths in the closure of certain strata that join a component of ab-even type with a component of ab-odd type in the boundary strata. 

\begin{prop}[{\cite[Proposition 7.3 (1)]{boissymero}}]
\label{prop:joinevenandodd}
Let $\calQ = \calQ(4n, -4l_1, \ldots, -4l_s)$ (resp. $\calQ(4n, -2, -2)$) be a genus $g\geq 2$ minimal stratum. For any $n_1, n_2$ satisfying the equality $n_1 + n_2 = 4n$ and not divisible by $4$, there exists a path $\gamma(t) \in \overline{\calQ}(n_1, n_2, -4l_1, \ldots, -4l_s)$
(resp. $\overline{\calQ}(n_1, n_2, -2, -2)$) such that $\gamma(0)$ is in the ab-even component of~$\calQ$ and~$\gamma(1)$ is in
the ab-odd component of $\calQ$.
\end{prop}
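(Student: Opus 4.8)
The plan is to adapt Boissy's argument in \cite[Proposition~7.3(1)]{boissymero} and reduce everything to the abelian setting through the canonical double cover, splitting into two cases according to whether $n_1,n_2$ are even or odd. Recall first that the ab-even (resp. ab-odd) component of $\calQ$ consists of the squares $\omega^2$ of abelian differentials $\omega$ in the even (resp. odd) component of $\omoduli[g](2n,-2l_1,\dots,-2l_s)$ (resp. $\omoduli[g](2n,-1,-1)$), and that by definition the parity of $\omega^2$ is the spin parity of $\omega$. Since $\omega$ carries poles, $\xi=\omega^2$ is not the square of a holomorphic differential; hence by Lemma~\ref{lm:CCbreak} breaking up its zero of order $4n$ into two zeros of orders $n_1,n_2$ produces a connected component $\calD_{\even}$ (resp. $\calD_{\odd}$) of $\calQ(n_1,n_2,-4l_1,\dots)$ that depends only on the ab-even (resp. ab-odd) component, and not on the auxiliary genus-zero piece $(X_2,\xi_2)$. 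The core of the argument is to prove $\calD_{\even}=\calD_{\odd}$: granting this, a path is produced by separating the two zeros inside this common connected component and colliding them back in two inequivalent ways, which land in the two distinct parity components of $\calQ$ in the boundary. Such a multivalued collision is possible precisely because $\calQ(n_1,n_2,\dots)$ carries no invariant separating the two parities, so that its closure meets both the ab-even and the ab-odd component of $\calQ$.

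First I would treat the case $n_1\equiv n_2\equiv 2 \pmod 4$, writing $n_i=2m_i$ with $m_i$ odd. Near the zero of order $4n$ the differential $\xi=\omega^2$ admits a single-valued square root, so I would choose the auxiliary piece $(X_2,\xi_2)$ itself to be a square $\eta_2^2$, which is legitimate by the independence statement in Lemma~\ref{lm:CCbreak}. The whole breaking then takes place in the locus of squares and is the square of the corresponding abelian operation, namely breaking the zero of order $2n$ of $\omega$ into two zeros of orders $m_1,m_2$. By \cite[Proposition~7.3(1)]{boissymero} the even and odd components of $\omoduli[g](2n,-2l_1,\dots)$ break into one and the same component of the stratum $\omoduli[g](m_1,m_2,-2l_1,\dots)$, which is parity-free since $m_1,m_2$ are odd; squaring this identity gives $\calD_{\even}=\calD_{\odd}$, and squaring Boissy's path yields the desired $\gamma(t)$ in $\overline{\calQ}(2m_1,2m_2,-4l_1,\dots)$.

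When $n_1$ and $n_2$ are odd the square root of $\xi$ is no longer single-valued at the new zeros, the broken differentials in $\calQ(n_1,n_2,\dots)$ are primitive, and the squaring shortcut is unavailable. Here I would instead pass to the canonical double cover: splitting the order-$4n$ zero of $\xi=\omega^2$ into two odd zeros glues the two disjoint sheets $X\sqcup X$ of its disconnected, abelian-type cover along the two new branch points, producing a connected double cover of genus $2g$ carrying an anti-invariant abelian differential $\wh\eta$ in $\omoduli[2g](n_1+1,n_2+1,-2l_1,-2l_1,\dots,-2l_s,-2l_s)$, whose spin parity computes the parity of $\xi$. Breaking up the ab-even and the ab-odd component then corresponds to an equivariant splitting of a single zero on this cover, and $\calD_{\even}=\calD_{\odd}$ reduces to an equivariant version of Boissy's abelian surgery on the connected cover. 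I expect this odd case to be the main obstacle: one must run the zero-splitting equivariantly for the involution that exchanges the two prongs, check that the target abelian stratum stays connected under the equivariance constraint, and descend the resulting path to the quotient. As a consistency check, and a possible shortcut, note that $\calQ(n_1,n_2,-4l_1,\dots)$ is of parity type by Proposition~\ref{prop:partype}, so Theorem~\ref{thm:parite} with $k=2$ shows its parity is constant on the whole primitive locus; thus no parity obstruction can separate $\calD_{\even}$ from $\calD_{\odd}$, and only the connectedness of the relevant primitive component remains to be secured by the surgery above.
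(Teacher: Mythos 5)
Your treatment of the case $n_1\equiv n_2\equiv 2\pmod 4$ is correct and is in substance the paper's argument: the paper's entire proof is the remark that Boissy's proof of \cite[Proposition~7.3(1)]{boissymero} goes through after multiplying each summand of the bubbling operations by two, i.e.\ a reduction to the abelian case via squares, which is what your choice of $\xi_2=\eta_2^2$ together with Lemma~\ref{lm:CCbreak} accomplishes. The problem is the case $n_1,n_2$ odd, which you explicitly leave open. The equivariant double-cover surgery you sketch there is not carried out: you would have to show that the even and odd anti-invariant loci in $\Omega\moduli[2g](n_1+1,n_2+1,-2l_1,-2l_1,\dots,-2l_s,-2l_s)$ can be joined through an equivariant degeneration and that the result descends, and none of this is supplied. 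Your ``consistency check'' via Theorem~\ref{thm:parite} only says that no parity invariant separates $\calD_{\even}$ from $\calD_{\odd}$; it does not show they coincide, and you cannot instead appeal to the classification of components of $\calQ(n_1,n_2,\dots)$, since that classification (Theorem~\ref{thm:quad-comp}) is proved using the present proposition.

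The missing idea is that no covering-space argument is needed: both residue classes of $n_1\bmod 4$ are handled uniformly by the genus-one reduction that the paper uses for the neighbouring Propositions~\ref{prop:joinhypandnonhyp} and~\ref{prop:joinabandnonab}. Realize the ab-even and ab-odd components of the minimal stratum as $\calC_{g-1}\oplus 2$ and $\calC_{g-1}\oplus 4$ for one and the same component $\calC_{g-1}$ in genus $g-1$ (this is what the proof of Proposition~\ref{prop:boissy6.2} provides). The bubbled genus-one pieces lie in $\komoduli[1][k=2](4n,-4n)$ with rotation numbers $\gcd(4n,2)=2$ and $\gcd(4n,4)=4$; after breaking the zero of order $4n$ into zeros of orders $n_1,n_2$ with $4\nmid n_i$, the rotation numbers become $\gcd(2,n_1,n_2)$ and $\gcd(4,n_1,n_2)$, which are equal (both $1$ if the $n_i$ are odd, both $2$ if $n_i\equiv 2\pmod 4$). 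By Theorem~\ref{thm:compGenreUn} the two genus-one differentials then lie in the same connected component of the genus-one stratum and can be joined by a path; gluing back to the fixed $(X_{g-1},\eta_{g-1})$ and smoothing produces the required $\gamma(t)$ in $\overline{\calQ}(n_1,n_2,-4l_1,\dots,-4l_s)$. This is exactly ``Boissy with every summand doubled,'' and it closes the odd case without any equivariant surgery.
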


\begin{proof}
The same proof as in~\cite{boissymero} works by multiplying each summand of the bubbling operations therein by two. 
\end{proof}

Similarly we construct paths in the closure of certain strata that join a component of ab-nonhyp type with a 
 component of nonab type in the boundary strata. 

\begin{prop}
\label{prop:joinabandnonab}
Let $\calQ = \calQ(2n, -2l_1, \ldots, -2l_s)$ be a genus $g\geq 2$ minimal stratum such that if $s=1$ then $l_1 > 1$. For any $n_1, n_2$ odd with $n_1 + n_2 = 2n$, there exists a path $\gamma(t) \in \overline{\calQ}(n_1, n_2, -2l_1, \ldots, -2l_s)$
such that $\gamma(0)$ is in an ab-nonhyp component of~$\calQ$ and~$\gamma(1)$ is in a nonab component of $\calQ$. 
\end{prop}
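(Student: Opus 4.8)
The plan is to follow the strategy of Proposition~\ref{prop:joinhypandnonhyp}: first construct the path inside a genus one stratum obtained by breaking up the zero, and then transport it to genus $g$ by gluing a fixed higher genus component and smoothing the resulting multi-scale $2$-differential. As there, the genus one model will be $\calQ(2n,-2n)$, while the poles $-2l_1,\dots,-2l_s$ are carried by the fixed higher genus component.

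For the genus one core, recall by Proposition~\ref{prop:rotalg} and Theorem~\ref{thm:compGenreUn} that the connected components of $\calQ(2n,-2n)$ are indexed by the rotation (equivalently torsion) number $d$, a divisor of $2n$ with $d\neq 2n$. Since a differential with $\divisor{\xi}=2n(z-p)$ on an elliptic curve is the square of an abelian differential exactly when $n(z-p)\sim 0$, i.e. exactly when its torsion number is even, the ab components are those of even rotation number and the nonab components those of odd rotation number. I would break up the zero of order $2n$ into two zeros of odd orders $n_1,n_2$; this is realizable by Proposition~\ref{prop:breakpos} since the differentials carry a pole and hence are not powers of a holomorphic differential. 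Choosing arc-length representatives $(\alpha,\beta)$ of a symplectic basis away from the zeros, the indices $\ind(\alpha)$ and $\ind(\beta)$ are unchanged by the breaking up, so the broken differential has rotation number $\gcd(\ind(\alpha),\ind(\beta),n_1,n_2)$ while the merged differential has rotation number $\gcd(\ind(\alpha),\ind(\beta),2n)$.

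As $n=\sum_j l_j+2g-2\ge 3$, both rotation numbers $1$ and $2$ occur in $\calQ(2n,-2n)$, the first being nonab and the second ab. I would take one differential of each, with symplectic bases realizing $(\ind(\alpha),\ind(\beta))=(0,1)$ and $(0,2)$ respectively, and break up their zeros into $n_1,n_2$. Because $n_1,n_2$ are odd, both broken differentials have rotation number $\gcd(2,n_1,n_2)=\gcd(1,n_1,n_2)=1$, so they lie in the single connected rotation-number-$1$ component of $\calQ(n_1,n_2,-2n)$ by Theorem~\ref{thm:compGenreUn}. Joining them by a path whose two ends approach the zero-merging boundary, where one end limits to the rotation-number-$2$ (ab) and the other to the rotation-number-$1$ (nonab) differential, produces a genus one path in $\overline{\calQ}(n_1,n_2,-2n)$ running from an ab point to a nonab point.

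To pass to genus $g$, I would fix the square $\eta_{g-1}^2$ of a non-hyperelliptic abelian differential $\eta_{g-1}\in\omoduli[g-1](n-2,-l_1,\dots,-l_s)$, glue the pole of order $-2n$ of the genus one piece to the zero of order $2n-4$ of $\eta_{g-1}^2$ with the unique equivalence class of $2$-prong-matchings from Corollary~\ref{cor:uniquepm}, and smooth. Since the top level component carries a pole the global $2$-residue condition is automatic, the smoothing lands in $\calQ(2n,-2l_1,\dots,-2l_s)$, and smoothing the genus one path above yields the desired $\gamma(t)$. The main obstacle is to check that this gluing faithfully transfers the relevant invariants: by the primitivity criterion of Remark~\ref{rem:primitive} the canonical double cover of the smoothed surface is disconnected, and hence the surface is an abelian square, exactly when the glued genus one piece is ab, so the two endpoints remain ab and nonab; and one must verify that non-hyperellipticity of $\eta_{g-1}$ survives the handle attachment, so that the ab endpoint lands in an ab-nonhyp rather than hyperelliptic component. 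A secondary issue is the existence of a non-hyperelliptic $\eta_{g-1}$, which holds by the Kontsevich--Zorich and Boissy classification of meromorphic abelian strata, with the low genus cases, notably $g=2$ where all holomorphic genus one and two strata are hyperelliptic, treated by hand for the meromorphic strata in question.
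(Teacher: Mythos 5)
Your proposal follows essentially the same route as the paper: the path is built in genus one using the rotation-number classification of $\calQ(2n,-2n)$ together with the observation that breaking the order-$2n$ zero into two odd-order zeros forces rotation number $\gcd(n_1,n_2,2)=1$, so the two broken differentials lie in a single component of $\calQ(n_1,n_2,-2n)$, and the path is then transported to genus $g$ by gluing a fixed genus $g-1$ piece carrying the poles and smoothing the resulting multi-scale $2$-differential. The only real difference is cosmetic: the paper takes the genus $g-1$ piece to be the component $\calC_{g-1}=\calC_0\oplus 2\oplus\cdots\oplus 2$, whose ab-nonhyp type (and that of $\calC_{g-1}\oplus 2$ versus the nonab type of $\calC_{g-1}\oplus 1$) was already established in the proof of Theorem~\ref{thm:CCminimal}, which disposes of the residual verifications (survival of non-hyperellipticity, existence of the genus $g-1$ piece in low genus) that you flag at the end.
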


\begin{proof}
Let $\calC_0 = \calQ(2n-4g, -2l_1, \ldots, -2l_s)$ be the corresponding minimal stratum of genus zero. Then the connected components of $\calQ$ given by $ \calC_0 \oplus 2 \oplus \cdots \oplus 2 \oplus 2$ and $\calC_0 \oplus 2 \oplus \cdots \oplus 2 \oplus 1 $ are of type ab-nonhyp and nonab respectively. 
 We write these two components as $\calC_{g-1} \oplus 2$ and $\calC_{g-1} \oplus 1$, where
 $\calC_{g-1} =  \calC_0 \oplus 2 \oplus \cdots \oplus 2 $
 is a connected component of the stratum $\calQ(2n-4, -2l_1, \ldots, -2l_s)$ in genus $g-1$. 
 
Fix a quadratic differential $(X_{g-1},\eta_{g-1}) $ in $\calC_{g-1}$. For any $(X_1,\eta_{1}) \in \calQ(2n, -2n)$ in genus one, we can smooth the \mstwod obtained by gluing the zero of~$\eta_{g-1}$ to the pole of $\eta_{1}$ with the unique $2$-prong-matching at the node. This way we obtain a quadratic differential $(X,\eta)$ in the stratum $ \calQ(2n,  -2l_1, \ldots, -2l_s)$. Similarly we can carry out the same procedure for $(X'_1,\eta_{1}') \in \calQ(n_1, n_2, -2n)$
 and obtain a quadratic differential $(X',\eta') \in  \calQ(n_1, n_2, -2l_1, \ldots, -2l_s)$, where $(X'_1,\eta_{1}')$ arises from breaking up the zero of $(X_1,\eta_{1})$ into two zeros of order $n_1$ and $n_2$ respectively.   
 
Note that a quadratic differential in the connected component $\calQ(2n-4, -2n) \oplus 2$ of the stratum $\calQ(2n,-2n)$ has rotation number $\gcd (2n, 2) = 2$. After breaking up the zero of order $2n$ into two zeros of odd order $n_1$ and $n_2$, 
the rotation number becomes
 $\gcd (n_1, n_2, 2) = 1$. 
 Hence there is a path in $\overline{\calQ}(n_1, n_2, -2n)$
 that joins  $\calQ(2n-4, -2n)\oplus 2$ to $\calQ(2n-4, -2n)\oplus 1$.  
 Using this path, the existence of the desired path in the genus~$g$ stratum 
 $ \overline{\calQ}(n_1, n_2, -2l_1, \ldots, -2l_s)$ thus follows from the construction described in the preceding paragraph. 
\end{proof}

After all these preparations, we can finally classify the connected components of the strata of quadratic differentials in general.

\begin{proof}[Proof of Theorem~\ref{thm:quad-comp}]
Denote by $\calQ = \calQ( n_1, \ldots, n_r, - l_1, \ldots, -l_s)$ a stratum of quadratic differentials of genus $g\geq 2$, with at least one $l_i \geq 2$.
Denote by
$$\calQ_{\min} \= \calQ(n, -l_1, \ldots, -l_s)$$
the corresponding minimal stratum, where
$n = n_1+ \cdots + n_r$. By Proposition~\ref{prop:lanneauCor2.7} the number of connected components of $\calQ$ is bounded above by the number of connected components of $\calQ_{\min}$.

If $n$ is odd, i.e., if $l_1 + \cdots + l_s$ is odd, then $\calQ_{\min}$ is connected by Proposition~\ref{prop:boissy6.2}, hence $\calQ$ is connected. 

From now on we assume that $n$ is even. Let us recall some related terminology first. We say that the set of zero orders (resp. pole orders)
is of \emph{hyperelliptic} type if it is $\{n, n \}$ or $\{2n \}$ (resp. $\{-l, -l \}$ or $\{-2l\}$), i.e., it is the set of zero orders (resp. pole orders)
of a hyperelliptic component. We say that the partition is of \emph{abelian} type if every entry is even (and in the case where $s = 1$ then we require $l_1 > 2$), as the corresponding stratum contains a connected component parameterizing global squares of abelian differentials. If every entry is a multiple of four or the polar part of $\mu$ is $(-2, -2)$ (except for $\mu = (8,-4)$ or $(8,-2,-2)$ which we have treated already in Theorem~\ref{thm:CCminimal} and for  
$\mu = (4,4,-4)$ or $(4,4,-2,-2)$ which will be treated separately at the end of the proof),  
we say that it is of \emph{abelian-parity} type, because the corresponding stratum of abelian differentials contains connected components of spin parity type, respectively. 
We now give a lower bound on the number of connected components. First consider the following cases: 
\begin{itemize}
\item If the stratum is $\calQ(n, n, -2l)$ (resp. $\calQ(n, n, -l, -l)$), then it contains a hyperelliptic component. The corresponding minimal stratum
$\calQ(2n, -2l)$ (resp. $\calQ(2n, -l, -l)$) contains one hyperelliptic component and at least one non-hyperelliptic component. Breaking up the zero of order $2n$ in a non-hyperelliptic quadratic differential gives a quadratic differential in a non-hyperelliptic component. Hence the stratum $\calQ(n, n, -2l)$ (resp. $\calQ(n, n, -l, -l)$) contains one hyperelliptic component and at least one non-hyperelliptic component.

\item If the stratum is of abelian type, then the corresponding minimal stratum has at least two non-hyperelliptic components, at least one of which arises from squares of abelian differentials (or more if it is also of abelian-parity type), and has possibly a hyperelliptic component if it is in addition of hyperelliptic type.
\end{itemize}

We can use the above discussion to give a lower bound on the number of connected components of~$\calQ$. If the set of zero and pole orders of $\calQ$ is of both hyperelliptic and abelian-parity type (hence of abelian type), then $\calQ$ has
at least four connected components: hyp, ab-odd, ab-even, and nonab-nonhyp.
 This implies that such strata, described in Theorem~\ref{thm:quad-comp} (1a), have exactly four connected components, as the lower and upper bounds on the number of connected components coincide in this case (both equal to four). Moreover, if the set of zero and pole orders of $\calQ$ is of abelian-parity type, then $\calQ$ has at least three connected components: nonab, ab-odd, and ab-even. Similarly if it is of both abelian and hyperelliptic types, then $\calQ$ has at least three connected components: hyp, ab, and nonab-nonhyp. Finally, if the set of zero and pole orders is of hyperelliptic or abelian type, then $\calQ$ has at least two connected components.

Next we give a refined upper bound for the number of connected components of~$\calQ$ as follows: 
\begin{enumerate}[(1)]
\item Suppose the set of poles of $\calQ$ is of both hyperelliptic and abelian-parity types. In this case the minimal stratum has four connected components which we denote by $\calQ_{\min}^{\hyp}$, $\calQ_{\min}^{\nonab}$, $\calQ_{\min}^{\abeven}$ and $\calQ_{\min}^{\abodd}$.
Let $\calC^{\hyp}$, $\calC^{\nonab}$, $\calC^{\abeven}$ and $\calC^{\abodd}$ be the connected components of $\calQ$ that are adjacent to the respective connected components of the minimal stratum. 

\begin{enumerate}[(a)]
 \item Suppose that  the set of zeros of $\calQ$ is not of hyperelliptic type, we can choose an entry~$n_j$ such that $n_j \neq \sum_{i\neq j} n_i$. Note that for any $1\leq j \leq r$, the stratum $\calQ(n_j, \sum_{i\neq j} n_i, - l_1, \ldots, - l_s)$ is adjacent to $\calQ_{\min}$.  By Proposition~\ref{prop:joinhypandnonhyp} there is a path in $\overline{\calQ}(n_j, \sum_{i\neq j} n_i, - l_1, \ldots, - l_s)$ joining $\calQ_{\min}^{\hyp}$ to a non-hyperelliptic component of $\calQ_{\min}$. Breaking up the zero of order $\sum_{i\neq j} n_i$ along this path 
into zeros of order $(n_i)_{i\neq j}$, we obtain a path in $\calQ$ that joins a neighborhood of $\calQ_{\min}^{\hyp}$ to a neighborhood 
of a non-hyperelliptic component of $\calQ_{\min}$. 
Hence $\calC^{\hyp}$ coincides with one of the non-hyperelliptic components $\calC^{\nonab}$, $\calC^{\abeven}$ or $\calC^{\abodd}$. It follows that in this case the number of connected components of $\calQ$ is at most three.

\item If the set of zeros of $\calQ$ is not of abelian-parity type, there exists an $n_j$ not divisible by four. By Proposition~\ref{prop:joinevenandodd} there is a path in $\overline{\calQ}(n_j, \sum_{i\neq j} n_i, - l_1, \ldots, - l_s)$ joining $\calQ_{\min}^{\abeven}$ to $\calQ_{\min}^{\abodd}$. Hence
$\calC^{\abeven} = \calC^{\abodd}$, and in this case the number of connected components of $\calQ$ is at most three.

\item Suppose the set of zeros of $\calQ$ is not of abelian type (hence not of abelian-parity type). We have seen that $\calC^{\abeven} = \calC^{\abodd}$. Moreover, we can choose an $n_j$ which is odd. By Proposition~\ref{prop:joinabandnonab} there is a path in $\overline{\calQ}(n_j, \sum_{i\neq j} n_i, - l_1, \ldots, - l_s)$ joining $\calQ_{\min}^{\ab}$ to $\calQ_{\min}^{\nonab}$, which implies that $\calC^{\nonab}$ coincides with
one of $\calC^{\hyp}$ and $\calC^{\abeven} = \calC^{\abodd}$. Hence in this case the number of connected components of $\calQ$ is at most two.

\item If the set of zeros of $\calQ$ is neither of hyperelliptic type nor of abelian type, then all conclusions in the preceding paragraphs hold. Combining with Proposition~\ref{prop:joinabandnonab} (which joins ab-nonhyp to nonab) implies that $\calQ$ has exactly one connected component.
\end{enumerate}

\item Suppose the set of poles of $\calQ$ is of both hyperelliptic and abelian types but not of abelian-parity type. In this case the minimal stratum has three connected components, denoted by $\calQ_{\min}^{\hyp}$, $\calQ_{\min}^{\ab}$ and $\calQ_{\min}^{\nonhypab}$.
Let $\calC^{\hyp}$, $\calC^{\ab}$ and $\calC^{\nonhypab}$ be the connected components of $\calQ$ that are adjacent to the  respective connected components of the minimal stratum. 

\begin{enumerate}[(a)]
\item If the set of zeros of $\calQ$ is not of hyperelliptic type, we can choose an $n_j$ such that $n_j \neq \sum_{i\neq j} n_i$. By Proposition~\ref{prop:joinhypandnonhyp} there is a path in $\overline{\calQ}(n_j, \sum_{i\neq j} n_i, - l_1, \ldots, - l_s)$ joining $\calQ_{\min}^{\hyp}$ to a non-hyperelliptic component of $\calQ_{\min}$. Hence
$\calC^{\hyp}$ coincides with one of the non-hyperelliptic components $\calC^{\ab}$ and $\calC^{\nonhypab}$. It follows that in this case the number of connected components of $\calQ$ is at most two.

\item If the set of zeros of $\calQ$ is not of abelian type, we can choose an $n_j$ such that $n_j$ is odd. 
By Proposition~\ref{prop:joinabandnonab} there is a path in $\overline{\calQ}(n_j, \sum_{i\neq j} n_i, - l_1, \ldots, - l_s)$ joining $\calQ_{\min}^{\ab}$ to $\calQ_{\min}^{\nonab}$, which implies that $\calC^{\ab}$ coincides with
one of $\calC^{\hyp}$ and $\calC^{\nonhypab}$. Hence in this case the number of connected components of $\calQ$ is at most two.

\item If the set of zeros of $\calQ$ is neither of hyperelliptic type nor of abelian type, then all conclusions in the preceding paragraphs hold. Combining with Proposition~\ref{prop:joinabandnonab} implies that $\calQ$ has exactly one connected component. 
\end{enumerate}

\item Suppose the set of poles of $\calQ$ is of abelian-parity type but not of hyperelliptic type. In this case the minimal 
stratum has three connected components, denoted by $\calQ_{\min}^{\abeven}$, $\calQ_{\min}^{\abodd}$ and $\calQ_{\min}^{\nonab}$.
Let $\calC^{\abeven}$, $\calC^{\abodd}$ and $\calC^{\nonab}$ be the connected components of $\calQ $ that are adjacent to the  respective connected components of the minimal stratum. 

\begin{enumerate}[(a)]
\item
If the set of zeros of $\calQ$ is of abelian type but not of abelian-parity type, we can choose an $n_j$ not divisible by four. By Proposition~\ref{prop:joinevenandodd} there is a path in $\overline{\calQ}(n_j, \sum_{i\neq j} n_i, - l_1, \ldots, - l_s)$ joining $\calQ_{\min}^{\abeven}$ to $\calQ_{\min}^{\abodd}$. Hence
$\calC^{\abeven} = \calC^{\abodd}$, and in this case the number of connected components of $\calQ$ is at most two.

\item
Suppose the set of zeros of $\calQ$ is not of abelian type (hence not of abelian-parity type). We have seen that $\calC^{\abeven} = \calC^{\abodd}$. Moreover, we can choose an odd~$n_j$. By Proposition~\ref{prop:joinabandnonab} there is a path in $\overline{\calQ}(n_j, \sum_{i\neq j} n_i, - l_1, \ldots, - l_s)$ joining $\calQ_{\min}^{\ab}$ to $\calQ_{\min}^{\nonab}$, which implies that $\calC^{\nonab}$ coincides with $\calC^{\abeven} = \calC^{\abodd}$. Hence in this case  $\calQ$ has exactly one connected component. 
\end{enumerate}

\item Suppose the set of poles of $\calQ$ is of hyperelliptic type but not of abelian type. In this case the minimal 
stratum has two connected components, denoted by $\calQ_{\min}^{\hyp}$ and $\calQ_{\min}^{\nonhyp}$.
Let $\calC^{\hyp}$ and $\calC^{\nonhyp}$ be the connected components of $\calQ$ that are adjacent to the respective connected components of the minimal stratum. 

\begin{enumerate}[(a)]
\item
If the set of zeros of $\calQ$ is not of hyperelliptic type, we can choose an $n_j$ such that $n_j \neq \sum_{i\neq j} n_i$. By Proposition~\ref{prop:joinhypandnonhyp} there is a path in $\overline{\calQ}(n_j, \sum_{i\neq j} n_i, - l_1, \ldots, - l_s)$ joining $\calQ_{\min}^{\hyp}$ to $\calQ_{\min}^{\nonhyp}$. Hence
$\calC^{\hyp}$ coincides with $\calC^{\nonhyp}$. It follows that in this case $\calQ$ is connected. 
\end{enumerate}

\item Suppose the set of poles of $\calQ$ is of abelian type but not of hyperelliptic type nor of abelian-parity type. In this case the minimal 
stratum has two connected components, denoted by $\calQ_{\min}^{\ab}$ and $\calQ_{\min}^{\nonab}$. Let $\calC^{\ab}$ and $\calC^{\nonab}$ be the connected components of $\calQ $ that are adjacent to the  respective connected components of the minimal stratum. 

\begin{enumerate}[(a)]
\item
If the set of zeros of $\calQ$ is not of abelian type, we can choose an odd~$n_j$. 
By Proposition~\ref{prop:joinabandnonab} there is a path in $\overline{\calQ}(n_j, \sum_{i\neq j} n_i, - l_1, \ldots, - l_s)$ joining $\calQ_{\min}^{\ab}$ to $\calQ_{\min}^{\nonab}$, which implies that $\calC^{\ab}$ coincides with $\calC^{\nonab}$. It follows that in this case $\calQ$ is connected. 
\end{enumerate}

\item For the remaining cases $\mu = (4,4,-4)$ or $(4,4,-2,-2)$ in $g=2$, by Theorem~\ref{thm:CCminimal} (1b) the corresponding minimal strata have only three connected components, as the hyperelliptic component coincides with one of the abelian-parity components. Therefore, the same argument as in (1a) implies that both strata have exactly three connected components. In particular, we conclude that $\calQ(4,4,-4)^{\hyp} = \calQ(4,4,-4)^{\odd}$ and $\calQ(4,4,-2,-2)^{\hyp} = \calQ(4,4,-2,-2)^{\even}$. 
\end{enumerate}

Finally Theorem~\ref{thm:quad-comp} follows from matching case by case with the above discussion, comparing the (same) upper and lower bounds on the number of connected components for each case.  
\end{proof} 

%\label{sec:quad}

 \appendix
 \section{Parity of $k$-differentials in genus zero and one}
 \label{app}
 
Recall that for even $k$ (and any genus) the parity of $k$-differentials of parity type can be determined by using Remark~\ref{rem:parity-explicit}. Nevertheless for odd $k > 1$, determining the parity explicitly can be challenging even for genus zero and one, despite that the classification of connected components of the strata of $k$-differentials is known in these low genus cases. Recall also that for odd $k$, a $k$-differential is of parity type if and only if the signature $\mu$ consists of even entries only (see~Proposition~\ref{prop:partype}).  

For $g= 0$, any $k$-differential $(X, \xi)$ is determined up to scale by the positions of the underlying singularities, which implies that $\komoduli[0](\mu)$ is a $\CC^{*}$-bundle over $\calM_{0,n}$ and hence is irreducible. For $k = 1$, since the canonical bundle of $\PP^1$ has degree $-2$, the parity of any (meromorphic) abelian differential with singularities of even order on $\PP^1$ is always even. However for higher $k$, the canonical cover $\wh X$ can have positive genus, hence determining the parity can be a non-trivial problem.   

For $g=1$ and $\mu = (2m_1, \ldots, 2m_n)$, each connected component of $\komoduli[1](\mu)$ parameterizes $(X, \xi)$ such that the underlying divisor satisfies $\sum_{i=1}^n (2m_i / d) z_i \sim 0$ in $X$ for a positive divisor $d$ of $\gcd (2m_1, \ldots, 2m_n)$ (except $d \neq 2m$ for $\mu = (2m, -2m)$) and such that any other torsion relation of the $z_i$ must be a multiple of this one for generic $(X, \xi)$ in the component.  Recall that this number $d$ is called the {\em torsion number} or equivalently the {\em rotation number} (see Proposition~\ref{prop:rotalg}), and we denote by $\komoduli[1](\mu)^d$ the corresponding component.  For $k = 1$, the dimension $h^0(X, \sum_{i=1}^n m_i z_i)$ is either one or zero, depending on whether $\sum_{i=1}^n m_i z_i \sim 0$ or not. Hence the parity of $\komoduli[1](\mu)^d$ is odd if and only if $d$ is even.  However for higher $k$, similarly the canonical cover $\wh X$ can have genus greater than or equal to two, hence determining the parity is again a non-trivial problem.   

Our goal is thus to explicitly determine the parities of $\komoduli[0](\mu)$ and $\komoduli[1](\mu)^d$ for odd $k > 1$.  The strategy is to first study the base cases for $k$-differentials of genus zero with three singularities and $k$-differentials of genus one with two singularities, and then apply induction to the number of singularities by smoothing certain \mskds.  

We begin with introducing several technical results which will be used in the subsequent parity calculations.   

%%%%%%%%%%%%
\subsection{Technical tools}
\label{subsec:tool}
%%%%%%%%%%%%

Let $(X, \xi)$ be a $k$-differential of type $\mu = (2m_1, \ldots, 2m_n)$. 
Recall that for the canonical cover $\pi\colon \wh X\to X$ with $\pi^{*}\xi = \wh \omega^k$, there is a deck transformation $\tau$ such that $\tau^{*}\wh \omega = \zeta \wh \omega$ for a primitive $k$-th root of unity $\zeta$.  The singularities $z_1, \ldots, z_n$ of $\xi$ are the only (possible) branch points of $\pi$. Let $r_i = \gcd (m_i, k)$, $n_i = m_i / r_{i}$ and $\ell_i = k / r_i$.  Then over $z_i$ there are $r_i$ distinct preimages $x_{i,1}, \ldots, x_{i, r_i}$, each with multiplicity $\ell_i$. The singularity order of $\wh \omega$ at each $x_{i,j}$ is $2n_i + \ell_i - 1$. We denote by $x_i = \sum_{j=1}^{r_i} x_{i,j}$ the {\em reduced sum} of the fiber points over $z_i$.

We first study certain $\tau$-invariant divisors and their effective sections.  

\begin{lm}
\label{lm:basis}
Let $D$ be a $\tau$-invariant divisor in $\whX$.  Then $H^0(\wh X, D)$ has a basis given by (possibly meromorphic) functions $ f $ such that each underlying divisor $(f)$ is $\tau$-invariant.  Moreover if $\deg D < k$, then the support of the effective divisor $D + (f)$ 
for any $f$ in this basis is a subset of $\{ x_{i,j} \}$ for $i = 1, \ldots, n$ and $j = 1, \ldots, r_i$. 
\end{lm}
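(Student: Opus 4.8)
The plan is to exploit the cyclic action generated by the deck transformation $\tau$, which has order $k$ on $\whX$. Since $D$ is $\tau$-invariant, pullback of functions defines a linear operator $\tau^{\ast}\colon H^{0}(\whX, D)\to H^{0}(\whX, D)$: if $f$ satisfies $(f)+D\geq 0$, then applying $\tau^{\ast}$ to this inequality and using $\tau^{\ast}D=D$ shows that $\tau^{\ast}f$ again lies in $H^{0}(\whX, D)$. Because $\tau^{k}=\Id$ on $\whX$, we have $(\tau^{\ast})^{k}=\Id$ on this finite-dimensional space, so $\tau^{\ast}$ is diagonalizable with eigenvalues among the $k$-th roots of unity. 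First I would choose a basis of $H^{0}(\whX, D)$ consisting of eigenfunctions. For such an eigenfunction $f$ with $\tau^{\ast}f=\lambda f$ one has $(f)=(\tau^{\ast}f)=\tau^{\ast}(f)$, since scaling by the nonzero constant $\lambda$ does not change the underlying divisor; hence each $(f)$ is $\tau$-invariant, which proves the first assertion.

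For the second assertion, fix such an eigenfunction $f$ and set $E:=D+(f)$, an effective divisor which is $\tau$-invariant because both $D$ and $(f)$ are. As $f$ is a meromorphic function on a compact Riemann surface, $\deg(f)=0$, so $\deg E=\deg D<k$. The key observation is that $\tau$ permutes the support of $E$ while preserving multiplicities, so this support is a union of $\tau$-orbits. Now the $\tau$-stabilizer of a point $p\in\whX$ has order equal to the ramification index of $\pi$ at $p$; in particular, if $p$ is an unramified point, its $\tau$-orbit consists of exactly $k$ distinct points. Were such an unramified $p$ to lie in the support of $E$, its entire orbit would lie there with multiplicity $\geq 1$, forcing $\deg E\geq k$ and contradicting $\deg E<k$. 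Therefore every point of the support of $E$ is a ramification point of $\pi$. Since the branch locus of $\pi$ is contained in $\{z_{1},\dots,z_{n}\}$, the ramification points lie among the fiber points $x_{i,j}$ over the $z_{i}$ (precisely those with $\ell_{i}=k/r_{i}>1$), so $\Supp(E)\subseteq\{x_{i,j}\}$, as claimed.

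The argument is essentially formal once set up, so I do not anticipate a serious obstacle; the one point requiring care is the bookkeeping of the $\tau$-action on divisors versus functions—verifying that an eigenfunction has $\tau$-invariant divisor, and that the $\tau$-invariance of $E$ is exactly what upgrades ``$p$ lies in the support'' to ``the entire orbit of $p$ lies in the support.'' The crux is the combination of the degree identity $\deg E=\deg D<k$ with the orbit-size dichotomy (trivial stabilizer off the ramification locus), and it is precisely this interplay that makes the hypothesis $\deg D<k$ indispensable.
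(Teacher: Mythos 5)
Your proof is correct and follows essentially the same route as the paper: both arguments diagonalize $\tau^{\ast}$ on $H^0(\whX, D)$ using $(\tau^{\ast})^k = \Id$ to get an eigenbasis with $\tau$-invariant divisors, and then both use the fact that a $\tau$-orbit away from the branch points has exactly $k$ elements to force the support of the degree-$(<k)$ effective invariant divisor $D+(f)$ into the special fibers over the $z_i$. Your write-up is slightly more explicit about the stabilizer/ramification dichotomy, but the content is identical.
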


\begin{proof}
Since $\tau^k = \Id$ and $\tau^{*}D = D$, there is an eigenspace decomposition $H^0(\whX, D) = \bigoplus_{i=1}^k H^0(\wh X, D)^i$ under the action of $\tau$, where $f\in H^0(\wh X, D)^i$ satisfies that $\tau^{*} f = \zeta^{i} f$.  It implies that $(f) = (\tau^{*}f) = \tau^{*} (f)$ give the same underlying divisor, hence this eigenbasis provides a desired basis for $H^0(\wh X, D)$.  

Next suppose $\deg D < k$. Then $ D + (f)$ is an effective $\tau$-invariant divisor of degree smaller than $k$.  Suppose 
$D + (f) = p_1 + \cdots + p_j$ for $j < k$ (where some $p_i$ can coincide). Then $\tau^{*} (p_1 + \cdots + p_j) = p_1 + \cdots + p_j$. Note that $\tau$ permutes points in the same fiber of $\pi$, and away from the $z_i$ every fiber consists of $k$ distinct points permuted cyclicly by $\tau$.  It implies that the $p_i$ must belong to the special fibers over the $z_i$. 
\end{proof}

Next we describe how to push down certain linear equivalence relations from $\wh X$ to~$X$.  

\begin{lm}
\label{lm:relation-push}
If $\sum_{i=1}^n a_i  x_i \sim 0$ holds in $\wh X$, then $\sum_{i=1}^n (a_i r_i) z_i \sim 0$ holds in $X$. 
\end{lm}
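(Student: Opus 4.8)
The plan is to push the linear equivalence relation down from $\wh X$ to $X$ by means of the divisor pushforward (equivalently, the norm map on function fields). First I would recall the standard fact that for a finite morphism $\pi\colon \wh X \to X$ of smooth projective curves, the pushforward of divisors $\pi_*\colon \mathrm{Div}(\wh X)\to \mathrm{Div}(X)$, defined on points by $\pi_*(p) = \pi(p)$ and extended linearly, carries principal divisors to principal divisors: if $f\in K(\wh X)^*$ then $\pi_*(\divisor{f}) = \divisor{\mathrm{Nm}_{K(\wh X)/K(X)}(f)}$, where $\mathrm{Nm}$ is the field norm. Hence $\pi_*$ descends to a homomorphism $\Pic(\wh X)\to \Pic(X)$ that preserves the relation $\sim 0$.

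The one computation to carry out is the pushforward of each reduced fiber $x_i = \sum_{j=1}^{r_i} x_{i,j}$. Since every $x_{i,j}$ lies over $z_i$, applying $\pi_*$ point by point gives $\pi_*(x_i) = \sum_{j=1}^{r_i} z_i = r_i z_i$; the ramification index $\ell_i$ plays no role here precisely because $x_i$ is the \emph{reduced} sum of the preimages rather than the scheme-theoretic fiber $\pi^{*}(z_i) = \ell_i x_i$. Consequently $\pi_*\bigl(\sum_{i=1}^n a_i x_i\bigr) = \sum_{i=1}^n a_i r_i z_i$, and if the left-hand divisor is principal on $\wh X$ then so is the right-hand divisor on $X$, which is exactly the claim.

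As an alternative that stays entirely within the equivariant language of the paper, I could average over the deck group: writing $\sum_i a_i x_i = \divisor{f}$ and setting $g = \prod_{l=0}^{k-1}\tau^{l*}f$, the function $g$ is $\tau$-invariant and hence of the form $\pi^{*}\bar g$ for some $\bar g\in K(X)^*$. Using that each reduced fiber $x_i$ is $\tau$-invariant (as $\tau$ merely permutes the points over $z_i$), one gets $\divisor{g} = k\sum_i a_i x_i$; comparing this with $\divisor{\pi^{*}\bar g} = \pi^{*}\divisor{\bar g}$ and using $\pi^{*}(z_i) = \ell_i x_i$ together with $\ell_i = k/r_i$ forces $\divisor{\bar g} = \sum_i a_i r_i z_i$, giving the result. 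This is of course just the norm map made explicit.

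Because the argument rests on a classical property of the norm map, I do not expect a substantial obstacle; the only point requiring care is the bookkeeping of ramification, namely keeping the reduced fiber $x_i$ and the full fiber $\ell_i x_i = \pi^{*}(z_i)$ distinct so that the factor $r_i$ (and not $k$ or $\ell_i$) appears in the final relation.
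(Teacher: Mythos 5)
Your proof is correct and, in essence, identical to the paper's: the paper also forms the $\tau$-invariant product $h=\prod_{j=1}^{k}(\tau^{j})^{*}f$, descends it to a function on $X$, and reads off the factor $r_i$ from the local comparison of $z_i$ with $\prod_j x_{i,j}^{\ell_i}$, which is exactly your second ("deck-group averaging") argument; your first paragraph is just the abstract norm-map packaging of the same computation.
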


\begin{proof}
By assumption, there exists a meromorphic function $f$ on $\whX$ such that the divisor of $f$ is $(f)  = \sum_{i=1}^n a_i  x_i$.  Let 
$h = \prod_{j=1}^k (\tau^{j})^{\ast} f$.  Then $h$ is $\tau$-invariant, hence it can also be regarded as a function on $X$. Note that $z_i = \prod_{j=1}^{r_i} x_{i,j}^{\ell_i}$ 
under $\pi$ (where we abuse notation to use the same symbol for both a point and a suitable local coordinate). Hence the factor $\prod_{j=1}^{r_i} x_{i,j}^{a_i k}$ in $h$ corresponds to $z_i^{a_i r_i}$. It follows that the divisor $(h)$ in~$X$ is equal to $\sum_{i=1}^n (a_i r_i) z_i$. 
\end{proof}

Similarly we describe certain linear equivalence relations between the $x_i$ for the canonical cover of a $k$-differential of genus zero. 

\begin{lm}
\label{lm:relation-up}
Let $(X, \xi)$ be a $k$-differential in the stratum $\komoduli[0](2m_1, \ldots, 2m_n)$ of genus zero.  Then the linear equivalence relations $\ell_1 x_1 \sim \cdots \sim \ell_n x_n \sim - \sum_{i=1}^n n_i x_i$ hold in the canonical cover $\wh X$. 
\end{lm}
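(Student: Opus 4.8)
The plan is to prove the two halves of the chain separately, reserving the genuine difficulty for the last equivalence. The first half, $\ell_1 x_1 \sim \cdots \sim \ell_n x_n$, is essentially formal. Since $X=\PP^1$, any two singular points satisfy $z_i \sim z_{i'}$ as divisors, and $\pi$ is ramified over $z_i$ with $r_i$ preimages of index $\ell_i$, so $\pi^{*}z_i = \ell_i x_i$. Pulling back $z_i \sim z_{i'}$ and using that $\pi^{*}$ preserves linear equivalence gives $\ell_i x_i \sim \ell_{i'} x_{i'}$ at once; applied to a chosen regular point (say $\infty$, after placing coordinates so no $z_i$ is at infinity) it also gives $\ell_i x_i \sim \pi^{*}(\infty)$.

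The real content is the final equivalence $\ell_j x_j \sim -\sum_i n_i x_i$. The naive route is to compare $(\wh\omega)=\sum_i(2n_i+\ell_i-1)x_i \sim K_{\whX}$ with the Riemann--Hurwitz expression $K_{\whX}\sim \pi^{*}K_X + R = -2\pi^{*}(\infty)+\sum_i(\ell_i-1)x_i$, where $R$ is the ramification divisor. This only yields the \emph{doubled} relation $\sum_i 2n_i x_i \sim -2\pi^{*}(\infty)$. I expect this factor of two to be the main obstacle: every divisor-class identity assembled from $K_{\whX}$, $R$, and the $\pi^{*}z_i$ carries even coefficients, whereas in the target relation each $n_i$ appears with multiplicity one, so one must halve the relation, and halving is not automatic in $\Pic(\whX)$.

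To remove the factor of two I would exploit that $k$ is odd, the standing hypothesis of the appendix. Writing $\xi = f\,(dw)^{k}$ with $f=\prod_i(w-a_i)^{2m_i}$ a rational function on $X$, set $y \coloneqq \wh\omega/\pi^{*}(dw)$, a meromorphic function on $\whX$ satisfying $y^{k}=\pi^{*}f=(\pi^{*}u)^{2}$, where $u=\prod_i(w-a_i)^{m_i}$ (a rational function since the $m_i$ are integers and $\sum_i m_i=-k$). Because $k$ is odd, $(k+1)/2$ is an integer and $v\coloneqq y^{(k+1)/2}/\pi^{*}u$ satisfies $v^{2}=y^{k+1}/(\pi^{*}u)^{2}=y$; thus $y$ is a genuine square on $\whX$. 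A direct divisor computation then gives $(y)=\sum_i 2n_i x_i + 2\pi^{*}(\infty)$, whence $(v)=\tfrac12(y)=\sum_i n_i x_i + \pi^{*}(\infty)$.

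Since $(v)$ is the divisor of a meromorphic function, it is principal, so $\sum_i n_i x_i \sim -\pi^{*}(\infty)\sim -\ell_j x_j$; combined with the first chain this yields $\ell_1 x_1\sim\cdots\sim\ell_n x_n\sim -\sum_i n_i x_i$, as claimed. The only points requiring care in the write-up are verifying that $v$ is single-valued on $\whX$ (immediate, as $y$ and $\pi^{*}u$ are) and the bookkeeping of the divisor of $y$ at the unramified fibre over $\infty$; the conceptual step is the extraction of the square root $v$, which is exactly what the oddness of $k$ makes possible.
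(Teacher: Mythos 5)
Your proof is correct, and the second half takes a genuinely different route from the paper's. The paper establishes the last equivalence by writing the canonical cover down explicitly as the cyclic cover $x^k = (z-z_1)^{m_1+k}\prod_{i\geq 2}(z-z_i)^{m_i}$, verifying via Riemann--Hurwitz and the ramification profile that this model really is the canonical cover, and then reading the relation off the principal divisor of the coordinate function, $(x) = (n_1+\ell_1)x_1+\sum_{i\geq 2}n_i x_i \sim 0$. You instead stay intrinsic on $\whX$: the function $v$ with $v^2 = y = \wh\omega/\pi^{*}(dw)$ is manufactured directly from the abelian differential, and its divisor $\sum_i n_i x_i + \pi^{*}(\infty)$ is, up to replacing $\pi^{*}(\infty)$ by the equivalent $\ell_1 x_1$, the same principal divisor the paper exhibits --- indeed $v^{k}=\pm\pi^{*}u$, so $v$ is secretly the coordinate function of the paper's cyclic model. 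What your version buys is that the one delicate step in the paper's argument, namely justifying that the ad hoc cyclic cover coincides with the canonical cover, is bypassed entirely; your divisor computations ($(\wh\omega)=\sum_i(2n_i+\ell_i-1)x_i$, $(\pi^{*}dw)=-2\pi^{*}(\infty)+\sum_i(\ell_i-1)x_i$, hence $(y)=\sum_i 2n_ix_i+2\pi^{*}(\infty)$) all check out, as does $v^2=y^{k+1}/(\pi^{*}u)^2=y$. What it costs is generality: the square-root extraction genuinely requires $k$ odd, whereas the lemma is stated without a parity hypothesis and the paper's proof is parity-free. Since the appendix only ever invokes this lemma for odd $k$, the restriction is harmless in context, but you should state it explicitly (or note that the even case needs a separate argument).
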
 

\begin{proof}
The relation $\ell_i x_i \sim \ell_j x_j$ follows from pulling back $z_i \sim z_j$ from $X\cong \PP^1$ to the canonical cover. To see the last relation, we give an explicit construction of the canonical cover. Consider the cyclic cover $\pi\colon \whX \to X$ modeled on 
\begin{eqnarray}
\label{eq:cyclic}
x^k = (z - z_1)^{m_1 + k} \prod_{i=2}^{n} (z-z_i)^{m_i} 
\end{eqnarray}
that maps $(x, z)$ to $z$ (after normalization if necessary to make $\wh X$ smooth), where $z$ is the (affine) coordinate of $X\cong \PP^1$ and we assume that the $z_i$ are away from $\infty$. Since $\sum_{i=1}^n m_i = -k$, the exponents on the right-hand side of Equation~\eqref{eq:cyclic} sum to zero, hence the map is unramified over $\infty$. Note that $\gcd (m_1 + k, k) = \gcd (m_1, k)$, the map has the correct ramification profile.  To justify that it gives the canonical cover, it suffices to show that $\sum_{i=1}^n (2n_i + \ell_i - 1) x_i \sim K_{\wh X}$, which would then give the canonical divisor associated to an abelian differential as a $k$-th root of $\pi^{*}\xi$. To see this, note that by Riemann-Hurwitz $K_{\wh X} \sim \pi^{*} K_X + \sum_{i=1}^n (\ell_i - 1) x_i$, hence it reduces to show that $\pi^{*} K_X \sim \sum_{i=1}^n 2n_i x_i$.  Since $X\cong \PP^1$, its canonical divisor class can be represented by $K_X\sim -2z_1$, and pulling it back via $\pi$ gives $\pi^{*} K_X \sim -2 \ell_1 x_1$. Moreover, the associated divisor of $x$ (as a meromorphic function on $\wh X$) is $(n_1 + \ell_1) x_1 + \sum_{i=2}^n n_i x_i \sim 0$. Therefore, we conclude that $\sum_{i=1}^n n_i x_i \sim - \ell_1 x_1$ and $(\whX, \pi)$ is the desired canonical cover. 
\end{proof}

\begin{rem}
\label{rem:exponent}
In Equation~\eqref{eq:cyclic} one can replace the exponents of the $(z-z_i)$ by any $a_1, \ldots, a_n$ such that $a_i \equiv m_i \pmod{k}$ for all $i$. Then by the same argument it still gives the canonical cover up to isomorphism. The divisor $(x)$ will differ from the above by adding some $\ell_i x_i$ and subtracting some other $\ell_j x_j$, which gives the same relation as above since $\ell_i x_i \sim \ell_j x_j$ for any $i$ and $j$.  
\end{rem}

%%%%%%%%%%%%
\subsection{Parity of $k$-differentials of genus zero}
\label{subsec:g0}
%%%%%%%%%%%%

Let $\xi$ be a $k$-differential on $X \cong \PP^1$ of type $\mu = (2m_1, \ldots, 2m_n)$ for $k$ odd. If $\xi$ is not primitive, then by definition its parity is equal to the parity of a primitive root differential. Note that $\xi$ is primitive if and only if $\gcd (m_1, \ldots, m_n, k) = 1$, which is equivalent to  $\gcd (m_1, \ldots, m_n) = 1$ since $\sum_{i=1}^n m_i = -k$. 

We first consider $k$-differentials of genus zero with three singularities. Let $(X, \xi)$ be a $k$-differential in the stratum $\komoduli[0](2m_1, 2m_2, 2m_3)$ with $m_1 + m_2 + m_3 = -k$ for $k$ odd.  Recall the notation that $\gcd (m_i, k) = r_i$, $k = r_i \ell_i$ and $m_i = r_i n_i$.  
In the canonical cover~$\wh X$, we have 
$\pi^{*}\xi = \wh \omega^k$ with the underlying canonical divisor given by $(\wh \omega) = \sum_{i=1}^3 (2n_i + \ell_i - 1) x_i$, where $x_i = \sum_{j=1}^{r_{i}} x_{i,j}$ with $\pi^{-1} (z_i) = \{ x_{i,j} \}$ for $i = 1, 2, 3$.  Our goal is to evaluate the parity $\Phi (\xi) = h^0 (\wh X, (\wh \omega)/2) \pmod{2}$.  

We first treat an easy case when some $m_i$ is divisible by $k$.   

\begin{prop}
\label{prop:g0-k}
If some $m_i$ is divisible by $k$, then the parity of $\komoduli[0](2m_1, 2m_2, 2m_3)$ is even. 
\end{prop}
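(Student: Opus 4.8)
The plan is to show that the divisibility hypothesis forces the canonical cover $\whX$ to be rational on every connected component, so that the half-canonical divisor has negative degree everywhere and hence admits no sections. By the symmetry of the three singularities we may assume $k \mid m_1$. Then $r_1 = \gcd(m_1,k) = k$ and $\ell_1 = 1$, so by the description of the cover recalled above the map $\pi\colon \whX \to X$ is \emph{unramified} over $z_1$; its branch locus is therefore contained in $\{z_2, z_3\}$, that is, in at most two points. This is the crucial consequence of divisibility by $k$ and the starting point of the argument.

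First I would establish that every connected component of $\whX$ has genus zero. Computing the Euler characteristic of the cover gives $\chi(\whX) = k\,\chi(\PP^1) - \sum_{i=2,3}(k - r_i) = r_2 + r_3$, where I use that $m_1 \equiv 0 \pmod k$ together with $\sum m_i = -k$ forces $m_2 \equiv -m_3 \pmod k$, hence $r_2 = \gcd(m_2,k) = \gcd(m_3,k) = r_3$. Thus $\chi(\whX) = 2r_2 > 0$, and since each connected component of a smooth projective curve contributes at most $2$ to $\chi$, every component must be a copy of $\PP^1$. Alternatively, one may use the explicit model of Lemma~\ref{lm:relation-up} and Remark~\ref{rem:exponent} with the exponent over $z_1$ chosen to be $0$, exhibiting $\whX$ as a disjoint union of curves of the form $x^{\ell} = w^{n}$ with $\gcd(n,\ell) = 1$, each of which is rational. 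Either route also records that $\whX$ has $\gcd(m_1,m_2,m_3,k)$ components, consistent with $\xi$ being non-primitive precisely when this number exceeds $1$.

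Given rationality, the parity computation is immediate. On each component $C \cong \PP^1$ the abelian differential $\whomega$ restricts to a nonzero meromorphic one-form (nonzero because $\whomega^{k} = \pi^{*}\xi$ does not vanish identically on $C$), so $\deg\bigl((\whomega)|_C\bigr) = 2\cdot 0 - 2 = -2$, and the half-canonical divisor $\tfrac12(\whomega)$ restricts to degree $-1 < 0$ on $C$. Hence $h^0\bigl(C, \tfrac12(\whomega)|_C\bigr) = 0$ for every component, and summing gives $h^0\bigl(\whX, \tfrac12(\whomega)\bigr) = 0$. Therefore $\Phi(\xi) = 0$ is even. In the non-primitive case this componentwise computation is exactly the one sanctioned by Remark~\ref{rem:parity-nonprim} for odd $k$, so no separate treatment is needed.

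I expect the only delicate point to be the genus computation: one must argue carefully that removing the third branch point leaves \emph{no} component of positive genus, including in the disconnected (non-primitive) case, rather than merely that the connected canonical cover of the primitive root is rational. The Euler characteristic count handles both uniformly, after which the rest is a one-line degree estimate on $\PP^1$.
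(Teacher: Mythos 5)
Your proof is correct in substance and arrives at the same endpoint as the paper (a rational canonical cover, hence a half-canonical divisor of negative degree with no sections), but by a genuinely different route. The paper first invokes the reduction to the primitive case stated at the top of the subsection; then $\gcd(m_1,m_2,m_3)=1$ together with $k\mid m_1$ and $m_1+m_2+m_3=-k$ forces $r_2=r_3=1$, Riemann--Hurwitz gives a \emph{connected} cover of genus zero, and the degree count $\deg(\whomega)/2=-1$ finishes. You instead work with the full degree-$k$ cyclic cover without assuming primitivity, which buys a uniform treatment of the non-primitive case (sanctioned by Remark~\ref{rem:parity-nonprim}) at the cost of having to control a possibly disconnected cover; the paper's reduction is shorter but relies on the convention announced at the start of the subsection.

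One inference in your Euler-characteristic step needs tightening: from $\chi(\whX)=r_2+r_3=2r_2>0$ and ``each component contributes at most $2$'' one cannot yet conclude that every component is rational --- a priori a cover with $r_2+1$ components could carry one component of genus one and still have $\chi=2r_2$. You need the exact component count: $\whX$ has $\gcd(m_1,m_2,m_3,k)$ components, and this gcd equals $r_2$ here (it divides $r_2=\gcd(m_2,k)$, and conversely $r_2$ divides $m_2$, divides $m_1$ since $k\mid m_1$, hence divides $m_3=-k-m_1-m_2$). You state the count only parenthetically and never feed it into the argument. Once it is in place, $r_2$ components with total $\chi=2r_2$ forces each to be $\PP^1$, and the rest of your argument (degree $-1$ on each component, so $h^0=0$) goes through; your alternative route via the explicit model $x^k=(z-z_2)^{m_2}(z-z_3)^{m_3}$, unramified at $\infty$ because $m_2+m_3\equiv 0\pmod k$, is also complete as sketched.
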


\begin{proof}
As remarked before we can assume that the stratum is primitive.  Suppose $m_1$ is divisible by $k$.  Since $\gcd (m_1, m_2, m_3) = 1$ and $m_1 + m_2 + m_3 = -k$, it implies that $r_2 = r_3 = 1$.  Then the genus of the canonical cover $\wh X$ is zero. In this case $\deg (\wh \omega)/2 = - 1$, hence $h^0 (\wh X, (\wh \omega)/2) = 0$ is even. 
\end{proof}

Next we consider the case when $m_1$, $m_2$ and $m_3$ are relatively prime to $k$.  In order to describe the parity explicitly, we introduce the following definition. 

\begin{df}
\label{def:N_k(3)}
For $k$ odd and $m_1 + m_2 + m_3 = -k$, we define $N_{k}(m_1, m_2, m_3)$ to be the number of integral tuples $(c_1, c_2, c_3)$ such that $$c_1, c_2, c_3 \geq 0,\quad c_1 + c_2 + c_3 = (k - 3) / 2 \text{ and } \sum_{i=1}^3 c_i x_i \sim \sum_{i=1}^3 (m_i + (k-1)/2) x_i$$ modulo the linear equivalence relations $k x_1 \sim k x_2 \sim k x_3 \sim - \sum_{i=1}^3 m_i x_i$ in $\wh X$.     
\end{df}

\begin{prop}
\label{prop:g0-1}
Suppose $m_1$, $m_2$ and $m_3$ are relatively prime to $k$.  Then the parity of $\komoduli[0](2m_1, 2m_2, 2m_3)$ is equal to the parity of $N_{k}(m_1, m_2, m_3)$. 
\end{prop}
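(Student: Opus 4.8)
The plan is to compute the parity straight from its definition~\eqref{eq:defpar}, namely $\Phi(\xi)=h^{0}\!\left(\whX,\tfrac12(\whomega)\right)\bmod 2$. Since $m_{1},m_{2},m_{3}$ are relatively prime to $k$ we have $r_{i}=1$, $\ell_{i}=k$ and $n_{i}=m_{i}$, so each $z_{i}$ has a single, totally ramified preimage $x_{i}$, which is a fixed point of the deck transformation $\tau$, and $(\whomega)=\sum_{i=1}^{3}(2m_{i}+k-1)x_{i}$. As $k$ is odd the divisor $D:=\tfrac12(\whomega)=\sum_{i=1}^{3}\bigl(m_{i}+\tfrac{k-1}{2}\bigr)x_{i}$ is integral, and $\deg D=\sum_{i}m_{i}+\tfrac{3(k-1)}{2}=-k+\tfrac{3(k-1)}{2}=\tfrac{k-3}{2}<k$. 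Thus it suffices to prove $h^{0}(\whX,D)=N_{k}(m_{1},m_{2},m_{3})$, and in particular the parities agree. Note that $\whX$ is connected by primitivity (the $m_i$ are coprime to $k$), so the parity is well defined, and the stratum is a single point in $\mathcal{M}_{0,3}$, so there is nothing to check about constancy.

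Because $\deg D<k$, Lemma~\ref{lm:basis} gives a basis of $H^{0}(\whX,D)$ consisting of $\tau$-eigenfunctions $f$ for which $D+(f)$ is effective and supported on $\{x_{1},x_{2},x_{3}\}$; writing $D+(f)=\sum_{i}c_{i}x_{i}$ this records a tuple with $c_{i}\geq 0$, $\sum_{i}c_{i}=\tfrac{k-3}{2}$ and $\sum_{i}c_{i}x_{i}\sim D$. The heart of the argument is a uniqueness statement: \emph{two eigenfunctions lying in the same $\tau$-eigenspace and having supported divisors have the same divisor}. Indeed, if $f,g$ are such, then $f/g$ is $\tau$-invariant and descends to a rational function $\bar h$ on $X\cong\PP^{1}$ whose divisor $\sum_{i}b_{i}z_{i}$ pulls back to $\sum_{i}kb_{i}x_{i}=(f)-(g)=\sum_{i}(c_{i}-c_{i}')x_{i}$; since each $\lvert c_{i}-c_{i}'\rvert\leq\tfrac{k-3}{2}<k$, every $b_{i}$ must vanish, so $(f)=(g)$. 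Consequently distinct basis elements occupy distinct eigenspaces and determine distinct tuples, giving $h^{0}(\whX,D)\leq\#\{\text{admissible tuples}\}$.

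For the reverse inequality I would observe that any admissible tuple with $\sum_{i}c_{i}x_{i}\sim D$ is realized by a connecting function $f$ with $(f)=\sum_{i}c_{i}x_{i}-D$; as $(f)$ is $\tau$-invariant, $f$ is automatically a $\tau$-eigenfunction, and functions attached to distinct tuples lie in distinct eigenspaces by the uniqueness step, hence are linearly independent. This yields
\[
h^{0}(\whX,D)\;=\;\#\Bigl\{(c_{1},c_{2},c_{3})\colon c_{i}\geq 0,\ \textstyle\sum_{i}c_{i}=\tfrac{k-3}{2},\ \sum_{i}c_{i}x_{i}\sim D\ \text{in}\ \Pic(\whX)\Bigr\}.
\]
It then remains to match this Picard-group count with $N_{k}(m_{1},m_{2},m_{3})$ from Definition~\ref{def:N_k(3)}, and this identification is the main obstacle. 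By Lemma~\ref{lm:relation-up} (with $\ell_{i}=k$) the relations $kx_{1}\sim kx_{2}\sim kx_{3}\sim-\sum_{i}m_{i}x_{i}$ hold in $\Pic(\whX)$, so every tuple counted by $N_{k}$ satisfies $\sum_{i}c_{i}x_{i}\sim D$; the content is that, for tuples of degree $\tfrac{k-3}{2}<k$, no further linear equivalence among the totally ramified points occurs. I expect to verify this using the explicit cyclic-cover model $x^{k}=\prod_{i}(z-z_{i})^{a_{i}}$ with $a_{i}\equiv m_{i}\pmod k$ and $\sum_i a_i=0$ (Lemma~\ref{lm:relation-up} and Remark~\ref{rem:exponent}): parametrizing the eigenspace of eigenvalue $\zeta^{j}$ by functions $g(z)\,x^{j}$ confines the possible zeros and poles to the fibers over the $z_{i}$, shows each eigenspace contributes at most one supported divisor whose residues satisfy $c_{i}\equiv\bigl(m_{i}+\tfrac{k-1}{2}\bigr)+j\,m_{i}\pmod k$, and thereby places every realized tuple in the single class of $\bigl(m_{i}+\tfrac{k-1}{2}\bigr)$ modulo the relation lattice generated by the displayed relations — exactly the class cut out in Definition~\ref{def:N_k(3)}. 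Carrying out this last bookkeeping, rather than any conceptual difficulty, is where the real work lies, and it delivers $h^{0}(\whX,D)=N_{k}(m_{1},m_{2},m_{3})$ and hence the claimed parity.
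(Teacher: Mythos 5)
Your proposal is correct, and its skeleton is the same as the paper's: both use Lemma~\ref{lm:basis} together with the bound $\deg(\whomega)/2=(k-3)/2<k$ to reduce the computation of $h^{0}(\whX,(\whomega)/2)$ to counting effective divisors supported on $\{x_1,x_2,x_3\}$ in the class of $(\whomega)/2$, and both must then show that the only linear equivalences among the $x_i$ are those generated by the relations of Lemma~\ref{lm:relation-up}, so that the Picard-group count agrees with the combinatorial count $N_k$. Where you diverge is in how that last, crucial step is carried out. The paper argues structurally: a relation $ax_1\sim ax_2$ with $a$ a proper divisor of $k$ would make the canonical cover factor through an intermediate cyclic cover of $\PP^1$ branched only over $z_1,z_2$, contradicting total ramification over $z_3$; a general relation $a_1x_1+a_2x_2\sim(a_1+a_2)x_3$ is then reduced to the known one by multiplying by an inverse of $m_1$ modulo $k$. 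You instead work inside the function field: the decomposition $\CC(\whX)=\bigoplus_j \CC(z)\,x^j$ forces any eigenfunction with supported divisor to produce a tuple with $c_i\equiv(1+j)m_i+(k-1)/2\pmod k$, which simultaneously yields your within-eigenspace uniqueness (the descent of $f/g$ is the $j=0$ case) and places every realized tuple in the single class cut out in Definition~\ref{def:N_k(3)}. The two routes are logically equivalent; yours is more computational but has the side benefit of producing the residues $c_i\bmod k$ explicitly, which is exactly what one needs to evaluate $N_k$ in practice (compare Conjecture~\ref{conj:g0-1-reduced}), while the paper's is lighter on bookkeeping. Your linear-independence argument via distinct $\tau$-eigenvalues replaces the paper's observation that the sections have pairwise distinct vanishing orders at $x_1$; both are valid. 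The one piece you leave as ``bookkeeping'' --- the residue computation for $g(z)x^j$ --- is routine and works exactly as you describe, so there is no genuine gap.
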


\begin{proof}
The linear equivalence relations $k x_1 \sim k x_2 \sim k x_3 \sim - \sum_{i=1}^3 m_i x_i$ are already given in Lemma~\ref{lm:relation-up}. We show that any other relations between the $x_i$ must be generated by these.  Suppose first there is a relation between $x_i$ and $x_j$, say, $a x_1 \sim a x_2$ for an integer $a$ not divisible by $k$.  Combining with $k x_1 \sim k x_2$, we can assume that $a \mid k$ and $0 < a < k$.  Set $z_1 = 0$ and $z_2 = \infty$ in $X\cong \PP^1$. Then the canonical cover $\pi$ corresponds to a meromorphic function $f$ on $\wh X$ such that $(f) = kx_1 - kx_2$ and $f$ is totally branched at $z_3$ (besides $z_1$ and $z_2$). Since $ax_1 - ax_2 \sim 0$, there is another function $h$ on $\wh X$ such that $(h) = ax_1 - ax_2$. Therefore, up to scale $f = h^b$, where $b = k / a > 1$. In other words, $\pi$ factors through an intermediate cover $\wh X\to \PP^1 \to \PP^1$ where the first map is $x \mapsto y = h(x)$ and the second map is $ y\mapsto z = y^b$. Since $b > 1$, this contradicts that $\pi$ is totally branched at $z_3$, as the second map is only branched at $0$ and $\infty$.  

Next suppose there is a relation $a_1 x_1 + a_2 x_2 \sim (a_1 + a_2) x_3$.  Since $\gcd (m_i, k) = 1$, there exists $w_i$ such that $m_i w_i \equiv 1 \pmod{k}$.   
 Hence multiplying the known relation $m_1 x_1 + m_2 x_2 \sim (m_1 + m_2) x_3 $ by $a_1 w_1$ leads to $a_1 x_1 + a_1 w_1 m_2 x_2 \sim a_1 (1 + w_1 m_2) x_3$. Combining these relations we conclude that $(a_1 w_1 m_2 - a_2) x_2 \sim (a_1 w_1 m_2 - a_2) x_3$. By the preceding paragraph, it implies that $a_2 \equiv a_1 w_1 m_2 \pmod{k}$, and hence the relation 
$a_1 x_1 + a_2 x_2 \sim (a_1 + a_2) x_3$ is a multiple of the relation $x_1 + w_1 m_2 x_2 \sim (1 + w_1 m_2) x_3$. This last relation follows from  the known relation $m_1 x_1 + m_2 x_2 \sim (m_1 + m_2) x_3$ by multiplying by $w_1$ (and subtracting the same amount of equivalent $kx_i$ on both sides). 

Finally we calculate $h^0 (\wh X, (\wh \omega)/2)$. Since $\deg (\wh \omega)/2 = (k - 3) / 2 < k$, by Lemma~\ref{lm:basis} the vector space $H^0 (\wh X, (\wh \omega)/2)$ has a basis of meromorphic functions $\{ f_1, \ldots, f_N\}$ such that $(f_i) + \sum_{i=1}^3 (m_i + (k - 1)/2) x_i = \sum_{i=1}^3 c_i x_i$ for $c_i \geq 0$ and $c_1 + c_2 + c_3 = (k - 3) / 2$, i.e., $\sum_{i=1}^3 c_i x_i$ is an effective divisor linearly equivalent to $\sum_{i=1}^3 (m_i + (k - 1)/2) x_i$. 
Moreover, two distinct such tuples $(c_1, c_2, c_3)$ and $(c'_1, c'_2, c'_3)$ must have $c_i \neq c'_i$ for all~$i$. Indeed if say $c_3 = c'_3$, then we would have a relation 
$(c_1 - c'_1) x_1 \sim (c_1 - c'_1) x_2$ for $0 < |c_1 - c'_1| < k$, contradicting the established fact that it should be generated by the relation $kx_1 \sim k x_2$. It follows that the sections associated to such effective divisors have mutually distinct zero or pole orders at $x_1$, hence they are linearly independent. In summary, we thus conclude that $h^0 (\wh X, (\wh \omega)/2) = N_{k}(m_1, m_2, m_3)$.  
\end{proof}

Based on numerical evidence for small values of $k$ (see Example~\ref{exa:g0-parity}), we make the following conjecture, which can be of independent interest in number theory.  

\begin{conj}
\label{conj:g0-1}
Suppose $k$ is odd, $m_1 + m_2 + m_3 = -k$ and $m_1, m_2, m_3$ are relatively prime to $k$. Then $N_{k}(m_1, m_2, m_3) \equiv \lfloor\frac{k+1}{4}\rfloor \pmod{2}$.  
\end{conj}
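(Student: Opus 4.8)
The plan is to reduce Conjecture~\ref{conj:g0-1} to a purely arithmetic lattice-point count, settle an exact base case, and then isolate the genuinely hard feature, namely the invariance of the parity of that count under change of signature.

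First I would make the group in Definition~\ref{def:N_k(3)} fully explicit. Write $G$ for the abelian group generated by $x_1,x_2,x_3$ subject to $kx_1\sim kx_2\sim kx_3\sim-\sum_i m_ix_i$ (each $x_i$ a single point, since $\gcd(m_i,k)=1$). All three relations have degree $0$, so the degree map $\sum c_ix_i\mapsto\sum c_i$ is well defined and $G\cong\ZZ\oplus T$ with $T$ the degree-$0$ part. Computing in the basis $x_1-x_2,\ x_2-x_3$, the degree-$0$ relation lattice is $\langle(k,0),(0,k),(m_1,-m_3)\rangle\subset\ZZ^2$, and since $m_1$ is a unit mod $k$ one gets $T\cong(\ZZ/k)^2/\langle(m_1,-m_3)\rangle\cong\ZZ/k$, generated by $v=x_2-x_3$, with $x_1-x_2\mapsto w_1m_3$ where $w_1m_1\equiv1\pmod k$. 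Because both sides of the equivalence in Definition~\ref{def:N_k(3)} have the same degree $(k-3)/2$, the condition is a single congruence in $T\cong\ZZ/k$. Substituting $c_3=(k-3)/2-c_1-c_2$ and using $m_1+m_2+m_3=-k$, this congruence collapses to
\[
\lambda c_1+c_2\equiv-\tfrac{\lambda+1}{2}\pmod k,\qquad \lambda:=1+w_1m_3\pmod k,
\]
so $N_k(m_1,m_2,m_3)$ is the number of $(c_1,c_2)\in\ZZ_{\geq0}^2$ with $c_1+c_2\leq(k-3)/2$ on this line. As the triangle has side $(k-3)/2<k$, each $c_1$ admits at most one $c_2$, whence $N_k=\#\{\,0\leq c_1\leq\tfrac{k-3}{2}:c_1+r(c_1)\leq\tfrac{k-3}{2}\,\}$ with $r(c_1)\in[0,k-1]$ the residue of $-\tfrac{\lambda+1}{2}-\lambda c_1$.

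Second I would pin down an exact base case. Taking $(m_1,m_2,m_3)=(-1,-1,2-k)$, which is admissible since $k$ odd forces $\gcd(k-2,k)=1$, one finds $\lambda\equiv-1$ and $-\tfrac{\lambda+1}{2}\equiv0$, so the congruence reads $c_2=c_1$ in range, giving $N_k=\#\{c_1:2c_1\leq(k-3)/2\}=\lfloor(k-3)/4\rfloor+1=\lfloor(k+1)/4\rfloor$. Thus the conjecture holds exactly for this signature, and the whole content is that the \emph{parity} of the triangle count is independent of the admissible triple.

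Third, and this is the crux, I would attack parity-invariance across signatures. Permuting the three singularities leaves $N_k$ unchanged and acts on the parameter by the anharmonic group $\lambda\mapsto\tfrac1\lambda,\,1-\lambda,\,\tfrac1{1-\lambda},\,\tfrac{\lambda}{\lambda-1},\,\tfrac{\lambda-1}{\lambda}$, so $N_k$ is constant on cross-ratio orbits; but many orbits remain. To bridge them I would use a roots-of-unity filter, writing $N_k=\tfrac1k\sum_{t=0}^{k-1}\zeta^{-t\mu_0}\sum_{c_1+c_2\leq M}\zeta^{t(\lambda c_1+c_2)}$ with $\mu_0=-\tfrac{\lambda+1}{2}$, $M=(k-3)/2$ and $\zeta=e^{2\pi i/k}$, and try to show that modulo $2$ the terms $t\neq0$ either cancel in pairs or are independent of $\lambda$, so that the parity is governed solely by the main term $\tfrac1k\binom{(k+1)/2}{2}$; equivalently I would seek a Dedekind-sum reciprocity evaluation of the sawtooth count $\#\{c_1:c_1+r(c_1)\leq M\}$. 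A more combinatorial alternative is to build, for neighbouring admissible $\lambda,\lambda'$, an explicit involution on the symmetric difference of the two solution sets, forcing $N_k(\lambda)\equiv N_k(\lambda')\pmod2$. The main obstacle is exactly this invariance: it is the empirical ``miracle'' behind the conjecture and does not follow from the deformation-invariance of the spin parity, because distinct admissible triples give non-isomorphic cyclic covers sitting in genuinely different strata, so a new arithmetic input — a reciprocity law or a parity-preserving bijection — will be required.
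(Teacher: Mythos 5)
The statement you are trying to prove is stated in the paper as a \emph{conjecture}: the authors give no proof, only an equivalent arithmetic reformulation (Conjecture~\ref{conj:g0-1-reduced}, via Remark~\ref{rem:conj-reduced}) and a table of numerical verifications for $k\le 21$. Your first two steps are correct and essentially reproduce what the paper already does. The computation of $T\cong\ZZ/k$ and the collapse of the linear-equivalence condition to the single congruence $\lambda c_1+c_2\equiv-\tfrac{\lambda+1}{2}\pmod k$ with $\lambda=1+w_1m_3$ checks out; since $\lambda\equiv -w_1m_2=-n$ in the paper's notation, your triangle count is exactly the paper's $N_k(n)$ after the substitution $b_i=(k-1)/2-c_i$, and your exact base case $(m_1,m_2,m_3)=(-1,-1,2-k)$, i.e.\ $\lambda=-1$, is precisely the case $n=1$ that the paper already settles in Remark~\ref{rem:conj-small}. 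So steps one and two are sound but add nothing beyond the paper's own reduction.

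The genuine gap is your third step, which is where the entire content of the conjecture lives and which you do not carry out. You correctly identify that one must show the parity of the count is independent of the admissible residue $\lambda$ (equivalently of $n$), but neither of your proposed mechanisms is executed: the roots-of-unity filter expresses $N_k$ through incomplete exponential sums over a triangle, and the claim that the $t\neq 0$ terms ``cancel in pairs or are independent of $\lambda$ modulo $2$'' is not an argument --- it is a restatement of the assertion to be proved, and there is no a priori reason the parity should be governed by the main term $\tfrac1k\binom{(k+1)/2}{2}$ (which need not even be an integer contribution term-by-term). Likewise, no involution between the solution sets for neighbouring $\lambda$ and $\lambda'$ is constructed, and the anharmonic-group symmetry you note only identifies finitely many orbits, far from all residues. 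As it stands the proposal proves an already-known equivalent form of the conjecture together with one exact case; the statement itself remains unproved, as you acknowledge.
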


\begin{rem}
\label{rem:conj-reduced}
Note that the relation $m_1 x_1 + m_2 x_2 \sim (m_1 + m_2) x_3$ is equivalent to the relation $x_1 + n x_2 \sim (1+n) x_3$, where $n \equiv m_2 w_1 \pmod{k}$ with $m_1 w_1 \equiv 1 \pmod{k}$. It follows that $N_k(m_1, m_2, m_3) = N_k(1, n, -k-1-n)$. Hence to prove the conjecture, it suffices to consider the tuple $(1, n, -k-1-n)$ with $\gcd (n, k) = \gcd (n+1, k) = 1$.  
\end{rem}

In the above let $y_i = x_i - x_3$ be a divisor of degree zero for $i = 1, 2$. Then we have $ky_1 \sim ky_2 \sim  m_1y_1 + m_2 y_2 \sim 0$ in $\whX$.  The condition 
$\sum_{i=1}^3 c_i x_i \sim \sum_{i=1}^3 (m_i + (k-1)/2) x_i$ and $c_1, c_2, c_3\geq 0$ is equivalent to  $((k-1)/2 - c_1) y_1 + ((k-1)/2 - c_2) y_2 \sim 0$, $c_1, c_2 \geq 0$ and $c_1 + c_2 \leq (k-3)/2$ (as $c_3 = (k-3)/2 - c_1 - c_2 \geq 0$). Denote by $b_i =  (k-1)/2 - c_i$ for $i = 1, 2$. Combining with Remark~\ref{rem:conj-reduced}, we can formulate Conjecture~\ref{conj:g0-1} in the following equivalent form.  

\begin{conj}
\label{conj:g0-1-reduced}
For $k$ odd and $\gcd (n, k) = \gcd (n+1, k) = 1$, let $N_k(n)$ be the number of pairs $(b_1, b_2)$ such that $b_1, b_2 \leq (k-1)/2$, $b_1 + b_2 \geq (k+1)/2$ and $b_2 \equiv n b_1 \pmod{k}$.  Then $N_k(n) \equiv \lfloor\frac{k+1}{4}\rfloor \pmod{2}$.  
\end{conj}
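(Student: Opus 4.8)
The plan is to split the statement into an exact evaluation of a base case and a parity-invariance statement in the parameter $n$, the latter being where all the difficulty lies. Set $h=(k-1)/2$ and write $\langle m\rangle$ for the representative of $m\bmod k$ in $\{1,\dots,k-1\}$, so that a counted pair is $(b_1,b_2)$ with $b_1,b_2\in\{1,\dots,h\}$, $b_2=\langle n b_1\rangle$, and $b_1+b_2\ge h+1$. First I would record the base case $n=1$: there the congruence forces $b_2=b_1$, the condition $b_2\le h$ is automatic, and $b_1+b_2\ge h+1$ becomes $b_1\ge\lceil (k+1)/4\rceil$, so counting $b_1\in[\lceil(k+1)/4\rceil,h]$ and using that $(k+1)/2$ is an integer gives
\[
N_k(1)\;=\;\tfrac{k+1}{2}-\Big\lceil\tfrac{k+1}{4}\Big\rceil\;=\;\Big\lfloor\tfrac{k+1}{4}\Big\rfloor .
\]
Thus the base case holds \emph{exactly}, not merely modulo $2$. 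Moreover $\lfloor(k+1)/4\rfloor=\lceil h/2\rceil$, which by Gauss's lemma is exactly the exponent $\mu_2$ with $\left(\frac{2}{k}\right)=(-1)^{\mu_2}$; so the target is equivalent to the single assertion that $N_k(n)\equiv\frac12\bigl(1-\left(\frac{2}{k}\right)\bigr)\pmod 2$ for every valid $n$. The period-$8$ behaviour in $k$ that one sees numerically is precisely this quadratic-residue symbol, which already signals that the natural tool is a Gauss-sum/reciprocity computation rather than a head-on count.

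The heart of the matter is therefore to prove that $N_k(n)\bmod 2$ is independent of the (valid) choice of $n$. I would pursue two complementary routes. The \emph{geometric} route exploits that $N_k(n)=h^0\!\bigl(\widehat X,\tfrac12(\widehat\omega)\bigr)\bmod 2$ is the parity of an honest theta characteristic $\theta$ on the cyclic $\mathbb Z/k$-cover $\widehat X\to\mathbb P^1$ totally branched over the three points, which has genus $(k-1)/2$ by Riemann--Hurwitz. By the theory of Atiyah and Mumford this parity equals the Arf invariant of the quadratic refinement $q_\theta$ on $H^1(\widehat X,\mathbb F_2)$, and $q_\theta$ is invariant under the deck transformation $\tau$. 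Since the quotient is $\mathbb P^1$ and $|\tau|=k$ is odd (hence invertible in $\mathbb F_2$), a transfer argument gives $H^1(\widehat X,\mathbb F_2)^{\tau}\cong H^1(\mathbb P^1,\mathbb F_2)=0$, so $\tau$ acts without nonzero fixed vectors; the Arf invariant of a quadratic refinement carrying such an odd-order automorphism is computed by a quadratic Gauss sum attached to $\tau$, which I expect to evaluate to $\left(\frac2k\right)$ independently of the branching data $n$.

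The \emph{elementary} route works directly with the pair count. Writing $A_n=\{b\in\{1,\dots,h\}:\langle n b\rangle\le h\}$, Gauss's lemma gives $|A_n|=h-\mu_n$ with $(-1)^{\mu_n}=\left(\frac{n}{k}\right)$, and $N_k(n)=\#\{b\in A_n:b+\langle n b\rangle\ge h+1\}$. The idea is to build an involution on $A_n$ pairing the ``small-sum'' with the ``large-sum'' elements so that $N_k(n)$ is congruent mod $2$ to the cardinality of an explicit residual set that is manifestly $n$-independent. Here I would first use the full $S_3$-symmetry $N_k(n)=N_k(n^{-1})=N_k(-1-n)$ coming from relabelling the three branch points (the two generators of the reduction in Remark~\ref{rem:conj-reduced}) to cut down the cases, and then invoke Zolotarev's lemma to convert the relevant permutation signs into the symbol $\left(\frac2k\right)$, matching $N_k(1)=\mu_2$.

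The hard part will be the $n$-independence itself. In the geometric route the obstacle is making the equivariant Arf/Gauss-sum computation rigorous and extracting precisely the factor $\left(\frac2k\right)$ — the ``$2$'' must enter through the halving in $\theta=\tfrac12(\widehat\omega)$. In the elementary route the obstacle is that the interval condition $b+\langle n b\rangle\ge h+1$ interacts with the folding $\langle n b\rangle\mapsto k-\langle n b\rangle$ in a way the naive swap involution $b\mapsto\langle n b\rangle$ (which only yields $N_k(n)=N_k(n^{-1})$) does not control, so the residual fixed-point set has to be identified and shown to have parity $\mu_2$ for every valid $n$. I regard reducing this residual count to the single Gauss-lemma count for the prime $2$ as the crux, with the numerically observed equality with $\left(\frac2k\right)$ serving as the guide for what the residual set should be.
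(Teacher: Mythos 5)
The statement you are addressing is stated in the paper as an open conjecture: the authors offer only the numerical verification of Table~\ref{tab:conj-small} (all $k\le 21$) and the base case $n=1$ of Remark~\ref{rem:conj-small}, so there is no proof in the paper to compare against. Your base case agrees with theirs and is in fact sharper --- the exact evaluation $N_k(1)=\lfloor\tfrac{k+1}{4}\rfloor$ is correct, as is the identification of $\lfloor\tfrac{k+1}{4}\rfloor\bmod 2$ with the Gauss-lemma exponent for $2$, so that the conjecture becomes equivalent to $(-1)^{N_k(n)}=\bigl(\tfrac{2}{k}\bigr)$; this matches the paper's implicit dichotomy between primes $\equiv\pm1$ and $\equiv\pm3\pmod 8$ in Definition~\ref{def:n_k} and is a genuinely useful reformulation. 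The extra symmetry $N_k(n)=N_k(-1-n)$ you invoke (beyond the $N_k(n)=N_k(n')$ of Remark~\ref{rem:conj-small}) is consistent with the table and would indeed follow from relabelling the three branch points in Proposition~\ref{prop:g0-1}.

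However, there is a genuine gap exactly where you flag one: the $n$-independence of $N_k(n)\bmod 2$ is never established, and that is the entire content of the conjecture. The geometric route rests on the unproved assertion that the Arf invariant of a theta characteristic preserved by an odd-order automorphism acting without nonzero fixed vectors on $H^1(\whX,\mathbb{F}_2)$ is computed by a Gauss sum equal to $\bigl(\tfrac{2}{k}\bigr)$; no such theorem is cited or proved, and it is precisely there that the dependence on the branching datum $n$ would have to be shown to cancel. Deformation invariance of the parity does not help, since for fixed $k$ the covers with different $n$ lie in different families; and one should be wary that the closely related count $|A_n|=h-\mu_n$ in your own notation has parity $\tfrac12\bigl(1-\bigl(\tfrac{n}{k}\bigr)\bigr)$, which genuinely varies with $n$, so no soft argument can give the independence. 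The elementary route likewise stops at the point of constructing the involution on $A_n$ and identifying the residual set; as you note, the naive swap only yields $N_k(n)=N_k(n^{-1})$. What you have, then, is a correct base case, a clarifying reformulation via $\bigl(\tfrac{2}{k}\bigr)$, and two plausible but unexecuted strategies; the conjecture remains open.
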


We provide some evidence and observation to this conjecture as follows.  

\begin{rem}
\label{rem:conj-small}
We can directly verify Conjecture~\ref{conj:g0-1-reduced} (and hence the equivalent Conjecture~\ref{conj:g0-1}) for small values of $k$ or $n$. For instance for $n = 1$, $N_k(1)$ is the number of integers $b$ such that $(k+1)/4 \leq b \leq (k-1)/2$. It is straightforward to check that $N_k(1) \equiv \lfloor \frac{k+1}{4} \rfloor \pmod{2}$. 

Moreover, let $n'$ satisfy that  
$n n' \equiv 1 \pmod{k}$. Then $(n' + 1) n \equiv 1+n \pmod{k}$, hence $\gcd (n', k) = \gcd (n' + 1, k) = 1$.  The condition $b_2 = n b_1 \pmod{k}$ is equivalent to $b_1 = n' b_2 \pmod{k}$, which implies that $N_k(n) = N_k(n')$.  
\end{rem}

\begin{exa}
\label{exa:g0-parity}
We verify Conjecture~\ref{conj:g0-1-reduced} (and hence the equivalent Conjecture~\ref{conj:g0-1}) for $k$ up to $21$.  According to  Remark~\ref{rem:conj-small}, in Table~\ref{tab:conj-small} we skip the case $n = 1$ and combine the cases of $n$ with its reciprocal $n' \pmod{k}$.  
\begin{table}[ht]
\begin{tabular}{| c | c | c | c | c |}
\hline 
$k$ & $n$ & $n'$ & $N_k(n)$ & $\lfloor \frac{k+1}{4} \rfloor$ \\
\hhline{|=|=|=|=|=|}
$5$ & $2$ & $3$ & $1$ & $1$ \\
\hline
$7$ & $2$ & $4$ & $0$ & $2$ \\
\hline
$7$ & $3$ & $5$ & $2$ & $2$ \\
\hline
$9$ & $4$ & $7$ & $2$ & $2$ \\
\hline
$11$ & $2$ & $6$ & $1$ & $3$ \\
\hline
$11$ & $3$ & $4$ & $1$ & $3$ \\
\hline
$11$ & $5$ & $9$ & $3$ & $3$ \\
\hline
$11$ & $7$ & $8$ & $1$ & $3$ \\
\hline
$13$ & $2$ & $7$ & $1$ & $3$ \\
\hline
$13$ & $3$ & $9$ & $3$ & $3$ \\
\hline
$13$ & $4$ & $10$ & $1$ & $3$ \\
\hline
$13$ & $5$ & $8$ & $1$ & $3$ \\
\hline
$13$ & $6$ & $11$ & $3$ & $3$ \\
\hline
$15$ & $7$ & $13$ & $4$ & $4$ \\
\hline 
$17$ & $2$ & $9$ & $2$ & $4$ \\
\hline 
$17$ & $3$ & $6$ & $ 2$ & $4$ \\
\hline 
\end{tabular} 
\quad\quad 
\begin{tabular}{| c | c | c | c | c |}
\hline 
$k$ & $n$ & $n'$ & $N_k(n)$ & $\lfloor \frac{k+1}{4} \rfloor$ \\
\hhline{|=|=|=|=|=|}
$17$ & $4$ & $13$ & $2 $ & $4$ \\
\hline 
$17$ & $5$ & $7$ & $ 2$ & $4$ \\
\hline 
$17$ & $8$ & $15$ & $4$ & $4$ \\
\hline 
$17$ & $10$ & $12$ & $2$ & $4$ \\
\hline 
$17$ & $11$ & $14$ & $2$ & $4$ \\
\hline 
$19$ & $2$ & $10$ & $1$ & $5$ \\
\hline 
$19$ & $3$ & $13$ & $3$ & $5$ \\
\hline 
$19$ & $4$ & $5$ & $3$ & $5$ \\
\hline 
$19$ & $6$ & $16$ & $1$ & $5$ \\
\hline 
$19$ & $7$ & $11$ & $3$ & $5$ \\
\hline 
$19$ & $8$ & $12$ & $1$ & $5$ \\
\hline 
$19$ & $9$ & $17$ & $5$ & $5$ \\
\hline 
$19$ & $14$ & $15$ & $3$ & $5$ \\
\hline 
$21$ & $4$ & $16$ & $1$ & $5$ \\
\hline 
$21$ & $10$ & $19$ & $5$ & $5$ \\
\hline 
$21$ & $13$ & $13$ & $3$ & $5$ \\
\hline 
\end{tabular} 
\bigskip 
\caption{Verification of Conjecture~\ref{conj:g0-1-reduced} for small values of $k$, where $nn' \equiv 1 \pmod{k}$. The last two columns have the same parity as predicted by the conjecture.}
\label{tab:conj-small}
\end{table}
\end{exa}

Assuming Conjecture~\ref{conj:g0-1} (or equivalently Conjecture~\ref{conj:g0-1-reduced}), we can indeed describe the parity of any $k$-differential 
in genus zero explicitly. We first introduce a parity function as follows. For $k$ odd, consider the prime factorization 
$k = p_1^{h_1}\cdots p_s^{h_s}q_1^{\ell_1}\cdots q_t^{\ell_t}$ where each $p_i$ is an odd prime such that $\lfloor (p_i+1)/4 \rfloor$ is even and each $q_i$ is an odd prime such that $\lfloor (q_i+1)/4 \rfloor$ is odd, namely, $p_i \equiv \pm 1$ and $q_i \equiv \pm 3 \pmod{8}$. For a prime $q$ and an integer $m$, denote by $\nu_q(m)$ the $q$-adic evaluation given by the highest exponent $\nu$ such that $q^{\nu}$ divides $m$. For the collection of primes $\uq = \{ q_1, \ldots, q_t \}$ in the factorization of $k$, define $\nu_{\uq}(m) = \sum_{i=1}^t \min\{\nu_{q_i}(m), \nu_{q_i}(k)\}$. Note that $\nu_{\uq}(k) = \sum_{i=1}^t \nu_{q_i}(k) \equiv \lfloor (k+1)/4 \rfloor \pmod{2}$.    

We can now introduce the number which will allows us to compute the parity of strata in genus zero. We remark that in  Definition~\ref{def:n_k} and Lemma~\ref{lm:n_k} below, we do not require that $\mu$ is a partition of~$-k$.
\begin{df}
\label{def:n_k}
For $k$ odd and $\mu = (m_1, \ldots, m_n)$, define $n_k(\mu)$ to be the number of entries $m_i$ in $\mu$ such that $\nu_{\uq}(m_i) \not \equiv \nu_{\uq}(k) \pmod{2}$. 
\end{df}

The following properties of $n_k(\mu)$ will be useful for our applications.  

\begin{lm}
\label{lm:n_k}
Let $k = p_1^{h_1}\cdots p_s^{h_s}q_1^{\ell_1}\cdots q_t^{\ell_t}$ as above and $\mu = (m_1, \ldots, m_n)$. 

\begin{enumerate}[{\rm (1)}]
\item If $\ell_i = 0$ for all $i$, then $n_k(\mu) \equiv 0 \pmod{2}$ for all $\mu$. 

\item The parity of $n_k(\mu)$ only depends on the remainders of $m_1, \ldots, m_n \pmod{k}$. 

\item For $\mu = (m_1, m_2, m_3)$ such that $m_1$, $m_2$ and $m_3$ are relatively prime to $k$, we have $n_k(\mu) \equiv \nu_{\uq}(k) \pmod{2}$. 

\item If $k$ and all entries of $\mu$ are divisible by $d$, then 
$n_{k/d} (\mu / d) \equiv n_{k}(\mu) \pmod{2}$. 

\item  $n_k(\mu_1) + n_k(\mu_2) \equiv n_k(\mu_1, \mu_2) \pmod{2}$. 

\item $n_k(\pm \ell, \ell, \mu) \equiv n_k(\mu) \pmod{2}$. 
\end{enumerate}
\end{lm}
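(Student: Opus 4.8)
The plan is to exploit the fact that $n_k(\mu)$ merely counts the entries $m_i$ satisfying the single per-entry predicate $P(m)\colon \nu_{\uq}(m)\not\equiv\nu_{\uq}(k)\pmod 2$; consequently $n_k$ is \emph{additive} under concatenation, i.e. $n_k(\mu_1,\mu_2)=n_k(\mu_1)+n_k(\mu_2)$ holds exactly, which settles (5) immediately. Everything else reduces to two elementary features of $\nu_{\uq}(m)=\sum_{i=1}^t\min\{\nu_{q_i}(m),\ell_i\}$. First, each summand $\min\{\nu_{q_i}(m),\ell_i\}$ depends only on $m\bmod q_i^{\ell_i}$ (it equals $\ell_i$ when $q_i^{\ell_i}\mid m$, and equals $\nu_{q_i}(m)<\ell_i$ otherwise, a value read off from $m\bmod q_i^{\ell_i}$), so the full value $\nu_{\uq}(m)$, and hence $P(m)$, depends only on $m\bmod k$; this is (2). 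Second, $\nu_{\uq}(-m)=\nu_{\uq}(m)$, so $P$ is sign-invariant. For (1), if all $\ell_i=0$ then $\uq=\emptyset$, $\nu_{\uq}\equiv 0$, and $P(m)$ reads $0\not\equiv 0$, which never holds, so $n_k(\mu)=0$. For (6), additivity together with sign-invariance gives $n_k(\pm\ell,\ell,\mu)=n_k(\pm\ell)+n_k(\ell)+n_k(\mu)=2\,n_k(\ell)+n_k(\mu)\equiv n_k(\mu)\pmod 2$.

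For (3), each $m_i$ coprime to $k$ has $\nu_{q_j}(m_i)=0$ for every $j$, hence $\nu_{\uq}(m_i)=0$. Thus $P(m_i)$ holds precisely when $\nu_{\uq}(k)$ is odd, and it holds for either all three entries or for none. Therefore $n_k(m_1,m_2,m_3)$ equals $3$ or $0$ according to the parity of $\nu_{\uq}(k)$, and since $3$ is odd this gives $n_k(m_1,m_2,m_3)\equiv\nu_{\uq}(k)\pmod 2$ in both cases.

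The main work, and the step I expect to be the genuine obstacle, is (4): here both the set of bad primes and the relevant valuations shift on passing from $k$ to $k/d$, so I must track the bookkeeping prime by prime. Write $k=dk'$ and $m_i=dm_i'$, and let $\uq'$ denote the bad primes of $k'$. Fixing a prime $q\equiv\pm 3\pmod 8$ with $e=\nu_q(k)$ and $f=\nu_q(d)$, the hypotheses $d\mid m_i$ and $d\mid k$ give $\nu_q(m_i)\ge f$ and $f\le e$, whence $\nu_q(m_i')=\nu_q(m_i)-f$. I will verify that the contribution of $q$ to $\nu_{\uq}(m_i)$ minus its contribution to $\nu_{\uq'}(m_i')$ equals $\min\{e,f\}$ in every case: when $e>f$ the prime is bad on both sides and the difference is $f$; when $e=f\ge 1$ it is bad only for $k$ and the difference is $e$, using $\nu_q(m_i)\ge f=e$ to evaluate the minimum; when $e=0$ it is bad for neither and the difference is $0$. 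Summing over $q$ yields $\nu_{\uq}(m_i)-\nu_{\uq'}(m_i')=C$ with $C=\sum_q\min\{\nu_q(k),\nu_q(d)\}$ independent of $i$. Applying the same identity with $m=k$ gives $\nu_{\uq}(k)-\nu_{\uq'}(k')=C$, so that $\nu_{\uq}(m_i)-\nu_{\uq}(k)=\nu_{\uq'}(m_i')-\nu_{\uq'}(k')$; the predicate $P$ is thus preserved entry by entry and $n_k(\mu)=n_{k/d}(\mu/d)$ holds on the nose, which is stronger than the claimed congruence. The only delicate point is the correct evaluation of these minima, where the divisibility hypotheses $d\mid m_i$ and $d\mid k$ are both essential.
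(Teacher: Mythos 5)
Your proof is correct and follows essentially the same route as the paper, which disposes of items (1)--(3), (5), (6) by the same direct observations and justifies (4) with the one-line claim that passing from $(k,\mu)$ to $(k/d,\mu/d)$ shifts all the relevant $\nu_{\uq}$ values by a common constant. Your prime-by-prime case analysis (in particular the case $\nu_q(d)=\nu_q(k)$, where $q$ drops out of the bad-prime set for $k/d$) is exactly the bookkeeping the paper leaves implicit, and it correctly yields the stronger exact equality $n_k(\mu)=n_{k/d}(\mu/d)$.
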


\begin{proof}
If $\ell_i = 0$ for all $i$, then $\nu_{\uq}(m_i) = \nu_{\uq}(k) = 0$ for all $m_i$. Hence $n_k(\mu) = 0$, thus verifying item (1).  Item (2) follows from the fact that $\min \{ \nu_{q_i}(m), \nu_{q_i}(k)\} = \min \{ \nu_{q_i}(m+k), \nu_{q_i}(k)\}$. For item (3), if $m_1$, $m_2$ and $m_3$ are relatively prime to~$k$, then $\nu_{\uq}(m_i)  = 0$ for all $i$.  Hence $n_k(\mu) = 0$ if $\nu_{\uq}(k)$ is even and $n_k(\mu) = 3$ if~$\nu_{\uq}(k)$ is odd. Item (4) follows from the fact that multiplying $d$ to $m_i / d$ and to~$k / d$ changes the respective $\nu_{\uq}$ by the same amount.  For items (5) and (6), they follow directly from Definition~\ref{def:n_k}.  
\end{proof}

We first consider when $k$ is an odd prime.  In this case, if $\lfloor (k+1)/4 \rfloor$ is odd, i.e. if $ k = q_1$, 
then by definition $n_k(\mu)$ is the number of entries in $\mu$ not divisible by $k$.  If $\lfloor (k+1)/4 \rfloor$ is even, i.e. if $k = p_1$, then $n_k(\mu)$ is even by Lemma~\ref{lm:n_k} (1).  

\begin{thm}[Conditional to Conjecture~\ref{conj:g0-1}]
\label{thm:k-prime}
Suppose $k$ is an odd prime. Then the parity $\Phi(2\mu)$ of $\komoduli[0](2\mu)$ is equal to $n_k(\mu)\pmod{2}$. 
\end{thm}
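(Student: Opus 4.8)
The plan is to prove the statement by induction on the number $n$ of singularities, using the additivity of the parity under gluing (Lemma~\ref{lm:sumparbis}) together with the matching additivity of $n_k$ recorded in Lemma~\ref{lm:n_k}. First a reduction that applies at every level: since $k$ is prime, a $k$-differential of type $2\mu$ is either primitive or a $k$-th power of an abelian differential. In the latter case every entry of $\mu$ is divisible by $k$, so $n_k(\mu) = 0$ by Definition~\ref{def:n_k}, while by Remark~\ref{rem:parity-nonprim} the parity equals $\Phi$ of an abelian differential on $\PP^1$, which is even because the relevant half-canonical divisor has negative degree. Hence the non-primitive case is settled directly, and for the inductive argument we may assume $\gcd(m_1,\dots,m_n)=1$.

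For the base case $n=3$, write $\mu=(m_1,m_2,m_3)$ with $m_1+m_2+m_3=-k$. If some $m_i$ is divisible by $k$, then Proposition~\ref{prop:g0-k} gives that $\Phi(2\mu)$ is even; on the other hand, since $k$ is prime, $m_i\equiv 0\pmod k$ forces $m_j\equiv -m_{j'}\pmod k$ for the other two indices, so among those two entries either both or neither are divisible by $k$, whence the number of entries not divisible by $k$ is even and therefore $n_k(\mu)$ is even. If instead $m_1,m_2,m_3$ are all coprime to $k$, then Proposition~\ref{prop:g0-1} identifies $\Phi(2\mu)$ with the parity of $N_k(m_1,m_2,m_3)$, which by Conjecture~\ref{conj:g0-1} equals $\lfloor (k+1)/4\rfloor \pmod 2$; meanwhile Lemma~\ref{lm:n_k}(3) gives $n_k(\mu)\equiv \nu_{\uq}(k)\pmod 2$, and $\nu_{\uq}(k)\equiv \lfloor (k+1)/4\rfloor\pmod 2$ as noted after Definition~\ref{def:n_k}. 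In all cases $\Phi(2\mu)\equiv n_k(\mu)\pmod 2$, which establishes the base case conditionally on the conjecture.

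For the inductive step with $n\geq 4$, pick two entries, say $m_1,m_2$, set $c=-k-m_1-m_2$ and $d=m_1+m_2$, and form the genus-zero signatures $\mu_0=(m_1,m_2,c)$ and $\mu_1=(d,m_3,\dots,m_n)$, each a partition of $-k$. We glue a $k$-differential of type $2\mu_0$ to one of type $2\mu_1$ along a node of branch orders $2c$ and $2d$; since $2c+2d=-2k$ and $k$ is odd we have $c\neq -k/2$, so exactly one branch has order $>-k$ and this is a genuine two-level \mskd, carrying a unique prong-matching by Corollary~\ref{cor:uniquepm}. The top component is a genus-zero $k$-differential whose non-nodal orders sum to a number $<-k<0$, hence it carries a marked (analytic) pole, so condition (i) of Proposition~\ref{prop:GRCk} holds and the \mskd is smoothable. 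Lemma~\ref{lm:sumparbis} and the definition of parity for non-primitive differentials then give $\Phi(2\mu)\equiv \Phi(2\mu_0)+\Phi(2\mu_1)\pmod 2$. By the inductive hypothesis the right-hand side is $n_k(\mu_0)+n_k(\mu_1)$; using Lemma~\ref{lm:n_k}(2),(5) together with $c\equiv -(m_1+m_2)$ and $d\equiv m_1+m_2\pmod k$, the auxiliary pair $(c,d)$ is of type $(-\ell,\ell)$ and drops out by Lemma~\ref{lm:n_k}(6), so $n_k(\mu_0)+n_k(\mu_1)\equiv n_k(\mu)\pmod 2$. This closes the induction.

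The main obstacle is the base case, which rests entirely on Conjecture~\ref{conj:g0-1}: Proposition~\ref{prop:g0-1} reduces the half-canonical section count $h^0(\wh X,(\wh\omega)/2)$ to the purely arithmetic quantity $N_k(m_1,m_2,m_3)$, but its parity is exactly the content of the unproven conjecture. By contrast the inductive step is essentially bookkeeping; the two points requiring care are that every split yields smoothable pieces of parity type — guaranteed because any genus-zero component of negative total order has an analytic pole — and that the parities of the two auxiliary node singularities cancel, which is precisely the cancellation built into Lemma~\ref{lm:n_k}(6).
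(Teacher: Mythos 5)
Your proof is correct and follows essentially the same route as the paper's: the base case $n=3$ rests on Proposition~\ref{prop:g0-k}, Proposition~\ref{prop:g0-1} and Conjecture~\ref{conj:g0-1}, and the inductive step splits off two entries and applies Lemma~\ref{lm:sumparbis} to the corresponding two-level \mskd. The only (harmless) difference is organizational: the paper splits into the cases $\lfloor (k+1)/4\rfloor$ even/odd and chooses $m_1,m_2$ not divisible by $k$ with explicit counting, whereas you absorb all of this bookkeeping uniformly into Lemma~\ref{lm:n_k}~(2),(5),(6), which is a mild streamlining rather than a new idea.
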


\begin{proof}
First suppose $\lfloor (k+1)/4 \rfloor$ is odd. For $\mu = (m_1, m_2, m_3)$, either non of the $m_i$ is divisible by $k$, or exactly one of them is divisible by $k$, or all of them are divisible by~$k$.  In the first case since $k$ is prime, all $m_i$ are relatively prime to $k$, hence assuming Conjecture~\ref{conj:g0-1} the parity is odd, which coincides with the parity of $n_k(\mu) = 3$. In the second case by Proposition~\ref{prop:g0-k} the parity is even, which coincides with the parity of $n_k(\mu) = 2$. In the last case the $k$-differential is a $k$-th power of an abelian differential, hence the parity is even, which coincides with the parity of $n_k(\mu) = 0$.  

Next we apply induction to the number of entries of~$\mu$.  Suppose the claim holds for any $\mu$ with at most $n-1$ entries.  For $\mu = (m_1, \ldots, m_n)$, if all the entries are divisible by $k$, then the parity is even and $n_k(\mu) = 0$.  Otherwise, there must be at least two entries, say $m_1$ and $m_2$, that are not divisible by $k$.  By Proposition~\ref{prop:breakpos} and Lemma~\ref{lm:sumparbis} we can break up a zero of order $2m_1+2m_2$ in a (meromorphic) $k$-differential of genus zero into two zeros of order $2m_1$ and $2m_2$ respectively and obtain the following parity relation   
$$\Phi(2\mu) \equiv \Phi(2m_1, 2m_2, -2k - 2m_1 - 2m_2) + \Phi (2m_1+2m_2, 2m_3, \ldots, 2m_n) \pmod{2}.$$
If $m_1 + m_2$ is not divisible by $k$, then $\Phi(2\mu) \equiv 3 + (n_k(\mu) - 2 + 1) \equiv n_k (\mu) \pmod{2}$ holds by induction. If $m_1 + m_2$ is divisible by $k$, then we obtain by induction the relation 
$\Phi(2\mu) \equiv 2 + (n_k(\mu) - 2) \equiv n_k (\mu) \pmod{2}$.  
\smallskip
\par
Next suppose $\lfloor (k+1)/4 \rfloor$ is even. Again for $\mu = (m_1, m_2, m_3)$, either non of the $m_i$ is divisible by $k$, or exactly one of them is divisible by $k$, or all of them are divisible by~$k$. The corresponding parities are all even (assuming Conjecture~\ref{conj:g0-1} for the first case).  Next we apply induction to the number of entries of $\mu$.  Suppose the claim holds for any $\mu$ with at most $n-1$ entries.  For $\mu = (m_1, \ldots, m_n)$, if all the entries are divisible by $k$, then the parity is even.  Otherwise, there must be at least two entries, say $m_1$ and $m_2$, that are not divisible by $k$.  By Proposition~\ref{prop:breakpos}, Lemma~\ref{lm:sumparbis} and using induction we conclude that  
$$\Phi(2\mu) \equiv \Phi(2m_1, 2m_2, -2k - 2m_1 - 2m_2) + \Phi (2m_1+2m_2, 2m_3, \ldots, 2m_n) \equiv 0 \pmod{2}.$$
\end{proof}

Now we can show that $n_k(\mu)$ is the desired parity function for general $k$ (assuming Conjecture~\ref{conj:g0-1}).  

\begin{thm}[Conditional to Conjecture~\ref{conj:g0-1}]
\label{thm:k-general}
The parity of $\komoduli[0](2\mu)$ is given by $n_k(\mu) \pmod{2}$.  
\end{thm}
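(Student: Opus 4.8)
The plan is to argue by strong induction on $k$, the base case of $k$ prime being exactly Theorem~\ref{thm:k-prime}. So I fix an odd composite $k$ and assume the statement for every odd $k''<k$. Since a non-primitive $k$-differential is by definition a power $\xi=\eta^{d}$ of a primitive $(k/d)$-differential $\eta$ with $d\mid k$ and $d>1$, and since $\Phi(\xi)=\Phi(\eta)$ for odd $k$ by Remark~\ref{rem:parity-nonprim}, the non-primitive case follows immediately: the inductive hypothesis gives $\Phi(\eta)\equiv n_{k/d}(\mu/d)\pmod 2$, and Lemma~\ref{lm:n_k}(4) identifies $n_{k/d}(\mu/d)\equiv n_k(\mu)\pmod 2$. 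Hence it remains to treat primitive strata $\komoduli[0](2\mu)$, i.e. those with $\gcd(m_1,\dots,m_n,k)=1$.

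For the primitive case I would run an inner induction on the number $n$ of entries. For $n\le 2$ the canonical cover is again a $\PP^1$ (a cyclic cover totally branched over two points), so $\deg(\wh\omega)/2=-1$ forces $\Phi(2\mu)$ even, matching $n_k(\mu)\equiv 0$ by Lemma~\ref{lm:n_k}(2),(6). The heart of the matter is the base case $n=3$, with $\mu=(m_1,m_2,m_3)$ and $m_1+m_2+m_3=-k$. If all three $m_i$ are coprime to $k$, then Proposition~\ref{prop:g0-1} gives $\Phi(2\mu)\equiv N_k(m_1,m_2,m_3)\pmod 2$, which by Conjecture~\ref{conj:g0-1} equals $\lfloor(k+1)/4\rfloor\equiv\nu_{\uq}(k)$; this agrees with $n_k(\mu)$ by Lemma~\ref{lm:n_k}(3). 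The genuinely new situation is a \emph{primitive} triple in which some $\gcd(m_i,k)>1$.

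To handle this I would pass to an intermediate cover. Choose a prime $p\mid\gcd(m_i,k)$. Primitivity forces at most one of $m_1,m_2,m_3$ to be divisible by $p$ (if two were, then $p$ would divide the third, hence $\gcd(m_1,m_2,m_3,k)$), so the cyclic $\ZZ/p$-cover $Y\to X\cong\PP^1$ built as in Lemma~\ref{lm:relation-up} is branched over exactly the two points where $p\nmid m_j$, and Riemann--Hurwitz then gives $Y\cong\PP^1$. By the universal property of canonical covers encoded in diagram~\eqref{eq:diagCanCov}, the pullback of $\xi$ to $Y$ is $\eta^{p}$ for a primitive $(k/p)$-differential $\eta$ whose canonical cover coincides with $\whX$, whence $\Phi(\xi)=\Phi(\eta)$. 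Since $Y$ has genus zero and $k/p<k$, the outer inductive hypothesis yields $\Phi(\eta)\equiv n_{k/p}(\mu_\eta)\pmod 2$, where $\mu_\eta$ is read off from the cover ($p$ unramified preimages of order $2m_i/p$ over the branchless point and one ramified preimage of order $2m_j+(k/p)(p-1)$ over each of the other two points). It then remains to verify the purely arithmetic identity $n_k(\mu)\equiv n_{k/p}(\mu_\eta)\pmod 2$, tracking how $\nu_{\uq}$ of these orders compares with $\nu_{\uq}(k)$ before and after dividing out $p$. This bookkeeping is where I expect most of the technical effort to go, and it is the main obstacle of the whole argument.

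Finally, for $n>3$ I would break up a metric zero exactly as in the proof of Theorem~\ref{thm:k-prime}: since a genus-zero $k$-differential can never be a $k$-th power of a holomorphic differential, Proposition~\ref{prop:breakpos} makes every such splitting realizable, and Lemma~\ref{lm:sumparbis} gives the parity relation $\Phi(2\mu)\equiv\Phi(2m_1,2m_2,-2k-2m_1-2m_2)+\Phi(2m_1+2m_2,2m_3,\dots,2m_n)\pmod 2$. The first summand is a three-singularity stratum (already settled) and the second has $n-1$ entries, to which the inner induction applies, routing through the non-primitive reduction whenever the merged signature fails to be primitive. The matching combinatorial relation $n_k(\mu)\equiv n_k(m_1,m_2,-k-m_1-m_2)+n_k(m_1+m_2,m_3,\dots,m_n)$ follows from Lemma~\ref{lm:n_k}(5)--(6) together with $\nu_{\uq}(-m)=\nu_{\uq}(m)$. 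Thus the only substantive difficulty beyond the assumed Conjecture~\ref{conj:g0-1} is concentrated in the $n=3$ non-coprime case: establishing $\Phi(\xi)=\Phi(\eta)$ cleanly and, above all, the arithmetic comparison of $n_k$ across the $\ZZ/p$-cover.
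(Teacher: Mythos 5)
Your proposal is essentially the paper's own proof: an outer induction on $k$ (the paper phrases it as induction on the exponents in the prime factorization), the three-singularity case split into the all-coprime subcase (Conjecture~\ref{conj:g0-1} together with Proposition~\ref{prop:g0-1} and Lemma~\ref{lm:n_k}(3)) and the subcase of a prime $d\mid\gcd(m_1,k)$ handled by passing to the intermediate degree-$d$ cover with exactly the new signature $(\underbrace{2m_1/d,\dots,2m_1/d}_{d},\,2m_2+k-k/d,\,2m_3+k-k/d)$ that you compute, followed by the $n>3$ step via breaking up a zero and Lemma~\ref{lm:sumparbis}. The one step you defer as the ``main obstacle,'' namely $n_k(\mu)\equiv n_{k/p}(\mu_\eta)\pmod 2$, is precisely what the paper verifies and takes only a few lines using Lemma~\ref{lm:n_k}: reduce the two ramified orders modulo $k/p$ by item (2), collapse the $p$ equal entries to one by item (6), rescale by $p$ via item (4) to get $n_k(m_1,m_2p,m_3p)$, and note that multiplying the two entries prime to $p$ by $p$ changes $n_k$ by $0$ or $2$ according to whether $p$ is one of the $p_i$ or one of the $q_i$.
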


\begin{proof}
We apply induction to the exponents in the prime factorization of $k$.  The claim holds for the base case when $k$ is an odd prime by Theorem~\ref{thm:k-prime}.  Suppose it holds for any exponents smaller than $(h_1, \ldots, h_s, \ell_1, \ldots, \ell_t)$ (in the sense of multi-indices). 

Consider first the case of three singularities $\mu = (m_1, m_2, m_3)$.  If $\gcd (m_1, m_2, m_3) = d  > 1$, then passing to a canonical $d$-cover and using induction, we have by Lemma~\ref{lm:n_k} (4) that $\Phi_{k}(2\mu) \equiv \Phi_{k/d} (2\mu / d) \equiv n_{k/d} (\mu / d) \equiv n_{k}(\mu)  \pmod{2}$, thus verifying the claim in this case. 

Next suppose $\gcd (m_1, m_2, m_3) = 1$.  If $m_1$, $m_2$ and $m_3$ are relatively prime to~$k$, then assuming Conjecture~\ref{conj:g0-1}, $\Phi_k(2\mu) \equiv \lfloor (k+1)/4 \rfloor \equiv \nu_{\uq}(k) \equiv n_k(\mu) \pmod{2}$ by Lemma~\ref{lm:n_k} (3).  Otherwise, there must be a prime factor $d$ of $k$ that divides one of the $m_i$ but not the other two, say, $d$ divides $m_1$ but not $m_2$ and $m_3$.  Then we can pass to a canonical $d$-cover, which implies that 
$$\Phi_k(2\mu) = \Phi_{k/d}(\underbrace{2m_1/d, \ldots, 2m_1/d}_{d}, 2m_2 + k - k/d, 2m_3 + k - k/d).$$ 
Using induction combined with 
Lemma~\ref{lm:n_k}, the parity of this stratum is equal to $n_{k/d}(2m_1/d, 2m_2, 2m_3 ) \equiv n_k (2m_1, 2m_2d, 2m_3d) \pmod{2}$. Hence it suffices to show that $n_k (2m_1, 2m_2d, 2m_3d) \equiv n_k(2m_1, 2m_2, 2m_3) \pmod{2}$, which is equivalent to show that 
$n_k (2m_2d, 2m_3d) \equiv n_k(2m_2, 2m_3) \pmod{2}$. If $d$ is equal to some prime $p_i$, then $n_k (2m_2p_i, 2m_3p_i) = n_k(2m_2, 2m_3)$ as $p_i$ is irrelevant to define  $n_k$  in Definition~\ref{def:n_k}.  If~$d$ is equal to some prime $q_i$, then since $m_2$ and $m_3$ are not divisible by $q_i$, it implies that 
$\nu_{\uq}(m_2q_i) = \nu_{\uq}(m_2)+1$ and $\nu_{\uq}(m_3q_i) = \nu_{\uq}(m_3)+1$. Therefore, the difference between 
$n_k (2m_2q_i, 2m_3q_i)$ and $n_k (2m_2, 2m_3)$ is even, thus verifying that they have the same parity.  

Now we can apply another round of induction to the number of singularities.  Suppose the claim holds for $\mu$ with at most $n-1$ entries for some $n > 3$. Then for $\mu = (m_1, \ldots, m_n)$ with $\sum_{i=1}^n m_i = -k$, combining  Proposition~\ref{prop:breakpos}, Lemma~\ref{lm:sumparbis} and Lemma~\ref{lm:n_k} we thus conclude that  
\begin{eqnarray*}
\Phi_k(2\mu) &\equiv & \Phi_k(-2k - 2m_1 - 2m_2, 2m_1, 2m_2) + \Phi_k(2m_1+2m_2, 2m_3, \ldots, 2m_n) \\
& \equiv & n_k(m_1 + m_2, m_1, m_2) + n_k (m_1+m_2, m_3, \ldots, m_n) \\
& \equiv & n_k(m_1+m_2, m_1 + m_2, m_1, m_2, m_3, \ldots, m_n) \\
& \equiv & n_k (m_1, \ldots, m_n) \pmod{2}. 
\end{eqnarray*}
\end{proof}

%%%%%%%%%%%%
\subsection{Parity of $k$-differentials of genus one}
\label{subsec:g1}
%%%%%%%%%%%%

We first consider $k$-differentials in genus one with two singularities.  Recall from Section~\ref{sec:globalop} that the connected component $\komoduli[1](2m, -2m)^d$ 
parameterizes $k$-differentials of torsion number $d$, where $d$ is a divisor of $2m$ and $d \neq 2m$.   

\begin{thm}
\label{thm:g1-2}
For odd $k$, the parity of the component $\komoduli[1](2m, -2m)^d$ is given by $d+1 \pmod{2}$.  
\end{thm}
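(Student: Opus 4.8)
The plan is to compute the parity directly from its definition $\Phi(\xi)=h^0\bigl(\wh{X},\tfrac12\operatorname{div}(\wh{\omega})\bigr)\bmod 2$, where $(\wh{X},\wh{\omega})$ is the canonical cover. Write $r=\gcd(2m,k)$ (which equals $\gcd(m,k)$ since $k$ is odd), $\ell=k/r$ and $n=m/r$, so that over the zero $z$ and the pole $p$ the cover has $r$ points each of ramification index $\ell$; let $x=\sum_j x_j$ and $y=\sum_j y_j$ be the reduced preimages. Then $\operatorname{div}(\wh{\omega})=(2n+\ell-1)x+(-2n+\ell-1)y$, and since $\ell$ is odd its half is the integral divisor $D=(n+\tfrac{\ell-1}{2})x+(-n+\tfrac{\ell-1}{2})y$ of degree $g(\wh{X})-1=k-r$. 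First I would dispose of the non-primitive components: if $\xi=\eta^{d'}$ with $\eta$ primitive of order $k/d'$, then one checks $d'\mid d$ (otherwise $\eta$ does not exist) and $d'$ is odd, while $\eta$ lies in the component of torsion number $d/d'$; Remark~\ref{rem:parity-nonprim} then gives $\Phi(\xi)\equiv\Phi(\eta)\equiv (d/d')+1\equiv d+1\pmod 2$ by induction on $k$ (the base $k=1$ being the classical genus-one computation). So I may assume $\xi$ primitive and $\wh{X}$ connected.

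For the primitive case I would apply Lemma~\ref{lm:basis} to $D$ (note $\deg D<k$ and $D$ is $\tau$-invariant). An eigenbasis of $H^0(\wh{X},D)$ consists of functions $f$ for which $D+(f)$ is an effective $\tau$-invariant divisor supported on $\{x_j,y_j\}$; since $\tau$ acts transitively on each fibre, such a divisor is $ax+by$ with $a,b\ge 0$ and $a+b=\ell-1$. Writing $c=a-n-\tfrac{\ell-1}{2}$, the condition $ax+by\sim D$ becomes $c(x-y)\sim 0$, and distinct $c$ give functions with distinct orders of vanishing at $x_1$, hence are linearly independent. Therefore
\begin{equation*}
h^0(\wh{X},D)=\#\bigl\{\,c\in\ZZ:\ -n-\tfrac{\ell-1}{2}\le c\le -n+\tfrac{\ell-1}{2},\ E\mid c\,\bigr\},
\end{equation*}
where $E$ is the order of $x-y$ in $\operatorname{Pic}^0(\wh{X})$; this counts the multiples of $E$ in an interval of $\ell$ consecutive integers centred at $-n$.

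The key input is the $2$-adic valuation of $E$. Let $e=\ord(z-p)$, so $e=2m/d$ by Proposition~\ref{prop:rotalg}. Pushing forward gives $\pi_*(x-y)=r(z-p)$, whose order $e/\gcd(e,r)$ divides $E$; pulling back $e(z-p)\sim 0$ gives $e\ell(x-y)\sim 0$, so $E\mid e\ell$. Because $k$ is odd, both $\gcd(e,r)$ and $\ell$ are odd, whence $v_2(E)=v_2(e)$; since $v_2(2m)=v_2(2n)$ this yields $v_2(2n/E)=v_2(2n)-v_2(e)=v_2(d)$, so $t:=2n/E$ has the same parity as $d$. I would then finish with an elementary lattice count. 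As $E\mid 2n$, the centre $-n=-\tfrac12 Et$ is $\equiv 0\pmod E$ when $t$ is even and $\equiv E/2\pmod E$ (with $E$ then even) when $t$ is odd: in the first case the multiples of $E$ in the interval are symmetric about a lattice point and their number is odd, in the second they split into symmetric pairs and their number is even. Hence $h^0(\wh{X},D)\equiv t+1\equiv d+1\pmod 2$, as claimed.

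The step I expect to require the most care is the determination of $E$: the clean identity $v_2(E)=v_2(e)$ depends on having a divisibility from $\pi_*$ \emph{and} one from $\pi^*$, and on the oddness of $k$ to cancel the factors $\gcd(e,r)$ and $\ell$. I would verify carefully that the multiplicities are correct, namely $\pi^*(z-p)=\ell(x-y)$ (each $x_j,y_j$ is an $\ell$-fold ramification point) and $\pi_*(x-y)=r(z-p)$, and that the lattice count is robust at the endpoints of the interval, which it is by the symmetry about its centre. The remaining verifications — the non-primitive reduction and the bijection between basis elements and admissible $c$ — are routine.
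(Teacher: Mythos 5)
Your proof is correct and follows essentially the same route as the paper's: both extract the eigenbasis of $H^0(\whX,(\wh\omega)/2)$ from Lemma~\ref{lm:basis}, observe that the admissible effective divisors $ax+by$ with $a+b=\ell-1$ are symmetric about the midpoint so that the parity of $h^0$ is governed by whether $n(x-y)\sim 0$, and settle that question by pushing forward and pulling back torsion relations, using that $r$ and $\ell$ are odd. The one assertion you leave unjustified, $E\mid 2n$, is exactly the relation $2n(x-y)\sim 0$ that the paper derives from $(\wh\omega)\sim K_{\whX}\sim(\ell-1)(x+y)$ via Riemann--Hurwitz on the genus-one base; with that one line added, your lattice count is a faithful repackaging of the paper's pairing argument, and your explicit reduction of the non-primitive case is a harmless addition.
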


\begin{proof}
Let $(X, \xi)$ be a $k$-differential in the connected component $\komoduli[1](2m, -2m)^d$. Since the torsion number of $\xi$ is $d$, we know that $(2m / d) (z_1 - z_2) \sim 0$ in $X$ and no relation of lower order holds. Recall the notation $\gcd (m, k) = r$, $k = r \ell$ and $m = r n$.  In the canonical cover $\wh X$, we have $\pi^{*} \xi = \wh \omega^k$ 
with the underlying canonical divisor $(\wh \omega) = (2n + \ell - 1)x_1 + (-2n + \ell-1) x_2$ being $\tau$-invariant, where $x_i = \sum_{j=1}^{r} x_{i,j}$ with $\pi^{-1} (z_i) = \{x_{i,j}\}_{j=1}^r$ for $i = 1, 2$.  Our goal is to evaluate $\Phi (\xi) = h^0 (\whX, (\wh \omega) / 2) \pmod{2}$.  

Since $\deg (\wh \omega) / 2 = \ell - 1 < k$, Lemma~\ref{lm:basis} implies that $H^0(\whX, (\wh \omega) / 2)$ has a basis $\{ f_1, \ldots, f_N\}$ such that $(f_i) + (n + (\ell - 1)/2) x_1 + (-n + (\ell-1)/2) x_2) = c_{i,1} x_1 + c_{i,2} x_2$ for $c_{i,1}, c_{i,2} \geq 0$ and $c_{i,1} + c_{i,2} = \ell - 1$.  Note that the Riemann-Hurwitz formula gives the linear equivalence relation 
$K_{\wh X} \sim (\ell -1) (x_1 + x_2)$, hence $ (\wh \omega) \sim (\ell -1) (x_1 + x_2)  $ and $2n x_1 \sim 2n x_2$.  Therefore, the divisor class 
$$(n + (\ell - 1)/2) x_1 + (-n + (\ell-1)/2) x_2 \sim (n + (\ell - 1)/2) x_2 + (-n + (\ell-1)/2) x_1$$ is invariant when interchanging $x_1$ and $x_2$. It implies that 
$c_{i,1} x_1 + c_{i,2} x_2$ is an effective section in the basis if and only if $(\ell - 1 - c_{i,1}) x_1 + (\ell - 1 - c_{i,2} x_2)$ is. Therefore, the dimension $h^0 (\whX, (\wh \omega) / 2)$ 
is an odd number if and only if  the linear equivalence relation $(\ell - 1) x_1 / 2 + (\ell-1)  x_2 / 2 \sim (n + (\ell - 1)/2) x_1 + (-n + (\ell-1)/2) x_2$ holds, i.e., if and only if $nx_1 \sim nx_2$. 

Next we show that $nx_1 \sim n x_2 $ if and only if $d$ is even.  If $nx_1 \sim n x_2 $, then by Lemma~\ref{lm:relation-push} we have $mz_1 \sim mz_2 $, hence $2m / d$ divides $m$ by the assumption on the rotation number, which is equivalent to $d$ being even. Conversely if $d$ is even, then $mz_1 \sim mz_2 $, which implies that $m\ell x_1 \sim m \ell x_2$ by pulling back via $\pi$.  Note that we have shown that $2n x_1 \sim 2n x_2$ in the preceding paragraph. Since $m = nr$ and $\ell, r$ are odd, we have $\gcd (2n, m\ell) = n$. Combining the two linear equivalence relations of $x_1$ and $x_2$, we thus conclude that $n x_1 \sim n x_2$.  
\end{proof}

Theorem~\ref{thm:g1-2} implies the following result which was previously used in Section~\ref{sec:parity-k}. 

\begin{cor}
\label{cor:g1-parity}
For $k$ odd and every $m \geq 2$, there exist cubic differentials in the primitive locus $\komoduli[1](2m, -2m)^{\prim}$ with distinct parities. 
\end{cor}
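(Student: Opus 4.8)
The plan is to read the result off directly from Theorem~\ref{thm:g1-2}, which computes the parity of the component $\komoduli[1](2m,-2m)^{d}$ to be $d+1 \pmod 2$. Thus it suffices to exhibit, for every $m\geq 2$, two torsion numbers $d$ of opposite parity whose corresponding components consist of primitive $k$-differentials. The natural candidates are $d=1$ and $d=2$. By Theorem~\ref{thm:compGenreUn} every positive divisor of $2m$ other than $2m$ itself is realized as the torsion (equivalently rotation) number of a unique connected component of $\komoduli[1](2m,-2m)$; for $m\geq 2$ both $1$ and $2$ divide $2m$ and are strictly smaller than $2m$, so they give two distinct components, of parities $1+1\equiv 0$ and $2+1\equiv 1 \pmod 2$.

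The substance of the argument is then to check that both of these components are primitive. Since primitivity is equivalent to connectedness of the canonical cover, it is an open and closed condition, so each component $\komoduli[1](2m,-2m)^{d}$ is entirely primitive or entirely non-primitive. Writing $\xi = g(z)(dz)^{k}$ on the underlying elliptic curve $X$, so that $g$ has divisor $2m(z_1-z_2)$, I would establish the criterion that $\xi$ is non-primitive if and only if there is a prime $p$ dividing $\gcd(k,2m)$ with $(2m/p)(z_1-z_2)\sim 0$ in $X$: such a $p$ yields a meromorphic $p$-th root of $g$ and hence expresses $\xi$ as a $p$-th power of a $(k/p)$-differential, and conversely any nontrivial power produces such a prime. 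Moreover, by definition of the torsion number a differential in $\komoduli[1](2m,-2m)^{d}$ has $z_1-z_2$ of order exactly $2m/d$.

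With this criterion the verification is immediate. For $d=1$ the point $z_1-z_2$ has maximal order $2m$, so $(2m/p)(z_1-z_2)\not\sim 0$ for every prime $p>1$ and the component is primitive. For $d=2$ the point $z_1-z_2$ has order $m$; a prime $p$ dividing $\gcd(k,2m)$ with $(2m/p)(z_1-z_2)\sim 0$ would force $m \mid (2m/p)$, hence $p\mid 2$, while $p\mid k$ with $k$ odd forces $p$ odd, a contradiction. Thus the $d=2$ component is primitive as well. Combining this with the parity values above, the $d=1$ and $d=2$ components both lie in $\komoduli[1](2m,-2m)^{\prim}$ and carry distinct parities, which is the assertion of the corollary.

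The only real content beyond Theorem~\ref{thm:g1-2} is this primitivity bookkeeping, namely pinning down exactly which torsion numbers yield primitive differentials, and this is the step I expect to require the most care; however it reduces to the elementary divisibility computation above, using crucially that $k$ is odd (so that the relevant prime $p$ cannot equal $2$). Everything else is a direct application of the two genus-one theorems already established.
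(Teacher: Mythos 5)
Your proposal is correct and follows essentially the same route as the paper: apply Theorem~\ref{thm:g1-2} to the components with torsion numbers $d=1$ and $d=2$ (both available for $m\geq 2$) and check that both are primitive because $\gcd(k,d)=1$ for $k$ odd. The only difference is that you spell out the primitivity criterion (your divisibility argument with the prime $p$ is exactly a proof that $\komoduli[1](2m,-2m)^{d}$ is primitive if and only if $\gcd(k,d)=1$), whereas the paper simply asserts it.
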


\begin{proof}
A connected component $\komoduli[1](2m, -2m)^{d}$ with torsion number $d$ parameterizes primitive $k$-differentials if and only if $\gcd (k, d) = 1$. Since $m \geq 2$, we can choose~$d$ to be $1$ and $2$, both relatively prime to $k$. Then primitive $k$-differentials in the two connected components with torsion number one and two respectively have distinct parities according to Theorem~\ref{thm:g1-2}.  
\end{proof}

Next we apply induction to the number of singularities and show that the question reduces to determine the parity of $k$-differentials of genus zero. 

\begin{prop}
\label{prop:g1-induction}
Let $\mu=(2m_1, \ldots, 2m_n)$ be a signature of $k$-differentials in genus one. Let $d$ be a common divisor of entries of $\mu$ such that $d \neq \pm 2 m_n$. Then for $k$ odd and $n\geq 3$, the parity of the connected component $\komoduli[1](2m_1, \ldots, 2m_n)^d$ is equal to the sum of the parities of the connected component $\komoduli[1](2m_n, -2m_n)^d$ and the stratum $\komoduli[0](2m_1, \ldots, 2m_{n-1}, 2m_n - 2k)$.  
\end{prop}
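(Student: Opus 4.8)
The plan is to exhibit a single $k$-differential in $\komoduli[1](\mu)^d$ as a smoothing of a two-level \mskd assembled from the two pieces in the statement, and then to read off both its parity (via Lemma~\ref{lm:sumparbis}) and its connected component (via the rotation number of Theorem~\ref{thm:compGenreUn}); since the parity is constant on the connected component $\komoduli[1](\mu)^d$, a single such example suffices. Concretely I would choose $(X_1,\xi_1)$ in the component $\komoduli[1](2m_n,-2m_n)^d$, which is nonempty exactly because $d\mid 2m_n$ and $d\neq\pm 2m_n$, together with $(X_0,\xi_0)$ in the irreducible genus zero stratum $\komoduli[0](2m_1,\dots,2m_{n-1},2m_n-2k)$, and glue the order $-2m_n$ singularity of $\xi_1$ to the order $2m_n-2k$ singularity of $\xi_0$. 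As $(-2m_n)+(2m_n-2k)=-2k$, this is a legitimate separating vertical node; placing the unique equivalence class of $k$-prong-matchings there (Corollary~\ref{cor:uniquepm}) yields a \mskd whose nodal curve has arithmetic genus one and whose surviving marked singularities have orders $2m_1,\dots,2m_n$, so any smoothing lies in $\komoduli[1](\mu)$.

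Granting smoothability, the parity statement is immediate. Both $\xi_0$ and $\xi_1$ have only even singularity orders (here $2m_n-2k$ is even since $k$ is odd), hence are of parity type, and Lemma~\ref{lm:sumparbis} gives $\Phi(\xi)\equiv \Phi(\xi_1)+\Phi(\xi_0)\pmod 2$; I use Remark~\ref{rem:parity-nonprim} to identify, for odd $k$, the quantity $d_i\Phi(\eta_i)$ appearing in \eqref{eq:paritybis} with $\Phi(\xi_i)$ irrespective of primitivity. This is precisely the asserted additive relation between the three parities.

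The component is then pinned down by its rotation number. Because the node is separating, all of $H_1$ of the smoothed genus one curve is carried by the genus one piece, so a symplectic basis $(\alpha,\beta)$ of $H_1$ may be represented in a neighborhood of $X_1$, and its indices are unchanged by the smoothing exactly as in the proofs of Theorem~\ref{thm:compGenreUn} and Lemma~\ref{lm:oplusg1}. Since $\xi_1$ has rotation number $d$ we have $\gcd(\ind(\alpha),\ind(\beta),2m_n)=d$, and because $d$ is a common divisor of every entry of $\mu$ the remaining orders contribute nothing to the gcd:
\[
\rot(X,\xi)=\gcd\bigl(\ind(\alpha),\ind(\beta),2m_1,\dots,2m_n\bigr)=\gcd\bigl(d,2m_1,\dots,2m_{n-1}\bigr)=d.
\]
Thus the smoothing lies in $\komoduli[1](\mu)^d$, completing the identification.

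I expect the one genuinely delicate point to be smoothability, i.e.\ the global $k$-residue condition of Proposition~\ref{prop:GRCk} at the separating node, which I would settle by a short case analysis according to which piece sits on the upper level (determined by comparing $m_n$ with $\tfrac{k}{2}$). If $m_n<0$, the glued singularity of $\xi_1$ is a zero and $\xi_1$ sits on top carrying the marked pole $2m_n$, so condition (i) of Proposition~\ref{prop:GRCk} holds. If $0<m_n<\tfrac{k}{2}$, then $\xi_1$ is on top and is not a $k$-th power of an abelian differential, since the component $\komoduli[1](2m_n,-2m_n)^d$ consists of such powers only when $k\mid d$, which together with $d\mid 2m_n$ and $d\neq\pm 2m_n$ would force $m_n\geq k$ (see the torsion-number description recalled around Corollary~\ref{cor:g1-parity}); hence condition (ii) applies. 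Finally if $m_n>\tfrac{k}{2}$, the genus zero piece is on top, and since $\sum_{i<n}m_i=-m_n<0$ it carries a marked pole, so condition (i) holds. In every case the \mskd is smoothable, and the three steps above then deliver the claim; a minor bookkeeping point, absorbed into this case analysis, is the sign of $m_n$, which merely interchanges the roles of the two branches of the glued node.
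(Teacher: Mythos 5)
Your proof is correct and follows essentially the same route as the paper: glue a differential in $\komoduli[1](2m_n,-2m_n)^d$ to one in the connected genus zero stratum $\komoduli[0](2m_1,\dots,2m_{n-1},2m_n-2k)$ along the $-2m_n$ and $2m_n-2k$ singularities, then apply Lemma~\ref{lm:sumparbis}. The paper's proof is a one-liner that leaves smoothability and the identification of the torsion/rotation number $d$ implicit; your case analysis of the global $k$-residue condition and your gcd computation correctly fill in exactly those omitted details.
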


\begin{proof}
Let $\xi_1$ be a $k$-differential of genus one in $\komoduli[1](2m_n, -2m_n)^d$ (which exists by the assumption that $d\neq \pm 2m_n$).  Let $\xi_0$ be a $k$-differential of genus zero in the connected stratum $\komoduli[0](2m_1, \ldots, 2m_{n-1}, 2m_n - 2k)$. We can construct a \mskd by gluing the singularity of $\xi_1$ with order $-2m_n$ to the singularity of $\xi_0$ with order $2m_n - 2k$.  Then the claim follows from  Lemma~\ref{lm:sumparbis}. 
\end{proof}

If we choose $d = 1$ in Proposition~\ref{prop:g1-induction}, then there is no restriction on $m_n$, and obviously we can also use any other $m_i$ instead of $m_n$. It thus implies the following relation for the strata of $k$-differentials in genus zero.  

\begin{cor}
\label{cor:mod-k}
The parity of the strata $\komoduli[0](2m_1, \ldots, 2m_n)$ in genus zero only depends on the remainders of $m_1, \ldots, m_n \pmod{k}$. 
\end{cor}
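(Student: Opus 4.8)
The plan is to deduce the corollary from the genus-one induction in Proposition~\ref{prop:g1-induction} together with the explicit genus-one computation in Theorem~\ref{thm:g1-2}. First I would fix the torsion number $d=1$, which is always admissible since $1$ divides every entry and $1\neq \pm 2m_j$ for any nonzero $m_j$. Given a genus-one signature $(2m_1,\dots,2m_n)$ with $\sum_{i=1}^n m_i=0$ and $n\geq 3$, applying Proposition~\ref{prop:g1-induction} with any distinguished index $j$ (the construction is symmetric in the singularities, as noted after the proposition) gives
\[
\Phi\bigl(\komoduli[1](2m_1,\dots,2m_n)^1\bigr)
= \Phi\bigl(\komoduli[1](2m_j,-2m_j)^1\bigr)
+ \Phi\bigl(\komoduli[0](\dots,2m_j-2k,\dots)\bigr),
\]
where the last stratum is genus zero with the entry $2m_j$ replaced by $2m_j-2k$. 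By Theorem~\ref{thm:g1-2} the middle term equals $1+1\equiv 0\pmod 2$, so the genus-zero parity equals the genus-one parity $\Phi(\komoduli[1](2m_1,\dots,2m_n)^1)$, and crucially this value is independent of the chosen index $j$.

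Since any genus-zero signature $(2a_1,\dots,2a_n)$ with $\sum_i a_i=-k$ is realized as such a shift (take $m_i=a_i$ for $i\neq n$ and $m_n=a_n+k$, so that $(2m_1,\dots,2m_n)$ is a genus-one signature), the index-independence from the previous step translates into the following statement on genus-zero strata: the two strata obtained from a fixed genus-one signature by shifting the $i$-th entry, respectively the $j$-th entry, by $-2k$ have equal parity. Equivalently, the elementary \emph{transfer} move that adds $2k$ to one entry and subtracts $2k$ from another --- which preserves the total $-2k$ and all remainders modulo $k$ --- leaves the parity unchanged. Now if $(2a_1,\dots,2a_n)$ and $(2a'_1,\dots,2a'_n)$ are two genus-zero parity-type signatures with $a_i\equiv a'_i\pmod k$ for all $i$, then $a_i-a'_i=kb_i$ with $\sum_i b_i=0$; repeatedly transferring a unit of $k$ from a position with $b_i>0$ to one with $b_j<0$ connects the two signatures by finitely many transfer moves. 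Combined with the fact that the parity is symmetric under relabeling the singularities, this proves that the genus-zero parity depends only on the remainders of $m_1,\dots,m_n$ modulo $k$.

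It remains to dispose of the small cases $n\leq 2$, where Proposition~\ref{prop:g1-induction} is not available. For $n=1$ the constraint $\sum_i m_i=-k$ forces the unique signature $(-2k)$, so there is nothing to prove. For $n=2$ the same gluing argument via Lemma~\ref{lm:sumparbis}, applied to a genus-one differential in $\komoduli[1](2m_1,-2m_1)^1$, shows that $\Phi(\komoduli[0](2m_1,-2m_1-2k))$ equals $\Phi(\komoduli[1](2m_1,-2m_1)^1)=0$, so every such stratum has even parity, which is trivially independent of the remainders. I expect the only bookkeeping requiring care to be the verification that the shifted signatures remain of parity type (all entries stay even, which is automatic for odd $k$ by Proposition~\ref{prop:partype}) and that the transfer moves indeed generate every remainder-preserving change of a signature with total $-2k$; the genuine analytic content is carried entirely by Theorem~\ref{thm:g1-2} and Proposition~\ref{prop:g1-induction}, so no further geometric input is needed.
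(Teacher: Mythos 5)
Your proof is correct and follows essentially the same route as the paper: choosing $d=1$ in Proposition~\ref{prop:g1-induction}, using Theorem~\ref{thm:g1-2} to kill the $\komoduli[1](2m_j,-2m_j)^1$ term, and exploiting the freedom in the distinguished index to get independence of the parity under shifting any single entry by $2k$. The paper leaves the transfer-move combinatorics and the $n\leq 2$ cases implicit, whereas you spell them out; this is a faithful elaboration rather than a different argument.
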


Note that Corollary~\ref{cor:mod-k} coincides with our expectation in Lemma~\ref{lm:n_k} (2).  

Finally using the (conjectural) parity description of $k$-differentials in genus zero, we can determine the parity of $k$-differentials in genus one.  Recall the function $n_k(\mu)$ introduced in Definition~\ref{def:n_k}. 

\begin{thm}[Conditional to Conjecture~\ref{conj:g0-1}]
\label{thm:g1-parity}
The parity of the connected component $\komoduli[1](2\mu)^d$ is given by $n_k(\mu) + d + 1 \pmod{2}$. 
\end{thm}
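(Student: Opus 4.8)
The plan is to reduce the genus one computation to the genus zero parity formula of Theorem~\ref{thm:k-general} together with the two-pointed base case of Theorem~\ref{thm:g1-2}, glued by the inductive device of Proposition~\ref{prop:g1-induction}. First I would dispose of the case $n=2$, where $2\mu=(2m,-2m)$ and the relevant component is $\komoduli[1](2m,-2m)^d$. Here Theorem~\ref{thm:g1-2} gives parity $d+1\pmod 2$, and since $n_k(m,-m)\equiv n_k()\equiv 0\pmod 2$ by Lemma~\ref{lm:n_k}(6), this matches the claimed value $n_k(\mu)+d+1$. Throughout the argument is conditional on Conjecture~\ref{conj:g0-1}, since Theorem~\ref{thm:k-general} is.

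For $n\geq 3$ the main step is a single application of Proposition~\ref{prop:g1-induction}. After reordering so that the singled-out entry is the last, and \emph{assuming} we may choose an index with $d\neq\lvert 2m_n\rvert$, the proposition yields
\[
\Phi\bigl(\komoduli[1](2\mu)^d\bigr)\equiv \Phi\bigl(\komoduli[1](2m_n,-2m_n)^d\bigr)+\Phi\bigl(\komoduli[0](2m_1,\ldots,2m_{n-1},2m_n-2k)\bigr)\pmod 2 .
\]
The first term equals $d+1$ by Theorem~\ref{thm:g1-2}. For the second term, observe that $(m_1,\ldots,m_{n-1},m_n-k)$ sums to $-k$, so it is a legitimate genus zero signature, and Theorem~\ref{thm:k-general} identifies its parity with $n_k(m_1,\ldots,m_{n-1},m_n-k)\pmod 2$. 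Since the parity of $n_k$ depends only on the residues of the entries modulo $k$ by Lemma~\ref{lm:n_k}(2), and $m_n-k\equiv m_n\pmod k$, this equals $n_k(\mu)$. Adding the two contributions gives $n_k(\mu)+d+1$, as desired.

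The obstacle is the selection hypothesis $d\neq\lvert 2m_i\rvert$: an admissible index exists precisely when some entry of $2\mu$ has absolute value different from $d$, and this fails exactly in the exceptional configuration where every entry of $2\mu$ equals $\pm d$ (forcing $n$ even, $d$ to be the full $\gcd$, and $d$ even), such as $\komoduli[1](d,d,-d,-d)^d$. I would treat this case separately by passing to a non-exceptional neighbour via the breaking-up operation of Proposition~\ref{prop:breakpos}: merging two equal-sign zeros of order $d$ into one of order $2d$ produces a component with an entry $2d\neq d$ to which the previous paragraph applies, and the inverse (breaking $2d$ into $d+d$) relates the two parities through Lemma~\ref{lm:sumparbis}, contributing a genus zero factor $\Phi(\komoduli[0](d,d,-2d-2k))$ controlled again by Theorem~\ref{thm:k-general}.

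The one genuinely delicate point is the bookkeeping that this detour reproduces the predicted value. Concretely one must check that breaking preserves the rotation number here (the orders retain $\gcd=d$, and indices vary continuously, so $d$ is unchanged), and then that the additivity and pair-cancellation identities of Lemma~\ref{lm:n_k}(5),(6), combined with the mod-$k$ invariance of Lemma~\ref{lm:n_k}(2), force the accumulated genus zero contributions to cancel modulo $2$, leaving exactly $n_k(\mu)+d+1$. I expect this consistency verification — rather than any conceptual difficulty — to absorb most of the effort, with the rest of the proof following mechanically from the cited results.
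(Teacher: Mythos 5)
Your proof is correct and follows essentially the same route as the paper: the main case is a single application of Proposition~\ref{prop:g1-induction} combined with Theorem~\ref{thm:g1-2}, Theorem~\ref{thm:k-general} and the mod-$k$ invariance of $n_k$ from Lemma~\ref{lm:n_k}(2). The only divergence is the exceptional case $\mu=(m,\dots,m,-m,\dots,-m)$ with $d=2m$, where the paper glues a three-pointed genus one component $\komoduli[1](2m,2m,-4m)^{2m}$ directly to a genus zero piece, whereas you merge two zeros and then break them apart again; these are re-associations of the same three-piece degeneration, and your bookkeeping via Lemma~\ref{lm:n_k}(2),(5),(6) yields the same cancellation.
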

 
 \begin{proof}
 For $\mu = (m_1, \ldots, m_n)$, if there exists some $m_i$ such that $d \neq \pm 2m_i$, then the claim follows from combining Proposition~\ref{prop:g1-induction}, Theorem~\ref{thm:k-general} and Theorem~\ref{thm:g1-2}.   
 
 If $d = 2|m_i|$ for all $i$, then $\mu = (\underbrace{m, \ldots, m}_h, \underbrace{-m, \ldots, -m}_h)$ for some $h \geq 2$ and $d = 2m$. By a similar argument as in the proof of Proposition~\ref{prop:g1-induction}, the parity of $\komoduli[1](2\mu)^{2m}$ is equal to the sum of the parities 
 of the connected component $\komoduli[1](2m, 2m, -4m)^{2m}$ and the connected stratum $\komoduli[0](\underbrace{2m, \ldots, 2m}_{h-2}, \underbrace{-2m, \ldots, -2m}_h, 4m-2k)$. The former has parity given by $n_k(m, m, -2m)+1$ as shown in the preceding paragraph. The latter has parity given by $n_k( \underbrace{m, \ldots, m}_{h-2}, \underbrace{-m, \ldots, -m}_h, 2m)$ using Theorem~\ref{thm:k-general} and Lemma~\ref{lm:n_k}~(2). Finally by Lemma~\ref{lm:n_k} (5) and (6), their sum has parity equal to the parity of 
 $n_k(\mu) + 1$, thus completing the proof. Indeed in this special case since there are even number of entries all of which have the same absolute value, by Lemma~\ref{lm:n_k}~(6) the parity of $n_k(\mu)$ is even.  
 \end{proof}
 
 \begin{rem}
 \label{rem:g1-parity}
Theorem~\ref{thm:g1-parity} holds unconditionally for those small values of $k$ verified in Example~\ref{exa:g0-parity}.  
 \end{rem}

%\label{app}

%%%%%%%%%%%%%%%%%%%%%%%%%%%%%%%%%%%%%%%%%%%%%%%%%%%%%%%%%%%%

%%%%%%%%%%%%%%%%%%%%%%%%%%%%%%%%%%%%%%%%%%%%%%%%%%%%%%%%%%%%

%\iffalse
%\addcontentsline{toc}{section}{References}
%\bibliographystyle{halpha}
\printbibliography
%\fi
\end{document}